\documentclass[11pt,reqno]{amsart}
\usepackage{amsmath,amssymb,graphicx,mathrsfs,amsopn,stmaryrd,amsthm,amscd,color,bm,extarrows}
\usepackage{newtxtext}
\usepackage{float}
\usepackage{ytableau,textcase}
\usepackage[colorlinks=true,citecolor=blue,linkcolor=blue
]{hyperref}
\usepackage[alphabetic,nobysame]{amsrefs}
\renewcommand{\MR}[1]{} \renewcommand{\PrintDOI}[1]{}
\newcommand{\arxiv}[1]{\href{http://arxiv.org/abs/#1}{arXiv:\nolinkurl{#1}}}
\usepackage[english]{babel}
\usepackage[mathscr]{euscript}
\usepackage[left=2.5cm,
            right=2.5cm,
            top=3cm,
            bottom=2.75cm
            ]{geometry}
\usepackage{collectbox}


\usepackage{dsfont}
\usepackage{tikz}    
\usetikzlibrary{arrows,matrix,decorations.markings,shapes.geometric}   
\usetikzlibrary{decorations.pathreplacing}

\usepackage{xy}
\allowdisplaybreaks
\xyoption{all}
\numberwithin{equation}{section}
\newtheorem{thm}{Theorem}[section]
\newtheorem{prop}[thm]{Proposition}
\newtheorem{lem}[thm]{Lemma}
\newtheorem{cor}[thm]{Corollary}

\theoremstyle{definition} 
\newtheorem{eg}[thm]{Example}
\newtheorem{dfn}[thm]{Definition}
\theoremstyle{remark}
\newtheorem{rem}[thm]{Remark}

\newcommand{\beq}{\begin{equation}}
\newcommand{\eeq}{\end{equation}}
\newcommand{\be}{\begin{equation*}}
\newcommand{\ee}{\end{equation*}}

\newcommand{\bC}{\mathbb{C}}

\newcommand{\bZ}{\mathbb{Z}}

\newcommand{\bN}{\mathbb{N}}

\newcommand{\mc}{\mathcal}
\newcommand{\cD}{\mathcal{D}}

\newcommand{\sfh}{\mathsf{h}}
\newcommand{\sfb}{\mathsf{b}}

\newcommand{\hf}{\tfrac12}

\newcommand{\bb}{\mathbf{b}}
\newcommand{\bh}{\mathbf{h}}

\newcommand{\g}{\mathfrak{g}}
\newcommand{\h}{\mathfrak{h}}

\newcommand{\gl}{\mathfrak{gl}}

\newcommand{\fksl}{\mathfrak{sl}}
\newcommand{\fkS}{\mathfrak{S}}

\newcommand{\id}{{\mathrm{id}}}   
   
\newcommand{\gr}{{\mathrm{gr}}}

\newcommand{\tl}{\tilde}
\newcommand{\wtl}{\widetilde}
\newcommand{\gge}{\geqslant}
\newcommand{\lle}{\leqslant}
\newcommand{\la}{\lambda}

\newcommand{\bla}{\bm\lambda}

\newcommand{\I}{{\mathbb I}}

\newcommand{\ve}{\varepsilon}

\newcommand{\Y}{{\mathscr{Y}}}
\newcommand{\Yi}{{^\imath\mathscr{Y}}}

\newcommand{\YiJ}{{^\imath\mathscr{Y}_{\jmath}}}

\newcommand{\ad}{\mathrm{ad}}
\newcommand{\imag}{\sqrt{-1}}
\newcommand{\comm}[2]{\bigl[#1,#2\bigr]}
\newcommand{\acomm}[2]{\bigl\{#1,#2\bigr\}}

\newenvironment{nouppercase}{%
  \renewcommand{\uppercasenonmath}[1]{}}{}

\begin{document}
\pagestyle{myheadings}
\setcounter{page}{1}

\title[\small Quasi-split iYangians: minimalistic presentations and coideal structures]{\Large Quasi-split iYangians: minimalistic presentations and coideal structures}

\author{Binhe Huang}

\address{
Department of Mathematical Sciences, Indiana University Indianapolis, 402
N. Blackford St., LD 270, Indianapolis, IN 46202, USA
}
\email{binhe.huang.math@gmail.com} 

\author{Kang Lu}

\address{
Shenzhen International Center for Mathematics and Department of Mathematics, Southern University of Science and Technology, Shenzhen, China
}
\email{kang.lu.math@gmail.com,luk@sustech.edu.cn}

\subjclass[2020]{Primary 17B37.}
\keywords{iYangians, Twisted Yangians, Drinfeld presentation, Symmetric pairs, Coideal subalgebras}

\begin{abstract}
We study iYangians, namely twisted Yangians in the Drinfeld current presentation, associated with quasi-split symmetric pairs of type $\mathsf{ADE}$ with nontrivial diagram involution. We establish minimalistic presentations in terms of degree-zero and degree-one generators. Type $\mathsf A_{2n}$ is treated separately: the isolated rank-two case requires two additional relations, whereas in higher rank these relations are forced by the neighboring noncentral orbit. As a by-product, we strengthen \cite[Theorem~3.1]{Lu26min} by proving that the extra relation in the minimalistic presentation of split iYangian is redundant whenever $\g$ has rank at least two. We also construct explicit injective homomorphisms from quasi-split iYangians into Yangians, identify their images as right coideal subalgebras, and obtain isomorphisms between the Drinfeld and $J$ presentations. Finally, we derive triangular estimates for the Drinfeld currents and their coproducts and apply them to the $^\imath\bm\ell$-weights arising by restriction from finite-dimensional Yangian modules.
\end{abstract}
\begin{nouppercase}	
\maketitle
\end{nouppercase}
\setcounter{tocdepth}{1}
\tableofcontents

\section{Introduction}

Yangians, introduced by Drinfeld in \cite{Dri85,Dri87}, form an important class of Hopf algebras in representation theory and quantum integrable systems. They may be viewed as deformations of universal enveloping algebras of current algebras and admit several complementary realizations. The R-matrix presentation and Drinfeld's $J$ presentation make the Hopf structure particularly transparent, whereas the Drinfeld current presentation is especially well suited to representation theory and related geometric constructions.

Twisted Yangians are the counterparts associated with symmetric pairs. They first appeared in the R-matrix framework as coideal subalgebras governed by reflection equations and boundary integrable systems; see, for example, \cite{Che84,Skl88,Ols92,MNO96,MR02,GR16}. A uniform construction for general symmetric pairs in the $J$ presentation was given in \cite{Mac02} and further developed in \cite{BR17}. In both realizations, the coideal structure is visible from the defining formulas. By contrast, a Drinfeld-type presentation makes the current structure accessible and is better adapted to many representation-theoretic questions, but its embedding into the ambient Yangian and the resulting coproduct formulas are much less apparent.

Drinfeld-type presentations of twisted Yangians have emerged more recently. The type $\mathsf{AI}$ case was obtained through Gauss decomposition in \cite{LWZ25GD}, and a uniform current presentation for all split types was subsequently formulated in \cite{LWZ25affine} via degeneration from affine iquantum groups \cite{LW21,Zha22}. The quasi-split case was treated in \cite{LZ24,Lu27super}. We refer to the resulting algebras as \emph{iYangians}. Their associated graded algebras are the universal enveloping algebras of the twisted current algebras $\g[z]^{\check\omega}$, where $\omega$ is an involution of $\g$ and $\check\omega(xz^r)=\omega(x)(-z)^r$. This identifies iYangians as filtered deformations naturally situated in the theory of quantum symmetric pairs \cite{Kolb14}. In the split setting, their relation with affine iquantum groups was established in \cite{LWZ25affine}, while minimalistic presentations and coideal structures in Drinfeld generators were studied in \cite{Lu26min}.

Recent work has developed this picture in several directions. Parabolic presentations and shifted generalizations in types $\mathsf{AI}$ and $\mathsf{AII}$ were constructed in \cite{LPTTW25}; their truncations admit baby comultiplications and are related to finite $W$-algebras of classical type. A related study of the semiclassical limits in type $\mathsf{AI}$ and their connections with Slodowy slices was carried out in \cite{TT24}. Shifted iYangians of arbitrary quasi-split $\mathsf{ADE}$ type, together with PBW bases and iGKLO representations, were introduced in \cite{LWW25SiY}, and their connections with fixed-point loci of affine Grassmannian slices (affine Grassmannian islices) were investigated in \cite{LWW25}. In type $\mathsf{AI}$, a version of the GKLO representation omitting the inhomogeneous term was studied in \cite{BPT25}; the resulting truncations were related to quantizations of symmetric quotients of affine Grassmannian slices. Coulomb-branch realizations associated with quivers with involutions were considered in \cite{SSX25,wang2025quivers,SSX26}, while related constructions involving Steinberg varieties and involutions on quiver varieties appeared in \cite{Li19,su2026twisted,Nak25}. Weight modules over truncated shifted iYangians were studied through orientifold KLRW algebras in \cite{shen2026weight}. At the $q$-deformed level, a uniform construction of shifted affine iquantum groups and their iGKLO-type representations for all quasi-split $\mathsf{ADE}$ types was given in \cite{LWW26affine}. The split simply-laced setting was also considered in \cite{LP26GKLO}. Factorization and coproduct formulas for the Drinfeld--Cartan series of quasi-split affine quantum symmetric pairs of type $\mathsf{AIII}$, together with a compatible boundary $q$-character map, were established in \cite{LP26qchar}. An iHopf-algebraic approach to the Drinfeld presentations and
coideal realizations of affine iquantum groups was developed in
\cite{LuPan26Drinfeld}; this framework also leads to iquantum loop
algebras associated with arbitrary Kac--Moody algebras. Finite presentations and current realizations have also continued to attract attention. The minimalistic presentation was extended to split iYangians of affine type $\mathsf A$ in \cite{Ueda25twistedaffine}. For split types $\mathsf{BCD}$, the relation between the R-matrix and Drinfeld realizations was further developed in \cite{regelskis2026twisted}.

The purpose of the present paper is twofold. First, we establish minimalistic presentations of quasi-split iYangians using only generators of degrees zero and one. Second, we construct explicit realizations inside the corresponding Yangians and use them to describe the coideal structure in Drinfeld generators. Both problems require separate arguments in type $\mathsf A_{2n}$.

Our first main result gives a minimalistic presentation of the iYangian $\Yi:=\Yi^\tau(\g)$, in the spirit of the corresponding results for Yangians and split iYangians in \cite{Lev93gen,GNW18,Lu26min}. Here $\tau$ is a nontrivial Dynkin diagram involution. Although the Drinfeld presentation has infinitely many generators $h_{i,r}$ and $b_{i,r}$, we show that all higher modes can be recovered from $h_{i,0}$, $h_{i,1}$, $b_{i,0}$, $b_{i,1}$ for $i\in \I$. The defining relations are the corresponding low-degree current relations, the finite-type Serre relations, and the explicitly stated relations in the isolated low-rank cases. The proof proceeds through a parallel minimalistic presentation of the classical limit $\mathrm U(\g[z]^{\check\omega})$: we establish the required relations in the twisted current algebra and then lift the result to $\Yi$ using the PBW theorem.

An important consequence is that the apparent extra relation $\big[h_{i,1},[b_{i,1},b_{\tau i,1}]\big]=0$ (cf. \cite[(1.6)]{Lev93gen}) involving the two $\tau$-paired nodes in types $\mathsf A_3$ and $\mathsf D$ is not an independent defining relation. At the associated-graded level it follows from the ordinary low-degree and finite-type Serre relations, and the PBW argument then gives the desired minimalistic presentation of the iYangian.

As a by-product, we strengthen the
minimalistic presentation of split iYangians in \cite[Theorem~3.1]{Lu26min}. For
every simple Lie algebra of rank at least two, the extra relation \cite[(3.4)]{Lu26min} in
that presentation follows from the remaining low-degree and finite
Serre relations. Thus it is needed only in type $\mathsf A_1$ and is
redundant, in particular, in types $\mathsf B_2\cong\mathsf C_2$ and
$\mathsf G_2$; see Corollary~\ref{cor:split-extra-redundant-quantum}. Consequently, \cite[Theorems~4.1 and~5.1]{Lu26min} extend to type $\mathsf G_2$.

Type $\mathsf A_{2n}$ has a genuinely different local structure. On the central $\tau$-orbit $\{n,n+1\}$ one has $c_{n,n+1}=-1$, so the simplified recursion for the Cartan modes on noncentral orbits is no longer available. The rank-two calculation in type $\mathsf A_2$ produces two classical relations that is imposed in the isolated rank-two case. When $n\gge2$, however, the neighboring orbit $\{n-1,n+2\}$ supplies two degree-shifting derivations $\cD_1=\ad(\bh_{n-1,1})$ and $\cD_2=\ad(\bh_{n-1,2})$. Their actions on the two central current families have different signs, and the combination $\cD_1^2+\cD_2$ isolates the two rank-two obstructions. Both therefore follow from the ordinary low-degree and finite-type Serre relations in higher rank, explaining why no additional relation is needed for $\mathsf A_{2n}$.

There is a second, independent exceptional feature in type $\mathsf A_{2n}$. The diagram involution acts nontrivially on certain $\tau$-fixed root spaces, and $\alpha+\tau\alpha$ can be a positive root. The embedding formulas therefore contain correction terms absent from the other quasi-split types. This root-theoretic phenomenon is separate from the minimalistic-presentation argument above.

Our second main result gives explicit formulas for the images of $h_{i,0}$, $h_{i,1}$, $b_{i,0}$, and $b_{i,1}$ in the Yangian $\Y$. Outside type $\mathsf A_{2n}$, the formulas are uniform and root-theoretic; the correction terms just described account for the remaining case. The resulting homomorphism $\varphi:\Yi\longrightarrow\Y$ is injective, and its image is a right coideal subalgebra: $\Delta(\Yi)\subset \Yi\otimes\Y$.
Moreover, under this identification the iYangian agrees with the corresponding twisted Yangian in the $J$ presentation. Thus the current and $J$ realizations provide two complementary descriptions of the same quasi-split iYangian.

Together with \cite[Theorem~4.1]{Lu26min} and its extension to type $\mathsf G_2$ in Corollary~\ref{cor:split-extra-redundant-quantum}, Theorem~\ref{thm:embed} proves the conjecture in \cite[Remark~3.23]{LWW25} for all split and quasi-split finite types.
More precisely, after restoring the deformation parameter $\hbar$ via the
Rees construction and passing to the opposite coproduct to match the
left coideal convention of \cite{LWW25}, the unshifted iYangian in the
Drinfeld presentation is a twisted Yangian in the sense of
\cite[Definition~3.15]{LWW25}. Thus the problem of realizing quasi-split
iYangians, including all split types, as coideal subalgebras of the
corresponding Yangians is completely resolved.

This realization makes the coideal structure explicit in Drinfeld generators. We obtain a closed formula for the coproduct of the first modified Cartan mode $\tl h_{i,1}$ and, consequently, an explicit formula for the coproduct of $b_{i,1}$. These identities are the starting point for triangular estimates in arbitrary degree.

To formulate the higher-degree results, set
\begin{align*}
h_i(u)&=1+\sum_{r\gge0}h_{i,r}u^{-r-1},
&
b_i(u)&=\sum_{r\gge0}b_{i,r}u^{-r-1},
\end{align*}
and let $\xi_i(u)$ denote the Cartan current of $\Y$. We prove
\begin{equation*}
h_i(u)\equiv
\left(1+\frac{\wp_i}{4u}\right)
\xi_i(u)\xi_{\tau i}(-u)
\qquad
\pmod{\Y_{Q_+}[\![u^{-1}]\!]},
\end{equation*}
together with the uniform coproduct estimates
\begin{align*}
\Delta(h_i(u))
&\equiv
h_i(u)\otimes\xi_i(u)\xi_{\tau i}(-u)
&&\pmod{\Yi\otimes\Y_{Q_+}[\![u^{-1}]\!]},
\\
\Delta(b_i(u))
&\equiv
b_i(u)\otimes\xi_{\tau i}(-u)+1\otimes b_i(u)
&&\pmod{\Yi\otimes\Y_{Q_+}[\![u^{-1}]\!]}.
\end{align*}
Here $\wp_n=1$ and $\wp_{n+1}=-1$ at the two central nodes of type $\mathsf A_{2n}$, and $\wp_i=0$ otherwise. The estimate for $b_i(u)$ itself is modified at these two nodes, but the coproduct estimates retain the same uniform triangular form; see Theorem~\ref{thm:hb}.

These are the iYangian analogues of the familiar triangular coproduct estimates for Yangian currents; cf. Lemma~\ref{lem:copro}. They also have a representation-theoretic consequence. When a finite-dimensional $\Y$-module is restricted to $\Yi$, the estimate for $h_i(u)$ controls the resulting $^\imath\bm\ell$-weights in terms of the ambient Yangian $\bm\ell$-weights. In particular, the $i$-th and $\tau i$-th components are coupled by the spectral involution $u\mapsto-u$.

The paper is organized as follows. Section~\ref{sec:pre} recalls the necessary background on Yangians and quasi-split iYangians and fixes the root-vector conventions used throughout. Section~\ref{sec:main-results} states the main results. Section~\ref{sec:pf-thm1} proves the minimalistic presentation through the classical algebra $\mathrm U(\g[z]^{\check\omega})$ and establishes the strengthening for split iYangians. Section~\ref{sec:pfthmB0} constructs the embedding into $\Y$, proves injectivity and the right coideal property, and identifies the image with the twisted Yangian in the $J$ presentation. Section~\ref{sec:proof-thmC} establishes the estimates for the Drinfeld currents and their coproducts and discusses their application to restricted Yangian modules. Appendix~\ref{sec:pf-prop2} completes the classical minimalistic-presentation argument in type $\mathsf A_{2n}$, including the isolated rank-two reconstruction and the higher-rank reduction. Appendix~\ref{sec:app+} derives the degree-zero and degree-one embedding formulas in type $\mathsf A$ from the R-matrix presentation. 

\medskip

\noindent {\bf Acknowledgements.}
The authors thank Weiqiang Wang for helpful comments and suggestions that improved the presentation of this paper. KL thanks Vidas Regelskis for sharing his preliminary calculations \cite{regelskis2026note} on relations among Gaussian generators in quasi-split type $\mathsf D$. KL also gratefully acknowledges the hospitality of the Department of Mathematics at the University of Virginia during the completion of this project. BH is grateful to Vyacheslav Futorny for his guidance and support during his master's studies. ChatGPT 5.6 Sol was used to assist with Lemma \ref{lem:fixed} and calculations at the associated graded level.

\medskip

\section{Yangians and twisted Yangians}\label{sec:pre}
Throughout the paper, all (Lie) algebras are defined over the field of complex numbers $\mathbb C$.

\subsection{Lie algebras}
Let $\g$ be a finite-dimensional simple Lie algebra associated with the Cartan matrix $C=(c_{ij})_{i,j\in\I}$ of type $\mathsf{ADE}$, where $\I=\{1,2,\cdots,n\}$ is the vertex set of the Dynkin diagram corresponding to $\g$. 

Fix an invariant inner product $(\cdot,\cdot)$ on $\g$ and normalize the Chevalley generators $x_i^\pm,\xi_i$ so that $(x_i^+,x_i^-)=1$ and $\xi_i=[x_i^+,x_i^-]$. Let $\Phi$ and $\Phi^+$ denote the set of roots and the set of positive roots, respectively, and denote the simple roots by $\alpha_i$ for $i\in \I$. Let $\{\varpi_i^\vee\}_{i\in \I}\subset\h $ be the basis of fundamental coweights dual to $\xi_i$ for $i\in \I$ with respect to $(\cdot,\cdot)$.

Denote the root space corresponding to a root $\beta\in\Phi$ by $\g_\beta$. For each $\alpha\in \Phi^+$, let $x^\pm_\alpha\in \g_{\pm\alpha}$ be nonzero root vectors normalized so that $(x_\alpha^+,x_\alpha^-)=1$ and $x_{\alpha_i}^\pm=x_i^\pm$. We set $x_{\alpha}=0$ if $\alpha\notin \Phi$ and
\[
x_\alpha:=\begin{cases}
x_\alpha^+, & \text{ if }\alpha\in\Phi^+,\\
x_{-\alpha}^-,& \text{ if }-\alpha\in\Phi^+.
\end{cases}
\]
We also set $x_{\alpha}^\pm=0$ if $\alpha\notin\Phi^+$.

For $\alpha,\beta\in\Phi$, define the structure constant $\eta_{\alpha,\beta}$ as follows: $\eta_{\alpha,\beta}=0$ if $\alpha+\beta\notin\Phi$, and
\[
[x_{\alpha},x_\beta]=\eta_{\alpha,\beta}x_{\alpha+\beta}
\]
if $\alpha+\beta\in \Phi$. We can further rescale the root vectors so that $\eta_{\alpha,\beta}=\eta_{-\beta,-\alpha}$ for $\alpha,\beta\in\Phi^+$; see, for example, \cite[\S25.2]{Hum72}. Therefore we have the following equalities, which will be used frequently below:
\beq\label{eta}
\eta_{\alpha,\beta}=-\eta_{\beta,\alpha}=\eta_{-\beta,-\alpha}=\eta_{-\alpha,\alpha+\beta},
\eeq
for $\alpha,\beta\in\Phi^+$ such that $\alpha+\beta\in\Phi^+$. Here the last equality follows from the invariance of the bilinear form.

\subsection{Quasi-split symmetric pairs}
Let $\tau$ be a nontrivial involution of the Dynkin diagram of $\g$, i.e. $c_{ij}=c_{\tau i,\tau j}$ such that $\tau^2=\mathrm{id}$ and $\tau\ne\id$. Then $\g$ is of type $\mathsf{A}_n$, $\mathsf{D}_n$, or $\mathsf{E}_6$. The quasi-split Satake diagrams $(\I,\tau)$ with $\tau \neq \id$ can be found in Table \ref{tab:Satakediag}, where the index denotes the number of nodes in a given diagram. Except where a split case (i.e. $\tau=\mathrm{id}$) is explicitly mentioned, we assume
throughout that $\tau\ne\id$. In every statement involving type
$\mathsf A_1$, we take $\tau=\id$.

\begin{table}[h]  
\begin{center}
\centering
\setlength{\unitlength}{0.6mm}			\begin{picture}(70,35)(0,5)
    \put(-70,20){${\rm AIII}_{2n-1} \,(r\gge 1)$}
    \put(-70,-15){${\rm AIII}_{2n} \,(r\gge 1)$}
    \put(-70,-50){${\rm DI}_{r} \,(r\gge 4)$}
    \put(-70,-85){${\rm EII}_{6} $}
   
				\put(0,10){$\circ$}
				\put(0,30){$\circ$}
				\put(50,10){$\circ$}
				\put(50,30){$\circ$}
				\put(72,10){$\circ$}
				\put(72,30){$\circ$}
				\put(92,20){$\circ$}

                \put(3,11.5){\line(1,0){16}}
				\put(3,31.5){\line(1,0){16}}
				\put(23,10){$\cdots$}
				\put(23,30){$\cdots$}
				\put(33.5,11.5){\line(1,0){16}}
				\put(33.5,31.5){\line(1,0){16}}
				\put(53,11.5){\line(1,0){18.5}}
				\put(53,31.5){\line(1,0){18.5}}
				
				\put(75,12){\line(2,1){17}}
				\put(75,31){\line(2,-1){17}}

				\color{red}
                \put(-7,20){$\tau$}
				\qbezier(0,13.5)(-4,21.5)(0,29.5)
				\put(-0.25,14){\vector(1,-2){0.5}}
				\put(-0.25,29){\vector(1,2){0.5}}
				
				\qbezier(50,13.5)(46,21.5)(50,29.5)
				\put(49.75,14){\vector(1,-2){0.5}}
				\put(49.75,29){\vector(1,2){0.5}}

				\qbezier(72,13.5)(68,21.5)(72,29.5)
				\put(71.75,14){\vector(1,-2){0.5}}
				\put(71.75,29){\vector(1,2){0.5}}
			\end{picture}
            \\
\setlength{\unitlength}{0.6mm}			\begin{picture}(70,35)(0,5)

				\put(0,10){$\circ$}
				\put(0,30){$\circ$}
				\put(50,10){$\circ$}
				\put(50,30){$\circ$}
				\put(72,10){$\circ$}
				\put(72,30){$\circ$}
				
				\put(3,11.5){\line(1,0){16}}
				\put(3,31.5){\line(1,0){16}}
				\put(23,10){$\cdots$}
				\put(23,30){$\cdots$}
				\put(33.5,11.5){\line(1,0){16}}
				\put(33.5,31.5){\line(1,0){16}}
				\put(53,11.5){\line(1,0){18.5}}
				\put(53,31.5){\line(1,0){18.5}}
				
				\put(73.5,13.6){\line(0,1){16}}
				
				\color{red}
                \put(-7,20){$\tau$}
				\qbezier(0,13.5)(-4,21.5)(0,29.5)
				\put(-0.25,14){\vector(1,-2){0.5}}
				\put(-0.25,29){\vector(1,2){0.5}}
				
				\qbezier(50,13.5)(46,21.5)(50,29.5)
				\put(49.75,14){\vector(1,-2){0.5}}
				\put(49.75,29){\vector(1,2){0.5}}

				\qbezier(76,13.5)(80,21.5)(76,29.5)
				\put(75.85,14){\vector(-1,-2){0.5}}
				\put(75.85,29){\vector(-1,2){0.5}}
			\end{picture}
\\
\setlength{\unitlength}{0.6mm}	\begin{picture}(40,35)(20,-15)
				\put(0,-1){$\circ$}
				\put(3,0){\line(1,0){16.5}}
				\put(20,-1){$\circ$}
				\put(64,-1){$\circ$}
				\put(84,-10){$\circ$}
				\put(84,9.5){$\circ$}

				\put(38,0){\line(-1,0){15.5}}
				\put(64,0){\line(-1,0){15}}
				
				\put(40,-1){$\cdots$}
				
				\put(83.5,9.5){\line(-2,-1){16.5}}
				\put(83.5,-8.5){\line(-2,1){16.5}}
				
				\put(12,-20.5){\begin{picture}(100,40)	\color{red}
                \put(79,20){$\tau$}
						\qbezier(75,13.5)(79,21.5)(75,29.5)
						\put(75.25,14){\vector(-1,-2){0.5}}
						\put(75.25,29){\vector(-1,2){0.5}}
				\end{picture}}
			\end{picture}
        \\
\setlength{\unitlength}{0.6mm}		\begin{picture}(70,35)(20,7)
                \put(10,35){\rotatebox[origin=c]{180}{\begin{picture}(100,10)
							
							\put(10,10){$\circ$}
							
							\put(32,10){$\circ$}
							
							\put(10,30){$\circ$}
							
							\put(32,30){$\circ$}

							\put(31.5,11){\line(-1,0){19}}
							\put(31.5,31){\line(-1,0){19}}
							
							\put(52,22){\line(-2,1){17.5}}
							\put(52,20){\line(-2,-1){17.5}}
							
							\put(54.7,21.2){\line(1,0){19}}

							\put(52,20){$\circ$}
							
							\put(74,20){$\circ$}
					\end{picture}}

					\put(-37,-32.5){\begin{picture}(100,40)	\color{red}
							\qbezier(5,13.5)(9,21.5)(5,29.5)
							\put(5.25,14){\vector(-1,-2){0.5}}
							\put(5.25,29){\vector(-1,2){0.5}}
					\end{picture}}
					\put(-15,-32.5){\begin{picture}(100,40)	\color{red}
                    \put(9,20){$\tau$}
							\qbezier(5,13.5)(9,21.5)(5,29.5)
							\put(5.25,14){\vector(-1,-2){0.5}}
							\put(5.25,29){\vector(-1,2){0.5}}
					\end{picture}}
				}
				
			\end{picture}
\end{center}
\vspace{0.5cm}
\caption{Quasi-split Satake diagrams with $\tau \neq {\rm Id}$}
\label{tab:Satakediag}
\end{table}

Let $\I_1$ be a set of representatives for $\tau$-orbits in $\I$ of length 2 and define $\I_{-1}=\tau \I_1$. Denote by $\I_0$ the set of fixed points of $\tau$ in $\I$, i.e., $\I_0 =\{i\in \I \mid \tau i=i\}$. Then $\I =\I_1 \sqcup \I_0 \sqcup \I_{-1}$. It is also convenient to set $\I_{\gge 0}=\I_1 \sqcup \I_0$ and choose $\I_1$ so that the full Dynkin subdiagram induced by $\I_1$ is connected.

By abuse of notation, we denote the involutions on $\Phi$ and $\g$ induced by $\tau$ again by $\tau$. We have the standard bilinear form $(\cdot,\cdot)$ defined on the root lattice. Then $(\tau\alpha,\tau \beta) = (\alpha, \beta)$ for $\alpha, \beta\in \Phi$.

\begin{lem}\label{lem:fixed}
Suppose $\g$ is not of type $\mathsf A_{2n}$. Then for any $\alpha\in \Phi$ such that $\alpha=\tau \alpha$ we have $\tau|_{\g_\alpha}=\mathrm{id}$.
\end{lem}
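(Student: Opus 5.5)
We first pin down what $\tau$ on $\g$ means: it is the diagram automorphism fixing the pinning, $\tau(x_i^\pm)=x_{\tau i}^\pm$ and $\tau(\xi_i)=\xi_{\tau i}$. Since every root space is one-dimensional and $\tau(\g_\alpha)=\g_{\tau\alpha}$, for $\tau\alpha=\alpha$ we get $\tau|_{\g_\alpha}=\epsilon_\alpha\,\id$ with $\epsilon_\alpha=\pm1$, because $\tau^2=\id$. Applying the Chevalley involution, or using $(\tau x,\tau y)=(x,y)$ together with the nondegenerate pairing $\g_\alpha\times\g_{-\alpha}\to\bC$, gives $\epsilon_{-\alpha}=\epsilon_\alpha$. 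So it suffices to treat $\alpha\in\Phi^+$ with $\tau\alpha=\alpha$ and prove $\epsilon_\alpha=1$.

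We argue by induction on the height of $\alpha$. If $\alpha=\alpha_i$ is simple, then $\tau i=i$ and $\tau x_i^+=x_i^+$. If $\alpha$ is not simple, we want to write $x_\alpha$ as a bracket of root vectors whose $\tau$-behaviour is known. There are two cases.

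\emph{Case 1.} There is a simple root $\alpha_i$ with $\tau i=i$ and $\alpha-\alpha_i\in\Phi^+$. Then $\beta=\alpha-\alpha_i$ is $\tau$-fixed, and $x_\alpha$ is a nonzero multiple of $[x_i^+,x_\beta^+]$. By induction $\tau$ fixes this bracket.

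\emph{Case 2.} Otherwise, pick $j$ with $\alpha-\alpha_j\in\Phi^+$, so $j\ne\tau j$. Since $\tau\alpha=\alpha$, also $\alpha-\alpha_{\tau j}\in\Phi^+$. In simply-laced type, $(\alpha,\alpha_j)=(\alpha,\alpha_{\tau j})=1$ because $\alpha\neq\alpha_j$ and $\alpha-\alpha_j$ is a root. Hence $(\alpha-\alpha_j,\alpha_{\tau j})=1-c_{j,\tau j}$.
\begin{itemize}
\item Outside type $\mathsf A_{2n}$, $c_{j,\tau j}=0$: orbit nodes are never adjacent in $\mathsf A_{2n-1}$, $\mathsf D$ or $\mathsf E_6$. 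So $(\alpha-\alpha_j,\alpha_{\tau j})=1$ and $\gamma:=\alpha-\alpha_j-\alpha_{\tau j}$ is a root. It is positive, or zero only when $\alpha=\alpha_j+\alpha_{\tau j}$, which is not a root since $c_{j,\tau j}=0$. Moreover $\gamma$ is $\tau$-fixed.
\item Then $x_\alpha$ is a nonzero multiple of $[x_j^+,[x_{\tau j}^+,x_\gamma^+]]$.
\item Applying $\tau$ gives $[x_{\tau j}^+,[x_j^+,x_\gamma^+]]$, using the induction hypothesis on $\gamma$.
\item By Jacobi, this differs from the original by $[[x_{\tau j}^+,x_j^+],x_\gamma^+]$, which vanishes because $\alpha_j+\alpha_{\tau j}\notin\Phi$. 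So $\tau$ fixes $x_\alpha$.
\end{itemize}

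The main obstacle is Case 2: finding the fixed root $\gamma$ and checking that the commutator $[x_{\tau j}^+,x_j^+]$ is zero. This is exactly where $c_{j,\tau j}=0$ is used. In $\mathsf A_{2n}$ the central pair has $c_{n,n+1}=-1$, and there $\tau$ acts by $-1$ on $x_{\alpha_n+\alpha_{n+1}}$, which is why that type is excluded. In writing the proof I would double-check that the nonzero constant in Case 2 is preserved under the swap. This holds because the Jacobi identity gives equality of the two nested brackets outright, so no structure-constant bookkeeping is needed.
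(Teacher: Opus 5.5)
Your proof is correct, and it takes a genuinely different route from the paper's. The paper does not argue root by root. It uses a dimension count,
$\dim\g^\tau=\dim\h^\tau+\tfrac12(|\Phi|-|\mathcal F|)+p$,
where $\mathcal F$ is the set of $\tau$-fixed roots and $p$ is the number of those on which $\tau$ acts by $+1$. It then substitutes the known fixed-point subalgebras of the foldings $\mathsf A_{2n-1}\to\mathsf C_n$, $\mathsf D_r\to\mathsf B_{r-1}$, $\mathsf E_6\to\mathsf F_4$, together with the counts of fixed roots, and reads off $p=|\mathcal F|$.

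\begin{itemize}
\item \emph{Paper's route.} It is very short once the type of $\g^\tau$ is granted, but that classification data is imported from outside. As a by-product it gives $p=0$ in type $\mathsf A_{2n}$, i.e.\ $\tau=-\mathrm{id}$ on \emph{every} fixed root space there. This matches Example~\ref{eg:A2n}.
\item \emph{Your route.} The height induction with the Jacobi identity is self-contained. It uses only simply-lacedness and the fact that the two nodes of a nontrivial $\tau$-orbit are nonadjacent. It therefore applies uniformly to any such diagram involution without consulting tables. It also shows exactly why $\mathsf A_{2n}$ is exceptional: $\tau[x_n^+,x_{n+1}^+]=-[x_n^+,x_{n+1}^+]$.
\end{itemize}

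By itself, your argument does not produce the complete sign pattern in type $\mathsf A_{2n}$. The lemma does not require it.
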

\begin{proof}
Let
\[
\mathfrak g=\mathfrak h\oplus \bigoplus_{\alpha\in\Phi}\mathfrak g_\alpha
\]
be the root space decomposition. Since $\tau$ is a diagram automorphism, it
preserves $\Phi^+$. For any root $\alpha$ fixed by $\tau$, the root space
$\mathfrak g_\alpha$ is one-dimensional and $\tau$-stable. Hence $\tau|_{\mathfrak g_\alpha}=\varepsilon_\alpha \operatorname{id}$ for some $\varepsilon_\alpha\in\{\pm 1\}$. 

Let $\mathcal F=\{\alpha\in\Phi:\tau\alpha=\alpha\}$ and let $p=\#\{\alpha\in \mathcal F:\tau|_{\mathfrak g_\alpha}=\operatorname{id}\}$.
The fixed Lie algebra $\mathfrak g^\tau$ receives contributions from three
sources: $\mathfrak h^\tau$, the two-element $\tau$-orbits in $\Phi$, and the $\tau$-fixed root spaces
on which $\tau$ acts by $1$. Therefore $\dim \mathfrak g^\tau
=
\dim \mathfrak h^\tau
+
\tfrac{1}{2}(|\Phi|-|\mathcal F|)
+
p$.
Indeed, each two-element orbit $\{\alpha,\tau\alpha\}$ contributes one
dimension to $\mathfrak g^\tau$, while a fixed root space contributes one
dimension if and only if $\tau$ acts on it by $1$.

The relevant foldings are
\[
\mathsf A_{2n-1}\mapsto \mathsf  C_n,\qquad
\mathsf D_r\mapsto \mathsf B_{r-1},\qquad
\mathsf E_6\mapsto \mathsf F_4.
\]
The corresponding counts are
\[
\begin{array}{|c|c|c|c|c|c|c|}
\hline
\mathfrak g
&
\mathfrak g^\tau
&
\dim \mathfrak g^\tau
&
\dim\mathfrak h^\tau
&
|\Phi|
&
|\mathcal F|
&
p
\\ \hline
\mathsf A_{2n-1}
&
\mathsf C_n
&
n(2n+1)
&
n
&
2n(2n-1)
&
2n
&
2n
\\[1mm]
\mathsf A_{2n}
&
\mathsf B_n
&
n(2n+1)
&
n
&
2n(2n+1)
&
2n
&
0
\\[1mm]
\mathsf D_r
&
\mathsf B_{r-1}
&
(r-1)(2r-1)
&
r-1
&
2r(r-1)
&
2(r-1)(r-2)
&
2(r-1)(r-2)
\\[1mm]
\mathsf E_6
&
\mathsf F_4
&
52
&
4
&
72
&
24
&
24
\\ \hline
\end{array}
\]
Since $|\mathcal F|=p$ for all cases excluding type $\mathsf A_{2n}$, the statement follows.
\end{proof}

By Lemma \ref{lem:fixed}, if $\g$ is not of type $\mathsf A_{2n}$, then we can further choose the root vectors $x_\alpha^\pm$ such that $\tau(x_\alpha^\pm)=x_{\tau \alpha}^\pm$ for all $\alpha\in \Phi^+$. Then we have
\beq\label{eta-tau}
\eta_{\tau\alpha,\tau\beta}=\eta_{\alpha,\beta}
\eeq
for $\alpha,\beta\in\Phi^+$ such that $\alpha+\beta\in\Phi^+$.

When $\g=\gl_N$ or $\g=\fksl_N$, it is convenient to set $i'=N+1-i$ for $1\lle i\lle N$.

\begin{eg}\label{eg:A2n}
For type $\mathsf A$, set $\g=\gl_N$ with the standard generators $\mathbf e_{ij}$ and consider the Dynkin diagram involution 
\[
\tau:\gl_N\to \gl_N,\qquad \mathbf e_{ij}\mapsto (-1)^{i-j+1}\mathbf e_{j'i'}.
\]
Then $\mathbf e_{ij}\in \g_\alpha$ with $\tau\alpha=\alpha$ for $\alpha\in\Phi$ if and only if $i\ne j=i'$. Then
\[
\tau(\mathbf e_{ii'})=(-1)^{i'-i+1}\mathbf e_{ii'}=(-1)^N\mathbf e_{ii'}.
\]
Thus, for any $\alpha\in\Phi$ such that $\tau \alpha=\alpha$, we have
\[
\tau|_{\g_\alpha} =
\begin{cases}
   \mathrm{id}, &\text{ if }N=2n,\\
   - \mathrm{id}, &\text{ if }N=2n+1.  
\end{cases}
\]
\end{eg}

Let $\omega$ be the composition of $\tau$ with the Chevalley involution of $\g$:
\beq\label{omega}
\omega:\g\to\g,\qquad \xi_i\to -\xi_{\tau i},\qquad x_i^\pm \to -x_{\tau i}^\mp,\qquad (i\in \I).
\eeq
Denote by $\mathfrak k:=\g^\omega$ the $\omega$-fixed point Lie subalgebra of $\g$ and by $\mathfrak p$ the eigenspace of $\omega$ corresponding to the eigenvalue $-1$. Then $(\g,\mathfrak k)$ is a symmetric pair of \textit{quasi-split type}. 

Note that 
\be
[\mathfrak k,\mathfrak k]\subset \mathfrak k,\qquad [\mathfrak k,\mathfrak p]\subset \mathfrak p,\qquad [\mathfrak p,\mathfrak p]\subset \mathfrak k.
\ee
We can always pick $x_\alpha^\pm$ so that $\omega(x_\alpha^\pm)=-x_{\tau \alpha}^\mp$ for $\alpha\in\Phi^+$. For instance, in the case $\g=\mathfrak{sl}_{N}$ where $N=2n+1$, we fix a specific choice of root vectors for $\alpha=\ve_i-\ve_j$ with $1\lle i<j\lle N$:
\beq\label{eq:rt-A2n0}
x_\alpha^+=(\sqrt{-1})^{j-i-1}\mathbf e_{ij},\qquad x_\alpha^-=(\sqrt{-1})^{i-j+1}\mathbf e_{ji}.
\eeq
cf. Example \ref{eg:A2n}.

A basis for $\mathfrak k$ is given by $x_{\alpha}^+-x_{\tau \alpha}^-$ and $\xi_i-\xi_{\tau i}$ for $\alpha\in\Phi^+$ and  $i\in \I_1$ whereas a basis for $\mathfrak p$ is given by $x_{\alpha}^++x_{\tau \alpha}^-$, $\xi_i$, and $\xi_j+\xi_{\tau j}$ for $\alpha\in\Phi^+$, $i\in \I_0$, and $j\in \I_1$. Set
\be
b_{\alpha}:=x_{\alpha}^+-x_{\tau\alpha}^-,\qquad  y_{\alpha}:=x_{\alpha}^++x_{\tau\alpha}^-,\qquad (\alpha\in\Phi^+).
\ee

Denote by $C_{\mathfrak k}$, $C_{\mathfrak p}$, and $C_{\g}$ the Casimir elements of $\mathfrak k$, $\mathfrak p$, and $\g$, respectively. We have
\be
C_{\mathfrak k}=-\tfrac12\sum_{\alpha\in\Phi^+}(x_{\alpha}^+-x_{\tau \alpha}^-)(x_{\tau\alpha}^+ - x_{\alpha}^-)+ \tfrac12\sum_{i\in \I_1}(\xi_i-\xi_{\tau i})(\varpi_i^\vee-\varpi_{\tau i}^\vee).
\ee
Denote by $\Omega_{\mathfrak k}$, $\Omega_{\mathfrak p}$, and $\Omega_{\g}$ the two-tensor Casimir elements of $\mathfrak k$, $\mathfrak p$, and $\g$, respectively. We have
\be
\Omega_{\g}= \Omega_{\mathfrak k}+\Omega_{\mathfrak p}, \qquad [x\otimes 1+1\otimes x,\Omega_\g]=0,
\ee
for $x\in\g$.

The following standard lemma will be used below; see, for example, \cite[Lemma~2.1]{Lu26min}.
\begin{lem}\label{lem:omega}
For $x\in\mathfrak p$, we have
\[
[x\otimes 1,\Omega_{\mathfrak p}]+[1\otimes x,\Omega_{\mathfrak k}]=[x\otimes 1,\Omega_{\mathfrak k}]+[1\otimes x,\Omega_{\mathfrak p}]=0.
\]
\end{lem}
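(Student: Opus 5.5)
The plan is to prove the lemma directly from the $\g$-invariance of $\Omega_\g$ together with the $\mathbb Z/2$-grading $\g=\mathfrak k\oplus\mathfrak p$. Fix $x\in\mathfrak p$ and write
\[
\Omega_{\mathfrak k}=\sum_a k_a\otimes k^a,\qquad \Omega_{\mathfrak p}=\sum_b p_b\otimes p^b,
\]
where $\{k_a\},\{k^a\}$ are dual bases of $\mathfrak k$ and $\{p_b\},\{p^b\}$ are dual bases of $\mathfrak p$. This is legitimate because $\mathfrak k$ and $\mathfrak p$ are orthogonal for the invariant form ($\omega$ preserves $(\cdot,\cdot)$, so the $\pm1$ eigenspaces are orthogonal), and the form is nondegenerate on each of them.

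Next we record the graded components. Since $[\mathfrak p,\mathfrak k]\subset\mathfrak p$ and $[\mathfrak p,\mathfrak p]\subset\mathfrak k$:
\begin{itemize}
\item $[x\otimes1,\Omega_{\mathfrak k}]$ and $[1\otimes x,\Omega_{\mathfrak p}]$ lie in $\mathfrak p\otimes\mathfrak k$;
\item $[x\otimes1,\Omega_{\mathfrak p}]$ and $[1\otimes x,\Omega_{\mathfrak k}]$ lie in $\mathfrak k\otimes\mathfrak p$.
\end{itemize}
Expanding the invariance identity $[x\otimes1+1\otimes x,\Omega_{\mathfrak k}+\Omega_{\mathfrak p}]=0$ gives a sum of these four terms equal to zero. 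Because $\g\otimes\g=\bigoplus_{\epsilon,\epsilon'}\g_\epsilon\otimes\g_{\epsilon'}$ is a direct sum, the $\mathfrak p\otimes\mathfrak k$ and $\mathfrak k\otimes\mathfrak p$ components must vanish separately. This yields
\[
[x\otimes1,\Omega_{\mathfrak k}]+[1\otimes x,\Omega_{\mathfrak p}]=0,\qquad [x\otimes1,\Omega_{\mathfrak p}]+[1\otimes x,\Omega_{\mathfrak k}]=0,
\]
which are exactly the two claimed identities.

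The only point needing care is that the commutators here are taken in $\mathrm U(\g)^{\otimes2}$. However, $[x\otimes1,y\otimes z]=[x,y]\otimes z$, so every term already lies in $\g\otimes\g\subset\mathrm U(\g)^{\otimes 2}$, where the grading decomposition applies. The orthogonality of $\mathfrak k$ and $\mathfrak p$, which underlies $\Omega_\g=\Omega_{\mathfrak k}+\Omega_{\mathfrak p}$, follows from $(\omega x,\omega y)=(x,y)$. I do not expect any real obstacle; the argument is a short component-separation.
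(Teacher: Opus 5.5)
Your proof is correct. Separating the $\mathfrak p\otimes\mathfrak k$ and $\mathfrak k\otimes\mathfrak p$ components of $[x\otimes 1+1\otimes x,\Omega_{\mathfrak k}+\Omega_{\mathfrak p}]=0$ gives exactly the two identities, and the orthogonality of $\mathfrak k$ and $\mathfrak p$ under the $\omega$-invariant form justifies the splitting $\Omega_\g=\Omega_{\mathfrak k}+\Omega_{\mathfrak p}$. The paper gives no proof of its own and only cites \cite[Lemma~2.1]{Lu26min}; your component-separation argument is the standard proof of that lemma.
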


\subsection{Yangians}\label{sec:Yang}
We recall the definitions of Yangians, following \cite{GNW18}. The Yangian $\Y$ associated with an arbitrary finite-dimensional simple Lie algebra $\g$ was first introduced by Drinfeld in the $J$ presentation.
\begin{dfn}[\cite{Dri85}]
\label{def:Y-J}
The Yangian $\Y:=\Y(\g)$ is the unital associative algebra generated by the elements $x$ and $J(x)$ for $x\in\g$, subject to the following defining relations:
\begin{equation*}
\begin{split}
&xy-yx = [x,y] \text{ for all } x,y\in\g,  \text{ $J$ is linear in }  x\in\g, \quad
   J([x,y]) = [x, J(y)],
\\
  &\big[J(x), J([y,z])\big] + \big[J(z), J([x,y])\big] + \big[J(y), J([z,x])\big]\\
  &\hskip3.5cm = \sum_{a,b,c\in \Lambda} \big([x,\xi_a],\big[[y,\xi_b],[z,\xi_c]\big]\big) \big\{ \xi_a,\xi_b,\xi_c \big\},
\\
  &\big[[J(x), J(y)], [z, J(w)]\big] + \big[[J(z), J(w)], [x, J(y)]\big]\\
 &=  \sum_{a,b,c\in \Lambda} \Big(\big([x,\xi_a],\big[[y,\xi_b],[[z,w],\xi_c]\big]\big) \\&
 \hskip3.5cm + \big([z,\xi_a],\big[[w,\xi_b],[[x,y],\xi_c]\big]\big) \Big)\big\{ \xi_a,\xi_b,J(\xi_c) \big\},
\end{split}
\end{equation*}
where $\{ \xi_a \}_{a \in \Lambda}$ is an orthonormal basis of $\g$, $\Lambda$ is a fixed indexing set of size $\dim \g$, and $\{ \xi_a,\xi_b,\xi_c \} = \frac{1}{24} \sum_{\pi\in \fkS_3} \xi_{\pi(a)} \xi_{\pi(b)} \xi_{\pi(c)}$, with $\fkS_3$ denoting the group of permutations of $\{ a,b,c\}$.
\end{dfn}

The Yangian $\Y$ is a Hopf algebra with the coproduct determined by
\beq\label{copro-J}
\Delta(x)=x\otimes 1+1\otimes x,\quad \Delta(J(x))=J(x)\otimes 1+1\otimes J(x)+\tfrac12[x\otimes 1,\Omega_\g].
\eeq

Later, Drinfeld introduced a new presentation of $\Y$, now known as the Drinfeld, or new/current, presentation. We recall it below.

Given two elements $x,y$ in an associative algebra $\mathcal A$, set $\{x,y\}:=xy+yx$.

\begin{dfn}[\cite{Dri87}]
\label{def:Yangian}The Yangian $\Y$ is the unital associative algebra with generators $x_{i
  ,r}^\pm,\xi_{i,r}$ for $i\in \I, r\in\bN$ subject to the following
defining relations: 
\begin{align}
  [\xi_{i,r}, \xi_{j,s}]& = 0, \notag\\
  [\xi_{i,0}, x^\pm_{j,s} ]& = \pm (\alpha_i,\alpha_j) x^\pm_{j,s},
  \label{eq:relHX}
\\
  [x^+_{i,r}, x_{j,s}^-]& = \delta_{ij} \xi_{i, r+s}, \label{eq:relXX}
\\
  [\xi_{i, r+1}, x^\pm_{j,s}] - [\xi_{i,r}, x^\pm_{j, s+1}]& =
  \pm\tfrac12(\alpha_i,\alpha_j) \big\{
    \xi_{i,r},x^\pm_{j,s}
  \big\}, \label{eq:relexHX}
\\
  [x^\pm_{i, r+1}, x^\pm_{j, s}] - [x^\pm_{i, r}, x^\pm_{j, s+1}] &= 
  \pm\tfrac12(\alpha_i,\alpha_j) \big\{
    x^\pm_{i,r},x^\pm_{j,s}\big\},
   \label{eq:relexXX}
\\
  \sum_{\sigma\in \fkS_m}
   \Big[x^\pm_{i, r_{\sigma(1)}}, \big[ x^\pm_{i, r_{\sigma(2)}}, \cdots,\,
       &[x^\pm_{i, r_{\sigma(m)}}, x^\pm_{j, s}] \cdots \big]\Big]
   = 0
   \quad \text{if $i\neq j$,}
 \notag
\end{align}
where $m = 1 - c_{ij}$.
\end{dfn}

Set
\beq\label{tlxidef}
\tl \xi_{i,1}:=\xi_{i,1}-\tfrac12\xi_{i,0}^2,
\eeq 
then by \eqref{eq:relHX} and \eqref{eq:relexHX} we have
\beq\label{tlxicom}
\big[\tl \xi_{i,1},x_{j,r}^\pm\big]=\pm(\alpha_i,\alpha_j)x_{j,r+1}^\pm.
\eeq 

The universal enveloping algebra $\mathrm{U}(\g)$ is identified with a subalgebra of $\Y$ via the map $\xi_i\mapsto \xi_{i,0}$, $x_i^{\pm}\mapsto x_{i,0}^\pm$ for $i\in\I$. 

The isomorphism between the \(J\)-presentation in Definition \ref{def:Y-J} and the Drinfeld presentation in Definition \ref{def:Yangian} is given by (see \cite[Theorem~1]{Dri87} and \cite[Theorem~2.6]{GRW19})
\begin{equation}\label{eq:J}
  \begin{split}
  x^\pm_i & \mapsto x^\pm_{i,0}, \hskip1.7cm \xi_i \mapsto \xi_{i,0},
\\
  J(\xi_i) & \mapsto \xi_{i,1} + v_i, \hskip0.9cm
  v_i := \tfrac14 \sum_{\alpha\in\Phi^+}
  (\alpha,\alpha_i) \{x^+_\alpha, x^-_\alpha\}
  - \tfrac12 \xi_i^2,
\\
  J(x^\pm_i) &\mapsto x^\pm_{i, 1} + w^\pm_i, \hskip0.6cm
  w^\pm_i := \pm \tfrac14
  \sum_{\alpha\in\Phi^+} \left\{[x_i^\pm,x^\pm_\alpha],x^\mp_\alpha\right\} 
  - \tfrac14\{x^\pm_i,\xi_i\}.
  \end{split}
\end{equation}
In the Drinfeld presentation, the coproduct defined in \eqref{copro-J} satisfies
\begin{align*}
&\Delta(\xi_i)=\xi_i\otimes 1+1\otimes \xi_i,\qquad ~~~~~\Delta(x_i^\pm)=x_i^\pm\otimes 1+1\otimes x_i^\pm,\\
&\Delta(\xi_{i,1})=\xi_{i,1}\otimes 1+1\otimes \xi_{i,1}+\xi_{i}\otimes \xi_{i}-\sum_{\alpha\in\Phi^+}(\alpha,\alpha_i)x_{\alpha}^-\otimes x_\alpha^+,\\
&\Delta(x_{i,1}^+)=x_{i,1}^+\otimes 1+1\otimes x_{i,1}^++\xi_{i}\otimes x_{i}^+-\sum_{\alpha\in\Phi^+}x_{\alpha}^-\otimes [x_{i}^+,x_\alpha^+],\\
&\Delta(x_{i,1}^-)=x_{i,1}^-\otimes 1+1\otimes x_{i,1}^-+x_{i}^-\otimes \xi_{i}+\sum_{\alpha\in\Phi^+}[x_{i}^-,x_\alpha^-]\otimes x_\alpha^+.
\end{align*}

The coproduct for Drinfeld generators of higher degrees has the following estimates. 

Let $\Y^{\gge 0}$ (resp. $\Y^{\lle 0}$) be the Borel subalgebra of $\Y$ generated by the elements $\xi_{i,r}$, $x_{i,r}^+$ (resp $x_{i,r}^-$) for $i\in\I$ and $r\in\bN$. Let $Q$ be the root lattice. Set
\[
Q_+:=\Big\{\sum_{i\in\I}k_i\alpha_i~\Big|~k_i\in\bN,\sum_{i\in \I}k_i>0\Big\},\qquad Q_-:=-Q_+.
\]
It is well known that $\Y$ is $Q$-graded by setting
\[
\mathrm{deg}\,\xi_{i,r}=0,\qquad \mathrm{deg}\,x_{i,r}^\pm=\pm\alpha_i. 
\]
For $\alpha\in Q$ and $\mathsf S\subset Q$, denote
\[
\Y_{\alpha} :=\{w\in \Y\mid \deg w=\alpha\},\qquad \Y_{\mathsf S} :=\mathrm{span}\{w\in \Y\mid \deg w\in \mathsf S\}.
\]
In particular, $\Y_{Q_+}$ is the subalgebra of $\Y$ spanned by homogeneous elements of degree in $Q_+$. Similarly, one can define these notations for the subalgebras $\Y^{\gge 0}$ and $\Y^{\lle 0}$.

Set
\be
\xi_i(u)=1+\sum_{r\gge 0}\xi_{i,r}u^{-r-1},\qquad  x_i^\pm(u)=\sum_{r\gge 0}x_{i,r}^\pm u^{-r-1}.
\ee

The following well-known statement can be found in \cite[Proposition 2.9]{GW23} and \cite[Lemma 2.5]{HZ24}. It improves \cite[Lemma 1]{Kn95} and \cite[Proposition 2.8]{CP91}.
\begin{lem}\label{lem:copro}
We have
\begin{align*}
&\Delta(\xi_i(u))\equiv \xi_i(u)\otimes \xi_i(u) &\pmod{\Y^{\lle 0}_{Q_-}\otimes \Y^{\gge0}_{Q_+}[\![u^{-1}]\!]},\\
&\Delta(x_i^+(u)) \equiv x_i^+(u)\otimes 1+\xi_i(u)\otimes x_i^+(u) &\pmod{\Y^{\lle 0}_{Q_-}\otimes \Y^{\gge0}_{\alpha_i+Q_+}[\![u^{-1}]\!]},\\
&\Delta(x_i^-(u)) \equiv x_i^-(u)\otimes \xi_i(u)+1\otimes x_i^-(u) &\pmod{\Y^{\lle 0}_{-\alpha_i+Q_-}\otimes \Y^{\gge0}_{Q_+}[\![u^{-1}]\!]}.
\end{align*}
\end{lem}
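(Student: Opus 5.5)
We prove the three congruences together, by induction on the degree $r$ of the current coefficients. The base cases $r=0,1$ are read off from the explicit formulas for $\Delta(\xi_i)$, $\Delta(x_i^\pm)$, $\Delta(\xi_{i,1})$ and $\Delta(x_{i,1}^\pm)$ recalled above. For example, in $\Delta(x_{i,1}^+)$ the term $\xi_i\otimes x_i^+$ is the degree-one coefficient of $\xi_i(u)\otimes x_i^+(u)$. Each term $x_\alpha^-\otimes[x_i^+,x_\alpha^+]$ lies in $\Y^{\lle0}_{Q_-}\otimes\Y^{\gge0}_{\alpha_i+Q_+}$, since its right factor has weight $\alpha+\alpha_i$. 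The other base cases are handled the same way.

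For the inductive step we use \eqref{tlxicom}. Applying it with $j=i$ gives $x_{i,r+1}^\pm=\pm\frac12[\tl\xi_{i,1},x_{i,r}^\pm]$. We then expand $\Delta(x^\pm_{i,r+1})=\pm\frac12[\Delta(\tl\xi_{i,1}),\Delta(x^\pm_{i,r})]$. From the base case,
\[
\Delta(\tl\xi_{i,1})=\tl\xi_{i,1}\otimes1+1\otimes\tl\xi_{i,1}-\sum_{\alpha\in\Phi^+}(\alpha,\alpha_i)x_\alpha^-\otimes x_\alpha^+ ,
\]
since the $\xi_i\otimes\xi_i$ terms cancel against $\frac12\Delta(\xi_i)^2$. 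The key observation is that the subspaces
\[
\mathcal K_\beta:=\Y^{\lle0}_{Q_-}\otimes\Y^{\gge0}_{\beta+Q_+}
\]
are stable under the adjoint action of $\tl\xi_{i,1}\otimes1$, $1\otimes\tl\xi_{i,1}$ and of the correction term. This holds because the Borel subalgebras are closed under $\ad\tl\xi_{i,1}$ (by \eqref{tlxicom}), and multiplication by $x_\alpha^-\otimes x_\alpha^+$ only lowers the left weight and raises the right weight.

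It then remains to compute the commutator with the leading term. For $x^+$ we need $[\Delta(\tl\xi_{i,1}),x^+_{i,r}\otimes1+\xi_i(u)\otimes x^+_{i,r}]$ coefficientwise. The diagonal parts reproduce the leading terms at degree $r+1$, using that $\xi$'s commute with $\tl\xi_{i,1}$. The term $x_\alpha^-\otimes x_\alpha^+$ commuted with $x_{i,r}^+\otimes1$ yields $[x^-_\alpha,x^+_{i,r}]\otimes x^+_\alpha$. This lies in $\Y_{Q_-}\otimes\Y_{Q_+}$ except when $\alpha=\alpha_i$, where the left factor is a Cartan element $-\xi_{i,r}$ and the term reads $-\xi_{i,r}\otimes x_i^+$. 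These $\alpha=\alpha_i$ contributions are exactly what is needed to build the term $\xi_i(u)\otimes x_i^+(u)$ at the next degree, combined with the contributions from $1\otimes\tl\xi_{i,1}$. I expect this bookkeeping to be the main obstacle. The cleanest route is to work with generating series, which is the approach of Gautam--Wendlandt and Knight. Alternatively, one can restrict to the leading terms modulo the ideal and compare with the known formula in the rank-one subalgebra $\Y(\fksl_2)$ attached to $i$, where the statement is classical (see \cite{CP91} and the $\fksl_2$ computation). For the terms with $\alpha\neq\alpha_i$ we also need to ensure that the left factor lies in $\Y^{\lle0}$; this requires the PBW/triangular decomposition $\Y=\Y^{\lle0}\Y^{\gge0}$ to reorder products.

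Finally, the Cartan current follows from $\xi_{i,r}=[x^+_{i,r},x^-_{i,0}]$ and the two established congruences. The cross terms, such as $[x^+_i(u)\otimes1,1\otimes x^-_i]$, vanish. The leading product gives $\xi_i(u)\otimes\xi_i(u)$ because $\xi_i(u)x_{i,0}^-$ and $x_{i,0}^-\xi_i(u)$ combine appropriately. Error terms stay in $\Y^{\lle0}_{Q_-}\otimes\Y^{\gge0}_{Q_+}$ after using the triangular decomposition once more. The $x^-$ statement is obtained by the symmetric argument, or by applying the Chevalley anti-automorphism composed with the flip $P$, which interchanges $\Delta$ with $\Delta^{\mathrm{op}}$ up to the appropriate identification.
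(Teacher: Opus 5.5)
The paper gives no proof of this lemma; it is quoted from \cite[Proposition~2.9]{GW23} and \cite[Lemma~2.5]{HZ24}. Your induction for $x_i^\pm(u)$ is correct, and the bookkeeping you expect to be the main obstacle is immediate. $\ad\Delta(\tl\xi_{i,1})$ preserves your $\mathcal K_{\alpha_i}$. Its diagonal part $\tfrac12\ad(\tl\xi_{i,1}\otimes1+1\otimes\tl\xi_{i,1})$ raises the $x^+$-index of every leading summand by one. The only other leading contribution is $\tfrac12\cdot(-2)[x_i^-,x_{i,r}^+]\otimes x_i^+=\xi_{i,r}\otimes x_{i,0}^+$, which is exactly the summand of $\xi_i(u)\otimes x_i^+(u)$ still missing in degree $r+1$.

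For $\alpha\ne\alpha_i$ no triangular decomposition is needed. $\ad x_\alpha^-$ is a noncommutative polynomial in the $\ad x_{j,0}^-$, and $[x_{j,0}^-,x_{i,r}^+]=-\delta_{ij}\xi_{i,r}$. Hence $[x_\alpha^-,x_{i,r}^+]\in\Y^{\lle0}$, and it vanishes unless $\alpha-\alpha_i\in Q_+$. The suggested reduction to $\Y(\fksl_2)$ is not directly available, since $\Delta$ does not preserve the rank-one subalgebra, but you do not need it. Your symmetry for $x_i^-(u)$ also works: $\sigma\colon x_{i,r}^\pm\mapsto x_{i,r}^\mp$, $\xi_{i,r}\mapsto\xi_{i,r}$ is an anti-automorphism of $\Y$ with $P(\sigma\otimes\sigma)\Delta=\Delta\sigma$, as one checks on $\xi_{i,1}$ and $x_{i,1}^\pm$.

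The gap is in the Cartan step. Write $\Delta(x_{i,r}^+)=L_r+E_r$ with $E_r\in\Y^{\lle0}_{Q_-}\otimes\Y^{\gge0}_{\alpha_i+Q_+}$. In $[E_r,\Delta(x_{i,0}^-)]$, the piece $[E_r,1\otimes x_{i,0}^-]$ has second tensor factors $[b,x_{i,0}^-]$ with $b\in\Y^{\gge0}_{\alpha_i+Q_+}$. These need not lie in $\Y^{\gge0}$, because $\ad x_{i,0}^-$ does not preserve $\Y^{\gge0}$. For instance, $[\xi_{i,1}(x_i^+)^2,x_i^-]$ contains $[\xi_{i,1},x_i^-](x_i^+)^2$, whose expansion contains $-2x_{i,1}^-(x_i^+)^2$. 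So by the triangular decomposition this commutator is not in $\Y^{\gge0}$. Reordering cannot remove such terms, and since you only know $E_r$ modulo $\mathcal K_{\alpha_i}$, you cannot certify that they cancel.

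What your argument actually proves is $\Delta(\xi_i(u))-\xi_i(u)\otimes\xi_i(u)\in\Y^{\lle0}_{Q_-}\otimes\Y_{Q_+}[\![u^{-1}]\!]$. This weaker form is in fact all the paper uses, in \S\ref{sssec:copro-h}, but it is not the statement.

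The missing idea is to prove the mirror bound as well and intersect. There are two ways to get it:
\begin{enumerate}
\item Compute $\Delta(\xi_{i,r})=[\Delta(x_{i,0}^+),\Delta(x_{i,r}^-)]$ using the $x^-$ congruence. This yields the same leading term with error in $\Y_{Q_-}\otimes\Y^{\gge0}_{Q_+}$.
\item Apply $P(\sigma\otimes\sigma)$. It fixes $\Delta(\xi_i(u))$ and $\xi_i(u)\otimes\xi_i(u)$, and carries $\Y^{\lle0}_{Q_-}\otimes\Y_{Q_+}$ onto $\Y_{Q_-}\otimes\Y^{\gge0}_{Q_+}$.
\end{enumerate}
Then conclude coefficientwise from the linear-algebra identity $(U\otimes X)\cap(V\otimes W)=U\otimes W$ for subspaces $U\subset V$, $W\subset X$. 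Apply it with $U=\Y^{\lle0}_{Q_-}$, $V=\Y_{Q_-}$, $W=\Y^{\gge0}_{Q_+}$, $X=\Y_{Q_+}$.
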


Define a filtration on $\Y$ by setting $\mathrm{deg}\,\xi_{i,r}=\mathrm{deg}\,x_{i,r}^\pm=r$, and denote by $\gr\,\Y$ the associated graded algebra of $\Y$. Let $\bar\xi_{i,r},\bar x_{i,r}^\pm$ be the images of $ \xi_{i,r}, x_{i,r}^\pm$ in the $r$-th component of $\mathrm{gr}\,\Y$, respectively. Then there is a well-known graded Hopf algebra isomorphism
\beq\label{ass}
\begin{split}
\rho:~&\mathrm{U}(\g[z])\stackrel{\cong\,}{\longrightarrow} \mathrm{gr}\,\Y, \\
&\xi_iz^r\mapsto \bar\xi_{i,r},\qquad x_i^{\pm }z^r \mapsto \bar x_{i,r}^\pm,\qquad  (i\in \I,r\in\bN).
\end{split}
\eeq

\subsection{Twisted Yangians in $J$ presentation}\label{sec:ty}
We now recall twisted Yangians in the $J$ presentation from \cite{Mac02,BR17}. 

Twisted Yangians for arbitrary symmetric pairs were defined as coideal subalgebras of $\Y$ via the $J$ presentation in \cite{Mac02}. They were further studied in \cite{BR17} via homogeneous quantization, where an explicit set of generators with defining relations similar to those in Definition \ref{def:Y-J} was obtained.
\begin{dfn}[\cite{Mac02,BR17}]\label{def:tY-J}
The twisted Yangian $\YiJ$ in the $J$ presentation is the subalgebra of $\Y$ generated by the following elements:
\[
x,\quad B(y):=J(y)-\tfrac14[y,C_{\mathfrak k}],
\]
where $x\in \mathfrak k$ and $y\in\mathfrak p$. 
\end{dfn}
Note that $\YiJ$ is a \textit{right} coideal subalgebra of $\Y$, i.e. $\Delta(\YiJ)\subset \YiJ\otimes \Y$. Indeed, Lemma \ref{lem:omega} and \eqref{copro-J} imply that
\begin{align*}
&\Delta(x)=x\otimes 1+1\otimes x,\\
&\Delta(B(y))=B(y)\otimes 1+1\otimes B(y)-[1\otimes y,\Omega_{\mathfrak k}].
\end{align*}
Moreover, this definition works for an arbitrary involution $\omega$ and hence can be used to define twisted Yangians associated to arbitrary symmetric pairs.

In \cite[Theorem~5.5]{BR17}, the definition of the twisted Yangian in the $J$ presentation is slightly different. The relation between the two definitions of twisted Yangians in the $J$ presentation is described in \cite[\S2.3]{Lu26min}.

One can define a family of twisted Yangians in the $J$ presentation as follows. 

Let $c$ be an arbitrary complex number and $\mathscr K$ an element in $\g$ such that $\mathscr K\in \mathfrak k\cap Z(\mathfrak k)$, where 
$$Z(\mathfrak k):=\{x\in \mathfrak k\,|\,[x,\mathfrak k]=0\}.$$

\begin{dfn}\label{def:tY-J2}
The twisted Yangian $\YiJ(\mathscr K,c)$ in the $J$ presentation is the subalgebra of $\Y$ generated by the following elements:
\[
x,\quad B_{\mathscr K,c}(y):=B(y)-c[y,\mathscr K],
\]
where $x\in \mathfrak k$ and $y\in\mathfrak p$. 
\end{dfn}

Since $\mathscr K$ is a primitive element, one shows that $\YiJ(\mathscr K,c)$ is a coideal subalgebra of $\Y$. The condition $\mathscr K\in \mathfrak k$ implies that $[y,\mathscr K]\in \mathfrak p$ for $y\in \mathfrak p$. Hence in general $\YiJ(\mathscr K,c)$ is different from $\YiJ$. The condition $\mathscr K\in Z(\mathfrak k)$ implies that 
$$
[x,B_{\mathscr K,c}(y)]=B_{\mathscr K,c}([x,y]),\qquad \text{for $x\in \mathfrak k$ and $y\in \mathfrak p$.}
$$
If we set $c=0$, then we recover the twisted Yangians in Definition \ref{def:tY-J}.

We shall use the following specific $\mathscr K$ for the case of type $\mathsf A_{2n}$,
\beq\label{eq:K-def}
\mathscr K:=\sum_{i=1}^n \frac{i}{2n+1}(\xi_i-\xi_{\tau i})-\sqrt{-1}\sum_{i=1}^n b_{\ve_i-\ve_{i'}}.
\eeq
By convention, we define $\mathscr K=0$ for other types.

\begin{lem}\label{lem:K-central}
If $\g=\mathfrak{sl}_{2n+1}$, then the element $\mathscr K$ in \eqref{eq:K-def} belongs to $\mathfrak k\cap Z(\mathfrak k)$.
\end{lem}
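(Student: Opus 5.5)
The plan is to work entirely in the matrix realization $\g=\fksl_{2n+1}\subset\gl_{2n+1}$ with $N=2n+1$, $i'=N+1-i$. There the involution $\omega$ is $X\mapsto -\tau(X)$ with $\tau(\mathbf e_{ij})=(-1)^{i-j+1}\mathbf e_{j'i'}$. Equivalently, $\omega(X)=-J X^{t}J^{-1}$ for a suitable antidiagonal matrix $J$, so $\mathfrak k$ is the orthogonal Lie algebra $\{X: X^tJ'+J'X=0\}$ for a symmetric antidiagonal form $J'$.

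\textbf{Step 1: $\mathscr K\in\mathfrak k$.} The element $\xi_i-\xi_{\tau i}$ lies in $\mathfrak k$ by the basis description of $\mathfrak k$ given earlier. Likewise $b_{\ve_i-\ve_{i'}}=x^+_\alpha-x^-_{\tau\alpha}$ with $\alpha=\ve_i-\ve_{i'}$. This $\alpha$ is $\tau$-fixed, and $b_\alpha\in\mathfrak k$ by the same description. No computation is needed here.

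\textbf{Step 2: $\mathscr K$ is central in $\mathfrak k$.} I will first write $\mathscr K$ explicitly as a matrix.
\begin{itemize}
\item The Cartan part: $\xi_i=\mathbf e_{ii}-\mathbf e_{i+1,i+1}$ and $\tau i=2n+1-i$. For $1\le k\le n$, the coefficient of $\mathbf e_{kk}$ in $\sum_{i=1}^n \frac{i}{2n+1}(\xi_i-\xi_{\tau i})$ telescopes to a simple expression, and the diagonal entries are antisymmetric under $k\mapsto k'$.
\item The root part: by \eqref{eq:rt-A2n0} with $j=i'$ and $j-i=N+1-2i$, we have $x^+_\alpha=(\sqrt{-1})^{N-2i}\mathbf e_{ii'}$ and $x^-_\alpha=(\sqrt{-1})^{2i-N}\mathbf e_{i'i}$. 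Since $\tau\alpha=\alpha$, $b_\alpha=x^+_\alpha-x^-_\alpha$, and $-\sqrt{-1}\,b_\alpha$ is a combination of $\mathbf e_{ii'}$ and $\mathbf e_{i'i}$ with coefficients $\pm1$.
\item So $\mathscr K=\sum_{i\le n} M_i$ is block diagonal: each $M_i$ is a $2\times2$ matrix supported on the indices $\{i,i'\}$, and the middle index $n+1$ gives a zero block.
\end{itemize}

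The key claim is that $\mathscr K$ equals, up to a scalar and the identity, an element of the form $P-\tfrac{2n}{2n+1}I$ or $\lambda(2P-I)$. Here $P$ is a rank-$2n$ projection, or better, $\mathscr K=c(\omega_0 +\tfrac{1}{2n+1}I)$ with $\omega_0$ acting on a $\mathfrak k$-stable decomposition $\bC^{N}=W\oplus L$. The decomposition should be $W=\bigoplus_{i\le n}\bC\text{-span}$ of eigenvectors and $L=\bC v_{n+1}$ in the middle.

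Centrality in $\mathfrak k\cong\mathfrak{so}_{2n+1}$ can then be checked directly. I will show that $[\mathscr K, b_\beta]=0$ and $[\mathscr K,\xi_i-\xi_{\tau i}]=0$ for all positive roots $\beta$ and all $i\le n$. By weight considerations it suffices to check the generators $b_{\alpha_i}$, $i\in\I$, together with the Cartan elements, since $\mathfrak k$ is generated by $b_{\alpha_i}$ and $\xi_i-\xi_{\tau i}$. Each check is a computation of $[\mathscr K,\mathbf e_{i,i+1}]$ and $[\mathscr K,\mathbf e_{(i+1)',i'}]$: the diagonal part contributes multiples of the differences of diagonal entries, and these must cancel against contributions of the $2\times2$ off-diagonal blocks.

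The main obstacle is the bookkeeping of the signs $(\sqrt{-1})^{\cdot}$ and the diagonal coefficients $\frac{i}{2n+1}$. I expect the diagonal entries to come out as $\frac{i}{2n+1}-\frac{i-1}{2n+1}$-type telescoping sums, giving constant diagonal values, possibly $\tfrac{1}{2n+1}$-shifted. In that case the coefficients of the diagonal and off-diagonal parts would make $\mathscr K+\mathrm{const}\cdot I$ a scalar multiple of a matrix $S$ commuting with $\mathfrak k$. That is, $S$ would be a $\mathfrak k$-intertwiner of the natural representation, which is irreducible for $\mathfrak{so}_{2n+1}$.

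Wait: irreducibility would force $S$ to be scalar, and $\mathscr K$ is traceless, so $\mathscr K=0$ if it were in $\fksl_N$ and commuted with $\mathfrak k$ acting irreducibly. So if that check fails I will instead suspect that $\mathfrak k$ here is not literally $\mathfrak{so}_{2n+1}$ in this basis. In that case I will compute the commutators with the generators $b_{\alpha_i}$ directly, $i=1,\dots,2n$, and handle the central pair $i=n,n+1$ separately, since there $c_{n,n+1}=-1$. This direct generator check is the plan I will actually execute.
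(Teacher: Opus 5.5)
Your Step 1 is fine: $\mathscr K$ is by definition a linear combination of the basis vectors $\xi_i-\xi_{\tau i}$ and $b_\alpha$ of $\mathfrak k$, so $\mathscr K\in\mathfrak k\subset\g$ with no trace computation needed.

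\textbf{The gap is in Step 2, and it comes from misidentifying $\omega$.} You have conflated $\omega$ with $\tau$. The involution $\omega$ is $\tau$ composed with the Chevalley involution, and with the root vectors \eqref{eq:rt-A2n0} one gets $\omega(\mathbf e_{ab})=(-1)^{a-b}\mathbf e_{a'b'}$. This automorphism is \emph{inner}: $\omega=\operatorname{Ad}(\mathscr S)$, where $\mathscr S=\sum_{k=1}^{N}(-1)^{n-k}\mathbf e_{k',k}$ and $\mathscr S^2=\mathbf 1_N$. Hence $\mathfrak k=Z_{\g}(\mathscr S)\cong\mathfrak s(\gl_n\oplus\gl_{n+1})$. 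The $\pm1$-eigenspaces of $\mathscr S$ have dimensions $n$ and $n+1$, and $v_{n+1}$ lies in the $(-1)$-eigenspace together with $n$ other vectors.

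The algebra $\mathfrak{so}_{2n+1}$ is $\g^{\tau}$ (cf. the table in Lemma \ref{lem:fixed}), not $\g^{\omega}$. In fact the basis you quote in Step 1 already gives $\dim\mathfrak k=n(2n+1)+n\neq\dim\mathfrak{so}_{2n+1}$. So the contradiction you reached via irreducibility is not a basis artifact. It shows that $\mathfrak k$ acts reducibly on $\bC^N$ and has a one-dimensional center. For the same reason, your decomposition $W\oplus L$ with $L=\bC v_{n+1}$ and your ``rank-$2n$ projection'' are not the right objects.

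\textbf{Your fallback is only a promise.} The check $[\mathscr K,b_{\alpha_i}]=0=[\mathscr K,\xi_i-\xi_{\tau i}]$ on generators would succeed. The generation of $\mathfrak k$ by these elements is also true, e.g. from Lemma \ref{lem:gener} restricted to $z$-degree zero. But none of the check is carried out, and the setup is already slightly off: $b_{\alpha_i}=\mathbf e_{i,i+1}-\mathbf e_{i',(i+1)'}$, which does not involve $\mathbf e_{(i+1)',i'}$.

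\textbf{The missing step is short.} Finishing your telescoping computation gives the diagonal part $\frac1N\mathbf 1_N-\mathbf e_{n+1,n+1}$; note the middle entry is $-\frac{2n}{N}$, not $\frac1N$. You also have $-\sqrt{-1}\,b_{\ve_i-\ve_{i'}}=(-1)^{n-i}(\mathbf e_{i,i'}+\mathbf e_{i',i})$. Together these give $\mathscr K=\mathscr S+\frac1N\mathbf 1_N$. Since $\mathfrak k=Z_{\g}(\mathscr S)$, it follows immediately that $\mathscr K$ commutes with $\mathfrak k$. This $\mathscr S$ is exactly the matrix $S$ you predicted would commute with $\mathfrak k$; you discarded it only because of the wrong description of $\mathfrak k$. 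This is the paper's argument.
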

\begin{proof}
Put $N=2n+1$ and set
\[
 \mathscr S=\sum_{i=1}^{N}(-1)^{n-i}\mathbf e_{i',i}.
\]
Then $\mathscr S^2=\mathbf 1_N$ and, for the matrix realization fixed in
\eqref{eq:rt-A2n0},
\[
 \omega(\mathbf e_{ab})=(-1)^{a-b}\mathbf e_{a',b'}
 =\mathscr S\mathbf e_{ab}\mathscr S^{-1}.
\]
Consequently, $\mathfrak k=Z_{\mathfrak g}(\mathscr S)$.  Our choice of root
vectors gives
\[
 -\sqrt{-1}\,b_{\ve_i-\ve_{i'}}
 =(-1)^{n-i}(\mathbf e_{i,i'}+\mathbf e_{i',i}),
 \qquad 1\lle i\lle n.
\]
We also have
\[
 \sum_{i=1}^n\frac{i}{N}(\xi_i-\xi_{\tau i})
 =\frac1N\mathbf 1_N-\mathbf e_{n+1,n+1}.
\]
It follows that $\mathscr K=\mathscr S+\frac1N\mathbf 1_N$. Since $\operatorname{tr}(\mathscr S)=-1$, the matrix $\mathscr K$ is traceless and
hence belongs to $\mathfrak g=\mathfrak{sl}_N$.  It is clearly fixed by $\omega$, so
$\mathscr K\in\mathfrak k$; moreover it commutes with
$Z_{\mathfrak g}(\mathscr S)=\mathfrak k$.  Thus
$\mathscr K\in\mathfrak k\cap Z(\mathfrak k)$.
\end{proof}

\subsection{iYangians}\label{sec:iy}
Recently, a Drinfeld-type presentation for twisted Yangians of quasi-split type was given in \cite{LZ24}; see also \cite{LWW25SiY} for shifted generalizations. We shall call these twisted Yangians the \emph{quasi-split iYangians}.

\begin{dfn}[{\cite[Theorem~3.9]{LZ24}}]\label{def:YN}
The \emph{quasi-split iYangian} associated to $(\I,\tau)$, denoted simply by $\Yi$, is the $\mathbb{C}$-algebra generated by $h_{i,r},b_{i,r},i\in \mathbb{I},r\in \bN$, subject to
\begin{align}\label{qs1}
& [h_{i,r},h_{j,s}]=0,\qquad\qquad  h_{\tau i,0}=-h_{i,0},
\\\label{qs2}
& [h_{i,0},b_{j,r}]=  (c_{ij}-c_{\tau i,j}) b_{j,r},
\\\label{qs3}
& [h_{i,1},b_{j,r}]=  (c_{ij}+c_{\tau i,j}) b_{j,r+1}+\frac{(c_{ij}-c_{\tau i,j})}{2}\{h_{i,0},b_{j,r}\},
\\\notag
& [h_{i,r+2},b_{j,s}] - [h_{i,r},b_{j,s+2}]=\frac{c_{ij}-c_{\tau i,j}}{2}\{h_{i,r+1},b_{j,s}\}
\\\label{qs4}
&\hskip2cm +\frac{c_{ij}+c_{\tau i,j}}{2}\{h_{i,r},b_{j,s+1}\}+\frac{c_{ij}c_{\tau i,j}}{4}[h_{i,r}, b_{j,s}],
\\\label{qs5}
& [b_{i,r+1 },b_{j,s}]  - [b_{i,r },b_{j,s+1 }] 
 =\frac{c_{ij}}{2}\{b_{i,r },b_{j,s }\}-2 \delta_{\tau i,j}(-1)^{r}h_{j,r+s+1 },
\end{align}
and the finite type Serre relations \eqref{finserre1-ty}--\eqref{finserre4-ty}: 
\begin{align}
&[b_{i,0},b_{j,0}]=\delta_{\tau i, j}h_{j,0}, &c_{ij}=0,\label{finserre1-ty}
\\
&\big[b_{i,0},[b_{i,0},b_{\tau i,0}]\big]  
= 4b_{i,0}, &c_{i,\tau i}=-1,\label{finserre2-ty}\\
&\big[b_{i,0},[b_{i,0},b_{j,0}]\big] =0, & c_{ij}=-1 \text{ and }i\ne \tau i\ne j, \label{finserre3-ty}\\
&\big[b_{i,0},[b_{i,0},b_{j,0}]\big] =-b_{j,0}, & c_{ij}=-1 \text{ and }i=\tau i.\label{finserre4-ty}
\end{align}
\end{dfn}

One of our main results, Theorem~\ref{thm:embed}, shows that the
iYangian $\Yi$ is isomorphic to a twisted Yangian in the
$J$ presentation and hence is a right coideal subalgebra of $\Y$.
This identifies the Drinfeld-current and $J$ realizations of the
same quasi-split twisted Yangian.

\medskip

We record some properties of iYangians that will be used later. Set
\be
\tl h_{i,1}:=h_{i,1}-\hf h_{i,0}^2.
\ee
By \eqref{qs2}--\eqref{qs3}, we have
\beq\label{eq:hi1bjr-new}
[\tl h_{i,1},b_{j,r}]=(c_{ij}+c_{\tau i,j})b_{j,r+1}.
\eeq
\begin{lem}\label{lem:generate}
The iYangian $\Yi$ is generated by $h_{i,0},h_{i,1},b_{i,0}$ for $i\in \I$.
\end{lem}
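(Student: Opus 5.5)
Let $\mathcal A\subset\Yi$ be the subalgebra generated by $h_{i,0},h_{i,1},b_{i,0}$ for $i\in\I$. We show by induction on $r$ that $b_{j,r}\in\mathcal A$ and $h_{j,r}\in\mathcal A$ for all $j\in\I$. The engine is \eqref{eq:hi1bjr-new}: $[\tl h_{i,1},b_{j,r}]=(c_{ij}+c_{\tau i,j})b_{j,r+1}$. Since $\tl h_{i,1}\in\mathcal A$, we have $b_{j,r+1}\in\mathcal A$ whenever $b_{j,r}\in\mathcal A$, provided some $i$ has $c_{ij}+c_{\tau i,j}\neq0$.

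\emph{Choice of $i$ for each $j$.}
\begin{itemize}
\item If $\tau j=j$, take $i=j$. Then $c_{jj}+c_{\tau j,j}=4\neq0$.
\item If $\tau j\ne j$ and $c_{j,\tau j}=0$ (all cases other than the central orbit of $\mathsf A_{2n}$), take $i=j$. Then $c_{jj}+c_{\tau j,j}=2$.
\item If $c_{j,\tau j}=-1$ (the central nodes $n,n+1$ of $\mathsf A_{2n}$), take $i=j$. Then $c_{jj}+c_{\tau j,j}=2-1=1\neq0$.
\end{itemize}
So in every case $i=j$ works, and induction on $r$ gives $b_{j,r}\in\mathcal A$ for all $r\in\bN$. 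This relies only on \eqref{qs2}--\eqref{qs3}, via \eqref{eq:hi1bjr-new}.

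\emph{Cartan modes.} Use \eqref{qs5} with $j=\tau i$:
\[
[b_{i,r+1},b_{\tau i,s}]-[b_{i,r},b_{\tau i,s+1}]=\tfrac{c_{i,\tau i}}{2}\{b_{i,r},b_{\tau i,s}\}-2(-1)^r h_{\tau i,r+s+1}.
\]
Take $r=0$ and $s=m-1$. Every other term lies in $\mathcal A$ by the previous step, so $h_{\tau i,m}\in\mathcal A$ for all $m\gge1$. As $i$ ranges over $\I$, so does $\tau i$. This also holds when $\tau i=i$, since \eqref{qs5} is then applied with $j=i$ and $\delta_{\tau i,j}=1$. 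Finally $h_{j,0}\in\mathcal A$ by definition, so $\mathcal A$ contains all generators and $\mathcal A=\Yi$.

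The only point needing care is checking that the coefficient $c_{ij}+c_{\tau i,j}$ never vanishes for the choice $i=j$, in particular at the central orbit of type $\mathsf A_{2n}$, where it equals $1$ rather than $2$. Everything else is direct substitution into \eqref{qs5}.
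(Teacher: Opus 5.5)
Your proof is correct and follows the paper's argument: use \eqref{eq:hi1bjr-new} with $i=j$ to produce all $b_{j,r}$ (the coefficient $c_{jj}+c_{\tau j,j}\in\{1,2,4\}$ never vanishes), then use \eqref{qs5} with $j=\tau i$ to recover every $h_{\tau i,m}$. Your explicit check of that coefficient, including the value $1$ on the central orbit of type $\mathsf A_{2n}$, fills in a detail the paper leaves implicit.
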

\begin{proof}
It follows from \eqref{eq:hi1bjr-new} that all $b_{j,r}$ for $j\in \I$ and $r\in\bN$ are generated by $h_{i,0},h_{i,1},b_{i,0}$ for $i\in \I$. Then \eqref{qs5} shows that all $h_{i,r}$ are also generated by $h_{i,0},h_{i,1},b_{i,0}$ for $i\in \I$.  
\end{proof}

\begin{lem}[{\cite[Proposition~3.12]{LZ24}}]
Suppose the relations \eqref{qs1}--\eqref{qs5} hold. Assuming further that $c_{i,\tau i}=-1$ and the relation \eqref{finserre2-ty} holds, then we have
\beq\label{todo-}
\mathrm{Sym}_{k_1,k_2}\big[b_{i,k_1},[b_{i,k_2},b_{\tau i,r}] \big] =\tfrac{4}{3}\,\mathrm{Sym}_{k_1,k_2}(-1)^{k_1}\sum_{p=0}^{k_1+r}3^{-p}[b_{i,k_2+p},h_{\tau i,k_1+r-p}],
\eeq
where $\mathrm{Sym}_{k_1,k_2}$ is the symmetrization map over $k_1,k_2$.
\end{lem}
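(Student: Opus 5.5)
Write $S(k_1,k_2,r)$ for the left-hand side of \eqref{todo-} and $T(k_1,k_2,r)$ for the right-hand side. The plan is to argue by induction on $k_1+k_2+r$, using only \eqref{qs1}--\eqref{qs5} and \eqref{finserre2-ty}, with $i$ and $j=\tau i$ fixed and $c_{i,\tau i}=-1$ (so $c_{ii}=2$ and $c_{\tau i,\tau i}=2$). The driving tool is the degree-raising operator $\ad\tl h_{k,1}$ from \eqref{eq:hi1bjr-new}, which acts on $b_{j,r}$ as multiplication by $c_{kj}+c_{\tau k,j}$ times a shift $r\mapsto r+1$. For $k=i$ the coefficients on $b_i$ and on $b_{\tau i}$ are both $c_{ii}+c_{\tau i,i}=2-1=1$, so $\ad\tl h_{i,1}$ is a derivation raising every index by one.

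The base case $k_1=k_2=r=0$ is \eqref{finserre2-ty}. Indeed, \eqref{qs5} with $r=s=0$, $j=\tau i$ gives $[b_{i,0},b_{\tau i,0}]$ modulo lower terms. Comparing this with the $p$-sum on the right, which reduces to $[b_{i,0},h_{\tau i,0}]$, and using \eqref{qs2} with $[h_{\tau i,0},b_{i,0}]=(c_{\tau i,i}-c_{ii})b_{i,0}=-3b_{i,0}$, I expect to recover $4b_{i,0}$ exactly. This also fixes the constant $4/3$.

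For the inductive step I apply $\ad\tl h_{i,1}$ to $S(k_1,k_2,r)$. Since it is a derivation shifting each of the three indices by one, this gives $S(k_1+1,k_2,r)+S(k_1,k_2+1,r)+S(k_1,k_2,r+1)$, with the symmetrization respected. Applying it to the right-hand side uses \eqref{eq:hi1bjr-new} on $b_{i,k_2+p}$ and kills $h$ via \eqref{qs1}. So the sum of the three shifted quantities equals a shift of $T$.

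To separate the three shifts I need two further independent relations:
\begin{itemize}
\item \eqref{qs5} applied inside the inner bracket, to trade $[b_{i,k_2+1},b_{\tau i,r}]$ against $[b_{i,k_2},b_{\tau i,r+1}]$ plus $\tfrac{c_{i\tau i}}2\{\cdot,\cdot\}$ plus $-2(-1)^{k_2}h_{\tau i,k_2+r+1}$;
\item \eqref{qs5} with $j=i$ for the outer pair, relating $[b_{i,k_1+1},X]$-type shifts, i.e.\ the standard Levendorskii trick of moving degree between the two $b_i$'s.
\end{itemize}
The resulting anticommutator terms are then rewritten, and the outer bracket with $h_{\tau i,\ast}$ is simplified using \eqref{qs4} (with $c_{\tau i,i}-c_{ii}=-3$ and $c_{\tau i,i}+c_{ii}=1$), which produces the geometric factors $3^{-p}$. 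An alternative, possibly cleaner, route is generating functions: encode \eqref{qs5} as a functional equation for $[b_i(u),b_{\tau i}(v)]$ involving $h_{\tau i}(-u)$ (the sign from $(-1)^r$), and \eqref{qs4} as the rational factor $\frac{u-v-3/2}{u-v+3/2}$-type conjugation. Then the sum $\sum_p 3^{-p}$ arises from expanding $1/(1-\tfrac13 x)$, and I would derive the symmetric cubic identity by a reduction to the base case using that $\ad\tl h_{i,1}$ and the shift relations span all directions.

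I expect the main obstacle to be the bookkeeping of the anticommutator (quadratic) terms produced by \eqref{qs5}. These must cancel after symmetrization in $k_1,k_2$ and leave only the linear $[b,h]$ terms, and verifying that the coefficients assemble into exactly $\tfrac43(-1)^{k_1}3^{-p}$ is where errors are likely. I would first check the cases $(1,0,0)$ and $(0,0,1)$ by hand to pin down the pattern before setting up the general induction.
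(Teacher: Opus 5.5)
The paper gives no proof of this lemma; it quotes \cite[Proposition~3.12]{LZ24}, so your plan has to stand on its own. Its frame is sound.

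The base case is exactly \eqref{finserre2-ty}, because $[b_{i,0},h_{\tau i,0}]=3b_{i,0}$ by \eqref{qs2}. Relation \eqref{qs5} plays no role there: at $r=s=0$ it relates $[b_{i,1},b_{\tau i,0}]$ and $[b_{i,0},b_{\tau i,1}]$, not $[b_{i,0},b_{\tau i,0}]$.

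The derivation $\ad\tilde h_{i,1}$ is compatible with the right-hand side. Write $U(a,b,c)=(-1)^a\sum_{p=0}^{a+c}3^{-p}[b_{i,b+p},h_{\tau i,a+c-p}]$. Then $U(a+1,b,c)+U(a,b,c+1)=0$, so $\ad\tilde h_{i,1}$ sends $T(a,b,c)$ to $T(a+1,b,c)+T(a,b+1,c)+T(a,b,c+1)$.

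Your two trades, combined via the Jacobi identity, give the exact identity
\[
S(a+1,b,c)-S(a,b,c+1)=C(a,b,c)-4(-1)^a[b_{i,b},h_{\tau i,a+c+1}],
\]
where $C(a,b,c)=\big[[b_{i,a+1},b_{i,b}],b_{\tau i,c}\big]-\big[[b_{i,a},b_{i,b}],b_{\tau i,c+1}\big]-\big[b_{i,b},\{b_{i,a},b_{\tau i,c}\}\big]$ has lower filtration degree. Iterating this, with the symmetry in the first two slots, expresses every $S$ of total degree $\ell+1$ through $S(0,0,\ell+1)$ modulo explicit lower terms. Then $\ad\tilde h_{i,1}$ applied at $(0,0,\ell)$ determines $3S(0,0,\ell+1)$, so the counting closes.

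What is missing is the only nontrivial step: proving $C(a,b,c)=T(a+1,b,c)-T(a,b,c+1)+4(-1)^a[b_{i,b},h_{\tau i,a+c+1}]$. Your expectation that the remainders produced by \eqref{qs5} cancel after symmetrization is false. They do cancel for $(a,b,c)=(0,0,0)$. Already for $(1,0,0)$, using $[b_{i,2},b_{i,0}]=\{b_{i,1},b_{i,0}\}$, $[b_{i,1},b_{i,0}]=b_{i,0}^2$ and \eqref{qs5} for the pair $(i,\tau i)$, one gets
\[
C(1,0,0)=\tfrac12\big[b_{i,0},[b_{i,0},b_{\tau i,0}]\big]-2\{b_{i,0},h_{\tau i,1}\}=2b_{i,0}-2\{b_{i,0},h_{\tau i,1}\}.
\]
So the cubic terms collapse to a lower-degree instance of the identity itself. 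An anticommutator also survives, and it matches the right side only after rewriting $[b_{i,0},h_{\tau i,2}]$ by \eqref{qs4} and $[b_{i,1},h_{\tau i,1}]$ by \eqref{qs3}.

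In general you must do two things. First, show that $C(a,b,c)$ can be reorganized into symmetrized double commutators of lower total degree, plus $[b,h]$ and $\{b,h\}$ terms; the induction hypothesis controls nothing else. Second, compare the result with $T$ using a closed formula for $[b_{i,n},h_{\tau i,m}]$ derived from \eqref{qs2}--\eqref{qs4}. The proposal neither carries this out nor explains why the reorganization is always possible, so as it stands the induction does not close.

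For the generating-function variant, note that the $h_{\tau i}$--$b_i$ exchange relation coming from \eqref{qs4} has factors in both $u-v$ and $u+v$, not a single factor like $\frac{u-v-3/2}{u-v+3/2}$.
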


\begin{lem}\label{lem:hbhb}
We have the following relations in $\Yi$.
\begin{enumerate}
    \item If $i\in \I_0$, then  $\big[h_{i,1},\big[b_{i,1},[h_{i,1},b_{i,1}]\big]\big]=4\big[b_{i,1}^2,h_{i,1}\big]$ .
\item If $i\in \I_1$ and $c_{i,\tau i}=-1$, then 
\begin{align}
&\big[b_{i,1},[b_{i,1},b_{\tau i,0}]\big]
+
\big[b_{\tau i,1},[b_{\tau i,1},b_{i,0}]\big]-2\{h_{i,0},b_{i,1}\}-2\{h_{\tau i,0},b_{\tau i,1}\}
=0,\label{eq:A2-different1}\\
&\big[[\tl h_{i,1},b_{i,1}],b_{\tau i,1}\big]-\big[b_{i,0},\big[\tl h_{i,1},[\tl h_{i,1},b_{\tau i,1}]\big]\big]+\hf\big[\tl h_{i,1},\{b_{i,0},b_{\tau i,1}\}\big]=0.\label{eq:A2-different2}
\end{align}
\end{enumerate}
\end{lem}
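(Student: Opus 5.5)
Both parts are identities among elements of $\Yi$ itself, so I will derive them directly from the defining relations \eqref{qs1}--\eqref{qs5} and the finite Serre relations. Throughout I rewrite everything in terms of $\tl h_{i,1}$, because \eqref{eq:hi1bjr-new} makes $\ad\tl h_{i,1}$ act on the $b$-currents as a pure degree-shift operator.

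\textbf{Part (1).} Let $i\in\I_0$, so $c_{ij}+c_{\tau i,j}=4$ for $j=i$ and $h_{i,0}=0$ by \eqref{qs1}. Then $h_{i,1}=\tl h_{i,1}$ and $[h_{i,1},b_{i,r}]=4b_{i,r+1}$, so the left-hand side becomes
\[
\big[h_{i,1},\big[b_{i,1},4b_{i,2}\big]\big]=16[b_{i,2},b_{i,2}]+16[b_{i,1},b_{i,3}]=16[b_{i,1},b_{i,3}].
\]
The right-hand side is $-4(b_{i,1}[h_{i,1},b_{i,1}]+[h_{i,1},b_{i,1}]b_{i,1})=-16\{b_{i,1},b_{i,2}\}$. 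So I need $[b_{i,1},b_{i,3}]=-\{b_{i,1},b_{i,2}\}$. For this I use \eqref{qs5} with $j=i=\tau i$ and $c_{ii}=2$:
\[
[b_{i,r+1},b_{i,s}]-[b_{i,r},b_{i,s+1}]=\{b_{i,r},b_{i,s}\}-2(-1)^rh_{i,r+s+1}.
\]
Taking $(r,s)=(2,1)$ gives $-[b_{i,1},b_{i,3}]=\{b_{i,2},b_{i,1}\}-2h_{i,4}$, and taking $(r,s)=(1,2)$ gives $[b_{i,2},b_{i,2}]-[b_{i,1},b_{i,3}]=\{b_{i,1},b_{i,2}\}+2h_{i,4}$. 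Adding the two eliminates $h_{i,4}$ and yields exactly the required identity. This part is routine.

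\textbf{Part (2).} Let $i\in\I_1$ with $c_{i,\tau i}=-1$, so we are in the local $\mathsf A_2$ situation with $c_{ii}+c_{\tau i,i}=1$. Here $h_{i,0}\neq0$ and \eqref{qs2} gives $[h_{i,0},b_{i,r}]=3b_{i,r}$ and $[h_{i,0},b_{\tau i,r}]=-3b_{\tau i,r}$.

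For \eqref{eq:A2-different1}, I will start from the identity \eqref{todo-}, which is available by \cite[Proposition~3.12]{LZ24}. I specialize it to $k_1=k_2=1$, $r=0$, once as stated and once with $i$ and $\tau i$ interchanged. This expresses each of $[b_{i,1},[b_{i,1},b_{\tau i,0}]]$ and $[b_{\tau i,1},[b_{\tau i,1},b_{i,0}]]$ as a finite sum of terms $[b_{\bullet,1+p},h_{\bullet,1-p}]$ with $p\in\{0,1\}$. I then add the two expressions. To simplify the sum I use $h_{\tau i,0}=-h_{i,0}$, the commutators coming from \eqref{qs2}--\eqref{qs3}, and the combination of $[h_{\tau i,1},b_{i,1}]$ and $[h_{i,1},b_{\tau i,1}]$ obtained from \eqref{qs3}. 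The expected outcome is that the contributions from $h_{\bullet,1}$ cancel and the $h_{\bullet,0}$-parts combine to $2\{h_{i,0},b_{i,1}\}+2\{h_{\tau i,0},b_{\tau i,1}\}$. The only real work here is the bookkeeping of the signs $(-1)^{k_1}$ and the powers $3^{-p}$.

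For \eqref{eq:A2-different2}, I substitute $[\tl h_{i,1},b_{i,r}]=b_{i,r+1}$ and $[\tl h_{i,1},b_{\tau i,r}]=b_{\tau i,r+1}$. The identity then reduces to
\[
[b_{i,2},b_{\tau i,1}]-[b_{i,0},b_{\tau i,3}]+\tfrac12\big(\{b_{i,1},b_{\tau i,1}\}+\{b_{i,0},b_{\tau i,2}\}\big)=0.
\]
I obtain this by applying \eqref{qs5} twice with $j=\tau i$ and $c_{ij}=-1$, namely with $(r,s)=(1,1)$ and $(r,s)=(0,2)$. The two $h_{\tau i,3}$ terms carry signs $(-1)^1$ and $(-1)^0$, so they cancel when the two relations are added. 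The anticommutator terms $-\tfrac12\{\cdot,\cdot\}$ then match the remaining terms above.

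The main obstacle I anticipate is \eqref{eq:A2-different1}: it genuinely needs the Serre-derived identity \eqref{todo-} rather than the current relations alone, and the cancellation of the $h_{\bullet,1}$-contributions there must be checked carefully.
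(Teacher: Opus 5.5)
Your proposal is correct. For \eqref{eq:A2-different1} you follow the paper's own argument, which is \eqref{todo-} together with \eqref{qs2}--\eqref{qs3}.

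One correction to your forecast of the bookkeeping there. With $k_1=k_2=1$, $r=0$ you get $[b_{i,1},[b_{i,1},b_{\tau i,0}]]=-\tfrac43\big([b_{i,1},h_{\tau i,1}]+\tfrac13[b_{i,2},h_{\tau i,0}]\big)$.
\begin{itemize}
\item The $p=1$ term gives $+b_{i,2}$. This cancels the shift part $-b_{i,2}$ of $[b_{i,1},h_{\tau i,1}]$ coming from \eqref{qs3}.
\item The surviving anticommutator comes from the correction term $\tfrac32\{h_{\tau i,0},b_{i,1}\}$ of $[b_{i,1},h_{\tau i,1}]$. After multiplying by $-\tfrac43$ and using $h_{\tau i,0}=-h_{i,0}$, this is $2\{h_{i,0},b_{i,1}\}$.
\end{itemize}
So each summand holds on its own. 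No cancellation between the $i$-half and the $\tau i$-half is needed, contrary to what you anticipated.

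You differ from the paper in the other two places.
\begin{itemize}
\item \textbf{Part (1).} The paper only cites \cite[Lemma~2.9]{Lu26min}. Your self-contained derivation is correct: add \eqref{qs5} at $(r,s)=(2,1)$ and $(1,2)$, so that $h_{i,4}$ drops out.
\item \textbf{Relation \eqref{eq:A2-different2}.} The paper writes $-2h_{\tau i,2}$ using \eqref{qs5} at $(0,1)$. It then applies $\ad\tl h_{i,1}$ and uses the Cartan commutativity $[\tl h_{i,1},h_{\tau i,2}]=0$ from \eqref{qs1}. You instead add \eqref{qs5} at $(1,1)$ and $(0,2)$, so that $h_{\tau i,3}$ cancels.
\end{itemize}
The two routes are interchangeable. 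Applying $\ad\tl h_{i,1}$ to the $(0,1)$ instance of \eqref{qs5} gives exactly the sum of the $(1,1)$ and $(0,2)$ instances plus $2[\tl h_{i,1},h_{\tau i,2}]$. Your version uses only $b$--$b$ current relations. The paper's version shows that \eqref{eq:A2-different2} is the quantum lift of the commutativity $[\bh_{1,1},\bh_{1,2}]=0$, which it is used to recover in \eqref{eq:A2-h11h12}.
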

\begin{proof}
The first relation is proved in \cite[Lemma~2.9]{Lu26min}. 
The relation \eqref{eq:A2-different1} follows from \eqref{todo-} and \eqref{qs3}. By \eqref{qs5}, we have $-2h_{\tau i,2}=[b_{i,1 },b_{\tau i,1}]  - [b_{i,0 },b_{\tau i,2 }] 
 +\tfrac{1}{2}\{b_{i,0 },b_{\tau i,1 }\}$. The relation \eqref{eq:A2-different2} is a reformulation of $0=-2[\tl h_{i,1},h_{\tau i,2}]$ by using this relation, where we also used $b_{\tau i,2 }=[\tl h_{i,1},b_{\tau i,1}]$ from \eqref{eq:hi1bjr-new}.
\end{proof}

Extend the involution $\omega$ on $\g$ defined in \eqref{omega} to an involution $\check\omega $ of the current algebra $\g[z]$ by
\[
\check\omega: \g[z]\to \g[z],\qquad xz^r\mapsto \omega(x)(-z)^r,
\]
for $x\in \g$ and $r\in\bN$. We call $\g[z]^{\check\omega}$ a \textit{twisted current algebra}. The twisted current algebra $\g[z]^{\check\omega}$ has the following basis:
\[
\xi_iz^{2r+1},\quad (\xi_{j}-(-1)^r \xi_{\tau j})z^r,\quad \big(x_\alpha^+-(-1)^rx_{\tau \alpha}^-\big)z^r,\quad (i\in\I_0,j\in \I_1,\alpha\in\Phi^+,r\in\bN).
\]

Define a filtration on $\Yi$ by setting $\mathrm{deg}\,h_{i,2r+1}=2r+1$ and $\mathrm{deg}\,b_{i,r}=\mathrm{deg}\,b_{j,r}=\mathrm{deg}\,h_{j,r}=r$ for $i\in\I_0$, $j\in\I_1\sqcup\I_{-1}$, and $r\in\bN$. Denote by $\gr\,\Yi$ the associated graded algebra of $\Yi$. Let $\bar h_{i,2r+1},\bar h_{j,r},\bar b_{i,r}$ be the images of $ h_{i,2r+1},h_{j,r}, b_{i,r}$ in the $(2r+1)$-st and $r$-th components of $\mathrm{gr}\,\Yi$, respectively. It is known from \cite[Corollary~3.8]{LZ24} that there exists an algebra isomorphism
\beq\label{assi}
\begin{split}
\varrho:~&\mathrm{U}(\g[z]^{\check\omega})\stackrel{\cong\,}{\longrightarrow} \mathrm{gr}\,\Yi,\\
&(\xi_i-(-1)^r\xi_{\tau i})z^{r}\mapsto \bar h_{i,r},\quad (x_i^+-(-1)^rx_{\tau i}^-)z^r \mapsto \bar b_{i,r},\quad (i\in \I,r\in\bN).
\end{split}
\eeq

\section{Main results} \label{sec:main-results}
\subsection{A minimalistic presentation for $\Yi$}
Our first main result is a minimalistic presentation of the iYangian $\Yi$ in the spirit of \cite{Lev93gen,GNW18,Lu26min} for all quasi-split types.
\begin{thm}\label{thm:min}
Let $\g$ be of type $\mathsf A_{r}$, $\mathsf D_r$, or $\mathsf E_6$. Then the iYangian $\Yi$ is isomorphic to the $\mathbb{C}$-algebra generated by $h_{i,r},b_{i,r}$, for $i\in \mathbb{I},0\lle r\lle 1$, subject to
\begin{align}\label{redqs1}
& [h_{i,r},h_{j,s}]=0,\qquad\qquad  h_{\tau i,0}=-h_{i,0},
\\\label{redqs2}
& [h_{i,0},b_{j,r}]=  (c_{ij}-c_{\tau i,j}) b_{j,r},
\\\label{redqs3}
& [h_{i,1},b_{j,0}]=  (c_{ij}+c_{\tau i,j}) b_{j,1}+\frac{(c_{ij}-c_{\tau i,j})}{2}\{h_{i,0},b_{j,0}\},
\\\label{redqs5}
& [b_{i,1 },b_{j,0}]  - [b_{i,0 },b_{j,1 }] 
 =\frac{c_{ij}}{2}\{b_{i,0 },b_{j,0 }\}-2 \delta_{\tau i,j}h_{j,1 },
\end{align}
the finite type Serre relations \eqref{finserre1-ty}--\eqref{finserre4-ty}, and possibly extra relations as follows.
\begin{enumerate}
\item If $\g$ is of type $\mathsf A_1$, then an additional relation is required:
\begin{align}
&\big[h_{1,1},\big[b_{1,1},[h_{1,1},b_{1,1}]\big]\big]=4[b_{1,1}^2,h_{1,1}].\label{eq:extraA1}
\end{align}
\item If $\g$ is of type $\mathsf A_2$, then two additional relations are required:
\begin{align}
&\big[b_{1,1},[b_{1,1},b_{2,0}]\big]
+
\big[b_{2,1},[b_{2,1},b_{1,0}]\big]-2\{h_{1,0},b_{1,1}\}-2\{h_{2,0},b_{2,1}\}
=0,\label{eq:extraA2-1}\\
&\big[[\tl h_{1,1},b_{1,1}],b_{2,1}\big]-\big[b_{1,0},\big[\tl h_{1,1},[\tl h_{1,1},b_{2,1}]\big]\big]+\hf\big[\tl h_{1,1},\{b_{1,0},b_{2,1}\}\big]=0.\label{eq:extraA2-2}
\end{align}
\item No additional relations are required for other types.
\end{enumerate} 
Here $\tl h_{i,1}:=h_{i,1}-\hf h_{i,0}^2$.
\end{thm}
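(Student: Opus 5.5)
Let $\widehat{\mathscr Y}$ be the algebra defined by the generators and relations in Theorem~\ref{thm:min}. The plan is to show that the natural map $\widehat{\mathscr Y}\to\Yi$ is an isomorphism, by constructing an inverse.

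\emph{Surjectivity.} The relations \eqref{redqs1}--\eqref{redqs5} are special cases of \eqref{qs1}--\eqref{qs5}. The finite Serre relations are imposed in both algebras. The extra relations hold in $\Yi$ by Lemma~\ref{lem:hbhb}. Hence there is a homomorphism $\widehat{\mathscr Y}\to\Yi$, and it is surjective by Lemma~\ref{lem:generate}.

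\emph{Construction of the inverse.} In $\widehat{\mathscr Y}$ I define higher modes recursively. For $j\in\I$, choose $i$ with $c_{ij}+c_{\tau i,j}\neq0$; for instance $i=j$, where $c_{jj}+c_{\tau j,j}\in\{2,4,1\}$. Then set
\[
b_{j,r+1}:=(c_{ij}+c_{\tau i,j})^{-1}[\tl h_{i,1},b_{j,r}],
\]
as in \eqref{eq:hi1bjr-new}. For $h_{j,r}$ I use \eqref{qs5} with $i=\tau j$ and $r=0$, which gives
\[
-2h_{j,s+1}=[b_{\tau j,1},b_{j,s}]-[b_{\tau j,0},b_{j,s+1}]-\tfrac{c_{\tau j,j}}2\{b_{\tau j,0},b_{j,s}\}.
\]
The first thing to check is that these definitions are independent of the choice of $i$. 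This amounts to the identity $[\tl h_{i,1},\tl h_{k,1}]=0$ acting compatibly together with the degree-one relations. I would prove this by an induction on the mode in the style of \cite{Lev93gen,GNW18,Lu26min}, using \eqref{redqs3} and \eqref{redqs5}.

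\emph{Reduction to the graded level.} Rather than verifying \eqref{qs1}--\eqref{qs5} directly in every degree, I would follow the PBW strategy. Filter $\widehat{\mathscr Y}$ as $\Yi$ is filtered. There is then a surjection from $\mathrm U(\mathfrak a)$ onto $\gr\widehat{\mathscr Y}$, where $\mathfrak a$ is the Lie algebra presented by the classical (associated graded) versions of the minimal relations. Composing with the graded map $\gr\widehat{\mathscr Y}\to\gr\Yi\cong\mathrm U(\g[z]^{\check\omega})$ from \eqref{assi}, it suffices to prove that the classical minimal presentation gives exactly $\g[z]^{\check\omega}$. In other words, I must show that the Lie algebra generated by $\bar h_{i,0},\bar h_{i,1},\bar b_{i,0},\bar b_{i,1}$ subject to the graded relations surjects isomorphically onto $\g[z]^{\check\omega}$. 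Once this holds, $\gr\widehat{\mathscr Y}\to\gr\Yi$ is injective, and therefore so is $\widehat{\mathscr Y}\to\Yi$. This classical step requires deriving all relations
\[
[\bar h_{i,r},\bar h_{j,s}]=0,\qquad [\bar b_{i,r+1},\bar b_{j,s}]=[\bar b_{i,r},\bar b_{j,s+1}]+\cdots
\]
from the low-degree ones. The standard argument first proves that $[\bar h_{i,1},\bar h_{j,s}]=0$ and the $b$–$b$ shift relations, by applying $\ad \bar h_{k,1}$ repeatedly. The commutativity of the Cartan modes then follows from a rank-one or rank-two computation.

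\emph{The main obstacle: the local rank-two computations.} For a fixed node $i\in\I_0$ with $i$ isolated (type $\mathsf A_1$), the Cartan commutativity $[\bar h_{1,1},\bar h_{1,3}]=0$ does not follow and must be imposed; this is \eqref{eq:extraA1}. When $i$ has a neighbour $j$, I would apply $\ad \bar h_{j,1}$ to $\bar b_{i,r}$, producing a nonzero multiple of $\bar b_{i,r+1}$. Using the Serre relations I can then move to $\bar h_{i,3}$ and deduce the rank-one obstruction. This works because the action of $\bar h_{j,1}$ on the $i$-currents differs from the action of $\bar h_{i,1}$, and a suitable linear combination kills the ambiguity.

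For a $\tau$-orbit with $c_{i,\tau i}=0$, as in types $\mathsf A_3$ and $\mathsf D$, the potential extra relation $[h_{i,1},[b_{i,1},b_{\tau i,1}]]=0$ would be derived in the same way from a neighbouring node. For the central orbit of $\mathsf A_{2n}$, the two obstructions \eqref{eq:extraA2-1} and \eqref{eq:extraA2-2} are needed when $n=1$. When $n\ge2$, I would use the derivations $\mathcal D_1=\ad\bar h_{n-1,1}$ and $\mathcal D_2=\ad\bar h_{n-1,2}$. These act on the two central current families with opposite signs, and the combination $\mathcal D_1^2+\mathcal D_2$ should isolate each obstruction, which then vanishes by the remaining relations.

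I expect this $\mathsf A_{2n}$ central-orbit computation to be the hardest step. The simplified Cartan recursion fails there, since $c_{n,n+1}=-1$, so the bookkeeping has to be done explicitly in $\g[z]^{\check\omega}$ with matrix units.
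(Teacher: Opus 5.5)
Your global reduction is the paper's. You use the surjection $\widehat{\mathscr Y}\to\Yi$ (Lemmas~\ref{lem:hbhb} and~\ref{lem:generate}) and the filtration, and observe that $\gr\widehat{\mathscr Y}\to\gr\Yi$ is an isomorphism once the leading terms of the minimal relations present $\mathrm U(\g[z]^{\check\omega})$, i.e.\ Proposition~\ref{prop:red}. Your ``independence of the choice of $i$'' step is superfluous: on this route no higher modes are defined in $\widehat{\mathscr Y}$, and in the classical algebra one simply takes $i=j$.

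The gap is in the classical step, which carries all the content, and specifically in the mechanism you give for killing the Cartan obstruction. By \eqref{hi1bjr}, $[\bh_{j,1},\bb_{i,r}]=\gamma_{ji}\bb_{i,r+1}$, so on the $i$-currents $\ad\bh_{j,1}$ is just $\gamma_{ji}\gamma_{ii}^{-1}\ad\bh_{i,1}$. Applying it to relations among $i$-currents gives nothing that $\ad\bh_{i,1}$ does not already give. For the same reason, your ``first'' step, which derives $[\bh_{i,1},\bh_{j,s}]=0$ by repeated $\ad\bh_{k,1}$, cannot work for $i=j$: that is the obstruction itself.

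The paper needs a genuinely different input from the neighbour.
\begin{itemize}
\item In $\mathsf A_3$ and $\mathsf D_r$, the fork node $j\in\I_0$ has $c_{ij}=c_{\tau i,j}$. By Jacobi and Lemma~\ref{lem:bb} this forces $[\bh_{i,2},\bb_{j,r}]=0$, hence $[\bh_{i,2},\bh_{j,1}]=0$ via $\bh_{j,1}=[\bb_{j,0},\bb_{j,1}]$. Together with $\gamma_{ii}+\gamma_{ji}=0$, i.e.\ $[\bh_{i,1}+\bh_{j,1},\bh_{i,2}]=0$, this yields $[\bh_{i,1},\bh_{i,2}]=0$ (Lemma~\ref{lem:extraD-redundant-gr}). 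It is the proportionality, not a difference, that is exploited.
\item For adjacent nodes of $\I_1$, one first needs the shifted Serre relation $[\bb_{i,1},[\bb_{i,1},\bb_{j,0}]]=0$. This already requires the degree-two modes of Lemma~\ref{lem:h2bj}; Lemma~\ref{lem:hi1hi2-com} then applies.
\item Fixed nodes are reached through Lemma~\ref{lem:com-transit}.
\item In the split argument (Lemma~\ref{lem:split-extra-redundant-gr}), $[\bh_{j,3},\bh_{i,1}]$ is computed in two ways, using the obstruction-free cross action of $\bh_{j,3}$ on $\bb_{i,r}$. This gives a nondegenerate $2\times 2$ system in $[\bh_{i,1},\bh_{i,3}]$ and $[\bh_{j,1},\bh_{j,3}]$.
\end{itemize}
None of these inputs appears in your sketch, so the redundancy of the extra relations outside $\mathsf A_1,\mathsf A_2$ is asserted rather than proved.

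For $\mathsf A_{2n}$ with $n\ge 2$ you name the right operator, but you omit what makes it work and what must follow. Take $i=n-1$, $j=n$. Applying $\cD_1^2+\cD_2$ to $\mathcal M_j=[\bb_{j,1},\bb_{\tau j,0}]-[\bb_{j,0},\bb_{\tau j,1}]+2\bh_{j,1}$ leaves the term $2[\bh_{i,2},\bh_{j,1}]$. So the Cartan seed \eqref{eq:A2n-central-seed-Cartan} is isolated only after proving $\cD_2(\bh_{j,1})=0$. In the paper this comes together with $[\bh_{i,1},\bh_{i,2}]=0$ (Lemma~\ref{lem:A2n-neighboring-orbit}), by a separate derivation from the Serre relation between $n-1$ and $n$ and the nonadjacency of $n-1$ and $n+1$.

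Even then, the two seeds are only the input. You must still rebuild every relation on the central orbit using the recursion $2\bh_{j,r+1}=[\bb_{\tau j,0},\bb_{j,r+1}]-[\bb_{\tau j,1},\bb_{j,r}]$. This is the whole analysis of Appendix~\ref{ssec:A2-reduction}: the action of $\bh_{j,2}$, same-family commutativity, the higher shifted Serre relations and the full mixed relation. After that come the cross-orbit relations (Lemma~\ref{lem:A2n-central-cross-hb}) and Cartan commutativity across the central orbit.

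Finally, this bookkeeping cannot be done ``in $\g[z]^{\check\omega}$ with matrix units''. A computation there only shows that the relations hold in the target, i.e.\ that $\psi$ in \eqref{eq:psi} is well defined. The content of Proposition~\ref{prop:red} is that they follow from the minimal relations inside the abstractly presented algebra.
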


Instead of working directly with the Yangian as in \cite{Lev93gen,GNW18}, we use the idea from the proof of \cite[Theorem~4.14]{LWZ25affine}, which greatly simplifies the problem by reducing it to the associated graded level.

\begin{prop}\label{prop:red}
Let $\g$ be of type $\mathsf A_{r}$, $\mathsf D_r$, or $\mathsf E_6$. Then the algebra $\mathrm{U}(\g[z]^{\check\omega})$ is isomorphic to the unital associative algebra generated by the elements $\bh_{i,0}$, $\bh_{i,1}$, $\bb_{i,0}$,  $\bb_{i,1}$ for $i\in\I$ subject only to the following relations:
\begin{align}
[\bh_{i,r},\bh_{j,s}]&=0&(0\lle r,s\lle 1),\label{tchh}\\
\bh_{i,r}&=(-1)^{r+1}\bh_{\tau i,r} & (0\lle r\lle 1),\label{tch-sym}\\
[\bh_{i,0},\bb_{j,r}]&=(c_{ij}-c_{\tau i,j})\bb_{j,r}& (0\lle r\lle 1),\label{tch0b}\\
[\bh_{i,1},\bb_{j,0}]&=(c_{ij}+c_{\tau i,j})\bb_{j,1},\label{tch1b}\\
[\bb_{i,1},\bb_{j,0}]&=[\bb_{i,0},\bb_{j,1}]-2\delta_{\tau i,j}\bh_{j,1},\label{tcbb}\\
[\bb_{i,r},\bb_{\tau i,s}]&=(-1)^{r}\bh_{\tau i,r+s} &(r+s\lle 1 \text{\emph{ and }} c_{i,\tau i}=0),\label{tcbb2}
\end{align}
the finite type Serre relations: 
\begin{align}
&[\bb_{i,0},\bb_{j,0}]=\delta_{\tau i, j}\bh_{j,0}, &c_{ij}=0,\label{tcfSerre0}
\\
&\big[\bb_{i,0},[\bb_{i,0},\bb_{\tau i,0}]\big]  
= 4\bb_{i,0}, &c_{i,\tau i}=-1,\label{tcfSerre1}\\
&\big[\bb_{i,0},[\bb_{i,0},\bb_{j,0}]\big] =0, & c_{ij}=-1 \text{ and }i\ne \tau i\ne j, \label{tcfSerre2}\\
&\big[\bb_{i,0},[\bb_{i,0},\bb_{j,0}]\big] =-\bb_{j,0}, & c_{ij}=-1 \text{ and }i=\tau i,\label{tcfSerre3}
\end{align}
and possibly extra relations as follows.
\begin{enumerate}
\item If $\g$ is of type $\mathsf A_1$, then an additional relation is required:
\begin{align}
\big[\bh_{1,1},\big[\bb_{1,1},[\bh_{1,1},\bb_{1,1}]\big]\big]=0.\label{eq:extraA1-gr}
\end{align}
\item If $\g$ is of type $\mathsf A_2$, then two additional relations are required:
\begin{align}
&
\big[\bb_{1,1},[\bb_{1,1},\bb_{2,0}]\big]
+
\big[\bb_{2,1},[\bb_{2,1},\bb_{1,0}]\big]
=0,
\label{eq:A2-extra-Serre}
\\
&
\big[[\bh_{1,1},\bb_{1,1}],\bb_{2,1}\big]
-
\big[\bb_{1,0},
[\bh_{1,1},[\bh_{1,1},\bb_{2,1}]]
\big]
=0.
\label{eq:A2-extra-Cartan}
\end{align}
\item No additional relations are required for other types.
\end{enumerate}
\end{prop}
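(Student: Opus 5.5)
Let $\mathcal A$ be the algebra defined by generators $\bh_{i,0},\bh_{i,1},\bb_{i,0},\bb_{i,1}$ and the relations listed in Proposition~\ref{prop:red}. First I check that the assignment
\[
\bh_{i,r}\mapsto(\xi_i-(-1)^r\xi_{\tau i})z^r,\qquad \bb_{i,r}\mapsto(x_i^+-(-1)^rx_{\tau i}^-)z^r,\qquad (0\lle r\lle 1),
\]
defines a homomorphism $\psi:\mathcal A\to\mathrm U(\g[z]^{\check\omega})$. All listed relations are bracket computations in $\g[z]$, so this is direct. The extra relations in types $\mathsf A_1,\mathsf A_2$ are the classical limits of Lemma~\ref{lem:hbhb}, and they can also be checked directly in $\fksl_2[z]$, $\fksl_3[z]$. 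Surjectivity holds because the twisted current algebra is generated by its degree $0$ and $1$ parts; this is the classical shadow of Lemma~\ref{lem:generate}. The substance of the proof is injectivity. I will show that $\mathcal A$ is a quotient of $\mathrm U(\g[z]^{\check\omega})$, which makes $\psi$ an isomorphism.

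Inside $\mathcal A$, define the higher modes recursively:
\[
\bb_{j,r+1}:=\tfrac{1}{c_{ij}+c_{\tau i,j}}[\bh_{i,1},\bb_{j,r}]
\]
for a suitable $i$ with $c_{ij}+c_{\tau i,j}\ne0$. Then define $\bh_{j,r+s+1}$ through the classical form of \eqref{qs5}, namely
\[
[\bb_{i,r+1},\bb_{j,s}]-[\bb_{i,r},\bb_{j,s+1}]=-2\delta_{\tau i,j}(-1)^r\bh_{j,r+s+1}.
\]
The goal is to verify the classical defining relations of $\g[z]^{\check\omega}$, i.e. the associated graded versions of \eqref{qs1}--\eqref{qs5} for all degrees. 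Granting these, the standard presentation of $\mathrm U(\g[z]^{\check\omega})$ in current generators gives the map back, using the isomorphism \eqref{assi} together with \cite[Theorem~3.9]{LZ24} at $\hbar=0$. I proceed by induction on total degree, as in \cite{Lev93gen,GNW18,Lu26min}.
\begin{enumerate}
\item Show that $\ad\bh_{i,1}$ acts as a degree-raising derivation, independent of the choice of $i$. This uses \eqref{tchh} and \eqref{tch1b}.
\item Establish $[\bb_{i,r+1},\bb_{j,s}]=[\bb_{i,r},\bb_{j,s+1}]$ for $j\ne\tau i$ by applying $\ad\bh_{k,1}$ to \eqref{tcbb}.
\item Show that the $\bh_{j,r}$ are well defined, pairwise commute, and satisfy $[\bh_{i,r},\bb_{j,s}]=(c_{ij}+(-1)^rc_{\tau i,j})\bb_{j,r+s}$.
\end{enumerate}

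The main obstacle is the commutativity $[\bh_{i,r},\bh_{j,s}]=0$ at the base step, essentially $[\bh_{i,1},\bh_{j,2}]=0$. When $\tau i=i$ or $\tau i\ne i$ with $c_{i,\tau i}=0$, I use a neighbouring node $k$ with $c_{ik}=-1$. I apply Serre relations \eqref{tcfSerre2}--\eqref{tcfSerre3} and \eqref{tcbb2} to express the offending term through brackets that vanish, following the split argument of \cite{Lu26min} and \cite[(1.6)]{Lev93gen}. In type $\mathsf A_1$ there is no neighbour, so \eqref{eq:extraA1-gr} must be imposed.

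For the central orbit of $\mathsf A_{2n}$, where $c_{n,n+1}=-1$, the recursion is obstructed precisely by the two relations \eqref{eq:A2-extra-Serre} and \eqref{eq:A2-extra-Cartan}. For $n=1$ these are imposed. For $n\gge2$, I act on the rank-two subalgebra with $\cD_1=\ad\bh_{n-1,1}$ and $\cD_2=\ad\bh_{n-1,2}$; these act on the $\{n,n+1\}$ currents with opposite signs. The plan is to apply $\cD_1^2+\cD_2$ to suitable degree-zero Serre identities such as \eqref{tcfSerre1}, so that the two obstructions are isolated and shown to vanish. This computation, deferred to the appendix, is where I expect the real difficulty. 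The remaining types ($\mathsf A_{2n-1}$, $\mathsf D$, $\mathsf E_6$) only require that each node has a neighbour, which holds in rank at least two.

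Once all the relations hold, $\mathcal A$ is spanned by ordered monomials in the $\bh_{i,r},\bb_{i,r}$, and it surjects onto $\mathrm U(\g[z]^{\check\omega})$ compatibly with $\psi$. Hence $\psi$ is an isomorphism.
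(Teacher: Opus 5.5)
Your architecture matches the paper's: recursively defined higher modes, recovery of all relations of Lemma~\ref{lem:full-gr}, base Cartan commutativity as the crux, and $\cD_1^2+\cD_2$ in type $\mathsf A_{2n}$. But the content of the statement, namely why no extra relation is needed outside $\mathsf A_1,\mathsf A_2$, is not supplied. The mechanism you offer (``each node has a neighbour, then Serre relations as in the split case'') fails in the configurations where it matters.

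In $\mathsf A_3$ and $\mathsf D_r$ the only length-two orbit $\{i,\tau i\}$ has just a $\tau$-fixed neighbour $j$, with $c_{ij}=c_{\tau i,j}=-1$. When two adjacent nodes lie in $\I_1$, the Serre-type route converts $[\bb_{i,1},[\bb_{i,1},\bb_{j,0}]]=0$ into $[\bb_{i,1},[\bb_{i,1},\bh_{j,1}]]=0$ by bracketing with $\bb_{\tau j,1}$, which needs $[\bb_{i,1},\bb_{\tau j,1}]=0$. For $j=\tau j$ adjacent to $i$ this fails, and the split argument of \cite{Lu26min} only yields relations at fixed nodes. The missing idea is Lemma~\ref{lem:extraD-redundant-gr}:
\begin{enumerate}
\item because $c_{ij}=c_{\tau i,j}$, the element $\bh_{i,2}=[\bb_{i,1},\bb_{\tau i,1}]$ centralizes every $\bb_{j,r}$, hence also $\bh_{j,1}=[\bb_{j,0},\bb_{j,1}]$;
\item because $\gamma_{ii}+\gamma_{ji}=\gamma_{i,\tau i}+\gamma_{j,\tau i}=0$, the element $\bh_{i,1}+\bh_{j,1}$ kills $\bb_{i,1}$ and $\bb_{\tau i,1}$.
\end{enumerate}
Together these give $[\bh_{i,1},\bh_{i,2}]=0$. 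Likewise, the lone fixed node of $\mathsf A_{2n-1}$ has no fixed neighbour. Its base relation is $[\bh_{j,1},\bh_{j,3}]=0$ (even modes vanish on fixed nodes, so there is no degree-two statement there). It has to be transferred from the adjacent length-two orbit, and only after the full orbit relations there, including $[\bh_{i,1},\bh_{i,3}]=0$, are established (Lemma~\ref{lem:com-transit}). The order of deduction and these configuration-specific arguments are the proof, not a formality.

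Several further steps fail as written.
\begin{enumerate}
\item In your step (2), when $j=i\ne\tau i$ every $\ad\bh_{k,1}$ multiplies both factors by the same scalar $\gamma_{ki}$. So you only obtain $\mathcal X(r+1,s)+\mathcal X(r,s+1)=0$; the $2\times2$ system in the proof of Lemma~\ref{lem:bb} degenerates. The needed $[\bb_{i,r},\bb_{i,s}]=0$ requires the action of $\ad\bh_{i,2}$ on all $\bb_{i,r}$ (Lemma~\ref{lem:h2bj-2}). It can therefore only come after the base commutativity, as in Lemma~\ref{lem:bibi-zero-I1}.
\item In type $\mathsf A_{2n}$, $n\gge2$, applying $\cD_1^2+\cD_2$ to the Serre relation \eqref{tcfSerre1} is what yields \eqref{eq:A2-extra-Serre}. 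The Cartan-type relation \eqref{eq:A2-extra-Cartan} comes instead from applying it to the mixed relation \eqref{tcbb} for the pair $(n,n+1)$, which involves $\bh_{n,1}$. This needs $\cD_2(\bh_{n,1})=[\bh_{n-1,2},\bh_{n,1}]=0$. That identity must itself be derived, together with $[\bh_{n-1,1},\bh_{n-1,2}]=0$, from the Serre relation between $n-1$ and $n$ (Lemma~\ref{lem:A2n-neighboring-orbit}).
\item Your claim that these two relations are the only obstructions on the central orbit is exactly the rank-two reconstruction of Appendix~\ref{ssec:A2-reduction}, which still has to be carried out. So do the cross-orbit Cartan actions from the central orbit (Lemma~\ref{lem:A2n-central-cross-hb}).
\item The Cartan action should be $[\bh_{i,r},\bb_{j,s}]=(c_{ij}-(-1)^rc_{\tau i,j})\bb_{j,r+s}$; your sign is reversed.
\end{enumerate}
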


Theorem \ref{thm:min} and Proposition \ref{prop:red} are proved in Section \ref{sec:pf-thm1}. The case for type $\mathsf A_{2n}$ is treated separately in Appendix \ref{sec:pf-prop2}. We also strengthen \cite[Theorem~3.1]{Lu26min} for split iYangians by removing the extra relation for types $\mathsf B_2\cong\mathsf C_2$ and $\mathsf G_2$; see Corollary \ref{cor:split-extra-redundant-quantum}.

\subsection{Embedding into Yangians}
Let $\g$ be a finite-dimensional simple Lie algebra of type $\mathsf{ADE}$.

If $\g$ is not of type $\mathsf A_{2n}$, then define a map $\varphi:\Yi\to \Y$ by the rule:
\begin{align}
	h_{i,0}&\mapsto \xi_{i}-\xi_{\tau i},\label{hi0-embedding}\\
    h_{i,1}&\mapsto \xi_{i,1}+\xi_{\tau i,1}-\xi_{i}\xi_{\tau i}+\hf\sum_{\alpha\in\Phi^+}(\alpha,\alpha_i)\{x_\alpha^+,x_{\tau \alpha}^+\},\label{hi1-embedding}\\
    b_{i,0}&\mapsto x_{i}^+-x_{\tau i}^-,\label{bi0-embedding}\\
    b_{i,1}&\mapsto x_{i,1}^++x_{\tau i,1}^-+\tfrac12\sum_{\alpha\in\Phi^+}\big\{[x_{i}^+,x_{\alpha}^+],x_{\tau\alpha}^+\big\}-\tfrac12\{x_i^+,\xi_{\tau i}\}.\label{bi1-embedding}
\end{align}
Note that if we set $\tau =\mathrm{id}$, then one recovers the formulas \cite[(4.1)--(4.3)]{Lu26min}.

If $\g=\mathfrak{sl}_{N}$ where $N=2n+1$. Recall from \eqref{eq:rt-A2n0} our specific choice of root vectors for $\alpha=\ve_i-\ve_j$ with $1\lle i<j\lle N$:
\be
x_\alpha^+=(\sqrt{-1})^{j-i-1}\mathbf e_{ij},\qquad x_\alpha^-=(\sqrt{-1})^{i-j+1}\mathbf e_{ji},
\ee
and introduce
\beq\label{eq:Theta}
    \Theta_i=\begin{cases}
    -[x_{\beta_i}^+,x_{\tau\beta_i}^+]+[x_{\beta_{i+1}}^+,x_{\tau\beta_{i+1}}^+],&\text{ if }~1\lle i<n,\\
    -[x_{\beta_{\tau i}}^+,x_{\tau\beta_{\tau i}}^+]+[x_{\beta_{i'}}^+,x_{\tau\beta_{i'}}^+],&\text{ if }~n+1<i<N,\\
    \tfrac12(\xi_n-\xi_{n+1})-[x_n^+,x_{n+1}^+],&\text{ if }~i=n,n+1,
    \end{cases}
\eeq
where $\beta_i:=\ve_i-\ve_{n+1}$ for $1\lle i\lle n$ and $\tau\beta_i=\ve_{n+1}-\ve_{i'}$. Note that $\tau i=(i+1)'=i'-1$.

Define
\beq\label{eq:wpi-def}
\wp_i=\begin{cases}
    1,&\text{ if }N=2n+1,i=n,\\
    -1,&\text{ if }N=2n+1,i=n+1\\
    0,&\text{ otherwise.}
\end{cases}
\eeq
For every other quasi-split type, we set $\wp_i=0$ for all $i\in \I$.
We first set
\[
\begin{split}
\Psi^+_{i,<}&=\{\ve_k-\ve_i\mid 1\lle k<i\}
\cup \{\ve_k-\ve_{i+1}\mid 1\lle k\lle i\},\\
\Psi^+_{i,>}&=\{\ve_i-\ve_k\mid i<k\lle N\}
\cup \{\ve_{i+1}-\ve_k\mid i+1<k\lle N\}.
\end{split}
\]
We then define
\be
\begin{aligned}
\Phi^+_{i,<}&=\begin{cases}
\Psi^+_{i,<}\setminus\{\alpha_i\},&1\lle i<n,\\
(\Psi^+_{n,<}\setminus\{\alpha_n\})\cup\{\alpha_{n+1}\},&i=n,\\
\Psi^+_{n+1,<}\setminus\{\alpha_n\},&i=n+1,\\
\Psi^+_{i,<},&n+1<i<N,
\end{cases}\\[1ex]
\Phi^+_{i,>}&=\begin{cases}
\Psi^+_{i,>},&1\lle i<n,\\
\Psi^+_{n,>}\setminus\{\alpha_{n+1}\},&i=n,\\
(\Psi^+_{n+1,>}\setminus\{\alpha_{n+1}\})\cup\{\alpha_n\},&i=n+1,\\
\Psi^+_{i,>}\setminus\{\alpha_i\},&n+1<i<N.
\end{cases}
\end{aligned}
\ee
In particular,
\[
\Phi^+_{i,<}\cap\Phi^+_{i,>}=\varnothing,
\qquad
\Phi^+_{i,<}\sqcup\Phi^+_{i,>}
=\{\alpha\in\Phi^+\mid (\alpha,\alpha_i)\ne0\}.
\]

The root vectors are chosen so that the corresponding embedding formulas look similar to the formulas for the split case as in \cite{Lu26min}; see the proof below.

Define an assignment on the low-degree generators $\varphi:\Yi\to \Y$ by the rule:
\begin{align}
	h_{i,0}&\mapsto \xi_{i}-\xi_{\tau i}+\tfrac14\wp_i,\label{hi0-embedding2}\\
    h_{i,1}&\mapsto \xi_{i,1}+\xi_{\tau i,1}-\xi_{i}\xi_{\tau i}+\hf\sum_{\alpha\in\Phi^+}(\alpha,\alpha_i)\{x_\alpha^+,x_{\tau \alpha}^+\}+\hf\Theta_i,\label{hi1-embedding2}\\
    b_{i,0}&\mapsto x_{i}^+-x_{\tau i}^-,\label{bi0-embedding2}\\
    b_{i,1}&\mapsto x_{i,1}^{+}+x_{\tau i,1}^{-}
    -\tfrac12\acomm{\xi_{\tau i}}{x_i^{+}}+\hf\wp_i x_i^+ +\sum_{\alpha }
       \comm{x_i^{+}}{x_{\alpha}^{+}}x_{\tau\alpha}^{+}
    +\sum_{\beta }
       x_{\tau\beta}^{+}\comm{x_i^{+}}{x_{\beta}^{+}},\label{bi1-embedding2}
\end{align}
where $\alpha$ runs over all positive roots $\ve_k-\ve_i$ for $1\lle k<i$ while $\beta$ runs over all positive roots $\ve_{i+1}-\ve_k$ for $i+1<k\lle N$. 

Note that the formula \eqref{bi1-embedding2} can be slightly simplified as 
\be
-\tfrac12\acomm{\xi_{\tau i}}{x_i^{+}}+\hf\wp_i x_i^+=\begin{cases}
       -\xi_{n+1}x_n^+,&\text{ if }i=n,\\
       -x_{n+1}^+\xi_n,&\text{ if }i=n+1,\\
    -\tfrac12\acomm{\xi_{\tau i}}{x_i^{+}},&\text{ otherwise;}
       \end{cases}
\ee
while the formula \eqref{hi1-embedding2} can be simplified as
\beq\label{eq:hi1-simle}
\varphi(h_{i,1})={}\xi_{i,1}+\xi_{\tau i,1}
      -\xi_{i}\xi_{\tau i}+\tfrac14\wp_i(\xi_i-\xi_{\tau i})+\sum_{\alpha\in\Phi^+_{i,<}}(\alpha,\alpha_i)x_\alpha^+x_{\tau\alpha}^++\sum_{\alpha\in\Phi^+_{i,>}}(\alpha,\alpha_i)x_{\tau\alpha}^+x_\alpha^+,
\eeq
see the derivation from the R-matrix presentation using \cite{LZ24} in Appendix \ref{ssec:obtain-h}. 

\begin{rem}
One of the main reasons that the formulas look different from the non-$\mathsf A_{2n}$ case is that $[x_{\tau\alpha}^+,x_\alpha^+]$ could be nonzero as  $\alpha+\tau\alpha$ may still belong to $\Phi^+$ for some $\alpha\in\Phi^+$. In all the remaining quasi-split types (including obviously the split case $\tau=\mathrm{id}$ considered in \cite{Lu26min}), $\alpha+\tau\alpha\notin \Phi^+$  for all $\alpha\in\Phi^+$.    
\end{rem}

Our next main result identifies the iYangian $\Yi$ in the Drinfeld
presentation with the twisted Yangian $\YiJ$
in the $J$ presentation.

\begin{thm}\label{thm:embed}
Let $\g$ be a simple Lie algebra of type $\mathsf A_{r}$, $\mathsf D_{r}$ or $\mathsf E_6$. We have the following.
\begin{enumerate}
    \item The map $\varphi:\Yi\to \Y$ defined by \eqref{hi0-embedding2}--\eqref{bi1-embedding2} for type $\mathsf A_{2n}$ and by \eqref{hi0-embedding}--\eqref{bi1-embedding} for other types induces an injective algebra homomorphism from $\Yi$ to $\Y$.
\item Identifying $\Yi$ with a subalgebra of $\Y$ via $\varphi$, we have
\beq\label{coprohi1}
\begin{split}
\Delta(\tl h_{i,1})=\tl h_{i,1}\otimes 1+1\otimes \tl h_{i,1}+\tfrac1{32}\wp_i^2+\sum_{\alpha\in\Phi^+}(\alpha,\alpha_i+\alpha_{\tau i})(x_{\tau\alpha}^+-x_{\alpha}^-)\otimes x_\alpha^+.
\end{split}
\eeq
In particular, $\Yi$ is a right coideal subalgebra of $\Y$, i.e. $\Delta(\Yi)\subset \Yi\otimes \Y$.
\item If $\g$ is not of type $\mathsf A_{2n}$, then the quasi-split iYangian $\Yi$ is isomorphic to the twisted Yangian $\YiJ$ in the $J$ presentation. If $\g$ is  of type $\mathsf A_{2n}$, then $\Yi$ is isomorphic to the twisted Yangian $\YiJ(\mathscr K,\tfrac18)$ in the $J$ presentation, where $\mathscr K$ is given by \eqref{eq:K-def}.
\end{enumerate}
\end{thm}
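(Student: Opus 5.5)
The plan is to prove the three parts in order, leaning on Theorem~\ref{thm:min}: $\Yi$ is presented by the finitely many generators $h_{i,r},b_{i,r}$ ($r=0,1$) subject to \eqref{redqs1}--\eqref{redqs5}, the finite Serre relations \eqref{finserre1-ty}--\eqref{finserre4-ty}, and, for $\mathsf A_2$, the extra relations \eqref{eq:extraA2-1}--\eqref{eq:extraA2-2}.

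\textbf{Part (1): well-definedness.} It suffices to check that the images under $\varphi$ satisfy this finite list of relations. The degree-zero relations only involve $\mathrm U(\g)\subset\Y$.
\begin{itemize}
\item Outside type $\mathsf A_{2n}$, $\varphi(h_{i,0})=\xi_i-\xi_{\tau i}$ and $\varphi(b_{i,0})=x_i^+-x_{\tau i}^-$ span $\mathfrak k$ in degree zero, so \eqref{redqs2} at $r=0$ and \eqref{finserre1-ty}--\eqref{finserre4-ty} are identities in $\mathfrak k$.
\item In type $\mathsf A_{2n}$ the shift $\frac14\wp_i$ is central and harmless. The Serre relation \eqref{finserre2-ty} must be verified directly with the root vectors \eqref{eq:rt-A2n0}.
\item For the degree-one relations, I will avoid brute force: rewrite $\varphi(h_{i,1})$ and $\varphi(b_{i,1})$ as $B(y)$ plus corrections, via \eqref{eq:J}. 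The natural candidates are $y=\xi_i+\xi_{\tau i}$ (resp. $y=x_i^++x_{\tau i}^-$), giving $B(y)=J(y)-\frac14[y,C_{\mathfrak k}]$, plus a quadratic element of $\mathrm U(\g)$ and, in type $\mathsf A_{2n}$, the term $-\frac18[y,\mathscr K]$.
\end{itemize}
Concretely, I expect $\varphi(\tl h_{i,1})=B_{\mathscr K,1/8}(\xi_i+\xi_{\tau i})+(\text{element of }\mathrm U(\mathfrak k))$. Then:
\begin{itemize}
\item \eqref{redqs3} follows from $[x,B(y)]=B([x,y])$ for $x\in\mathfrak k$, together with the computation $[h_{i,1},b_{j,0}]$ done as the commutator of $b_{j,0}\in\mathfrak k$ with $\varphi(h_{i,1})$.
\item \eqref{redqs1} reduces to $[\varphi(h_{i,1}),\varphi(h_{j,1})]=0$. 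I plan to check this by a weight argument plus the Yangian relations $[\xi_{i,1},\xi_{j,1}]=0$ and \eqref{tlxicom}, handling the quadratic root-vector sums with \eqref{eta} and \eqref{eta-tau}.
\item \eqref{redqs5} is the hardest degree-one check: $[b_{i,1},b_{j,0}]-[b_{i,0},b_{j,1}]$ must produce $-2\delta_{\tau i,j}h_{j,1}$. Since $b_{j,1}=\frac{1}{c_{kj}+c_{\tau k,j}}[\tl h_{k,1},b_{j,0}]$ for a suitable $k$, I will use the Jacobi identity to reduce it to $[\tl h_{k,1},[b_{i,0},b_{j,0}]]$-type identities in $\mathfrak k$.
\item The $\mathsf A_2$ extra relations are checked by a direct computation in $\Y(\fksl_3)$.
\end{itemize}

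\textbf{Injectivity.} Filter $\Y$ as in \eqref{ass}. The top-degree symbol of $\varphi(h_{i,r})$ (resp. $\varphi(b_{i,r})$) is $(\xi_i-(-1)^r\xi_{\tau i})z^r$ (resp. $(x_i^+-(-1)^rx_{\tau i}^-)z^r$); for $r\le1$ this is read off directly, and for higher $r$ it follows inductively from \eqref{eq:hi1bjr-new} and \eqref{qs5}. Hence $\gr\varphi$ is the composite of $\varrho^{-1}$ in \eqref{assi} with the inclusion $\mathrm U(\g[z]^{\check\omega})\hookrightarrow\mathrm U(\g[z])\cong\gr\Y$. That composite is injective by PBW, so $\varphi$ is injective.

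\textbf{Part (2): coproduct.} Formula \eqref{coprohi1} is computed from the explicit $\Delta(\xi_{i,1})$ given in Section~\ref{sec:Yang} and $\Delta(x)=x\otimes1+1\otimes x$, applied to $\varphi(\tl h_{i,1})$. The cross terms $\xi_i\otimes\xi_i$ and $x_\alpha^-\otimes x_\alpha^+$ combine with the coproduct of the quadratic correction. Most cross terms cancel, leaving $\sum_\alpha(\alpha,\alpha_i+\alpha_{\tau i})(x^+_{\tau\alpha}-x^-_\alpha)\otimes x^+_\alpha$; the constant $\frac1{32}\wp_i^2$ comes from $\Delta$ applied to the constant $\frac14\wp_i$-shifts. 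The first tensor factor lies in $\mathfrak k\subset\Yi$. The coideal property then follows because $\Yi$ is generated by $h_{i,0},\tl h_{i,1},b_{i,0}$ (Lemma~\ref{lem:generate}), and $\Delta(b_{i,0})$ is primitive.

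\textbf{Part (3): identification with the $J$ presentation.}
\begin{itemize}
\item \emph{Inclusion.} The decomposition from Part (1) shows $\varphi(\Yi)\subseteq\YiJ(\mathscr K,c)$ with $c=\frac18$ in type $\mathsf A_{2n}$ and $c=0$ otherwise.
\item \emph{Equality.} Both algebras have associated graded $\mathrm U(\g[z]^{\check\omega})$ inside $\gr\Y$, by PBW for twisted Yangians in the $J$ presentation (BR17). Since $\gr$ of the inclusion is an isomorphism, the inclusion is an equality.
\end{itemize}
Alternatively, equality follows because $\mathfrak k$ together with $B(\xi_i+\xi_{\tau i})$ generates all $B(y)$, by $\mathfrak k$-equivariance and irreducibility considerations for $\mathfrak p$.

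\textbf{Main obstacle.} I expect the main obstacle to be the type $\mathsf A_{2n}$ verification of \eqref{redqs1}, \eqref{redqs5}, and the central-node relations. There the $\Theta_i$ corrections and $\alpha+\tau\alpha\in\Phi^+$ spoil the clean cancellations. My first attempt will be to verify the $J$-presentation decomposition $\varphi(\tl h_{i,1})=B_{\mathscr K,1/8}(\cdot)+\mathrm U(\mathfrak k)$ in matrix form, using $\mathscr K=\mathscr S+\frac1N\mathbf 1_N$ from Lemma~\ref{lem:K-central}, so that the relations follow from the coideal and $\mathfrak k$-equivariance structure rather than term-by-term computation.
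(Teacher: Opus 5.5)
The overall architecture matches the paper's: check the finite list of relations from Theorem~\ref{thm:min}, get injectivity from $\gr$ and PBW, compute $\Delta(\tl h_{i,1})$ directly, and prove Part~(3) by writing $\varphi(h_{i,1})$ as $B_{\mathscr K,c}(\xi_i+\xi_{\tau i})$ plus a degree-zero correction, then comparing associated graded algebras. The genuine gap is Part~(1) in type $\mathsf A_{2n}$: your proposal has no argument that establishes the homomorphism property there.

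Your shortcut only handles some of the relations. Write $\varphi(\tl h_{i,1})=B_{\mathscr K,1/8}(\xi_i+\xi_{\tau i})+q_i$ with $q_i$ of filtration degree zero. Then the equivariance $[x,B_{\mathscr K,c}(y)]=B_{\mathscr K,c}([x,y])$ does reduce \eqref{redqs3} and \eqref{redqs5} to finite computations in $\mathrm U(\g)$. But two relations are not reached:
\begin{enumerate}
\item \eqref{redqs1} with $r=s=1$ involves $[B_{\mathscr K,c}(y),B_{\mathscr K,c}(y')]$ for $y,y'\in\mathfrak p$. Neither $\mathfrak k$-equivariance nor the coideal property determines this commutator.
\item \eqref{eq:extraA2-1}--\eqref{eq:extraA2-2} are cubic in degree-one generators; the second is $[\tl h_{1,1},h_{2,2}]=0$ in disguise. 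So ``direct computation in $\Y(\fksl_3)$'' means controlling modes up to degree three.
\end{enumerate}
Moreover, the two mechanisms behind the paper's hand verification in the other types are exactly what break down here. One is the normalization $\tau(x^\pm_\alpha)=x^\pm_{\tau\alpha}$ behind \eqref{eta-tau}; the conclusion of Lemma~\ref{lem:fixed} fails in $\mathsf A_{2n}$, cf.\ Example~\ref{eg:A2n}. The other is the vanishing $[x_\alpha^+,x_{\tau\alpha}^+]=0$, which kills the last cubic terms in the proof of $[\varphi(h_{i,1}),\varphi(h_{j,1})]=0$.

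The paper never checks these relations by hand in type $\mathsf A_{2n}$. Instead it \emph{derives} \eqref{hi0-embedding2}--\eqref{bi1-embedding2} as the images of $h_{i,r},b_{i,r}$ ($r=0,1$) under a composite of three maps: the identification \eqref{eq:Y2i-R} from \cite{LZ24}, the R-matrix embedding \eqref{eq:embed-R}, and the Gauss-decomposition identification \eqref{eq:Y2-R}. That composite is an honest algebra homomorphism, so every relation is inherited automatically, including the $\mathsf A_2$ extra relations via Lemma~\ref{lem:hbhb}. This transport through the R-matrix realization (Appendix~\ref{sec:app+}) is the missing idea. The paper also invokes it for $\mathsf A_1$, whose extra relation \eqref{eq:extraA1} your list omits.

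Three smaller points.
\begin{enumerate}
\item Even outside $\mathsf A_{2n}$, \eqref{tlxicom} only controls $\tl\xi_{i,1}$ against simple root vectors. The cross terms in $[\varphi(h_{i,1}),\varphi(h_{j,1})]$ involve every $x_\alpha^+$. The paper passes to $J(\xi_i)$ via \eqref{eq:J} and uses $[J(\xi_i)-\tilde v_i,J(\xi_j)-\tilde v_j]=0$ from \cite{GRW19}, where $\tilde v_i=J(\xi_i)-\tl\xi_{i,1}$, together with $[J(\xi_i)+J(\xi_{\tau i}),x_\alpha^+]=(\alpha,\alpha_i+\alpha_{\tau i})J(x_\alpha^+)$.
\item For injectivity in type $\mathsf A_{2n}$, the symbol of $\varphi(h_{n,0})$ is $\xi_n-\xi_{n+1}+\tfrac14$, because the scalar also has filtration degree zero. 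So $\rho^{-1}\circ\gr\varphi\circ\varrho$ is not literally the inclusion. It is the inclusion precomposed with the automorphism of $\mathrm U(\g[z]^{\check\omega})$ induced by the character $\tfrac18(\,\cdot\,,\mathscr K)$ on $\mathfrak k$, extended by zero in positive degrees. Alternatively, argue with PBW filtrations as the paper does.
\item In your alternative for Part~(3), $\mathfrak p$ is not irreducible in type $\mathsf A$. There $\mathfrak k\cong\mathfrak s(\gl_n\oplus\gl_n)$ or $\mathfrak s(\gl_n\oplus\gl_{n+1})$ is of Hermitian type, so $\mathfrak p=\mathfrak p^+\oplus\mathfrak p^-$. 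All $B_{\mathscr K,c}(y)$ are still generated, by bracketing with the $b_{j,0}$ and inducting on height, but not for the reason you give.
\end{enumerate}
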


Theorem \ref{thm:embed} is proved in Section \ref{sec:pfthmB0}. Again, in type $\mathsf A_{2n}$, Part (1) is treated separately using the R-matrix presentation in Appendix \ref{sec:app+}.

It is proved in \cite{CGM14} that the twisted Yangians of type AIII in the R-matrix and J presentations are isomorphic. Hence in type \(\mathsf A\), Part~(3) alternatively follows from that along with the identification between the R-matrix and Drinfeld realizations
established in \cite{LZ24}.

In terms of $B(\xi_i+\xi_{\tau i})$ from Definition \ref{def:tY-J}, we can rewrite \eqref{hi1-embedding} and \eqref{hi1-embedding2} as
\be
h_{i,1}\mapsto B(\xi_i+\xi_{\tau i})+\hf(\xi_i-\xi_{\tau i})^2+\tfrac{1}4\sum_{\alpha\in\Phi^+}(\alpha,\alpha_i)\big\{b_{\alpha}, b_{\tau \alpha}\big\}+\hf\Theta_i.
\ee
Here by convention $\Theta_i=0$ if $\g$ is not of type $\mathsf A_{2n}$.

We set
\beq\label{eq:gamma-def}
\gamma_{ij}=c_{ij}+c_{\tau i,j}.
\eeq
Then we have $\gamma_{ij}=\gamma_{ji}$.

The coproduct for $b_{i,1}$ can be explicitly computed using the relation 
\[
\Delta(b_{i,1})=\gamma_{ii}^{-1}\big[\Delta(\tl h_{i,1}),b_{i,0}\otimes 1+1\otimes b_{i,0}\big]
\]
as
\begin{equation*}
\Delta(b_{i,1})
=
b_{i,1}\otimes 1+1\otimes b_{i,1}
+
\gamma_{ii}^{-1}
\sum_{\alpha\in\Phi^+}
(\alpha,\alpha_i+\alpha_{\tau i})
\Big(
[b_{\tau\alpha},b_{i,0}]\otimes x_\alpha^+
+
b_{\tau\alpha}\otimes[x_\alpha^+,b_{i,0}]
\Big).
\end{equation*}
In particular, if $\g$ is not of type $\mathsf A_{2n}$, then using calculations similar to those in \S\ref{ssec:homo} we obtain
\beq\label{helper98}
\begin{split}
\Delta(b_{i,1})={}&b_{i,1}\otimes 1+1\otimes b_{i,1}+h_{i,0}\otimes x_{i}^+-b_{i,0}\otimes \xi_{\tau i}\\&-\sum_{\alpha\in\Phi^+}\Big(\big([x_{\tau\alpha}^+,x_i^+]-[x_{\tau i}^-,x_{\alpha}^-]\big)\otimes x_\alpha^++(x_\alpha^+-x_{\tau\alpha}^-)\otimes [x_{\tau\alpha}^+,x_i^+]\Big).
\end{split}
\eeq

\subsection{Coproduct estimates}
From now on, we regard the iYangian $\Yi$ as a right coideal subalgebra of the Yangian $\Y$ via the homomorphism $\varphi$ defined by \eqref{hi0-embedding2}--\eqref{bi1-embedding2} for type $\mathsf A_{2n}$ and by \eqref{hi0-embedding}--\eqref{bi1-embedding} for other types. 

Our next main result gives estimates for the generators $h_{i,r}$, $b_{i,r}$ in terms of the elements $\xi_{j,s}$ and $x_{j,s}^\pm$.

Set 
\begin{align*}
h_i(u)=1+\sum_{r\gge 0}h_{i,r}u^{-r-1},\qquad b_i(u)=\sum_{r\gge 0}b_{i,r}u^{-r-1}.
\end{align*}
\begin{thm}\label{thm:hb}
Let $\g$ be a simple Lie algebra of type $\mathsf{ADE}$. We have
\begin{align}
&h_i(u)\equiv \big(1+\tfrac{\wp_i}{4u}\big)\xi_i(u)\xi_{\tau i}(-u) &\pmod{\Y_{Q_+}[\![u^{-1}]\!]},\label{eq:h-est}\\
&\Delta(h_i(u))\equiv h_i(u)\otimes\xi_i(u)\xi_{\tau i}(-u)  &\pmod{\Yi\otimes \Y_{Q_+}[\![u^{-1}]\!]},\label{eq:h-co-est}\\
&\Delta(b_i(u))\equiv b_i(u)\otimes\xi_{\tau i}(-u)+1\otimes b_i(u) &\pmod{\Yi\otimes \Y_{Q_+}[\![u^{-1}]\!]}.\label{eq:b-co-est}
\end{align}
If $\g$ is not of type $\mathsf A_{2n}$, we have
\beq
b_i(u)\equiv 
\tfrac12 \{x_i^+(u),\xi_{\tau i}(-u)\}+x_{\tau i}^-(-u), 
\qquad \pmod{\Y_{\alpha_i+Q_+}^{\gge 0}[\![u^{-1}]\!]}.\label{eq:b-est}
\eeq
If $\g$ is of type $\mathsf A_{2n}$, then
\beq
b_i(u)\equiv \begin{cases}
\xi_{n+1}(-u)x_n^+(u)+x_{n+1}^-(-u),&\text{ if }i=n,\\
x_{n+1}^+(u)\xi_n(-u)+x_n^-(-u),&\text{ if }i=n+1,\\
\tfrac12 \{x_i^+(u),\xi_{\tau i}(-u)\}+x_{\tau i}^-(-u), &\text{ otherwise. }
\end{cases}   \pmod{\Y_{\alpha_i+Q_+}^{\gge 0}[\![u^{-1}]\!]}.\label{eq:b-est2}
\eeq
\end{thm}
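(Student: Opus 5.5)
The plan is to prove everything by induction on the degree $r$, using the recursions in $\Yi$ together with the triangular estimates for Yangian currents in Lemma~\ref{lem:copro}. Throughout we work modulo the left ideal spanned by elements of positive $Q$-degree. Since $\Y_{Q_+}$ is a subalgebra, and $[\xi_{j,s},\Y_{Q_+}]\subset\Y_{Q_+}$, products of the form (degree-$0$ element)$\cdot\Y_{Q_+}$ and $\Y_{Q_+}\cdot$(anything of degree $\gge 0$) stay inside $\Y_{Q_+}+\Y_{Q_+}\cdot\Y$. So we must be careful with ordering. It is convenient to work in $\Y/\Y\Y_{Q_+}$, i.e.\ modulo the left ideal generated by $\Y_{Q_+}$.

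\emph{Base case.} Read off the estimates for $r=0,1$ from the explicit formulas for $\varphi$. For example, $\varphi(b_{i,0})\equiv -x_{\tau i}^-$ and $\varphi(h_{i,0})=\xi_i-\xi_{\tau i}+\frac14\wp_i$. In $\varphi(h_{i,1})$ the sum $\sum(\alpha,\alpha_i)\{x^+_\alpha,x^+_{\tau\alpha}\}$ lies in $\Y_{Q_+}$, so $h_{i,1}\equiv \xi_{i,1}+\xi_{\tau i,1}-\xi_i\xi_{\tau i}+\dots$. This matches the $u^{-2}$ coefficient of $(1+\wp_i/4u)\xi_i(u)\xi_{\tau i}(-u)$, including the $\Theta_i$ and $\wp_i$ corrections in type $\mathsf A_{2n}$ via \eqref{eq:hi1-simle}. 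For \eqref{eq:b-est}, the estimate is taken modulo the finer space $\Y^{\gge0}_{\alpha_i+Q_+}$. Here the $x^-$ part is $x_{\tau i}^-(-u)$ exactly, while the $x^+$ part is controlled using \eqref{bi1-embedding} and $[x_i^+,x_\alpha^+]x_{\tau\alpha}^+\in\Y^{\gge0}_{\alpha_i+Q_+}$.

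\emph{Induction for $b_i(u)$.} Use $b_{i,r+1}=\gamma_{ii}^{-1}[\tl h_{i,1},b_{i,r}]$ from \eqref{eq:hi1bjr-new}. In all cases $\gamma_{ii}=c_{ii}+c_{\tau i,i}\neq0$; it equals $1$ at the central nodes of $\mathsf A_{2n}$. The key point is that $\ad\tl h_{i,1}$, computed via \eqref{tlxicom}, acts on $x_{j,s}^+$ by $(\alpha_i+\alpha_{\tau i},\alpha_j)$ times a degree shift. It acts on $x_{\tau i,s}^-$ by $-(\ldots)$ times a shift, which matches the spectral parameter $-u$. It preserves $\Y^{\gge0}_{\alpha_i+Q_+}$ modulo commutators with the $Q_+$-part of $\tl h_{i,1}$, and those commutators land in the ideal again. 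Closing the induction on the $\xi_{\tau i}(-u)$ factor requires the identity $[\tl\xi_{k,1},\xi_{\tau i}(-u)]=0$ together with the generating-function form of \eqref{eq:relexHX}.

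\emph{Induction for $h_i(u)$.} Use \eqref{qs5} with $j=\tau i$: $h_{\tau i,r+s+1}$ is expressed through $[b_{i,r+1},b_{\tau i,s}]-[b_{i,r},b_{\tau i,s+1}]$ and anticommutators. Substituting the $b$-estimates gives this modulo $\Y_{Q_+}$. The surviving terms come from $[x^+_i(u),x^-_i(\cdot)]$, which yield $\xi_i$-currents by \eqref{eq:relXX}. A generating-function computation then gives the product $\xi_i(u)\xi_{\tau i}(-u)$.

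\emph{Coproduct estimates.} The coproduct estimates follow similarly. The base cases are \eqref{coprohi1} and the formula for $\Delta(b_{i,1})$. In these, every correction term has second tensor factor $x^+_\alpha$ or $[x^+_{\tau\alpha},x_i^+]$, hence lies in $\Yi\otimes\Y_{Q_+}$. The term $-b_{i,0}\otimes\xi_{\tau i}$ supplies the coefficient of $\xi_{\tau i}(-u)$ at order $u^{-2}$. Then apply $\Delta$ to the same recursions. This requires that $\Yi\otimes\Y_{Q_+}$ be stable under $\ad\Delta(\tl h_{i,1})$ and under multiplication by $\Delta(\Yi)$ in the relevant orders, which follows from the $Q$-grading on the second factor and Lemma~\ref{lem:copro}. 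For \eqref{eq:h-co-est}, use \eqref{qs5} again.

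I expect the main obstacle to be the ideal bookkeeping. $\Y_{Q_+}$ is not an ideal, so products such as $b_{i,r}\,b_{\tau i,s}$ with $b_{i,r}\equiv\ldots+\Y_{Q_+}$ may produce degree-zero contributions when a $Q_+$ term meets an $x^-$ term. To handle this I would first show, term by term, that each error term in $b_i(u)$ can be put in the form $\Y^{\lle0}\cdot\Y^{\gge0}_{Q_+}$. Then in $[b_{i,r},b_{\tau i,s}]$ the only cross terms that reach degree $0$ are explicitly computable. These are exactly the sources of the $\wp_i$ correction in type $\mathsf A_{2n}$, and there I would verify the resulting factor $(1+\wp_i/4u)$ using the central-node formulas \eqref{eq:b-est2}.
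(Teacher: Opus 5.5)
Your treatment of \eqref{eq:b-est}, \eqref{eq:h-est} and \eqref{eq:b-co-est} has the same architecture as the paper: recursion by $\ad\tl h_{i,1}$, then \eqref{bi0biu}, then recursion by $\ad\Delta(\tl h_{i,1})$. However, your route to \eqref{eq:h-co-est} breaks down.

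\textbf{The coproduct estimate for $h_i(u)$.} Applying $\Delta$ to \eqref{qs5} needs $\Yi\otimes\Y_{Q_+}$ to be stable under brackets and products with $\Delta(b_{\tau i,0})$ and $\Delta(b_{\tau i,1})$, and it is not. Indeed, $\Delta(b_{\tau i,0})$ contains $-1\otimes x_i^-$, while the error in $\Delta(b_{i,1})$ contains $h_{i,0}\otimes x_i^+$ (see \eqref{helper98}). Then $[1\otimes x_i^-,h_{i,0}\otimes x_i^+]=-h_{i,0}\otimes\xi_i$ has second-factor degree $0$. Since \eqref{eq:b-co-est} controls $\Delta(b_i(u))$ only modulo $\Yi\otimes\Y_{Q_+}$, such cross terms are out of reach. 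Neither the $Q$-grading nor Lemma~\ref{lem:copro} fixes this.

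The missing idea is a degree argument that exploits the coideal property. By \eqref{eq:h-est}, $h_i(u)=(1+\tfrac{\wp_i}{4u})\mc H_i(u)+\theta_i(u)$ with $\mc H_i(u)=\xi_i(u)\xi_{\tau i}(-u)$ and $\theta_i(u)$ of degrees in $Q_+$. Lemma~\ref{lem:copro} gives $\Delta(\mc H_i)=\mc H_i\otimes\mc H_i+\mc E_i$ with $\mc E_i\in\Y_{Q_-}\otimes\Y_{Q_+}$. The difference $\Delta(h_i(u))-h_i(u)\otimes\mc H_i(u)$ lies in $\Yi\otimes\Y$. Its components of second-factor degree $\beta\notin Q_+$ come only from $\Delta(\theta_i)-\theta_i\otimes\mc H_i$. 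Their first factor lies in $\Yi\cap\Y_{Q_+}$ if $\beta=0$, or in $\Yi\cap\Y_{Q_{j,+}}$ if $\beta$ has a negative $\alpha_j$-coefficient. Both intersections vanish by Lemma~\ref{lem:cap=0}, so no computation with \eqref{qs5} is needed.

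\textbf{The $b$-recursion.} Set $A(u)=\tfrac12\{x_i^+(u),\xi_{\tau i}(-u)\}+x_{\tau i}^-(-u)$. The Cartan part $\tl\xi_{i,1}+\tl\xi_{\tau i,1}$ of $\tl h_{i,1}$ gives $\gamma_{ii}uA(u)-\tfrac12\gamma_{ii}\{x_i^+,\xi_{\tau i}(-u)\}+\gamma_{ii}x_{\tau i}^-$. This falls short of the required $\gamma_{ii}(uA(u)-b_{i,0})$ by $\tfrac12\gamma_{ii}\{x_i^+,\xi_{\tau i}(-u)-1\}$. That term has degree exactly $\alpha_i$, so it is not in $\Y^{\gge0}_{\alpha_i+Q_+}$.

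It is supplied by the positive-degree part of $\tl h_{i,1}$. Its component of degree $\alpha_i+\alpha_{\tau i}$, bracketed with $x_{\tau i}^-(-u)$ via \eqref{xi+xi-}, produces exactly this term. At the central nodes of $\mathsf A_{2n}$ this component is $x_{n+1}^+x_n^+$, which produces the ordered products in \eqref{eq:b-est2}.

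So your claim that commutators with the $Q_+$-part land back in the error space fails for precisely the term that closes the induction. The identity $[\tl\xi_{k,1},\xi_{\tau i}(-u)]=0$ and \eqref{eq:relexHX} do not generate the $\xi_{\tau i}(-u)$ factor. The same mechanism drives \eqref{eq:b-co-est}: the term $\gamma_{ii}\,b_{i,0}\otimes x_{\tau i}^+$ of \eqref{coprohi1}, bracketed with $1\otimes b_i(u)$ and evaluated by \eqref{eq:b-est}, yields $\gamma_{ii}\,b_{i,0}\otimes(\xi_{\tau i}(-u)-1)$.

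\textbf{The quotient you work in.} Do not work in $\Y/\Y\Y_{Q_+}$. That left ideal contains degree-zero elements such as $x_i^-x_i^+$. Estimates there do not determine the degree-zero part of $h_i(u)$, so they are strictly weaker than \eqref{eq:h-est}.

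The ordering problem you anticipate is already resolved by \eqref{eq:b-est} itself. The errors of $b_{i,r}$ and $b_{\tau i,s}$ have degrees in $\alpha_i+Q_+$ and $\alpha_{\tau i}+Q_+$. Their only negative-degree terms are multiples of $x_{\tau i,r}^-$ and $x_{i,s}^-$. Hence every product involving an error term in $[b_{i,r},b_{\tau i,s}]$ or $\{b_{i,r},b_{\tau i,s}\}$ lies in $\Y_{Q_+}$. In particular, the $\wp_i$-correction comes from the main terms, namely the scalar in $h_{i,0}$ and the ordered products at the central nodes, not from error cross terms.
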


Theorem \ref{thm:hb} is proved in Section \ref{sec:proof-thmC} with type $\mathsf A_{2n}$ included. The $q$-analogue of \eqref{eq:h-est}--\eqref{eq:h-co-est} for affine iquantum groups of quasi-split type $\mathsf A$ were obtained by Li--Prze\'zdziecki in \cite{LP26qchar}. A uniform extension to all split and quasi-split untwisted affine types is given by Lu--Pan in \cite[Theorem~3.18]{LuPan26Drinfeld}, using the framework of iHopf algebras. Their approach, however, does not carry over directly to the Yangian setting.

\subsection{Restriction of modules}
Let $\mathscr P_\I:=(1+u^{-1}\bC[\![u^{-1}]\!])^{\I}$ denote the set of $\I$-tuples of power series in $u^{-1}$ with constant term 1. We call an element of $\mathscr P_{\I}$ an $^\imath\bm\ell$-weight. We write an $^\imath\bm\ell$-weight in the form $\bla=(\la_i(u))_{i\in \I}$, 
where
\[
\la_i(u)=1+\sum_{r\gge 0}\la_{i,r}u^{-r-1}.
\]
For a finite-dimensional $\Yi$-module $\mc M$ and $\bla\in\mathscr P_{\I}$, the \textit{$^\imath\bm\ell$-weight space} of $^\imath\bm\ell$-weight $\bla$ is the subspace of $\mc M$ defined by
\[
\mc M_{\bla}:=\big\{v\in \mc M\mid \forall~ i\in\I \text{ and }r\gge 0, \exists~ p>0 \text{ such that } (h_{i,r}-\la_{i,r})^pv=0\big\}.
\]
If $\mc M_{\bla}\ne 0$, then we say $\bla$ is an $^\imath\bm\ell$-weight of $\mc M$.

The estimate \eqref{eq:h-est} can be used to determine the spectrum of $h_i(u)$ acting on a finite-dimensional $\Y$-module regarded as a $\Yi$-module by restriction. 

For a monic polynomial $P(u)$ in $u$, denote
\[
P^-(u)=(-1)^{\deg P}P(-u).
\]
Note that $P^-(u)$ is the monic polynomial whose roots are the opposites of the roots of $P(u)$. 

By \eqref{eq:h-est}, $h_i(u)$ and $(1+\frac{\wp_i}{4u})\xi_i(u)\xi_{\tau i}(-u)$, acting on a finite-dimensional $\Y$-module, share the same eigenvalues . The following is an immediate corollary of \cite[Theorem~1]{Kn95}. 

\begin{cor}\label{cor:cha}
Let $\g$ be a simple Lie algebra of type $\mathsf{ADE}$. Let $\mathscr V$ be a finite-dimensional $\Y$-module regarded as a $\Yi$-module by restriction. Then an $^\imath\bm\ell$-weight $\bm\la$ of $\mathscr V$ has the form
\[
\la_i(u)=\Big(1+\frac{\wp_i}{4u}\Big)\frac{\Xi_i(u+\hf )\,\Xi_{\tau i}^-(u-\hf )}{\Xi_i(u-\hf )\,\Xi_{\tau i}^-(u+\hf )},\qquad \text{ for }i\in \I,
\]
where $\Xi_i(u)$ is a monic polynomial in $u$.
\end{cor}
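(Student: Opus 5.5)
We derive Corollary~\ref{cor:cha} directly from the estimate \eqref{eq:h-est} of Theorem~\ref{thm:hb} together with Knight's description of $\bm\ell$-weights of finite-dimensional Yangian modules \cite[Theorem~1]{Kn95}. The latter says that every $\bm\ell$-weight $\bm\mu=(\mu_j(u))_{j\in\I}$ of a finite-dimensional $\Y$-module $\mathscr V$, i.e.\ every joint generalized eigenvalue of the commuting currents $\xi_j(u)$, has the form
\[
\mu_j(u)=\frac{\Xi_j(u+\hf)}{\Xi_j(u-\hf)}
\]
for monic polynomials $\Xi_j$ (with the normalization $(\alpha_j,\alpha_j)=2$ matching Definition~\ref{def:Yangian}).

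\textbf{Step 1: triangularize the action.} Decompose $\mathscr V=\bigoplus_{\mu}\mathscr V_\mu$ into $\h$-weight spaces and choose a total order refining the dominance order on weights. Since $\mathscr V$ is finite-dimensional, this gives a finite filtration
\[
F_{\ge\mu}=\bigoplus_{\nu\ge\mu}\mathscr V_\nu .
\]
Elements of $\Y_{Q_+}$ map $\mathscr V_\nu$ into $\bigoplus_{\beta\in Q_+}\mathscr V_{\nu+\beta}$, so they strictly raise the filtration. The Cartan currents $\xi_{j,r}$ preserve each $\mathscr V_\nu$. The $h_{i,r}$ lie in $\Yi$, which commutes with nothing in particular, but by \eqref{eq:h-est} each coefficient satisfies
\[
h_{i,r}=c_{i,r}+y_{i,r},
\]
where $c_{i,r}$ is the $u^{-r-1}$-coefficient of $\big(1+\tfrac{\wp_i}{4u}\big)\xi_i(u)\xi_{\tau i}(-u)$ and $y_{i,r}\in\Y_{Q_+}$. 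Hence each $h_{i,r}$ preserves every $F_{\ge\mu}$ and acts on the associated graded space $F_{\ge\mu}/F_{>\mu}\cong\mathscr V_\mu$ by $c_{i,r}$.

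\textbf{Step 2: compare spectra.} The $h_{i,r}$ commute with one another and stabilize a common flag, namely any refinement of the filtration above. The joint generalized eigenvalues of a commuting family on $\mathscr V$ are therefore the union over $\mu$ of those on the graded pieces. On $\mathscr V_\mu$ the commuting operators $\xi_{j,s}$ have joint generalized eigenvalues given by the $\bm\ell$-weights $\bm\mu$ of $\mathscr V$ of weight $\mu$. Consequently $c_{i,r}$ has generalized eigenvalue equal to the corresponding coefficient of
\[
\Big(1+\frac{\wp_i}{4u}\Big)\mu_i(u)\mu_{\tau i}(-u).
\]
Because the $c_{i,r}$ are polynomials in the commuting $\xi$'s, the joint spectrum transforms coefficientwise, and this holds jointly in $i$ and $r$. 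One caveat is that infinitely many $r$ occur, but the spans involved are finite-dimensional, so the joint generalized eigenspace decomposition of the commutative algebra generated by all $h_{i,r}$ exists and the argument goes through.

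\textbf{Step 3: substitute Knight's form.} It remains to rewrite $\mu_{\tau i}(-u)$. We have
\[
\mu_{\tau i}(-u)=\frac{\Xi_{\tau i}(-u+\hf)}{\Xi_{\tau i}(-u-\hf)}
=\frac{(-1)^{d}\,\Xi_{\tau i}^-(u-\hf)}{(-1)^{d}\,\Xi_{\tau i}^-(u+\hf)},
\qquad d=\deg\Xi_{\tau i},
\]
using $\Xi^-(v)=(-1)^{\deg\Xi}\Xi(-v)$ with $v=u\mp\hf$. The signs cancel, which yields exactly the displayed formula for $\la_i(u)$.

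The only point requiring care is Step~2, namely the passage from a triangular congruence modulo $\Y_{Q_+}$ to equality of generalized eigenvalues; the triangular filtration argument handles it. Everything else is bookkeeping with the sign convention for $P^-$.
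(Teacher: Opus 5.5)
Your Steps~1 and~2 are correct. They are the triangularity argument behind the paper's one-line proof: by \eqref{eq:h-est}, every $^\imath\bm\ell$-weight of $\mathscr V$ has the form $\la_i(u)=\big(1+\frac{\wp_i}{4u}\big)\mu_i(u)\mu_{\tau i}(-u)$ for some $\bm\ell$-weight $\bm\mu$ of $\mathscr V$.

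The gap is the version of \cite[Theorem~1]{Kn95} you use in Step~3. It is false that every $\bm\ell$-weight of a finite-dimensional $\Y$-module has the form $\Xi_j(u+\hf)/\Xi_j(u-\hf)$ with $\Xi_j$ a monic polynomial. The $u^{-1}$-coefficient of such a ratio is $\deg\Xi_j\gge0$. The $u^{-1}$-coefficient of $\mu_j(u)$ is the eigenvalue of $\xi_{j,0}$, which is negative for some $j$ on the lowest weight space of any nontrivial module. For example, on the two-dimensional evaluation module of $\Y(\fksl_2)$ the lowest $\bm\ell$-weight is $\frac{u-a-1}{u-a}$. In the normalization of Definition~\ref{def:Yangian}, Knight's theorem only provides
\[
\mu_j(u)=\frac{Q_j(u+\hf)\,R_j(u-\hf)}{Q_j(u-\hf)\,R_j(u+\hf)},\qquad Q_j,\,R_j\ \text{monic polynomials}.
\]
Substituting this into your Step~3 gives the displayed formula only with the rational functions $\Xi_i=Q_i/R_i$. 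So the polynomiality of $\Xi_i$, which is the actual content of the corollary, is left unproved.

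The missing step is that the spectral flip $u\mapsto -u$ turns the denominator-type factors of $\mu_{\tau i}$ into numerator-type ones. Using $P(-w)=(-1)^{\deg P}P^-(w)$ as in your Step~3,
\[
\mu_{\tau i}(-u)=\frac{Q^-_{\tau i}(u-\hf)\,R^-_{\tau i}(u+\hf)}{Q^-_{\tau i}(u+\hf)\,R^-_{\tau i}(u-\hf)}.
\]
Put $\Xi_i:=Q_i\,R^-_{\tau i}$ for every $i\in\I$; this is a monic polynomial. Since $(P_1P_2)^-=P_1^-P_2^-$ and $(P^-)^-=P$, we get $\Xi^-_{\tau i}=Q^-_{\tau i}R_i$. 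Hence
\[
\mu_i(u)\,\mu_{\tau i}(-u)=\frac{Q_i(u+\hf)R^-_{\tau i}(u+\hf)\,Q^-_{\tau i}(u-\hf)R_i(u-\hf)}{Q_i(u-\hf)R^-_{\tau i}(u-\hf)\,Q^-_{\tau i}(u+\hf)R_i(u+\hf)}=\frac{\Xi_i(u+\hf)\,\Xi^-_{\tau i}(u-\hf)}{\Xi_i(u-\hf)\,\Xi^-_{\tau i}(u+\hf)},
\]
and this holds simultaneously for all $i$, including $\tau i=i$. Replacing your Step~3 by this regrouping completes the proof. It is also the reason every $^\imath\bm\ell$-weight of a restricted module, not only the highest one, has polynomial type.
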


\section{Proof of Theorem \ref{thm:min}}\label{sec:pf-thm1}
\subsection{An auxiliary lemma}
We first recall the following lemma from \cite[Proposition~2.7]{LZ24}, which is closely related to Definition \ref{def:YN} and \eqref{assi}. 
\begin{lem}\label{lem:full-gr}
The algebra $\mathrm U(\g[z]^{\check\omega})$ is generated by $\sfh_{i,r},\sfb_{i,r}$ for $i\in \I,r\in\bN$, subject to the relations \eqref{eq:qsclassical1}--\eqref{eq:qsclassical0}, for $i,j\in\I$, $r,s\in \bN$:
\begin{align}
\label{eq:qsclassical1}
&[\sfh_{i,r},\sfh_{j,s}]=0,\qquad \sfh_{i,r} =(-1)^{r+1} \sfh_{\tau i,r},
\\
\label{eq:qsclassical2}
&[\sfh_{i,r},\sfb_{j,s}]=\big(c_{ij}-(-1)^r c_{\tau i,j} \big) \sfb_{j,r+s},
\\
\label{eq:qsclassical3}
&[\sfb_{i,r+1},\sfb_{j,s}]-[\sfb_{i,r},\sfb_{j,s+1}]=-2\delta_{j,\tau i} (-1)^r \sfh_{\tau i,r+s+1}, 
\\
&[\sfb_{i,r},\sfb_{\tau i,s}]=(-1)^r \sfh_{\tau i,r+s},\qquad\qquad \qquad  c_{i,\tau i}=0,\label{eq:qsclassical0}
\end{align}
and the Serre relations \eqref{eq:qss1}-\eqref{eq:qss4}: 
\begin{align}
&[\sfb_{i,0},\sfb_{j,0}]=\delta_{\tau i, j}\sfh_{j,0}, &c_{ij}=0,\label{eq:qss1}
\\
&\big[\sfb_{i,0},[\sfb_{i,0},\sfb_{\tau i,0}]\big]  
= 4\sfb_{i,0}, &c_{i,\tau i}=-1,\label{eq:qss2}\\
&\big[\sfb_{i,0},[\sfb_{i,0},\sfb_{j,0}]\big] =0, & c_{ij}=-1 \text{ and }i\ne \tau i\ne j, \label{eq:qss3}\\
&\big[\sfb_{i,0},[\sfb_{i,0},\sfb_{j,0}]\big] =-\sfb_{j,0}, & c_{ij}=-1 \text{ and }i=\tau i.\label{eq:qss4}
\end{align}
Here the elements $\sfb_{i,r}$ and $\sfh_{i,r}$ are identified as elements of $\g[z]^{\check\omega}$ by
\beq\label{eq:hb-explicit}
\sfh_{i,r} \mapsto (\xi_i-(-1)^r\xi_{\tau i})z^{r},\quad \sfb_{i,r}\mapsto (x_i^+-(-1)^rx_{\tau i}^-)z^r.
\eeq
\end{lem}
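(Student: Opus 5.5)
The plan is to transfer the relations from the known presentation of the iYangian $\Yi$ to the associated graded algebra, and then identify the latter with $\mathrm U(\g[z]^{\check\omega})$. Lemma~\ref{lem:full-gr} is recalled from \cite[Proposition~2.7]{LZ24}, but I would still verify it directly along the following lines.

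\emph{Step 1: the relations hold.} Let $\mathcal A$ be the algebra generated by $\sfh_{i,r},\sfb_{i,r}$ subject to \eqref{eq:qsclassical1}--\eqref{eq:qss4}. First I check that the assignment \eqref{eq:hb-explicit} respects every relation. The Cartan and $[\sfh,\sfb]$ relations follow from $[\xi_i,x_j^\pm]=\pm c_{ij}x_j^\pm$. Relation \eqref{eq:qsclassical3} follows from $[x_i^+,x_j^-]=\delta_{ij}\xi_i$: the two sides give $-\delta_{j,\tau i}\big((-1)^{s}+(-1)^{r+1}\big)$-type sign combinations, which collapse to $-2(-1)^r\sfh_{\tau i,r+s+1}$ because only the parity of $r+s+1$ matters. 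The Serre relations are degree-zero statements in $\mathfrak k$ and can be checked in $\g$. Hence we obtain a surjection $\pi:\mathcal A\to \mathrm U(\g[z]^{\check\omega})$. It is surjective because the elements \eqref{eq:hb-explicit} together with their iterated brackets span $\g[z]^{\check\omega}$; this uses the fact that the $x_i^\pm$ generate $\g$ and that $\check\omega$-invariants are spanned by the symmetrized elements.

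\emph{Step 2: injectivity via a spanning argument.} Define $\mathfrak a\subset\mathcal A$ as the Lie subalgebra generated by the $\sfh$'s and $\sfb$'s. It suffices to show that $\dim$ of each $z$-degree piece of $\mathfrak a$ is at most that of $\g[z]^{\check\omega}$; then $\mathcal A=\mathrm U(\mathfrak a)$ and $\mathfrak a\cong\g[z]^{\check\omega}$. The degree-zero part of $\mathfrak a$ is a quotient of the Lie algebra given by the generators $\sfb_{i,0},\sfh_{i,0}$ with the Serre relations. By the Serre-type presentation of $\mathfrak k$ (the classical limit of the quasi-split iquantum group), this part maps onto $\mathfrak k$ with dimension at most $\dim\mathfrak k$. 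Next, the operator $\ad\sfh_{i,1}$ shifts $\sfb_{j,s}\mapsto \gamma_{ij}\sfb_{j,s+1}$ by \eqref{eq:qsclassical2}, and $\gamma_{ij}$ is nonzero for suitable $i$. Using this together with \eqref{eq:qsclassical3}, I would show that the degree-$r$ piece is spanned by brackets of the form $[\sfb_{i,r},X]$, with $X$ of degree $0$. It is therefore a $\mathfrak k$-module generated by the images of the $\sfb_{i,r}$, and it is a quotient of $\mathfrak k$ for even $r$ and of $\mathfrak p$ for odd $r$, as $\mathfrak k$-modules. The bound on $\mathfrak p$ is the main obstacle: I would identify the odd piece with the $\mathfrak k$-module generated by lowest-weight-type vectors, and use \eqref{eq:qsclassical0} and the mixed relations to pin down the defining relations of $\mathfrak p$ as a cyclic $\mathfrak k$-module.

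An alternative route to Step 2 avoids this module-theoretic bound. Take the filtration on $\Yi$, whose associated graded algebra is a quotient of $\mathcal A$, because the relations of Definition~\ref{def:YN} degenerate to those of $\mathcal A$ at top degree. Then use the PBW theorem for $\Yi$ \cite{LZ24}, which says that $\gr\Yi$ has the size of $\mathrm U(\g[z]^{\check\omega})$. Combined with the surjection $\mathcal A\to\mathrm U(\g[z]^{\check\omega})$ and the identification \eqref{assi}, this forces $\pi$ to be an isomorphism. This argument needs a spanning set of $\mathcal A$ of the PBW shape, and that would be the step requiring care.
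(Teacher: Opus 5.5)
The paper gives no proof of this lemma; it imports it from \cite[Proposition~2.7]{LZ24}, so your proposal has to stand on its own. Step~1 is essentially fine. For generation you need a leading-term induction on root height, not merely the fact that the $x_i^\pm$ generate $\g$.

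The gap is Step~2, which is the entire content of the lemma. Your frame is right: pass to the Lie algebra $L$ with the same presentation, so that $\mathcal A=\mathrm U(L)$, and aim for $\dim L_r\le\dim(\g[z]^{\check\omega})_r$. The degree-zero bound is acceptable provided you cite an actual Serre-type presentation of $\mathfrak k$ for the quasi-split involution; a $q\to1$ limit of the $\imath$Serre presentation is not automatically a presentation.

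For $r\ge1$, however, your \emph{therefore} does not follow. Generation of $L_r$ over $L_0\cong\mathfrak k$ by the degree-$r$ generators only makes $L_r$ a quotient of a free $\mathrm U(\mathfrak k)$-module, which gives no dimension bound at all. To get a quotient of $\mathfrak k$ ($r$ even) or of $\mathfrak p$ ($r$ odd), you must show that these generators satisfy in $L$ a complete set of defining relations of the $\mathfrak k$-module $\mathfrak k z^r$, resp.\ $\mathfrak p z^r$. Concretely, these are higher-degree, polarized analogues of \eqref{eq:qss1}--\eqref{eq:qss4}, such as formulas for $[\sfb_{i,r_1},[\sfb_{i,r_2},\sfb_{j,r_3}]]$ (compare \eqref{eq:A2-higher-Serre1}). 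None of these is a defining relation once the degrees are positive. They must be extracted from \eqref{eq:qsclassical2}--\eqref{eq:qsclassical3} and the degree-zero Serre relations, which is the kind of work done in \S\ref{sec:pf-thm1} and Appendix~\ref{sec:pf-prop2}. You name the odd case as the main obstacle without giving a mechanism, and the even case is simply asserted.

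Your alternative route does not get around this. The filtration gives a surjection $\mathcal A\twoheadrightarrow\gr\Yi$, and composing it with $\varrho^{-1}$ from \eqref{assi} just gives back $\pi$. So knowing that $\gr\Yi$ has the size of $\mathrm U(\g[z]^{\check\omega})$ only tells you that $\mathcal A$ is at least that large, which surjectivity of $\pi$ already says. Injectivity of $\pi$ needs an upper bound on $\mathcal A$, that is, the PBW-shaped spanning set you postpone. That is the same missing bound as in your first route, and once you have it, $\Yi$ plays no role.

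There is also a circularity risk. As far as I can tell, the PBW theorem for the algebra of Definition~\ref{def:YN} is obtained in \cite{LZ24} with this very presentation as an input. That is the same pattern by which Proposition~\ref{prop:red} is used in \S\ref{sec:pfthmA} to produce a map \emph{out of} $\mathrm U(\g[z]^{\check\omega})$. Relying on it here would therefore likely be circular.
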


Recall $\gamma_{ij}$ from \eqref{eq:gamma-def}. We have the following simple observation.
\begin{lem}\label{lem:gener}
We have $$\sfb_{i,r+1}=\gamma_{ii}^{-1}[\sfh_{i,1},\sfb_{i,r}],\quad 
2\sfh_{i,r+1}=[\sfb_{\tau i,0},\sfb_{i,r+1}]-[\sfb_{\tau i,1},\sfb_{i,r}].
$$
If $\g$ is not of type $\mathsf A_{2n}$, then $\sfh_{i,r+1}=[\sfb_{\tau i,0},\sfb_{i,r+1}]$. In particular, the algebra $\mathrm{U}(\g[z]^{\check\omega})$ is generated by $\sfb_{i,0}$, $\sfh_{i,0}$ and $\sfh_{i,1}$.
\end{lem}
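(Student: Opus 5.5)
The three identities come from specializing the relations of Lemma~\ref{lem:full-gr}, or from computing directly in $\g[z]^{\check\omega}$ via \eqref{eq:hb-explicit}. Generation then follows by induction on the degree $r$.

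\emph{First identity.} Take $j=i$ and degree $1$ in \eqref{eq:qsclassical2}. This gives $[\sfh_{i,1},\sfb_{i,r}]=(c_{ii}+c_{\tau i,i})\sfb_{i,r+1}=\gamma_{ii}\sfb_{i,r+1}$. We must check that $\gamma_{ii}\neq0$. Since $c_{ii}=2$ and $c_{\tau i,i}\in\{2,0,-1\}$, according as $\tau i=i$, $\tau i\ne i$ non-adjacent, or $\tau i\ne i$ adjacent, we get $\gamma_{ii}\in\{4,2,1\}$. Hence we may divide.

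\emph{Second identity.} Apply \eqref{eq:qsclassical3} with first index $\tau i$, second index $j=i$ (so $j=\tau(\tau i)$ and the $\delta$ equals $1$), and degrees $0$ and $r$. This gives
\[
[\sfb_{\tau i,1},\sfb_{i,r}]-[\sfb_{\tau i,0},\sfb_{i,r+1}]=-2\sfh_{i,r+1},
\]
which rearranges to the stated formula.

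\emph{Third identity.} Outside type $\mathsf A_{2n}$ we have $c_{i,\tau i}\in\{2,0\}$, and we split into these two cases.
\begin{itemize}
\item If $c_{i,\tau i}=0$, the claim is exactly \eqref{eq:qsclassical0} with indices $(\tau i,i)$ and degrees $(0,r+1)$.
\item If $\tau i=i$, compute in $\g[z]$ using $[x_i^+,x_i^-]=\xi_i$:
\[
[(x_i^+-x_i^-),(x_i^+-(-1)^{r+1}x_i^-)z^{r+1}]=\big(-(-1)^{r+1}\xi_i+\xi_i\big)z^{r+1}=(\xi_i-(-1)^{r+1}\xi_{i})z^{r+1}=\sfh_{i,r+1}.
\]
\end{itemize}
This is where the exclusion of $\mathsf A_{2n}$ enters. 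When $c_{i,\tau i}=-1$, the commutator $[\sfb_{\tau i,0},\sfb_{i,r+1}]$ also involves the root vector of $\alpha_i+\alpha_{\tau i}$, so it is not a Cartan element.

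\emph{Generation.} Let $A$ be the subalgebra generated by $\sfb_{i,0},\sfh_{i,0},\sfh_{i,1}$. By the first identity and induction on $r$, every $\sfb_{i,r}$ lies in $A$. The second identity then expresses every $\sfh_{i,r+1}$, including the case $r=0$ and type $\mathsf A_{2n}$, through elements $\sfb_{j,s}$ already in $A$. Since the $\sfh_{i,r},\sfb_{i,r}$ generate $\mathrm U(\g[z]^{\check\omega})$ by Lemma~\ref{lem:full-gr}, we get $A=\mathrm U(\g[z]^{\check\omega})$. I expect the only step needing care to be the index bookkeeping in \eqref{eq:qsclassical3}; the rest is immediate.
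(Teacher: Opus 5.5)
Your proof is correct and is essentially the intended argument. The paper records this lemma as a ``simple observation'' without proof, to be read off from the relations of Lemma~\ref{lem:full-gr} (equivalently, computed via \eqref{eq:hb-explicit}), which is exactly what you do. That includes checking $\gamma_{ii}\neq0$ and handling the fixed case $\tau i=i$ by the direct computation in $\g[z]$.
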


\begin{lem}\label{lem:add-}
We have the following relations:
\begin{enumerate}
    \item $\big[\sfh_{1,1},\big[\sfb_{1,1},[\sfh_{1,1},\sfb_{1,1}]\big]\big]=0$, if $\g$ is of type $\mathsf A_1$;
    \item $\big[\sfb_{1,1},[\sfb_{1,1},\sfb_{2,0}]\big]
+
\big[\sfb_{2,1},[\sfb_{2,1},\sfb_{1,0}]\big]
=0$ and $\big[[\sfh_{1,1},\sfb_{1,1}],\sfb_{2,1}\big]
-
\big[\sfb_{1,0},
[\sfh_{1,1},[\sfh_{1,1},\sfb_{2,1}]]
\big]
=0$, if $\g$ is of type $\mathsf A_2$.
\end{enumerate}
\end{lem}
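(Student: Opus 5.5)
The plan is to verify both parts by direct computation inside $\g[z]$, using the explicit identification \eqref{eq:hb-explicit}. Since $\mathrm U(\g[z]^{\check\omega})\subset \mathrm U(\g[z])$ and every expression in the lemma is an iterated commutator, each one is in fact an element of the Lie algebra $\g[z]$. It therefore suffices to compute these brackets in $\g\otimes \bC[z]$, where $[xz^a,yz^b]=[x,y]z^{a+b}$.

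\emph{Type $\mathsf A_1$ ($\tau=\id$).} Here $\sfh_{1,1}=2\xi_1 z$ and $\sfb_{1,1}=(x^++x^-)z$.
\begin{enumerate}
\item First, $[\sfh_{1,1},\sfb_{1,1}]=2\cdot 2(x^+-x^-)z^2=4(x^+-x^-)z^2$.
\item Next, $[\sfb_{1,1},(x^+-x^-)z^2]=[x^++x^-,x^+-x^-]z^3=-2\xi_1 z^3$, a Cartan element.
\item Finally, bracketing this Cartan element with $\sfh_{1,1}$ gives $0$.
\end{enumerate}
This proves part (1).

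\emph{Type $\mathsf A_2$ ($\tau$ swaps $1,2$).} Here
\[
\sfb_{1,1}=(x_1^++x_2^-)z,\quad \sfb_{2,1}=(x_2^++x_1^-)z,\quad \sfb_{1,0}=x_1^+-x_2^-,\quad \sfb_{2,0}=x_2^+-x_1^-,
\]
and $\sfh_{1,1}=(\xi_1+\xi_2)z$. Write $x_{12}^\pm$ for the root vectors of $\alpha_1+\alpha_2$ and use $[x_1^+,x_2^+]=\eta x_{12}^+$ and $[x_2^-,x_1^-]=\eta' x_{12}^-$.

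For the first relation:
\begin{enumerate}
\item Compute $[\sfb_{1,1},\sfb_{2,0}]=z\big([x_1^+,x_2^+]-[x_1^+,x_1^-]+[x_2^-,x_2^+]-[x_2^-,x_1^-]\big)=z\big(\eta x_{12}^+-\xi_1-\xi_2-\eta' x_{12}^-\big)$.
\item Bracket once more with $\sfb_{1,1}$. This produces terms in $x_1^\pm,x_2^\pm$ multiplied by $z^2$.
\item Carry out the analogous computation for $[\sfb_{2,1},[\sfb_{2,1},\sfb_{1,0}]]$.
\end{enumerate}
The claim is that the two $z^2$-coefficients cancel. The mechanism I expect is this. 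Swapping $1\leftrightarrow 2$ corresponds to applying $\tau$, and $\tau$ acts on the fixed root space $\g_{\alpha_1+\alpha_2}$ by $-1$ (Example \ref{eg:A2n}). The $\tau$-transform of the first double commutator therefore equals minus the second, up to the $z$-parity sign. It is cleaner to verify this by hand with the matrices \eqref{eq:rt-A2n0} in $\fksl_3$. The main task is keeping track of these signs and normalizations.

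For the second relation, I use that $\sfh_{1,1}$ acts on $x_1^+,x_2^+$ by $(\alpha_1+\alpha_2,\alpha_j)=1$ and on $x_1^-,x_2^-$ by $-1$.
\begin{enumerate}
\item $[\sfh_{1,1},\sfb_{1,1}]=(x_1^+-x_2^-)z^2$.
\item $[\sfh_{1,1},[\sfh_{1,1},\sfb_{2,1}]]=(x_2^++x_1^-)z^3$.
\item The first term is then $[(x_1^+-x_2^-),(x_2^++x_1^-)]z^3$.
\item The second term is $[x_1^+-x_2^-,x_2^++x_1^-]z^3$, which is the same expression.
\end{enumerate}
Hence their difference is $0$ identically.

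The main obstacle is the first $\mathsf A_2$ identity, which depends on the nontrivial sign of $\tau$ on $\g_{\alpha_1+\alpha_2}$. I would settle it with the explicit $3\times3$ matrices rather than abstract structure constants.
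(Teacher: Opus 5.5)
Computing the iterated brackets directly in $\g[z]$ via \eqref{eq:hb-explicit} is sound, and it amounts to the ``straightforward calculation'' the paper cites from Lemma~\ref{lem:full-gr}. Your computations for part (1) and for the second $\mathsf A_2$ identity are correct and complete.

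\textbf{The gap.} You never evaluate the first $\mathsf A_2$ identity, and the mechanism you propose for it is wrong. The diagram automorphism $\tau$, extended $z$-linearly to $\g[z]$, is a Lie algebra automorphism with $\tau(\sfb_{1,r})=\sfb_{2,r}$ and $\tau(\sfb_{2,r})=\sfb_{1,r}$. It therefore sends $\mathsf S_+:=[\sfb_{1,1},[\sfb_{1,1},\sfb_{2,0}]]$ to $\mathsf S_-:=[\sfb_{2,1},[\sfb_{2,1},\sfb_{1,0}]]$ exactly, with no sign. The fact that $\tau$ acts by $-1$ on $\g_{\alpha_1+\alpha_2}$ just says $\tau[x_1^+,x_2^+]=[x_2^+,x_1^+]$. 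That is already built into $\tau$ being a homomorphism, so it cannot produce an extra sign. Even granting your premise $\tau(\mathsf S_+)=-\mathsf S_-$, you would only get $\mathsf S_++\mathsf S_-=\mathsf S_+-\tau(\mathsf S_+)$, which still needs proof. The symmetry gives nothing by itself: the sum vanishes because each summand vanishes.

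\textbf{How to close it.} Finish your step 2. Use $(\alpha_1+\alpha_2,\alpha_1)=(\alpha_1+\alpha_2,\alpha_2)=1$, $(\alpha_1,\alpha_2)=-1$, and the Serre relations $[x_1^+,[x_1^+,x_2^+]]=0=[x_2^-,[x_2^-,x_1^-]]$. Then
\[
\big[x_1^++x_2^-,\,[x_1^+,x_2^+]-\xi_1-\xi_2-[x_2^-,x_1^-]\big]
=0+x_1^++x_2^--x_1^+-x_2^-+0=0.
\]
The middle four terms are, in order:
\[
[\xi_1+\xi_2,x_1^+]=x_1^+,\qquad
-[x_1^+,[x_2^-,x_1^-]]=[\xi_1,x_2^-]=x_2^-,
\]
\[
[x_2^-,[x_1^+,x_2^+]]=[\xi_2,x_1^+]=-x_1^+,\qquad
[\xi_1+\xi_2,x_2^-]=-x_2^-.
\]
Hence $\mathsf S_+=0$, and $\mathsf S_-=\tau(\mathsf S_+)=0$. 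This agrees with what the paper extracts, in the opposite direction, from the summed relation; see \eqref{eq:A2-two-shifted-Serre} and Lemma~\ref{lem:A2n-central-seeds}. No explicit $3\times 3$ matrices or structure constants $\eta,\eta'$ are needed.
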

\begin{proof}
It follows from a straightforward calculation by \eqref{eq:qsclassical1}--\eqref{eq:qsclassical0}.
\end{proof}

\subsection{Proof of Proposition \ref{prop:red}, Part I}\label{sec:pf-prop1}
Let $\mathcal U$ be the algebra generated by $\bb_{i,r}$ and $\bh_{i,r}$ for $r=0,1$ subject to the relations \eqref{tchh}--\eqref{tcfSerre3} along with 
\begin{itemize}
    \item the extra relation \eqref{eq:extraA1-gr} when $\g$ is of type $\mathsf A_{1}$,
    \item and the extra relations \eqref{eq:A2-extra-Serre}--\eqref{eq:A2-extra-Cartan} when $\g$ is of type $\mathsf A_{2}$.
\end{itemize}
Then it follows from Lemma \ref{lem:full-gr} and Lemma \ref{lem:add-} that there exists an algebra homomorphism
\beq\label{eq:psi}
\begin{split}
\psi:~&\mathcal U\to \mathrm{U}(\g[z]^{\check\omega})\\
& \bh_{i,r}\mapsto \sfh_{i,r},\qquad \bb_{i,r}\mapsto \sfb_{i,r} \qquad (r=0,1),
\end{split}
\eeq
which is surjective by Lemma \ref{lem:gener}. 

In the rest of this subsection, we exclude the case $\g=\mathfrak{sl}_{2n+1}$, which needs to be considered in a very different manner in Appendix \ref{sec:pf-prop2}.

Define
\beq\label{bhdef}
\bb_{i,r+1}=\gamma_{ii}^{-1}[\bh_{i,1},\bb_{i,r}],\quad \bh_{i,r+1}=[\bb_{\tau i,0},\bb_{i,r+1}]\qquad (r>0).
\eeq
To prove $\psi$ is an isomorphism, it suffices to prove that \eqref{eq:qsclassical1}--\eqref{eq:qsclassical0} can be deduced from \eqref{tchh}--\eqref{tcfSerre3} and the extra relation \eqref{eq:extraA1-gr} when $\g$ is of type $\mathsf A_1$. This should justify the existence of the inverse homomorphism of $\psi$. We establish this result by adapting the methods of \cite{Lev93gen,GNW18,Lu26min}, partitioning the proof into five main steps for clarity.

The relations \eqref{tcbb}--\eqref{tcbb2} imply that 
\be
[\bb_{\tau i,0},\bb_{i,1}]=\bh_{i,1}.
\ee
Hence \eqref{bhdef} also holds true if $r=0$.

\subsubsection*{\bf Step 1} We first derive some easy relations that will be used later.

Using an easy induction on $r$ by applying $\mathrm{ad}(\bh_{j,1})$ to \eqref{tch1b} recursively together with \eqref{tchh} and \eqref{bhdef}, we obtain
\beq\label{hi1bjr}[\bh_{i,0},\bb_{j,r}]=(c_{ij}-c_{\tau i,j})\bb_{j,r},\qquad [\bh_{i,1},\bb_{j,r}]=\gamma_{ij}\bb_{j,r+1} 
\eeq
for $i,j\in \I$ and $r\in\bN$.

\begin{lem}\label{lem:bb}
If $i,j$ are in distinct $\tau$-orbits, then the relation \eqref{eq:qsclassical3} holds:
\be
[\bb_{i,r+1},\bb_{j,s}]-[\bb_{i,r},\bb_{j,s+1}]=0.
\ee
\end{lem}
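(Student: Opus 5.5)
We argue in the algebra $\mathcal U$, where $\bb_{i,r}$ and $\bh_{i,r}$ for $r\gge2$ are defined by \eqref{bhdef}. The relation \eqref{hi1bjr} gives $[\bh_{k,1},\bb_{j,r}]=\gamma_{kj}\bb_{j,r+1}$ for all $k,j$. Set
\[
X_{r,s}:=[\bb_{i,r+1},\bb_{j,s}]-[\bb_{i,r},\bb_{j,s+1}].
\]
The aim is to prove $X_{r,s}=0$ by induction on $r+s$. The base case $X_{0,0}=0$ is \eqref{tcbb}: since $j\ne\tau i$, the $\delta$-term vanishes.

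For the inductive step, apply $\ad(\bh_{k,1})$ to $X_{r,s}=0$ for a suitable $k$. The Leibniz rule together with \eqref{hi1bjr} yields
\[
\gamma_{ki}\,X_{r+1,s}+\gamma_{kj}\,X_{r,s+1}=0 .
\]
We obtain a second independent linear relation between $X_{r+1,s}$ and $X_{r,s+1}$ from the identity
\[
X_{r+1,s}+X_{r,s+1}=[\bb_{i,r+2},\bb_{j,s}]-[\bb_{i,r},\bb_{j,s+2}].
\]
Concretely, we choose two indices $k$ (for instance $k=i$ and $k=j$) so that the $2\times2$ matrix
\[
\begin{pmatrix}\gamma_{ii}&\gamma_{ij}\\ \gamma_{ji}&\gamma_{jj}\end{pmatrix}
\]
is nonsingular. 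Then both $X_{r+1,s}$ and $X_{r,s+1}$ vanish. This is the standard argument of \cite{Lev93gen,GNW18}.

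The obstacle is the degenerate case, where this determinant $\gamma_{ii}\gamma_{jj}-\gamma_{ij}^2$ vanishes. Note that $\gamma_{ii}=2+c_{\tau i,i}\in\{1,2\}$, which is nonzero; here $\gamma_{ii}=1$ occurs only for the central pair of $\mathsf A_{2n}$, a case that is excluded in this subsection. Since $i$ and $j$ lie in distinct orbits, we have $|\gamma_{ij}|\lle 2$. Singularity can therefore occur when $\gamma_{ij}=\pm2$, for example for adjacent noncentral orbits in types $\mathsf A_{2n-1}$ and $\mathsf D$. To handle this we use a third index $k$ whose row $(\gamma_{ki},\gamma_{kj})$ is not proportional to $(\gamma_{ii},\gamma_{ij})$. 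Such a $k$ exists: take a node adjacent to one of the two orbits but not to the other, or, when the rank is small, use $\tau$-fixed nodes. If no such $k$ exists, we fall back on the trick of \cite{Lev93gen,Lu26min} and proceed as follows.

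We first prove the statement for the sequence $X_{0,s}$ using $\ad(\bh_{j,1})$ only. We then treat the symmetrized sums $\sum_{r+s=m}$ and argue separately for antisymmetric combinations. For these we express $\bb_{i,r+1}=\gamma_{ii}^{-1}[\bh_{i,1},\bb_{i,r}]$ and use the Jacobi identity together with \eqref{tchh}, which moves an $\bh_{i,1}$ across. The resulting identity is $X_{r+1,s}-X_{r,s+1}=\gamma_{ii}^{-1}[\bh_{i,1},X_{r,s}]+(\text{terms in }X_{r',s'},\ r'+s'=r+s)$, and these terms vanish by induction. Combined with the $\ad(\bh_{k,1})$ relation, this gives a nonsingular system in the remaining case.

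Finally, when $c_{ij}=0$ and the orbits are non-adjacent, the statement is easier. From \eqref{tcfSerre0} and \eqref{tcbb}, applying $\ad(\bh_{i,1})$ and $\ad(\bh_{j,1})$ separately gives $[\bb_{i,r},\bb_{j,s}]=0$ for all $r,s$. Indeed, we have $\gamma_{ij}=0$ there, so $\ad(\bh_{i,1})$ raises only the $i$-index, and $\ad(\bh_{j,1})$ raises only the $j$-index.
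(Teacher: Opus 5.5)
Your core mechanism matches the paper's proof: induct on $r+s$, take the base case from \eqref{tcbb}, and apply $\ad(\bh_{i,1})$ and $\ad(\bh_{j,1})$ to $X_{r,s}=0$ to get a linear system with coefficient matrix $\bigl(\begin{smallmatrix}\gamma_{ii}&\gamma_{ij}\\ \gamma_{ji}&\gamma_{jj}\end{smallmatrix}\bigr)$. The gap is that you never establish the one substantive point, namely that this matrix is invertible, and your analysis of it is wrong.

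First, $\gamma_{ii}=c_{ii}+c_{\tau i,i}$ takes these values:
\begin{itemize}
\item $2$ on a noncentral orbit of length two;
\item $1$ on the central pair of $\mathsf A_{2n}$;
\item $4$, not $1$ or $2$, when $\tau i=i$.
\end{itemize}

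Second, suppose $i,j$ lie in two distinct noncentral orbits of length two. Then $c_{ij}=c_{\tau i,j}=-1$ is impossible: together with its $\tau$-image it would make the four distinct nodes $i,j,\tau i,\tau j$ form a cycle in the Dynkin diagram, which is a tree. Hence $\gamma_{ij}\in\{0,-1\}$ and the determinant is $4$ or $3$.

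The value $\gamma_{ij}=-2$ occurs only when one of the two nodes is $\tau$-fixed and adjacent to the other orbit. Then its diagonal entry is $4$, and the determinant is $4$, or $12$ if both nodes are fixed. For the central pair of $\mathsf A_{2n}$ against another orbit, $\gamma_{ij}\in\{0,-1\}$ and the determinant is $2$ or $1$.

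So the determinant never vanishes. Your proposed degenerate example gives $\gamma_{ij}=-1$ and determinant $3$ for adjacent noncentral orbits in $\mathsf A_{2n-1}$, and type $\mathsf D$ has only one noncentral orbit, so it has no such pair at all. This short check is the paper's ``case-by-case study'', and it is all that is needed.

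Everything after ``The obstacle is the degenerate case'' should therefore be dropped, and as written it would not work anyway.
\begin{itemize}
\item The identity $X_{r+1,s}+X_{r,s+1}=[\bb_{i,r+2},\bb_{j,s}]-[\bb_{i,r},\bb_{j,s+2}]$ is a tautology and imposes no constraint.
\item The fallback identity $X_{r+1,s}-X_{r,s+1}=\gamma_{ii}^{-1}[\bh_{i,1},X_{r,s}]+(\text{lower terms})$ is false. The Leibniz computation gives exactly $\gamma_{ii}^{-1}[\bh_{i,1},X_{r,s}]=X_{r+1,s}+\gamma_{ii}^{-1}\gamma_{ij}X_{r,s+1}$, which is your first equation again, so no new relation arises.
\item Using $\ad(\bh_{k,1})$ for a third index $k$ is legitimate but unnecessary.
\end{itemize}
Your last paragraph on nonadjacent orbits is correct, and there one even gets $[\bb_{i,r},\bb_{j,s}]=0$. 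That case is already covered by the general argument, since the determinant is then $\gamma_{ii}\gamma_{jj}\neq0$.

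Note also that the lemma is reused in Appendix~\ref{sec:pf-prop2} for type $\mathsf A_{2n}$. There the central case, with determinant $1$, is needed, so it should not be set aside as excluded.
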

\begin{proof}
Let $\mathcal X(r,s):=[\bb_{i,r+1},\bb_{j,s}]-[\bb_{i,r},\bb_{j,s+1}]$. We prove by induction on $r+s$ that  $\mathcal X(r,s)=0$. 

The base case $r=s=0$ follows from \eqref{tcbb}. Suppose now $\mathcal X(r,s)=0$. Applying $\mathrm{ad}(\bh_{i,1})$ to $\mathcal X(r,s)=0$, we find that 
\beq\label{helper6}
\gamma_{ii}\mathcal X(r+1,s)+\gamma_{ij}\mathcal X(r,s+1)=0.
\eeq
Applying $\mathrm{ad}(\bh_{j,1})$ to $\mathcal X(r,s)=0$, we find that 
\beq\label{helper7}
\gamma_{ij}\mathcal X(r+1,s)+\gamma_{jj}\mathcal X(r,s+1)=0.
\eeq
Via case-by-case study, we find that the coefficient matrix of the linear system \eqref{helper6}--\eqref{helper7} is invertible and hence it has only trivial solution. Therefore, $\mathcal X(r+1,s)=\mathcal X(r,s+1)=0$.
\end{proof}

By \eqref{bhdef}, we have $[\bb_{\tau i,0},\bb_{i,1}]=\bh_{i,1}$ and $[\bb_{\tau i,0},\bb_{i,2}]=\bh_{i,2}$. Applying $\mathrm{ad}(\bh_{i,1})$ to the first equality and using \eqref{hi1bjr}, we find that
\beq\label{eq:fork-h2-bracket}
[\bb_{\tau i,1},\bb_{i,1}]+[\bb_{\tau i,0},\bb_{i,2}]=0\Longrightarrow [\bb_{i,1},\bb_{\tau i,1}]=\bh_{i,2}.
\eeq
In particular, 
\beq\label{eq:h2i-taui}
\bh_{\tau i,2}=[\bb_{\tau i,1},\bb_{i,1}]=-\bh_{i,2}.
\eeq
For $i\in\I_1$, relation \eqref{tcbb} gives
$[\bb_{i,0},\bb_{i,1}]=0$. Applying $\operatorname{ad}(\bh_{i,1})$
to this equality further gives $[\bb_{i,0},\bb_{i,2}]=0$.

\subsubsection*{\bf Step 2}
In this step, we assume that $\I_1$ contains at least two distinct elements $i,j$. Moreover, we can assume that $c_{ij}=-1$ and $c_{i,\tau j}=c_{\tau i,j}=c_{i,\tau i}=c_{j,\tau j}=0$.

\begin{lem}\label{lem:h2bj}
Let $i\in\I_1$ satisfy $c_{i,\tau i}=0$. Then we have $$[\bh_{i,2},\bb_{i,0}]=
 (c_{ii}-c_{i,\tau i})\bb_{i,2},\qquad 
 [\bh_{i,2},\bb_{k,r}]=
 (c_{ik}-c_{\tau i,k})\bb_{k,r+2},$$ 
 where $k\notin \{i,\tau i\}$.
\end{lem}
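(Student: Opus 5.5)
We work inside $\mathcal U$, where $\bh_{i,2}=[\bb_{\tau i,0},\bb_{i,2}]=[\bb_{i,1},\bb_{\tau i,1}]$ by \eqref{bhdef} and \eqref{eq:fork-h2-bracket}. The plan is to compute $[\bh_{i,2},\bb_{k,r}]$ through these bracket expressions and the Jacobi identity, reducing everything to relations among $\bb$'s already available in $\mathcal U$, and then to shift the degree using \eqref{hi1bjr}.

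\textbf{The case $k\notin\{i,\tau i\}$.} Write $\bh_{i,2}=[\bb_{\tau i,0},\bb_{i,2}]$. Then
\[
[\bh_{i,2},\bb_{k,r}]=\big[\bb_{\tau i,0},[\bb_{i,2},\bb_{k,r}]\big]-\big[\bb_{i,2},[\bb_{\tau i,0},\bb_{k,r}]\big].
\]
Since $k$ lies in a different $\tau$-orbit from $i$, Lemma~\ref{lem:bb} lets us move degree freely between the two factors, so $[\bb_{i,2},\bb_{k,r}]=[\bb_{i,0},\bb_{k,r+2}]$. Similarly $[\bb_{\tau i,0},\bb_{k,r}]$ depends only on the total degree, so the second term may be rewritten as $[\bb_{i,0},[\bb_{\tau i,0},\bb_{k,r+2}]]$ plus correction terms coming from commuting degrees across nested brackets.

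A cleaner route is by induction on $r$. For the base case $r=0$, use $\bh_{i,2}=[\bb_{i,1},\bb_{\tau i,1}]$ and Lemma~\ref{lem:bb} to write
\[
[\bh_{i,2},\bb_{k,0}]=[\bb_{i,1},[\bb_{\tau i,1},\bb_{k,0}]]-[\bb_{\tau i,1},[\bb_{i,1},\bb_{k,0}]].
\]
Then replace each $\bb_{\cdot,1}$ by $\gamma^{-1}[\bh_{\cdot,1},\bb_{\cdot,0}]$ and pull the $\bh_{\cdot,1}$'s outside using \eqref{hi1bjr}. This reduces the expression to $\ad(\bh_{i,1})$ and $\ad(\bh_{\tau i,1})=\ad(\bh_{i,1})$ (by \eqref{tch-sym}) applied to $[\bh_{i,0},\bb_{k,0}]$. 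Here one uses $[\bb_{i,0},[\bb_{\tau i,0},\bb_{k,0}]]-[\bb_{\tau i,0},[\bb_{i,0},\bb_{k,0}]]=[\bh_{i,0},\bb_{k,0}]$, which follows from \eqref{tcbb2} with $c_{i,\tau i}=0$ together with Jacobi. Collecting terms gives $(c_{ik}-c_{\tau i,k})\bb_{k,2}$.

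For the inductive step, apply $\ad(\bh_{k,1})$. It commutes with $\bh_{i,2}$ only if we know $[\bh_{k,1},\bh_{i,2}]=0$. That commutation follows by expanding $\bh_{i,2}=[\bb_{i,1},\bb_{\tau i,1}]$ and using $\gamma_{ki}=\gamma_{k,\tau i}$, so that the contributions cancel: $\gamma_{ki}\big([\bb_{i,2},\bb_{\tau i,1}]+[\bb_{i,1},\bb_{\tau i,2}]\big)=\gamma_{ki}[\bh_{i,1},\bh_{i,2}]/\gamma_{ii}$, which vanishes by the same argument once $[\bh_{i,1},\bh_{i,2}]=0$. That last identity should follow from $[\bb_{i,0},\bb_{i,2}]=0$ (Step 1) and antisymmetry. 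Granting this, induction on $r$ gives the formula for all $r$, since $\gamma_{kk}\neq 0$.

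\textbf{The case $k=i$.} Here $\bh_{i,2}=[\bb_{\tau i,0},\bb_{i,2}]$, and we need
\[
[\bh_{i,2},\bb_{i,0}]=\big[\bb_{\tau i,0},[\bb_{i,2},\bb_{i,0}]\big]-\big[\bb_{i,2},[\bb_{\tau i,0},\bb_{i,0}]\big].
\]
The first term vanishes by Step 1. The second equals $\big[\bb_{i,2},\bh_{i,0}\big]$ up to sign, by \eqref{tcbb2}, and this is $(c_{ii}-c_{\tau i,i})\bb_{i,2}$ by \eqref{hi1bjr}. Getting the sign to match is a direct check.

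\textbf{Main obstacle.} The hardest part is the base case for $k\notin\{i,\tau i\}$. The reduction to degree zero requires careful control of mixed-degree brackets $[\bb_{\tau i,1},\bb_{k,0}]$, and this is exactly where the Step~2 hypotheses ($c_{ij}=-1$ with the other pairings zero) and the Serre relations \eqref{tcfSerre0}, \eqref{tcfSerre2} may be needed. If the direct manipulation fails, I would instead obtain the $k$-relation by applying $\ad(\bb_{j,0})$ to the established $k=i$ relation, for a neighbor $j$.
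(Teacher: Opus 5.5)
Your computation of $[\bh_{i,2},\bb_{i,0}]$ is correct. You use $\bh_{i,2}=[\bb_{\tau i,0},\bb_{i,2}]$ and $[\bb_{i,0},\bb_{i,2}]=0$, whereas the paper uses $\bh_{i,2}=[\bb_{i,1},\bb_{\tau i,1}]$ and $[\bb_{i,0},\bb_{i,1}]=0$; both work.

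The case $k\notin\{i,\tau i\}$ is not proved, and the base case is the first gap. Rewriting $\bb_{i,1},\bb_{\tau i,1}$ as $\tfrac12[\bh_{i,1},\bb_{\cdot,0}]$ and pulling $\ad(\bh_{i,1})$ outside only produces derivation consequences of the degree-zero relation $[[\bb_{i,0},\bb_{\tau i,0}],\bb_{k,0}]=[\bh_{\tau i,0},\bb_{k,0}]$. These carry no information about $[\bh_{i,2},\bb_{k,0}]$. For example, apply $\ad(\bh_{i,1})^2$. The $\bb_{k,1}$-terms cancel because $[\bb_{i,1},\bb_{\tau i,0}]=-[\bb_{i,0},\bb_{\tau i,1}]$, and the unknown enters only through
\[
4[[\bb_{i,2},\bb_{\tau i,0}],\bb_{k,0}]+4[[\bb_{i,0},\bb_{\tau i,2}],\bb_{k,0}]+8[[\bb_{i,1},\bb_{\tau i,1}],\bb_{k,0}]=(-4-4+8)[\bh_{i,2},\bb_{k,0}]=0,
\]
using $\bh_{\tau i,2}=-\bh_{i,2}$. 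So you get $0=0$, and the same happens with $\ad(\bh_{k,1})$ or mixed products. Your first route through $\bh_{i,2}=[\bb_{\tau i,0},\bb_{i,2}]$ is circular for the same reason: Jacobi just returns $[\bh_{i,2},\bb_{k,r}]$. The fallback via $\ad(\bb_{j,0})$ does not isolate $[\bh_{i,2},\bb_{j,0}]$ either.

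What is missing is to apply Lemma~\ref{lem:bb} inside the nested brackets.
\begin{itemize}
\item Start from $\bh_{i,2}=[\bb_{i,1},\bb_{\tau i,1}]$ and expand by Jacobi.
\item Shift $[\bb_{i,1},\bb_{k,r}]=[\bb_{i,0},\bb_{k,r+1}]$ and $[\bb_{\tau i,1},\bb_{k,r}]=[\bb_{\tau i,0},\bb_{k,r+1}]$, then expand by Jacobi again.
\item The two terms $\pm[\bh_{\tau i,1},\bb_{k,r+1}]$ coming from $[\bb_{i,0},\bb_{\tau i,1}]=\bh_{\tau i,1}=-[\bb_{i,1},\bb_{\tau i,0}]$ cancel.
\item One more shift gives $[\bb_{i,0},[\bb_{k,r+2},\bb_{\tau i,0}]]+[\bb_{\tau i,0},[\bb_{i,0},\bb_{k,r+2}]]=-[\bh_{\tau i,0},\bb_{k,r+2}]=(c_{ik}-c_{\tau i,k})\bb_{k,r+2}$.
\end{itemize}
This is the paper's proof. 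It needs no Serre relations and none of the Step~2 hypotheses on neighbors. Also, $[[\bb_{i,0},\bb_{\tau i,0}],\bb_{k,0}]=[\bh_{\tau i,0},\bb_{k,0}]=-[\bh_{i,0},\bb_{k,0}]$, so the identity you quote has the wrong sign.

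The inductive step is the second gap. You correctly reduce $[\bh_{k,1},\bh_{i,2}]$ to $\tfrac{\gamma_{ki}}{2}[\bh_{i,1},\bh_{i,2}]$. However, $[\bh_{i,1},\bh_{i,2}]=0$ does not follow from $[\bb_{i,0},\bb_{i,2}]=0$ and antisymmetry. By \eqref{eq:fork-h2-bracket} it is exactly relation \eqref{eq:extraD-gr}, whose redundancy is one of the paper's main results. The paper obtains it only later, in Lemmas~\ref{lem:hi1hi2-com} and \ref{lem:extraD-redundant-gr}. Both of those derivations rest on the present lemma or on its computation, so your induction is circular. It is also unnecessary, because the direct computation above holds for every $r\in\bN$ at once.
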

\begin{proof}
By the equalities above and $c_{\tau i,i}=0$, we have
\begin{align*}
 [\bh_{i,2},\bb_{i,0}]
 &=
 \bigl[[\bb_{i,1},\bb_{\tau i,1}],\bb_{i,0}\bigr]
 \\
 &=
 \bigl[\bb_{i,1},[\bb_{\tau i,1},\bb_{i,0}]\bigr]
 +
 \bigl[[\bb_{i,1},\bb_{i,0}],\bb_{\tau i,1}\bigr]
 \\
 &=
 \bigl[\bb_{i,1},[\bb_{\tau i,1},\bb_{i,0}]\bigr]=
 [\bb_{i,1},-\bh_{i,1}]=
(c_{ii}-c_{\tau  i,i})\bb_{i,2}.
\end{align*}
For the second identity, we compute
\begin{align*}
 [\bh_{i,2},\bb_{k,r}]
 &=
 \bigl[[\bb_{i,1},\bb_{\tau i,1}],\bb_{k,r}\bigr]
 \\
 &=
 \bigl[[\bb_{i,1},\bb_{k,r}],\bb_{\tau i,1}\bigr]
 +
 \bigl[\bb_{i,1},[\bb_{\tau i,1},\bb_{k,r}]\bigr]
 \\
 &=
 \bigl[[\bb_{i,0},\bb_{k,r+1}],\bb_{\tau i,1}\bigr]
 +
 \bigl[\bb_{i,1},[\bb_{\tau i,0},\bb_{k,r+1}]\bigr]
 \\
 &=
 [\bh_{\tau i,1},\bb_{k,r+1}]
 +
 \bigl[\bb_{i,0},[\bb_{k,r+1},\bb_{\tau i,1}]\bigr]\\
 &\qquad\qquad-
 [\bh_{\tau i,1},\bb_{k,r+1}]
 +
 \bigl[\bb_{\tau i,0},[\bb_{i,1},\bb_{k,r+1}]\bigr]
 \\
 &=
 \bigl[\bb_{i,0},[\bb_{k,r+1},\bb_{\tau i,1}]\bigr]
 +
 \bigl[\bb_{\tau i,0},[\bb_{i,1},\bb_{k,r+1}]\bigr]
 \\
 &=
 \bigl[\bb_{i,0},[\bb_{k,r+2},\bb_{\tau i,0}]\bigr]
 +
 \bigl[\bb_{\tau i,0},[\bb_{i,0},\bb_{k,r+2}]\bigr]
 \\
 &=
 -\bigl[[\bb_{i,0},\bb_{\tau i,0}],\bb_{k,r+2}\bigr]=
 -[\bh_{\tau i,0},\bb_{k,r+2}]
=
 (c_{ik}-c_{\tau i,k})\bb_{k,r+2}.
\end{align*}
This proves the lemma.
\end{proof}

\begin{lem}
We have $[\bb_{i,1},[\bb_{i,1},\bb_{j,0}]]=0$.
\end{lem}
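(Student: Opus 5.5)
The plan is to obtain the relation by shifting degrees in the finite-type Serre relation \eqref{tcfSerre2}. We apply the derivations $\mathrm{ad}(\bh_{i,1})$ and $\mathrm{ad}(\bh_{j,1})$, using \eqref{hi1bjr} together with Lemma~\ref{lem:bb} and the Step~1 identities $[\bb_{i,0},\bb_{i,1}]=0$ and $[\bb_{i,0},\bb_{i,2}]=0$.

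In the Step~2 setting the relevant constants are
\[
\gamma_{ii}=\gamma_{jj}=2,\qquad \gamma_{ij}=\gamma_{ji}=c_{ij}+c_{\tau i,j}=-1 .
\]
Since $c_{ij}=-1$ and $i\ne\tau i\ne j$, relation \eqref{tcfSerre2} gives
\[
\mathcal S(0,0,0)=0,\qquad\text{where}\quad \mathcal S(r_1,r_2,s):=\bigl[\bb_{i,r_1},[\bb_{i,r_2},\bb_{j,s}]\bigr].
\]

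We first record two symmetries of $\mathcal S$.
\begin{itemize}
\item Since $i$ and $j$ lie in distinct $\tau$-orbits, Lemma~\ref{lem:bb} shows that $[\bb_{i,r},\bb_{j,s}]$ depends only on $r+s$. Hence $\mathcal S(r_1,r_2,s)$ depends only on $r_1$ and $r_2+s$.
\item By the Jacobi identity, $\mathcal S(r_1,r_2,s)-\mathcal S(r_2,r_1,s)=\bigl[[\bb_{i,r_1},\bb_{i,r_2}],\bb_{j,s}\bigr]$. This vanishes whenever $\{r_1,r_2\}\subset\{0,1\}$ or $\{r_1,r_2\}=\{0,2\}$, by the Step~1 identities.
\end{itemize}
Together these give
\[
\mathcal S(0,1,0)=\mathcal S(1,0,0)=\mathcal S(0,0,1),\qquad \mathcal S(1,0,1)=\mathcal S(1,1,0).
\]

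Next we apply $\mathrm{ad}(\bh_{i,1})$ to $\mathcal S(0,0,0)=0$. By \eqref{hi1bjr} this yields
\[
2\mathcal S(1,0,0)+2\mathcal S(0,1,0)-\mathcal S(0,0,1)=0 .
\]
With the identifications above this reads $3\mathcal S(1,0,0)=0$, so $\mathcal S(1,0,0)=0$.

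We then apply $\mathrm{ad}(\bh_{i,1})$ and $\mathrm{ad}(\bh_{j,1})$ separately to $\mathcal S(1,0,0)=0$. Using $\mathcal S(1,0,1)=\mathcal S(1,1,0)$, we expect the two linear equations
\[
2\mathcal S(2,0,0)+\mathcal S(1,1,0)=0,\qquad -\mathcal S(2,0,0)+\mathcal S(1,1,0)=0 .
\]
The second equation uses $\gamma_{ji}=-1$ on the two $\bb_i$ factors and $\gamma_{jj}=2$ on $\bb_{j}$. The coefficient matrix has determinant $3\neq0$, so $\mathcal S(1,1,0)=0$, which is the claimed relation.

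The main obstacle is bookkeeping rather than conceptual. We must check that every rewriting of $\mathcal S$ invoked is justified only by Lemma~\ref{lem:bb} and the two Step~1 vanishing brackets. In particular, we need to avoid any bracket such as $[\bb_{i,1},\bb_{i,2}]$ that is not yet available at this stage. The coefficients also need to be verified carefully, since the argument only needs the two determinants to be nonzero.
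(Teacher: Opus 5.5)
Your argument is correct, and it takes a shorter route than the paper. The coefficients check out. Applying $\ad(\bh_{i,1})$ and $\ad(\bh_{j,1})$ to $\mathcal S(1,0,0)=0$ gives $2\mathcal S(2,0,0)+2\mathcal S(1,1,0)-\mathcal S(1,0,1)=0$ and $-\mathcal S(2,0,0)-\mathcal S(1,1,0)+2\mathcal S(1,0,1)=0$. After $\mathcal S(1,0,1)=\mathcal S(1,1,0)$ these become your system, whose determinant is $3$.

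The paper works with the symmetrized expression $\mathsf X(r_1,r_2\,|\,s)=\mathcal S(r_1,r_2,s)+\mathcal S(r_2,r_1,s)$. Using only the degree-one derivations it reaches \eqref{eq:X-intermediate}, namely $\mathsf X(2,0\,|\,0)+\mathsf X(1,1\,|\,0)=0$ and $\mathsf X(1,0\,|\,1)=0$. It then brings in $\ad(\bh_{i,2})$ and $\ad(\bh_{j,2})$ through Lemma~\ref{lem:h2bj} to get a second equation. That equation forces $\mathsf X(2,0\,|\,0)=0$, and hence $\mathsf X(1,1\,|\,0)=0$.

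Your unsymmetrized bookkeeping keeps the antisymmetric part that the paper discards. The Jacobi identity with $[\bb_{i,0},\bb_{i,1}]=0$, together with $[\bb_{i,1},\bb_{j,0}]=[\bb_{i,0},\bb_{j,1}]$ (just \eqref{tcbb}), gives $\mathsf X(1,0\,|\,1)=2[\bb_{i,1},[\bb_{i,1},\bb_{j,0}]]$. So the second relation in \eqref{eq:X-intermediate} already yields the lemma, and the paper's degree-two step is unnecessary. Your proof needs only \eqref{tcbb}, \eqref{tcfSerre2} and \eqref{hi1bjr} in low degrees. The vanishing $[\bb_{i,0},\bb_{i,2}]=0$ that you list is never used.

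The paper's symmetrized scheme buys two things. It does not depend on $[\bb_{i,0},\bb_{i,1}]=0$, which holds only because $\tau i\neq i$; at a $\tau$-fixed node this bracket equals $\bh_{i,1}$. It also exercises the $\ad(\bh_{i,2})$ mechanism that is reused later, for instance in Lemma~\ref{lem:bibi-zero-I1} and in the $\mathsf A_2$ reconstruction.
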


\begin{proof}
For \(r_1,r_2,s\in\mathbb N\), set
\[
 \mathsf X(r_1,r_2\,|\,s)
 :=
 [\bb_{i,r_1},[\bb_{i,r_2},\bb_{j,s}]]
 +
 [\bb_{i,r_2},[\bb_{i,r_1},\bb_{j,s}]].
\]
Notice that $\mathsf X(r_1,r_2\,|\,s)
 =
 \mathsf X(r_2,r_1\,|\,s)$.

Applying $\mathrm{ad}(\bh_{i,1})$ and
$\mathrm{ad}(\bh_{j,1})$ to $\mathsf X(0,0\,|\,0)=0$,
respectively, we obtain as in the proof of Lemma \ref{lem:bb} that
$\mathsf X(1,0\,|\,0)
 =
 \mathsf X(0,0\,|\,1)
 =
 0$. Similarly, by applying $\mathrm{ad}(\bh_{i,1})$ and
$\mathrm{ad}(\bh_{j,1})$ to
\(\mathsf X(1,0\,|\,0)=0\), we have
\beq\label{eq:X-intermediate}
 \mathsf X(2,0\,|\,0)+\mathsf X(1,1\,|\,0)=0,
 \qquad
 \mathsf X(1,0\,|\,1)=0. 
\eeq

We next apply $\mathrm{ad}(\bh_{i,2})$ and
$\mathrm{ad}(\bh_{j,2})$ to
\(\mathsf X(0,0\,|\,0)=0\). Using Lemma \ref{lem:h2bj}, we obtain
\begin{align*}
 &(c_{ii}-c_{i,\tau i})\mathsf X(2,0\,|\,0)
 +(c_{ii}-c_{i,\tau i})\mathsf X(0,2\,|\,0)
 +(c_{ij}-c_{\tau i,j})\mathsf X(0,0\,|\,2)
 =0,
 \\
 &(c_{ji}-c_{\tau j,i})\mathsf X(2,0\,|\,0)
 +(c_{ji}-c_{\tau j,i})\mathsf X(0,2\,|\,0)
 +(c_{jj}-c_{\tau j,j})\mathsf X(0,0\,|\,2)
 =0.
\end{align*}
Note that $\mathsf X(2,0\,|\,0)=\mathsf X(0,2\,|\,0)$. The same approach in the proof of Lemma \ref{lem:bb} implies that $\mathsf X(2,0\,|\,0)=0$, which by \eqref{eq:X-intermediate} further completes the proof.
\end{proof}

Now we move to the next relation. Note that if $i,j\in \I_1$, $i\ne j$, and $c_{i,\tau j}=0$, then $[\bb_{i,1},\bb_{\tau j,1}]=0$ follows easily from \eqref{tcfSerre0} and Lemma \ref{lem:bb} by repeatedly applying $\mathrm{ad}(\bh_{i,1})$.
\begin{lem}\label{lem:hi1hi2-com}
Under the assumption that $i,j\in \I_1$ and $c_{ij}=-1$, we have
\begin{equation*}
 [\mathbf h_{i,1},\mathbf h_{i,2}]
 =
 [\mathbf h_{i,1},
   [\mathbf b_{i,1},\mathbf b_{\tau i,1}]]
 =0.
\end{equation*}
\end{lem}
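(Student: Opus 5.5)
We work in the setting of Step 2: $i,j\in\I_1$ with $c_{ij}=-1$ and $c_{i,\tau j}=c_{\tau i,j}=c_{i,\tau i}=c_{j,\tau j}=0$. In particular $\gamma_{ii}=2$, $\gamma_{ij}=-1$, and $c_{ij}-c_{\tau i,j}=-1$. The first equality in the statement is just \eqref{eq:fork-h2-bracket}. It remains to show $[\bh_{i,1},[\bb_{i,1},\bb_{\tau i,1}]]=0$.

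By \eqref{hi1bjr}, this bracket equals $\gamma_{ii}[\bb_{i,2},\bb_{\tau i,1}]+\gamma_{i,\tau i}[\bb_{i,1},\bb_{\tau i,2}]$. Since $\gamma_{i,\tau i}=c_{i,\tau i}+c_{\tau i,\tau i}=2$, it equals $2\big([\bb_{i,2},\bb_{\tau i,1}]+[\bb_{i,1},\bb_{\tau i,2}]\big)$. So the goal is the symmetric vanishing
\[
\mathsf Y:=[\bb_{i,2},\bb_{\tau i,1}]+[\bb_{i,1},\bb_{\tau i,2}]=0.
\]
Using \eqref{bhdef} and Lemma~\ref{lem:h2bj}, we can rewrite the pieces. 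One has $\bb_{i,2}=\tfrac12[\bh_{i,1},\bb_{i,1}]$, and also $\bb_{i,2}=(c_{ii}-c_{i,\tau i})^{-1}[\bh_{i,2},\bb_{i,0}]=\tfrac12[\bh_{i,2},\bb_{i,0}]$. Similarly $\bb_{\tau i,2}$ is expressed through $\bh_{\tau i,2}=-\bh_{i,2}$ acting on $\bb_{\tau i,0}$. Substituting the $\bh_{i,2}$-expressions into $\mathsf Y$ and using the Jacobi identity, $\mathsf Y$ becomes $\tfrac12\big[\bh_{i,2},[\bb_{i,0},\bb_{\tau i,1}]+[\bb_{i,1},\bb_{\tau i,0}]\big]$ plus correction terms of the form $[\bb_{i,0},[\bh_{i,2},\bb_{\tau i,1}]]$. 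The key point is that $[\bb_{i,0},\bb_{\tau i,1}]$ and $[\bb_{i,1},\bb_{\tau i,0}]$ are $\pm\bh_{\tau i,1}$ and $\pm\bh_{i,1}$ by \eqref{tcbb2}, so the leading term reduces to commutators $[\bh_{i,2},\bh_{i,1}]$. This is circular, so this route alone only yields a linear relation between $\mathsf Y$ and $[\bh_{i,1},\bh_{i,2}]$.

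To break the circularity, I will use the neighbor $j$ as an auxiliary derivation, in the spirit of Lemma~\ref{lem:bb}. Lemma~\ref{lem:h2bj} with $k=j$ gives $[\bh_{i,2},\bb_{j,r}]=-\bb_{j,r+2}$. Also $\bb_{i,r}$ and $\bb_{\tau i,s}$ can be reached from $\bb_{j,\cdot}$-brackets: since $c_{ij}=-1$, the finite Serre relations and Lemma~\ref{lem:bb} express $\bb_{i,r}$ inside iterated brackets with $\bb_{j}$. Concretely, I plan to compute $[\bh_{j,1},\bh_{i,2}]$ in two ways. It vanishes after writing $\bh_{i,2}=[\bb_{i,1},\bb_{\tau i,1}]$, giving $\gamma_{ji}[\bb_{i,2},\bb_{\tau i,1}]+\gamma_{j,\tau i}[\bb_{i,1},\bb_{\tau i,2}]$. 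Here $\gamma_{ji}=-1$ and $\gamma_{j,\tau i}=c_{j,\tau i}+c_{\tau j,\tau i}=0-1=-1$, so this is $-\mathsf Y$. Thus $\mathsf Y=-[\bh_{j,1},\bh_{i,2}]$.

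It then suffices to show $[\bh_{j,1},\bh_{i,2}]=0$, which is genuinely a commutation between different orbits. For that I will use $\bh_{j,1}=[\bb_{\tau j,0},\bb_{j,1}]$ and apply $\ad(\bh_{i,2})$ via Lemma~\ref{lem:h2bj} with $k=j,\tau j$:
\[
[\bh_{i,2},\bh_{j,1}]=(c_{i,\tau j}-c_{\tau i,\tau j})[\bb_{\tau j,2},\bb_{j,1}]+(c_{ij}-c_{\tau i,j})[\bb_{\tau j,0},\bb_{j,3}].
\]
Both coefficients equal $-1$, so this is $-\big([\bb_{\tau j,2},\bb_{j,1}]+[\bb_{\tau j,0},\bb_{j,3}]\big)$. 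This combination should vanish using \eqref{tcbb2} and Step~1 for the orbit of $j$, via \eqref{eq:qsclassical3}-type shifts within the $\{j,\tau j\}$ orbit. This is the main obstacle: those shifts have only been established at low degree, namely \eqref{eq:fork-h2-bracket}. I expect to need to propagate them by applying $\ad(\bh_{j,1})$ and $\ad(\bh_{i,1})$, which act with different coefficient pairs $(2,2)$ and $(-1,-1)$ on $\bb_{j},\bb_{\tau j}$. The hope is that the resulting linear system forces the needed vanishing. If it does not, I will fall back on symmetrizing in $i\leftrightarrow j$.
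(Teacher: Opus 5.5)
Your reductions $[\bh_{i,1},\bh_{i,2}]=2\mathsf Y$ and $[\bh_{j,1},\bh_{i,2}]=-\mathsf Y$ are correct. The step you flag as the main obstacle, however, fails as planned, so as written the proof does not close.

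First, there is a sign error. Lemma~\ref{lem:h2bj} with $k=\tau j$ gives the coefficient $c_{i,\tau j}-c_{\tau i,\tau j}=0-(-1)=+1$, not $-1$. Hence
\[
[\bh_{i,2},\bh_{j,1}]=[\bb_{\tau j,2},\bb_{j,1}]-[\bb_{\tau j,0},\bb_{j,3}].
\]
The sum you hoped to show vanishes is sent by $\psi$ from \eqref{eq:psi} to $2\sfh_{j,3}\neq 0$, by \eqref{eq:qsclassical0}. So that sum cannot vanish.

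Second, and more seriously, even the corrected difference is not killed by shifts inside the orbit $\{j,\tau j\}$. Apply $\ad(\bh_{j,1})$ to $\bh_{j,2}=[\bb_{\tau j,0},\bb_{j,2}]$ and to $\bh_{j,2}=[\bb_{j,1},\bb_{\tau j,1}]$, and add the two resulting identities. This gives exactly
\[
[\bb_{\tau j,2},\bb_{j,1}]-[\bb_{\tau j,0},\bb_{j,3}]=-[\bh_{j,1},\bh_{j,2}],
\]
which is the statement you are proving, now at the vertex $j$. Bringing in $\ad(\bh_{i,1})$ produces nothing new. On $\bb_{j,\cdot}$ and $\bb_{\tau j,\cdot}$ it acts by $(-1,-1)$, which is proportional to the $(2,2)$ of $\ad(\bh_{j,1})$, so the linear system you are counting on is degenerate. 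Your main line therefore only transfers the problem from $i$ to $j$.

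The missing step is the symmetrization you mention only as a fallback, and it does close the argument. Put $C_k:=[\bh_{k,1},\bh_{k,2}]$. The computations above give
\[
C_i=2\mathsf Y=2[\bh_{i,2},\bh_{j,1}]=-2C_j.
\]
The hypotheses are symmetric in $i$ and $j$: Lemma~\ref{lem:h2bj} and \eqref{eq:fork-h2-bracket} hold at both vertices. Hence also $C_j=-2C_i$, so $C_i=4C_i$ and $C_i=0$.

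Completed this way, your proof takes a different route from the paper's. The paper starts from the shifted Serre relation $[\bb_{i,1},[\bb_{i,1},\bb_{j,0}]]=0$ of the preceding lemma. It brackets with $\bb_{\tau j,1}$ to get $[\bb_{i,1},[\bb_{i,1},\bh_{j,1}]]=0$. It then brackets twice with $\bb_{\tau i,0}$ to reach $-3[\bh_{\tau i,1},[\bb_{i,1},\bb_{\tau i,1}]]=0$.

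Your symmetrization uses no Serre relation at all, only \eqref{hi1bjr}, \eqref{eq:fork-h2-bracket}, and Lemma~\ref{lem:h2bj} at both vertices. This is the same two-vertex mechanism as in Lemma~\ref{lem:split-extra-redundant-gr}, and it makes the preceding lemma unnecessary here. In exchange, the paper's Serre-based chain is the one that carries over to Lemma~\ref{lem:A2n-neighboring-orbit}. There the neighbor is central, so Lemma~\ref{lem:h2bj} and the identity $\bh_{j,2}=[\bb_{j,1},\bb_{\tau j,1}]$ are unavailable at it, and the symmetrization cannot run.
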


\begin{proof}
By the preceding lemma, we have
$0=
 [\mathbf b_{i,1},
  [\mathbf b_{i,1},\mathbf b_{j,0}]]$.
Applying the operation
\([\,\cdot\,,\mathbf b_{\tau j,1}]\)
to it, and using the relevant commutation
relations, gives
\begin{equation}
 0=
 [\mathbf b_{i,1},
  [\mathbf b_{i,1},\mathbf h_{j,1}]].
 \label{eq:bi-bi-hj}
\end{equation}
We next apply
\( [\,\cdot\,,\mathbf b_{\tau i,0} ]\)
to \eqref{eq:bi-bi-hj}. By the Jacobi identity, we obtain
\begin{align}
 0
 =
 -[\mathbf h_{\tau i,1},
   [\mathbf b_{i,1},\mathbf h_{j,1}]]
 -
 [\mathbf b_{i,1},
   [\mathbf h_{\tau i,1},\mathbf h_{j,1}]]
 -
 [\mathbf b_{i,1},
   [\mathbf b_{i,1},\mathbf b_{\tau i,1}]].
 \label{eq:first-ad-btaui0}
\end{align}
Since the Cartan generators commute, the middle term in
\eqref{eq:first-ad-btaui0} vanishes and we find
\begin{equation}
 0=
 [\mathbf h_{\tau i,1},
  [\mathbf b_{i,1},\mathbf h_{j,1}]]
 +
 [\mathbf b_{i,1},
  [\mathbf b_{i,1},\mathbf b_{\tau i,1}]].
 \label{eq:intermediate-Cartan}
\end{equation}

Applying
\(\bigl[\,\cdot\,,\mathbf b_{\tau i,0}\bigr]\)
once again to \eqref{eq:intermediate-Cartan}, and repeatedly using the
Jacobi identity, gives
\begin{align}
 0
 ={}&
 2[\mathbf b_{\tau i,1},
   [\mathbf b_{i,1},\mathbf h_{j,1}]]
 -
 [\mathbf h_{\tau i,1},
   [\mathbf h_{\tau i,1},\mathbf h_{j,1}]]
 \notag\\
 & 
 -
 [\mathbf h_{\tau i,1},
   [\mathbf b_{i,1},\mathbf b_{\tau i,1}]]
 -
 [\mathbf h_{\tau i,1},
   [\mathbf b_{i,1},\mathbf b_{\tau i,1}]]
 -
 [\mathbf b_{i,1},
   [\mathbf h_{\tau i,1},\mathbf b_{\tau i,1}]].
 \label{eq:second-ad-btaui0}
\end{align}
Again,
\(
 [\mathbf h_{\tau i,1},\mathbf h_{j,1}]=0
\),
so the second term in \eqref{eq:second-ad-btaui0} vanishes.

By the Cartan action relations, we have
\begin{equation}
 2[\mathbf b_{\tau i,1},
   [\mathbf b_{i,1},\mathbf h_{j,1}]]
 =
 -[\mathbf b_{\tau i,1},
   [\mathbf b_{i,1},\mathbf h_{\tau i,1}]].
 \label{eq:Cartan-action-comparison}
\end{equation}
Substituting \eqref{eq:Cartan-action-comparison} into
\eqref{eq:second-ad-btaui0}, we obtain
\begin{align}
 0
 =
 -[\mathbf b_{\tau i,1},
   [\mathbf b_{i,1},\mathbf h_{\tau i,1}]]
 -
 2[\mathbf h_{\tau i,1},
   [\mathbf b_{i,1},\mathbf b_{\tau i,1}]]
 -
 [\mathbf b_{i,1},
   [\mathbf h_{\tau i,1},\mathbf b_{\tau i,1}]].
 \label{eq:three-Jacobi-terms}
\end{align}
Using the Jacobi identity, \eqref{eq:three-Jacobi-terms} becomes
\begin{equation*}
 0=
 -3[\mathbf h_{\tau i,1},
    [\mathbf b_{i,1},\mathbf b_{\tau i,1}]].
\end{equation*}
It follows that $[\mathbf h_{\tau i,1},
  [\mathbf b_{i,1},\mathbf b_{\tau i,1}]]
 =0$.

Finally, using
\(
 \mathbf h_{\tau i,1}=\mathbf h_{i,1}
\)
and
\(
 [\mathbf b_{i,1},\mathbf b_{\tau i,1}]
 =\mathbf h_{i,2}
\),
we conclude that $[\mathbf h_{i,1},\mathbf h_{i,2}]=0$,
completing the proof of  the lemma.
\end{proof}

\begin{lem}\label{lem:h2bj-2}
If $i\in \I_1$ and $[\bh_{i,1},\bh_{i,2}]=0$, then we have $[\bh_{i,2},\bb_{i,r}]=
 (c_{ii}-c_{i,\tau i})\bb_{i,r+2}$ and $[\bh_{i,2},\bb_{\tau i,r}]=
 (c_{\tau i,i}-c_{ii})\bb_{\tau i,r+2}$ for $r\in\bN$.
\end{lem}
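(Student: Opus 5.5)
We argue by induction on $r$, using only \eqref{hi1bjr}, \eqref{bhdef} and the hypothesis $[\bh_{i,1},\bh_{i,2}]=0$. Since $\bh_{\tau i,1}=\bh_{i,1}$ by \eqref{tch-sym}, the operator $\ad(\bh_{i,1})$ acts on both families: on $\bb_{i,r}$ as $\gamma_{ii}$ times the degree shift $r\mapsto r+1$, and on $\bb_{\tau i,r}$ as $\gamma_{i,\tau i}=\gamma_{ii}$ times the same shift. In all cases covered here, $\gamma_{ii}=c_{ii}+c_{\tau i,i}\neq0$ (recall type $\mathsf A_{2n}$ is excluded in this subsection).

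The argument therefore reduces to the base cases $r=0$. The hypothesis $[\bh_{i,1},\bh_{i,2}]=0$ then lets $\ad(\bh_{i,2})$ commute with $\ad(\bh_{i,1})$. Granting the identity for $r$, we apply $\ad(\bh_{i,1})$ to it:
\[
\gamma_{ii}[\bh_{i,2},\bb_{i,r+1}]=[\bh_{i,1},[\bh_{i,2},\bb_{i,r}]]=(c_{ii}-c_{i,\tau i})\gamma_{ii}\bb_{i,r+3}.
\]
Dividing by $\gamma_{ii}$ gives the claim for $r+1$. The $\tau i$ family is handled identically.

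For the base case $r=0$ with $\bb_{i,0}$, I would write $\bh_{i,2}=[\bb_{i,1},\bb_{\tau i,1}]$ from \eqref{eq:fork-h2-bracket} and expand with the Jacobi identity:
\[
[\bh_{i,2},\bb_{i,0}]=[\bb_{i,1},[\bb_{\tau i,1},\bb_{i,0}]]+[[\bb_{i,1},\bb_{i,0}],\bb_{\tau i,1}].
\]
The second term vanishes because $[\bb_{i,0},\bb_{i,1}]=0$ (Step 1). For the first term, \eqref{tcbb} gives $[\bb_{\tau i,1},\bb_{i,0}]=[\bb_{\tau i,0},\bb_{i,1}]-2\bh_{i,1}=-\bh_{i,1}$. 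Hence the first term equals $[\bh_{i,1},\bb_{i,1}]=\gamma_{ii}\bb_{i,2}$.

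This must equal $(c_{ii}-c_{i,\tau i})\bb_{i,2}$, which is exactly the computation in Lemma~\ref{lem:h2bj}. That computation holds when $c_{i,\tau i}=0$. In the remaining case $c_{i,\tau i}=-1$, which occurs only in $\mathsf A_{2n}$, the result is excluded here. For type $\mathsf D$ the orbit $\{i,\tau i\}$ has $c_{i,\tau i}=0$, so the base case is simply Lemma~\ref{lem:h2bj}.

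For $\bb_{\tau i,0}$, I would use $\bh_{i,2}=-\bh_{\tau i,2}=-[\bb_{\tau i,1},\bb_{i,1}]$ from \eqref{eq:h2i-taui}. The computation is the same with $i\leftrightarrow\tau i$: it needs $[\bb_{\tau i,0},\bb_{\tau i,1}]=0$, which holds by \eqref{tcbb} since $\delta_{\tau(\tau i),\tau i}=0$. The result is $-(c_{ii}-c_{i,\tau i})\bb_{\tau i,2}$, as claimed.

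The main obstacle is the bookkeeping of signs and of the scalar $\gamma_{ii}$ versus $c_{ii}-c_{i,\tau i}$ in the base case. It is in this check that the exclusion of $\mathsf A_{2n}$, i.e. $c_{i,\tau i}=0$, is actually used.
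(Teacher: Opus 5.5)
Your proposal is correct and follows the paper's proof. The base case $r=0$ is exactly Lemma~\ref{lem:h2bj}, which you re-derive. The induction step uses $[\bh_{i,1},\bh_{i,2}]=0$ to commute $\ad(\bh_{i,1})$ past $\ad(\bh_{i,2})$, and the $\tau i$ family comes from $\bh_{\tau i,2}=-\bh_{i,2}$.

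The hypothesis genuinely used in the base case is $c_{i,\tau i}=0$, not the exclusion of type $\mathsf A_{2n}$ as such. This is why the paper can reuse the lemma for the noncentral orbits in Appendix~\ref{sec:pf-prop2}.
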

\begin{proof}
The first equality immediately follows from Lemma \ref{lem:h2bj} and $[\bh_{i,1},\bh_{i,2}]=0$ by repeatedly commuting with $\bh_{i,1}$. The second equality then follows from the fact that $\bh_{\tau i,2}=-\bh_{i,2}$ established in \eqref{eq:h2i-taui}.
\end{proof}

\subsubsection*{\bf Step 3}
Now we consider the case that $\I_1$ contains exactly one element. This happens only for the case of type $\mathsf A_2$, $\mathsf A_3$, and $\mathsf D_r$ for $r\gge 4$. Note we only consider the latter two cases. 
\begin{lem}\label{lem:extraD-redundant-gr}
Suppose that $i\in\I_1$, $j\in\I_0$, and
\begin{equation}\label{eq:common-fixed-neighbor}
 c_{i,\tau i}=0,\qquad
 c_{ij}=c_{\tau i,j}=c_{ji}=c_{j,\tau i}=-1.
\end{equation}
Then the relations \eqref{tchh}--\eqref{tcfSerre3}, without
\eqref{eq:extraD-gr}, imply
\begin{equation}\label{eq:extraD-gr}
\big[\bh_{i,1},[\bb_{i,1},\bb_{\tau i,1}]\big]=0.
\end{equation}
The hypotheses hold for the unique nontrivial $\tau$-orbit in type
$\mathsf A_3$ and in type $\mathsf D_r$.
\end{lem}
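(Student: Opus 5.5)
We follow the strategy of Lemma~\ref{lem:hi1hi2-com}, with the fixed neighbor $j\in\I_0$ playing the role that the second orbit representative played in Step~2. The basic input is a Serre-type vanishing involving degree-one currents, which we then push through repeated brackets with $\bb_{\tau i,0}$ until a multiple of $[\bh_{\tau i,1},[\bb_{i,1},\bb_{\tau i,1}]]$ remains.

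First, from the degree-zero Serre relation \eqref{tcfSerre2} (with $c_{ij}=-1$, $i\ne\tau i\ne j$), namely $[\bb_{i,0},[\bb_{i,0},\bb_{j,0}]]=0$, we derive $[\bb_{i,1},[\bb_{i,1},\bb_{j,0}]]=0$. For this we use the symmetrized quantities $\mathsf X(r_1,r_2\,|\,s)$ as before. Applying $\ad(\bh_{i,1})$ and $\ad(\bh_{j,1})$ gives linear systems whose coefficients are $\gamma_{ii}=2$, $\gamma_{ij}=c_{ij}+c_{\tau i,j}=-2$, and $\gamma_{jj}=c_{jj}+c_{jj}=4$. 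These systems are invertible, since $2\cdot4-4\ne0$. Lemma~\ref{lem:bb} supplies the needed relation $[\bb_{i,r+1},\bb_{j,s}]=[\bb_{i,r},\bb_{j,s+1}]$.

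Second, we need a degree-two Cartan operator. Lemma~\ref{lem:h2bj} applies verbatim, since $c_{i,\tau i}=0$. It gives $[\bh_{i,2},\bb_{i,0}]=2\bb_{i,2}$ and $[\bh_{i,2},\bb_{j,r}]=(c_{ij}-c_{\tau i,j})\bb_{j,r+2}=0$. The second of these only yields a relation for $\mathsf X(2,0|0)$, so instead we use $\ad(\bh_{j,2})$, where $\bh_{j,2}$ is defined as $[\bb_{j,0},\bb_{j,2}]$ via \eqref{bhdef}. Alternatively, combining the $\ad(\bh_{i,1})$ and $\ad(\bh_{j,1})$ equations at the next level should suffice, because $\mathsf X(2,0|0)+\mathsf X(1,1|0)=0$ together with $\mathsf X(1,0|1)=0$ gives two equations. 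If needed, a third equation comes from applying $\ad(\bh_{i,2})$: the $\bb_j$ term drops out, leaving $4\mathsf X(2,0|0)=0$. This forces $\mathsf X(1,1|0)=0$.

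Third, we bracket $[\bb_{i,1},[\bb_{i,1},\bb_{j,0}]]=0$ with $\bb_{j,1}$ rather than with $\bb_{\tau j}$, since $j$ is fixed. By \eqref{tcbb} we have $[\bb_{j,0},\bb_{j,1}]=-\bh_{j,1}$ up to the sign convention, and since $c_{ij}=-1$ the brackets $[\bb_{i,1},\bb_{j,1}]$ reduce via Lemma~\ref{lem:bb}. We expect to obtain $[\bb_{i,1},[\bb_{i,1},\bh_{j,1}]]=0$ modulo terms of the form $[\bb_{i,1},[\bb_{i,1},\bb_{j,1}],\dots]$. This is the delicate point: unlike in Step~2, $\bb_{j,1}$ does not commute with $\bb_{i,*}$. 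The more robust route is therefore to apply $\ad(\bh_{j,1})$ directly, using $[\bh_{j,1},\bb_{i,r}]=\gamma_{ji}\bb_{i,r+1}=-2\bb_{i,r+1}$. After this, we repeat the two successive brackets with $\bb_{\tau i,0}$, exactly as in \eqref{eq:first-ad-btaui0}--\eqref{eq:three-Jacobi-terms}. The relations used are $[\bb_{\tau i,0},\bb_{i,1}]=\bh_{i,1}$ (from \eqref{tcbb}--\eqref{tcbb2}), $[\bb_{\tau i,0},\bb_{j,0}]$-type Serre relations, and the identity $\bh_{\tau i,1}=\bh_{i,1}$. The Cartan comparison analogous to \eqref{eq:Cartan-action-comparison} becomes $[\bh_{j,1},\bb_{\tau i,1}]=-2\bb_{\tau i,2}$ versus $[\bh_{\tau i,1},\bb_{\tau i,1}]=2\bb_{\tau i,2}$. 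Hence $\bh_{j,1}$ acts on $\bb_{i},\bb_{\tau i}$ as $-\bh_{i,1}$, and the final combination should collapse to a nonzero multiple (we expect $-3$) of $[\bh_{\tau i,1},[\bb_{i,1},\bb_{\tau i,1}]]$, which proves \eqref{eq:extraD-gr}.

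The main obstacle is the third step, namely the bookkeeping of the Jacobi expansions when $j$ is $\tau$-fixed. Brackets $[\bb_{j,r},\bb_{j,s}]$ produce $\bh_j$ terms, and the Serre relation \eqref{tcfSerre3} has the inhomogeneous right-hand side $-\bb_{j,0}$. If the coefficient degenerates, we would first try the alternative start $[\bb_{j,1},[\bb_{j,1},\bb_{i,0}]]$ coming from \eqref{tcfSerre3}, and take a linear combination of the two resulting identities. Finally, the hypotheses \eqref{eq:common-fixed-neighbor} are checked directly. In type $\mathsf A_3$, take $i=1$, $\tau i=3$, $j=2$. In type $\mathsf D_r$, take the forked nodes $r-1,r$ with $j=r-2$.
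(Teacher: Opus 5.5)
Your Steps 1--2 are fine, provided you take the $\ad(\bh_{i,2})$ option: Lemma~\ref{lem:h2bj} kills $\bb_{j,r}$ and gives $4\mathsf X(2,0\,|\,0)=0$, so you do get $[\bb_{i,1},[\bb_{i,1},\bb_{j,0}]]=0$. The $\ad(\bh_{j,2})$ option is useless, because for $j\in\I_0$ the element $[\bb_{j,0},\bb_{j,2}]$ vanishes in the target and you know nothing about its action.

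\textbf{The gap is Step 3.} You never obtain the analogue of \eqref{eq:bi-bi-hj}, namely $[\bb_{i,1},[\bb_{i,1},\bh_{j,1}]]=0$, and that is the only input the later $\bb_{\tau i,0}$-bracketing needs.
\begin{itemize}
\item Since $[\bh_{j,1},\bb_{i,1}]=-2\bb_{i,2}$, this element equals $2[\bb_{i,1},\bb_{i,2}]$. So ``applying $\ad(\bh_{j,1})$ directly'' only rewrites the target and proves nothing.
\item $[\bb_{i,1},\bb_{i,2}]=0$ is not yet available either. Lemma~\ref{lem:bibi-zero-I1} presupposes $[\bh_{i,1},\bh_{i,2}]=0$ through Lemma~\ref{lem:h2bj-2}, so using it would be circular.
\item In Lemma~\ref{lem:hi1hi2-com} the vanishing came from bracketing with $\bb_{\tau j,1}$. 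That element commutes with $\bb_{i,1}$ and satisfies $[\bb_{j,0},\bb_{\tau j,1}]=\bh_{j,1}$. For $j=\tau j$ the only source of $\bh_{j,1}$ is $[\bb_{j,0},\bb_{j,1}]$, and $\bb_{j,1}$ does not commute with $\bb_{i,1}$. The Jacobi expansion therefore leaves uncontrolled terms such as $[[\bb_{i,1},\bb_{j,1}],[\bb_{i,1},\bb_{j,0}]]$.
\item If instead you apply $\ad(\bh_{j,1})$ to the shifted Serre relation and then bracket with $\bb_{\tau i,0}$, the surviving $\bb_{j,0}$ produces $[\bb_{j,0},\bb_{\tau i,0}]$. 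This is not a Cartan element, since $c_{j,\tau i}=-1$. So nothing collapses as it does in \eqref{eq:first-ad-btaui0}--\eqref{eq:three-Jacobi-terms}, and the ``expected $-3$'' is unsupported.
\end{itemize}

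\textbf{The missing idea is to bypass the Serre relation entirely.} You already have both ingredients.
\begin{enumerate}
\item Lemma~\ref{lem:h2bj} gives $[\bh_{i,2},\bb_{j,r}]=(c_{ij}-c_{\tau i,j})\bb_{j,r+2}=0$ for all $r$. Hence $[\bh_{i,2},\bh_{j,1}]=[\bh_{i,2},[\bb_{j,0},\bb_{j,1}]]=0$.
\item You observed that $\bh_{j,1}$ acts on $\bb_{i,r}$ and $\bb_{\tau i,r}$ as $-\bh_{i,1}$, because $\gamma_{ii}+\gamma_{ji}=\gamma_{i,\tau i}+\gamma_{j,\tau i}=0$. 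So $\ad(\bh_{i,1}+\bh_{j,1})$ kills $\bb_{i,1}$ and $\bb_{\tau i,1}$, and therefore kills $\bh_{i,2}=[\bb_{i,1},\bb_{\tau i,1}]$.
\item Subtracting the identity in (1) from the one in (2) gives $[\bh_{i,1},\bh_{i,2}]=0$. By \eqref{eq:fork-h2-bracket} this is exactly \eqref{eq:extraD-gr}.
\end{enumerate}
This is precisely the paper's proof, and it needs none of your Steps 1--3. Your verification of the hypotheses in types $\mathsf A_3$ and $\mathsf D_r$ is correct.
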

\begin{proof}
We claim that $\bh_{i,2}$ commutes with every $\bb_{j,r}$.  Using
\eqref{eq:fork-h2-bracket}, the Jacobi identity, and then
Lemma \ref{lem:bb}, we compute
\begin{align*}
 [\bh_{i,2},\bb_{j,r}]
 &=
 [[\bb_{i,1},\bb_{\tau i,1}],\bb_{j,r}]
 \\
 &=
 [[\bb_{i,0},\bb_{j,r+1}],\bb_{\tau i,1}]
 +
 [\bb_{i,1},[\bb_{\tau i,0},\bb_{j,r+1}]]
 \\
 &=
 [\bb_{i,0},[\bb_{j,r+1},\bb_{\tau i,1}]]
 +
 [\bb_{\tau i,0},[\bb_{i,1},\bb_{j,r+1}]]
 \\
 &=
 [\bb_{i,0},[\bb_{j,r+2},\bb_{\tau i,0}]]
 +
 [\bb_{\tau i,0},[\bb_{i,0},\bb_{j,r+2}]]
 \\
 &=
 -[[\bb_{i,0},\bb_{\tau i,0}],\bb_{j,r+2}]
 \\
 &=
 -[\bh_{\tau i,0},\bb_{j,r+2}]
 =
 (c_{ij}-c_{\tau i,j})\bb_{j,r+2}
 =
 0.
\end{align*}
In the third equality, the two terms containing
$[\bh_{\tau i,1},\bb_{j,r+1}]$ cancel.  Thus $[\bh_{i,2},\bb_{j,r}]=0$ for $r\in\bN$.

Since $j=\tau j$, we have $\bh_{j,1}=[\bb_{j,0},\bb_{j,1}]$. It follows from the claim that 
\begin{equation}\label{eq:h2-J-commute}
 [\bh_{i,2},\bh_{j,1}]
 =
 [\bh_{i,2},[\bb_{j,0},\bb_{j,1}]]
 =
 0.
\end{equation}
By \eqref{hi1bjr} and
\eqref{eq:common-fixed-neighbor}, we have
\begin{align*}
 [\bh_{i,1}+\bh_{j,1},\bb_{i,1}]
 &=
 (\gamma_{ii}+\gamma_{ji})\bb_{i,2}
 =
 (2-2)\bb_{i,2}=0,
 \\
 [\bh_{i,1}+\bh_{j,1},\bb_{\tau i,1}]
 &=
 (\gamma_{i,\tau i}+\gamma_{j,\tau i})
 \bb_{\tau i,2}
 =
 (2-2)\bb_{\tau i,2}=0.
\end{align*}
Together with \eqref{eq:fork-h2-bracket}, this yields
\[
[\bh_{i,1}+\bh_{j,1},\bh_{i,2}]
 =
 [\bh_{i,1}+\bh_{j,1},[\bb_{i,1},\bb_{\tau i,1}]]
 =
 0.
\]
Subtracting \eqref{eq:h2-J-commute}, we obtain $[\bh_{i,1},\bh_{i,2}]=0$.
Equation \eqref{eq:extraD-gr} now follows from
\eqref{eq:fork-h2-bracket}.
\end{proof}

\subsubsection*{\bf Step 4}
Then, with the previous lemmas, we are ready to deduce the higher-degree
relations for the vertices in $\I_1$. Fix $i\in\I_1$, so that
$i\neq\tau i$ and $c_{i,\tau i}=0$. Note that
$[\bh_{i,1},\bh_{i,2}]=0$ has already been established in Step 2 and Step 3. We prove by
induction on $\ell$ that
\begin{align}
[\bh_{i,r},\bh_{i,s}]&=[\bh_{i,r},\bh_{\tau i,s}]=0,
&\bh_{i,p}=(-1)^{p+1}\bh_{\tau i,p},
\label{todo1-I1}
\\
[\bh_{i,r},\bb_{j,s}]&=
\big(c_{ij}-(-1)^r c_{\tau i,j}\big)\bb_{j,r+s},
&j\in\{i,\tau i\},
\label{todo2-I1}
\\
[\bb_{i,r},\bb_{i,s}]&=[\bb_{\tau i,r},\bb_{\tau i,s}]=0,
\label{todo3-I1}
\\
[\bb_{i,r},\bb_{\tau i,s}]&=(-1)^r\bh_{\tau i,r+s}.
\label{todo4-I1}
\end{align}
where $r+s\lle \ell$ and $p\lle \ell$. This verifies \eqref{eq:qsclassical1}--\eqref{eq:qsclassical0} for the case $i\in \I_1$ and $j\in \{i,\tau i\}$.

We first prove \eqref{todo3-I1}.

\begin{lem}\label{lem:bibi-zero-I1}
For every $i\in\I_1$ and $r,s\in\bN$, one has
\begin{equation*}
[\bb_{i,r},\bb_{i,s}]=0=[\bb_{\tau i,r},\bb_{\tau i,s}].
\end{equation*}
\end{lem}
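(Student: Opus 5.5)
We need to prove $[\bb_{i,r},\bb_{i,s}]=0$ for $i\in\I_1$ with $c_{i,\tau i}=0$ (so $\gamma_{ii}=2$). We already have $[\bb_{i,0},\bb_{i,1}]=0$ from \eqref{tcbb}, and $[\bb_{i,0},\bb_{i,2}]=0$ from Step~1. The plan is an induction on $r+s$ using $\ad(\bh_{i,1})$, which raises degree by $\gamma_{ii}\bb_{i,r+1}$. The relation $[\bb_{i,0},\bb_{i,0}]=0$ is trivial. Set $\mathcal Y(r,s)=[\bb_{i,r},\bb_{i,s}]$, so $\mathcal Y(r,s)=-\mathcal Y(s,r)$.

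First we need the "difference" relation $\mathcal Y(r+1,s)=\mathcal Y(r,s+1)$, which is \eqref{eq:qsclassical3} with $j=i\ne\tau i$. Its base case $r=s=0$ is \eqref{tcbb}, which gives $[\bb_{i,1},\bb_{i,0}]=[\bb_{i,0},\bb_{i,1}]$, hence both vanish. Applying $\ad(\bh_{i,1})$ to $\mathcal X(r,s):=\mathcal Y(r+1,s)-\mathcal Y(r,s+1)=0$ only yields $\mathcal X(r+1,s)+\mathcal X(r,s+1)=0$, since there is just one Cartan operator. As in Lemma~\ref{lem:bb}, we therefore need a second independent derivation. For this we use $\ad(\bh_{i,2})$, which by Lemma~\ref{lem:h2bj-2} (available since $[\bh_{i,1},\bh_{i,2}]=0$ by Steps~2--3) acts as $\bb_{i,r}\mapsto 2\bb_{i,r+2}$. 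Alternatively, we can use $\ad(\bh_{k,1})$ for a neighbor $k$ with $\gamma_{ki}\neq\gamma_{kk}\cdot$(ratio); when $\ad$ of another Cartan element acts on $\bb_{i,\cdot}$ with a different ratio, the linear-system trick applies directly. The cleanest route is to note that $\ad(\bh_{i,1})$ alone already suffices for antisymmetric quantities, as follows.

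Concretely, we argue directly by induction on $N=r+s$ that $\mathcal Y(r,s)=0$ for all $r+s=N$. Assume this for $N$. Applying $\ad(\bh_{i,1})$ to $\mathcal Y(r,s)=0$ gives $\mathcal Y(r+1,s)+\mathcal Y(r,s+1)=0$ for all $r+s=N$. Applying $\ad(\bh_{i,2})$ to $\mathcal Y(r,s-1)=0$ (for $r+s-1=N-1$) gives $\mathcal Y(r+2,s-1)+\mathcal Y(r,s+1)=0$ for total degree $N+1$. Let $y_k=\mathcal Y(k,N+1-k)$, with $y_k=-y_{N+1-k}$. The two families of equations read $y_{k+1}=-y_k$ and $y_{k+2}=-y_k$. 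Together these give $y_k=-y_k$, hence $y_k=0$ whenever the relevant indices lie in range, which they do once $N\ge1$. The base cases $N\le 2$ are the relations recorded above together with the trivial ones. The statement for $\tau i$ follows by the symmetric argument, or by swapping $i\leftrightarrow\tau i$, since Lemma~\ref{lem:h2bj-2} also covers $\bb_{\tau i,r}$ with coefficient $-2$.

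The main obstacle is bookkeeping at the boundary indices. The equation from $\ad(\bh_{i,2})$ requires $s\ge1$, so it only relates $y_k$ with $k\le N-1$, and we must check that the two recursions still close up, using antisymmetry $y_k=-y_{N+1-k}$ to cover the endpoints. Another point to verify is that Lemma~\ref{lem:h2bj-2} is legitimately available at this stage. It relies on $[\bh_{i,1},\bh_{i,2}]=0$, which was proved in Steps~2--3 for all $\mathsf A_{2n-1},\mathsf D,\mathsf E_6$ configurations, and on Lemma~\ref{lem:h2bj}, whose proof for $k=i$ uses only degree-one data.
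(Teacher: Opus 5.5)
Your concrete argument is correct and is essentially the paper's proof. Both induct on total degree, use $\ad(\bh_{i,1})$ to get the alternating chain $y_{k+1}=-y_k$, and use $\ad(\bh_{i,2})$ via Lemma~\ref{lem:h2bj-2} to get $y_{k+2}=-y_k$ (the paper uses only the $k=0$ instance), which forces $y_0=0$, after which the chain kills every $y_k$.

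You should drop your side remarks, though:
\begin{itemize}
\item $\ad(\bh_{i,1})$ alone does not suffice in odd total degree, since there the chain and skew-symmetry are compatible with any $y_0$.
\item $\ad(\bh_{k,1})$ for a neighbor $k$ acts on the $i$-family as a scalar multiple of $\ad(\bh_{i,1})$, so it yields no new equation.
\item The difference relation from your first paragraph is never used.
\end{itemize}
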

\begin{proof}
We prove the first assertion by induction on $\ell=r+s$. The degree-one
relation gives
\begin{align*}
0
&=[\bb_{i,1},\bb_{i,0}]-[\bb_{i,0},\bb_{i,1}]
=-2[\bb_{i,0},\bb_{i,1}],
\end{align*}
so that $[\bb_{i,0},\bb_{i,1}]=0$. Applying
$\mathrm{ad}(\bh_{i,1})$ and using \eqref{hi1bjr}, we obtain
\begin{align*}
0
&=[\bh_{i,1},[\bb_{i,0},\bb_{i,1}]]
=2[\bb_{i,0},\bb_{i,2}],
\end{align*}
whence $[\bb_{i,0},\bb_{i,2}]=0$. These are the required initial cases.

Suppose now that $\ell\gge 2$ and that
$[\bb_{i,r},\bb_{i,\ell-r}]=0$ for every $0\lle r\lle\ell$. We prove
that
\begin{equation}
[\bb_{i,r},\bb_{i,\ell+1-r}]=0
\qquad (0\lle r\lle \ell+1).
\label{bibi-next-I1}
\end{equation}
Applying $\mathrm{ad}(\bh_{i,1})$ to the induction hypothesis and using
\eqref{hi1bjr}, we obtain
\begin{equation}
[\bb_{i,r},\bb_{i,\ell+1-r}]
+[\bb_{i,r+1},\bb_{i,\ell-r}]=0
\qquad (0\lle r\lle\ell).
\label{bibi-chain-I1}
\end{equation}
In particular,
\begin{align}
0=[\bb_{i,0},\bb_{i,\ell+1}]
  +[\bb_{i,1},\bb_{i,\ell}]=[\bb_{i,1},\bb_{i,\ell}]
  +[\bb_{i,2},\bb_{i,\ell-1}].
\label{bibi-A-I1}
\end{align}

On the other hand, applying $\mathrm{ad}(\bh_{i,2})$ to
$[\bb_{i,0},\bb_{i,\ell-1}]=0$ and Lemma \ref{lem:h2bj-2}, we get
\begin{equation}
0=[\bb_{i,0},\bb_{i,\ell+1}]
  +[\bb_{i,2},\bb_{i,\ell-1}].
\label{bibi-C-I1}
\end{equation}
Equation \eqref{bibi-A-I1}  implies that $[\bb_{i,0},\bb_{i,\ell+1}]
=[\bb_{i,2},\bb_{i,\ell-1}]$.
Together with equation \eqref{bibi-C-I1}, this gives
$[\bb_{i,0},\bb_{i,\ell+1}]=0$. Equation
\eqref{bibi-chain-I1} then successively implies
\eqref{bibi-next-I1} for every $0\lle r\lle\ell+1$. This completes the
induction. Replacing $i$ by $\tau i$ proves the second assertion.
\end{proof}

We next prove the remaining relations \eqref{todo1-I1}--\eqref{todo4-I1}, using induction on $\ell=r+s$.  The cases of total
degree $0,1,2$ follow from the defining relations and the lower-degree
relations established in the previous steps. Suppose that the assertions are known in all
total degrees strictly smaller than $\ell$.

For the inductive step at total degree $\ell$, we proceed in two
stages. First, assuming \eqref{todo4-I1} in total degree $\ell$,
we show that it implies \eqref{todo1-I1} and \eqref{todo2-I1}
in the same total degree. We then prove \eqref{todo4-I1} in total
degree $\ell$ using only relations of strictly smaller total degree
and the already established relation \eqref{todo3-I1}. This closes
the induction.

Assume for the moment that \eqref{todo4-I1} holds in total degree
$\ell$. Then
\begin{align*}
\bh_{\tau i,\ell}
=[\bb_{i,0},\bb_{\tau i,\ell}]
=-[\bb_{\tau i,\ell},\bb_{i,0}]
=(-1)^{\ell+1}\bh_{i,\ell},
\end{align*}
proving the symmetry relation in
\eqref{todo1-I1}. We now prove the commutativity relation in \eqref{todo1-I1}. We may
assume $0\lle r\lle s$. The cases of low total degree have already been
established, so it remains to consider $s\gge 2$. First suppose that
$r=0$. By \eqref{todo4-I1},
$\bh_{i,s}=-[\bb_{\tau i,1},\bb_{i,s-1}]$, and therefore by \eqref{hi1bjr} we have
\begin{align*}
[\bh_{i,0},\bh_{i,s}]
&=-[\bh_{i,0},[\bb_{\tau i,1},\bb_{i,s-1}]]
\\
&=2[\bb_{\tau i,1},\bb_{i,s-1}]
  -2[\bb_{\tau i,1},\bb_{i,s-1}]
=0.
\end{align*}
Suppose next that $r\gge 1$. Using the induction hypothesis for
\eqref{todo2-I1} and then \eqref{todo4-I1}, we obtain
\begin{align*}
[\bh_{i,r},\bh_{i,s}]
&=-[\bh_{i,r},[\bb_{\tau i,1},\bb_{i,s-1}]]
\\
&=2(-1)^r[\bb_{\tau i,r+1},\bb_{i,s-1}]
  -2[\bb_{\tau i,1},\bb_{i,r+s-1}]
\\
&=2(-1)^r(-1)^{r+1}
  [\bb_{\tau i,0},\bb_{i,r+s}]
  +2[\bb_{\tau i,0},\bb_{i,r+s}]
=0.
\end{align*}
Thus $[\bh_{i,r},\bh_{i,s}]=0$ for all $r,s\in\bN$.
Together with $\bh_{i,p}=(-1)^{p+1}\bh_{\tau i,p}$, this also gives all Cartan
commutativity relations supported on the orbit $\{i,\tau i\}$.

For \eqref{todo2-I1}, if $r<\ell$, then the induction hypothesis gives $[\bh_{i,r},\bb_{j,0}]
=\big(c_{ij}-(-1)^r c_{\tau i,j}\big)\bb_{j,r}.$
Repeatedly applying $\mathrm{ad}(\bh_{i,1})$ to
it, and using the Cartan commutativity proved above, we
obtain \eqref{todo2-I1}. If $r=\ell$, then
\begin{align*}
[\bh_{i,\ell},\bb_{i,0}]&=-\big[[\bb_{\tau i,1},\bb_{i,\ell-1}],\bb_{i,0}\big]=-\big[[\bb_{\tau i,1},\bb_{i,0}],\bb_{i,\ell-1}\big]\\&=[\bh_{i,1},\bb_{i,\ell-1}]=(c_{ii}-(-1)^\ell c_{\tau i,i})\bb_{i,\ell},
\end{align*}
completing the proof of 
\eqref{todo2-I1} by Cartan symmetry.

It remains to prove \eqref{todo4-I1} in total degree $\ell$
without using \eqref{todo1-I1} or \eqref{todo2-I1} in that same degree.

Assume first that $\ell=2l+1$. By the induction hypothesis, for every
$0\lle r\lle 2l$ one has
\begin{equation}
\bh_{i,2l}=(-1)^r
[\bb_{\tau i,r},\bb_{i,2l-r}].
\label{cross-ind-odd-I1}
\end{equation}
Applying $\mathrm{ad}(\bh_{i,1})$ to
\eqref{cross-ind-odd-I1}, we obtain
\begin{align}
[\bh_{i,1},\bh_{i,2l}]
=2(-1)^r\big(
[\bb_{\tau i,r},\bb_{i,2l+1-r}]
+[\bb_{\tau i,r+1},\bb_{i,2l-r}]
\big).
\label{cross-rec-odd-I1}
\end{align}
Set
\begin{equation*}
\mathfrak X=[\bb_{\tau i,0},\bb_{i,2l+1}],
\qquad
2\mathfrak Y=[\bh_{i,1},\bh_{i,2l}].
\end{equation*}
It follows recursively from \eqref{cross-rec-odd-I1} that
\begin{equation}
[\bb_{\tau i,r},\bb_{i,2l+1-r}]
=(-1)^r(\mathfrak X-r\mathfrak Y)
\qquad (0\lle r\lle 2l+1).
\label{cross-linear-odd-I1}
\end{equation}
On the other hand, using
$\bh_{i,1}=[\bb_{\tau i,0},\bb_{i,1}]$, the induction hypothesis for
\eqref{todo2-I1}, and \eqref{cross-linear-odd-I1}, we obtain
\begin{align*}
2\mathfrak Y
&=[\bh_{i,1},\bh_{i,2l}]
 =[[\bb_{\tau i,0},\bb_{i,1}],\bh_{i,2l}]
\\
&=2[\bb_{\tau i,2l},\bb_{i,1}]
  -2[\bb_{\tau i,0},\bb_{i,2l+1}]=2(\mathfrak X-2l\mathfrak Y)-2\mathfrak X.
\end{align*}
Here we also used $[\bh_{i,2l},\bb_{i,1}]=2\bb_{i,2l+1}$ that follows from the induction
hypothesis for \eqref{todo4-I1}, together with
\eqref{todo3-I1}:
\begin{align*}
[\bh_{i,2l},\bb_{i,1}]
&=
\big[[\bb_{\tau i,0},\bb_{i,2l}],\bb_{i,1}\big]
\\
&=
-\big[\bb_{i,2l},
      [\bb_{\tau i,0},\bb_{i,1}]\big]
=
-\big[\bb_{i,2l},\bh_{i,1}\big]
=
2\bb_{i,2l+1}.
\end{align*}
Consequently, $(4l+2)\mathfrak Y=0$, and hence
$\mathfrak Y=0$. This proves the assertion in total degree $2l+1$.

We next consider $\ell=2l+2$. The same argument, now starting from the
already established total degree $2l+1$, gives
\begin{equation*}
[\bb_{\tau i,r},\bb_{i,2l+2-r}]
=(-1)^r(\mathfrak X-r\mathfrak Y),
\end{equation*}
where
\begin{equation*}
\mathfrak X=[\bb_{\tau i,0},\bb_{i,2l+2}],
\qquad
2\mathfrak Y=[\bh_{i,1},\bh_{i,2l+1}].
\end{equation*}

Note that by induction hypothesis, we have $[\bh_{i,l+1},\bb_{i,0}]=2\bb_{i,l+1}$ and $[\bh_{i,l+1},\bh_{i,1}]=0$. Therefore, by repeatedly applying $\mathrm{ad}(\bh_{i,1})$ to the former equality, we obtain $[\bh_{i,l+1},\bb_{i,l+1}]=2\bb_{i,2l+2}$. Using the induction hypothesis and the definition
$\bh_{i,l+1}=[\bb_{\tau i,0},\bb_{i,l+1}]$, we have
\begin{align*}
0
&=[\bh_{i,l+1},\bh_{i,l+1}]
 =[\bh_{i,l+1},[\bb_{\tau i,0},\bb_{i,l+1}]]
\\
&=2(-1)^l[\bb_{\tau i,l+1},\bb_{i,l+1}]
  +2[\bb_{\tau i,0},\bb_{i,2l+2}]
\\
&=2(-1)^l(-1)^{l+1}
  \big(\mathfrak X-(l+1)\mathfrak Y\big)
  +2\mathfrak X=2(l+1)\mathfrak Y.
\end{align*}
Thus $\mathfrak Y=0$, proving the assertion in total degree $2l+2$. We have therefore proved \eqref{todo4-I1} in total degree $\ell$.
The deductions above now imply \eqref{todo1-I1} and
\eqref{todo2-I1} in the same total degree. Hence all the required
relations hold in total degree $\ell$, and the induction is complete.
In particular,
\[
[\bb_{\tau i,r},\bb_{i,\ell-r}]
=
(-1)^r[\bb_{\tau i,0},\bb_{i,\ell}]
=
(-1)^r\bh_{i,\ell}.
\]

\subsubsection*{\bf Step 5}
Now we are ready to finalize the proof of Proposition \ref{prop:red}. We start with the following lemma.

\begin{lem}\label{lem:com-transit}
Suppose $i\in \I_1$, $j\in \I_0$, and $c_{ij}=-1$. If $[\bh_{i,1},\bh_{i,2}]=0$, then $[\bh_{j,1},\bh_{j,3}]=0$.
\end{lem}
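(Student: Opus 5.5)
The plan is to trade the unknown bracket $[\bh_{j,1},\bh_{j,3}]$ for a bracket with $\bh_{i,1}$, and then to compute that bracket using the full set of orbit relations for $\{i,\tau i\}$ from Step 4. Since $\g$ is not of type $\mathsf A_{2n}$ here and $\tau j=j$, \eqref{bhdef} gives $\bh_{j,1}=[\bb_{j,0},\bb_{j,1}]$ and $\bh_{j,3}=[\bb_{j,0},\bb_{j,3}]$. By \eqref{hi1bjr}, $[\bh_{j,1},\bb_{j,r}]=\gamma_{jj}\bb_{j,r+1}=4\bb_{j,r+1}$ and $[\bh_{i,1},\bb_{j,r}]=\gamma_{ij}\bb_{j,r+1}=-2\bb_{j,r+1}$.

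\textbf{Step A (trading $\bh_{j,1}$ for $\bh_{i,1}$).} Set $D:=\bh_{j,1}+2\bh_{i,1}$. By the two formulas above, $D$ kills every $\bb_{j,r}$, in the spirit of the proof of Lemma \ref{lem:extraD-redundant-gr}. Since $\bh_{j,3}$ is a bracket of $\bb_{j,0}$ and $\bb_{j,3}$, it follows that $[D,\bh_{j,3}]=0$, so $[\bh_{j,1},\bh_{j,3}]=-2[\bh_{i,1},\bh_{j,3}]$. It therefore suffices to prove $[\bh_{i,1},\bh_{j,3}]=0$.

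\textbf{Step B (reduction to a shift formula).} Write $\bh_{i,1}=[\bb_{\tau i,0},\bb_{i,1}]$ and expand with the Jacobi identity. If we know
\[
[\bh_{j,3},\bb_{k,r}]=-2\,\bb_{k,r+3}\qquad (k\in\{i,\tau i\},\ r=0,1),
\]
then
\[
[\bh_{j,3},\bh_{i,1}]=-2\big([\bb_{\tau i,3},\bb_{i,1}]+[\bb_{\tau i,0},\bb_{i,4}]\big).
\]
By \eqref{todo4-I1} and \eqref{todo1-I1} from Step 4, which are available because of the hypothesis $[\bh_{i,1},\bh_{i,2}]=0$, we have $[\bb_{\tau i,3},\bb_{i,1}]=-\bh_{i,4}$ and $[\bb_{\tau i,0},\bb_{i,4}]=\bh_{i,4}$. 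Hence the right-hand side vanishes. The coefficient $-2$ is the expected value $c_{ji}-(-1)^3c_{\tau j,i}=2c_{ji}$; the only point that matters is that it is the same for $i$ and $\tau i$.

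\textbf{Step C (the shift formula; main obstacle).} We must establish $[\bh_{j,3},\bb_{k,r}]=-2\bb_{k,r+3}$. I would first do $r=0$ and then get $r=1$ by applying $\mathrm{ad}(\bh_{i,1})$, which is legitimate once $[\bh_{i,1},\bh_{j,3}]$ is controlled. To avoid circularity, it is better to obtain $r=1$ directly by the same computation. The computation expands
\[
[[\bb_{j,0},\bb_{j,3}],\bb_{i,r}]
\]
by the Jacobi identity. Lemma \ref{lem:bb} lets us move degrees freely between $\bb_j$ and $\bb_i$, since $j$ and $i$ lie in distinct orbits. The inner brackets are then reorganized using the quadratic relation $\mathsf X(r_1,r_2\,|\,s)=0$ for the pair $(j,i)$. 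This relation is obtained from the finite Serre relation \eqref{tcfSerre3} for $j\in\I_0$, $i$ adjacent, by the same $\mathrm{ad}(\bh_{\cdot,1})$, $\mathrm{ad}(\bh_{i,2})$ bootstrap as in the lemma preceding Lemma \ref{lem:hi1hi2-com}. This is where the hypothesis $[\bh_{i,1},\bh_{i,2}]=0$ enters, via Lemma \ref{lem:h2bj-2}, to make $\mathrm{ad}(\bh_{i,2})$ a clean degree-two shift on $\bb_{i,\cdot}$; recall that $\mathrm{ad}(\bh_{i,2})$ kills $\bb_{j,\cdot}$ by the claim in Lemma \ref{lem:extraD-redundant-gr}.

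The computation should collapse, as in the split argument of \cite{Lu26min}, to an identity of the form
\[
[\bh_{j,3},\bb_{i,r}]=[\bb_{j,0},[\bb_{j,0},\bb_{i,r+3}]]+\cdots,
\]
and the right-hand side is rewritten by \eqref{tcfSerre3} as $-2\bb_{i,r+3}$ plus terms that cancel pairwise. I expect the bookkeeping of these cancellations, and checking that the inhomogeneous Serre term has the right coefficient, to be the hardest part. If the direct expansion becomes unwieldy, my fallback is to apply $\mathrm{ad}(\bh_{i,2})$ and $\mathrm{ad}(\bh_{i,1})$ to the known degree-one identity $[\bh_{j,1},\bb_{i,0}]=-2\bb_{i,1}$. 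This produces a small linear system, as in Lemma \ref{lem:bb}, whose unique solution is the desired shift formula.
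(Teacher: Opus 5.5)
Your Steps A and B are correct. $D=\bh_{j,1}+2\bh_{i,1}$ does kill every $\bb_{j,r}$. Given the shift formula, $[\bh_{j,3},\bh_{i,1}]$ does vanish by \eqref{todo4-I1} and \eqref{todo1-I1} in total degree $4$. The gap is Step C: it carries all the content, it is not proved, and the mechanisms you propose for it do not work as stated.

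Put $\mathsf X(r_1,r_2\,|\,s):=[\bb_{j,r_1},[\bb_{j,r_2},\bb_{i,s}]]+[\bb_{j,r_2},[\bb_{j,r_1},\bb_{i,s}]]$. Your Jacobi expansion together with Lemma \ref{lem:bb} gives exactly $[\bh_{j,3},\bb_{i,r}]=\mathsf X(0,0\,|\,r+3)-\mathsf X(3,0\,|\,r)$. The outer $\bb_{j,3}$ cannot be shifted without recreating $[\bb_{j,3},\bb_{j,0}]=-\bh_{j,3}$, so $\mathsf X(3,0\,|\,r)$ must be determined independently.

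\emph{The relation is inhomogeneous.} For $j\in\I_0$ these expressions are not zero: $\mathsf X(r_1,r_2\,|\,s)=-\bigl((-1)^{r_1}+(-1)^{r_2}\bigr)\bb_{i,r_1+r_2+s}$.

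\emph{The bootstrap does not transfer.} In the lemma preceding Lemma \ref{lem:hi1hi2-com}, a Cartan mode shifted the doubled slots by two. Here the doubled vertex is $j$, $\bh_{j,2}=0$, and $\ad(\bh_{i,2})$ kills every $\bb_{j,\cdot}$. So $\ad(\bh_{i,2})$ only shifts the $s$-slot and reproduces equations already given by $\ad(\bh_{\cdot,1})$.

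\emph{The fallback cannot work.} $\bh_{j,3}$ never appears among the images of $[\bh_{j,1},\bb_{i,0}]=-2\bb_{i,1}$ under $\ad(\bh_{i,1})$ and $\ad(\bh_{i,2})$.

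The missing idea is as follows. Set $\mathsf Y(r_1,r_2\,|\,s):=\mathsf X(r_1,r_2\,|\,s)+\bigl((-1)^{r_1}+(-1)^{r_2}\bigr)\bb_{i,r_1+r_2+s}$. The inhomogeneous term drops out of the recursion. Let $A=\mathsf Y(r_1+1,r_2\,|\,s)+\mathsf Y(r_1,r_2+1\,|\,s)$ and $B=\mathsf Y(r_1,r_2\,|\,s+1)$. Applying $\ad(\bh_{i,1})$ and $\ad(\bh_{j,1})$ to $\mathsf Y(r_1,r_2\,|\,s)=0$, with $\gamma_{ij}=\gamma_{ji}=-2$, $\gamma_{ii}=2$, $\gamma_{jj}=4$, gives $-2A+2B=0=4A-2B$, hence $A=B=0$.

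Starting from \eqref{tcfSerre3}, this kills degree one. In degree two it yields only $\mathsf Y(2,0\,|\,0)+\mathsf Y(1,1\,|\,0)=0$. The extra equation is $\mathsf X(2,0\,|\,0)=\mathsf X(0,0\,|\,2)$, i.e. $\mathsf Y(2,0\,|\,0)=0$. It follows from Lemma \ref{lem:bb} and $[\bb_{j,0},\bb_{j,2}]=0$, obtained by applying $\ad(\bh_{j,1})$ to $[\bb_{j,0},\bb_{j,1}]=\bh_{j,1}$.

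Propagating, every $\mathsf Y$ of degree at most $3$ vanishes, as do $\mathsf Y(0,0\,|\,4)$ and $\mathsf Y(3,0\,|\,1)$. Hence $[\bh_{j,3},\bb_{i,r}]=-2\bb_{i,r+3}$ for $r=0,1$, and likewise with $\tau i$ in place of $i$. With this inserted your argument is complete, and the hypothesis $[\bh_{i,1},\bh_{i,2}]=0$ enters only through Step 4 in Step B.

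The paper runs the transfer in the opposite direction and so avoids higher inhomogeneous Serre relations altogether. It uses $[\bh_{i,1},\bh_{i,3}]=0$ from Step 4, and the fact that $\bh_{i,1}$ and $-\bh_{j,1}$ act identically on $\bb_{i,\cdot}$ and $\bb_{\tau i,\cdot}$, to get $[\bh_{j,1},\bh_{i,3}]=0$. It then brackets twice with $\bb_{j,0}$. This needs only the easy cross action $[\bh_{i,3},\bb_{j,0}]=2c_{ij}\bb_{j,3}$, a Lemma \ref{lem:h2bj}-type computation closed by $[\bb_{i,0},\bb_{\tau i,0}]=\bh_{\tau i,0}$. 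It concludes with \cite[(3.35)]{Lu26min}.
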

\begin{proof}
If $[\bh_{i,1},\bh_{i,2}]=0$, then it follows from the discussion in Step 4 that $[\bh_{i,1},\bh_{i,3}]=0$. Moreover, we have $[\bh_{i,3},\bb_{j,0}]=2c_{ij}\bb_{j,3}$ by the same proof as in Lemma \ref{lem:h2bj}. We have
\begin{align*}
0&\,\,\,=\,\,[\bh_{i,1},\bh_{i,3}]\stackrel{\eqref{todo4-I1}}{=}\big[\bh_{i,1},[\bb_{\tau i,2},\bb_{i,1}]\big]=\big[[\bh_{i,1},\bb_{\tau i,2}],\bb_{i,1}\big]+\big[\bb_{\tau i,2},[\bh_{i,1},\bb_{i,1}]\big]\\
&\stackrel{\eqref{hi1bjr}}{=}\frac{c_{ii}}{2c_{ij}}\Big(\big[[\bh_{j,1},\bb_{\tau i,2}],\bb_{i,1}\big]+\big[\bb_{\tau i,2},[\bh_{j,1},\bb_{i,1}]\big]\Big)\\
&\,\,\,=\,\,\frac{c_{ii}}{2c_{ij}}\big[\bh_{j,1},[\bb_{\tau i,2},\bb_{i,1}]\big]=\frac{c_{ii}}{2c_{ij}}[\bh_{j,1},\bh_{i,3}].
\end{align*}
Thus $[\bh_{j,1},\bh_{i,3}]=0$. Applying $[\,\cdot\,,\bb_{j,0}]$ to it and then using \eqref{hi1bjr}, we obtain
\[
c_{jj}[\bb_{j,1},\bh_{i,3}]+c_{ij}[\bh_{j,1},\bb_{j,3}]=0.
\]
Applying $\mathrm{ad}(\bb_{j,0})$ to the above equation, a similar calculation using $[\bh_{j,1},\bh_{i,3}]=0$ shows that
\begin{align*}
-4c_{ij}c_{jj}[\bb_{j,1},\bb_{j,3}]+c_{ij}[\bh_{j,1},\bh_{j,3}]=0.
\end{align*}
Note that \cite[(3.35)]{Lu26min} implies that $[\bh_{j,1},\bh_{j,3}]=2(\alpha_j,\alpha_j)[\bb_{j,3},\bb_{j,1}]$. Combining this with the above equation implies $[\bh_{j,1},\bh_{j,3}]=0$.
\end{proof}

We first observe from the previous steps that we always have
\begin{align}
&[\bh_{i,1},\bh_{i,2}]=0, &\text{ for all }i\in \I_1,\label{eq:hi1hi2-commm}\\
&[\bh_{j,1},\bh_{j,3}]=0, &\text{ for all }j\in \I_0.\label{eq:hj1hj3-commm}
\end{align}
Indeed, 
if $\I_1$ has at least two elements, then \eqref{eq:hi1hi2-commm} follows from Lemma \ref{lem:hi1hi2-com} as $\I_1$ is connected. 
If $\I_0$ has at least two elements, then \eqref{eq:hj1hj3-commm} follows from \cite[Lemmas 3.9, 3.11]{Lu26min}. Lemma \ref{lem:com-transit} indicates if $c_{ij}=-1$, then \eqref{eq:hi1hi2-commm} implies \eqref{eq:hj1hj3-commm}. Thus, we have the desired commutativity in \eqref{eq:hi1hi2-commm}--\eqref{eq:hj1hj3-commm} for type $\mathsf A_{2r+1}$ for $r\gge 2$, $\mathsf E_6$. For type $\mathsf A_1$, the commutativity \eqref{eq:hj1hj3-commm} is obtained by enforcing the extra relation $\big[\bh_{1,1},\big[\bb_{1,1},[\bh_{1,1},\bb_{1,1}]\big]\big]=0$. For type $\mathsf A_3$ and $\mathsf D_r$, the relation \eqref{eq:hi1hi2-commm} follows from Lemma \ref{lem:extraD-redundant-gr} and its proof while the relation  \eqref{eq:hj1hj3-commm} is obtained by Lemma \ref{lem:com-transit}.

Due to \eqref{eq:hi1hi2-commm}--\eqref{eq:hj1hj3-commm}, it follows from Step 4 and \cite[\S3.3]{Lu26min} that \eqref{eq:qsclassical1}--\eqref{eq:qsclassical0} hold for $i,j$ in the same $\tau$-orbit. It suffices to prove \eqref{eq:qsclassical1}--\eqref{eq:qsclassical3} for $i,j$ in different $\tau$-orbits. In this case, \eqref{eq:qsclassical3} follows from Lemma \ref{lem:bb}. It remains to prove \eqref{eq:qsclassical1}--\eqref{eq:qsclassical2}. 

Now suppose $i,j$ are in different $\tau$-orbits. 
If $i\in \I_1$, then we have
\begin{align*}
 [\bh_{i,r},\bb_{j,s}]
 &=
 \bigl[[\bb_{i,r-1},\bb_{\tau i,1}],\bb_{j,s}\bigr]
 \\
 &=
 \bigl[[\bb_{i,r-1},\bb_{j,s}],\bb_{\tau i,1}\bigr]
 +
 \bigl[\bb_{i,r-1},[\bb_{\tau i,1},\bb_{j,s}]\bigr]
 \\
 &=
 \bigl[[\bb_{i,0},\bb_{j,r+s-1}],\bb_{\tau i,1}\bigr]
 +
 \bigl[\bb_{i,r-1},[\bb_{\tau i,0},\bb_{j,s+1}]\bigr]
 \\
 &=
 [\bh_{\tau i,1},\bb_{j,r+s-1}]
 +
 \bigl[\bb_{i,0},[\bb_{j,r+s-1},\bb_{\tau i,1}]\bigr]\\
 &\qquad\qquad-
 [\bh_{i,r-1},\bb_{j,s+1}]
 +
 \bigl[\bb_{\tau i,0},[\bb_{i,r-1},\bb_{j,s+1}]\bigr]
 \\
 &=(c_{ij}+c_{\tau i,j})\bb_{j,r+s}-[\bh_{i,r-1},\bb_{j,s+1}]\\
 &\qquad\qquad
 +
 \bigl[\bb_{i,0},[\bb_{j,r+s},\bb_{\tau i,0}]\bigr]
 +
 \bigl[\bb_{\tau i,0},[\bb_{i,0},\bb_{j,r+s}]\bigr]
 \\
 &=
 (c_{ij}+c_{\tau i,j})\bb_{j,r+s}-[\bh_{i,r-1},\bb_{j,s+1}]-\bigl[[\bb_{i,0},\bb_{\tau i,0}],\bb_{j,r+s}\bigr]
 \\
  &=
 (c_{ij}+c_{\tau i,j})\bb_{j,r+s}-[\bh_{i,r-1},\bb_{j,s+1}]-[\bh_{\tau i,0},\bb_{j,r+s}]
 \\
 &=2c_{ij}\bb_{j,r+s}-[\bh_{i,r-1},\bb_{j,s+1}].
\end{align*}
Thus, we easily obtain by induction on $r$ that $[\bh_{i,r},\bb_{j,s}]=(c_{ij}-(-1)^rc_{\tau i,j})\bb_{j,r+s}$. If $i\in \I_0$, then one proves similarly that $[\bh_{i,2r+1},\bb_{j,s}]=2c_{ij}\bb_{j,2r+s+1}$, see also \cite[Step 3 of \S3.3]{Lu26min}.

Then for any $i\in \I$ and $j\in \I$, we have $[\bh_{i,r},\bb_{j,s}]=(c_{ij}-(-1)^rc_{\tau i,j})\bb_{j,r+s}$. Here $\tau i=i$ is allowed. Applying $\mathrm{ad}(\bb_{\tau j,0})$ to it, we obtain that
\begin{align*}
(-1)^r(c_{ij}-(-1)^rc_{\tau i,j})[\bb_{\tau j,r},\bb_{j,s}]+[\bh_{i,r},\bh_{j,s}]=(c_{ij}-(-1)^rc_{\tau i,j})\bh_{j,r+s}.
\end{align*}
Using \eqref{eq:qsclassical0} for the case $j\in \I_1$ and \cite[(3.42)]{Lu26min} for the case $j\in \I_0$, we find that $[\bh_{i,r},\bh_{j,s}]=0$, proving the first equality in \eqref{eq:qsclassical1}. 

Combining with the treatment for type $\mathsf A_{2n}$ in Appendix \ref{sec:pf-prop2}, the proof of Proposition \ref{prop:red} is complete.

\subsection{Proof of Theorem \ref{thm:min}}\label{sec:pfthmA}
Now we prove Theorem \ref{thm:min} with the help of Proposition \ref{prop:red}.

Let $\Yi^{\min}$ be the algebra generated by $h_{i,r},b_{i,r}$ for $i\in \I$ and $r=0,1$ subject to the relations \eqref{redqs1}--\eqref{redqs5}, the finite Serre type relations \eqref{finserre1-ty}--\eqref{finserre4-ty}, and the extra relations \eqref{eq:extraA1}--\eqref{eq:extraA2-2} depending on the underlying type. Then we have an algebra homomorphism
\begin{align*}
\pi:&~ \Yi^{\min}\to \Yi\\
&~h_{i,r}\mapsto h_{i,r},\quad b_{i,r}\mapsto b_{i,r} \qquad (i\in \I,r=0,1).
\end{align*}
Note that when the extra relations are included, then they are also satisfied in $\Yi$ by Lemma \ref{lem:hbhb}.
Moreover, $\pi$ is surjective as $\Yi$ is generated by $h_{i,0},h_{i,1},b_{i,0}$ for $i\in \I$ by Lemma \ref{lem:generate}. 

Introduce a filtration on $\Yi^{\min}$  by setting $\deg h_{i,r}=\deg b_{i,r}=r$ for $i\in \I$ and $r=0,1$. Recall the filtration on $\Yi$ from Section \ref{sec:iy}. Then $\pi$ is clearly a filtered homomorphism. Taking the associated graded, we obtain a surjective homomorphism $\gr\,\pi:\gr\,\Yi^{\min}\to\gr\,\Yi$.

On the other hand, recall the algebra isomorphism $\varrho: \mathrm{U}(\g[z]^{\check\omega})\stackrel{\cong\,}{\longrightarrow} \mathrm{gr}\,\Yi$ from \eqref{assi}. By the presentation of $\mathrm{U}(\g[z]^{\check\omega})$ from Proposition \ref{prop:red}, we also have a natural homomorphism $\mathrm{U}(\g[z]^{\check\omega})\to \gr\,\Yi^{\min}$, which matches on generators (in different math fonts). By composing  $\varrho^{-1}$ with this map, we obtain a homomorphism $\gr \,\Yi\to \gr\,\Yi^{\min}$; by \eqref{assi}, \eqref{eq:hb-explicit} and \eqref{eq:psi}, this homomorphism is clearly the inverse to $\gr\,\pi$ as seen on generators. Hence, we have proved that $\gr\,\pi:\gr\,\Yi^{\min}\to\gr\,\Yi$ is an isomorphism, and so is $\pi:\Yi^{\min}\to \Yi$.

\subsection{A strengthening for split iYangians}\label{ssec:split-strengthening}
Recall the associated-graded argument for
the minimalistic presentation of the split iYangians in  \cite[Theorem 3.1]{Lu26min}. In that
presentation an extra relation as \eqref{eq:extraA1} was imposed in types
$\mathsf A_1$, $\mathsf B_2\cong\mathsf C_2$, and $\mathsf G_2$.
The following argument shows that the last two cases are not exceptional. By an abuse of notation, we denote by 
$\Yi$ the split iYangian and in this case $\g$ can be any finite-dimensional simple Lie algebra. Note that in this case $\I=\I_0$.

Let $\g$ be simple of rank at least two and let $i,j\in\I$ be adjacent
vertices. Set
\[
 a_{ii}=(\alpha_i,\alpha_i),\qquad
 a_{jj}=(\alpha_j,\alpha_j),\qquad
 a_{ij}=(\alpha_i,\alpha_j)\ne0.
\]
We use the recursively defined elements $\bb_{k,r}$, $\bh_{k,2r+1}$
 and the identities
\begin{align}
 [\bh_{k,1},\bb_{l,r}]&=2(\alpha_k,\alpha_l)\bb_{l,r+1},
 \label{eq:split-mode-action}\\
 \bh_{k,1}&=[\bb_{k,0},\bb_{k,1}],
 &
 \bh_{k,3}&=[\bb_{k,2},\bb_{k,1}]
             =[\bb_{k,0},\bb_{k,3}],
 \label{eq:split-low-modes}\\
 [\bh_{k,3},\bb_{l,r}]&=2(\alpha_k,\alpha_l)\bb_{l,r+3}
 \qquad(k\ne l)
 \label{eq:split-h3-action}
\end{align}
that are established before the extra relation is used; see
\cite[(3.24)--(3.32)]{Lu26min}.

\begin{lem}\label{lem:split-extra-redundant-gr}
In the split classical minimalistic presentation, the relations other
than the extra relation imply
\begin{equation}
 [\bh_{k,1},\bh_{k,3}]=0
 \qquad(k\in\I)
 \label{eq:split-h1h3-derived}
\end{equation}
whenever $\g$ is simple of rank at least two. Consequently,
\begin{equation}
 \Big[\bh_{k,1},
   \big[\bb_{k,1},[\bh_{k,1},\bb_{k,1}]\big]
 \Big]=0
 \label{eq:split-extra-graded-derived}
\end{equation}
is redundant. In particular, this applies to
$\mathsf B_2\cong\mathsf C_2$ and $\mathsf G_2$.
\end{lem}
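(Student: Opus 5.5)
Let $i,j$ be adjacent vertices. The plan is to imitate the proof of Lemma~\ref{lem:com-transit}: first show that $\bh_{i,3}$ commutes with $\bh_{j,1}$, then bracket twice with $\bb_{j,0}$ to turn this into a statement about the pair $\bb_{j,1},\bb_{j,3}$.

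We start from \eqref{eq:split-h3-action} with $k=i$, $l=j$. This gives $[\bh_{i,3},\bb_{j,r}]=2a_{ij}\bb_{j,r+3}$. We then compute
\[
[\bh_{j,1},\bh_{i,3}]=[\bh_{j,1},[\bb_{i,2},\bb_{i,1}]],
\]
using \eqref{eq:split-low-modes}. By \eqref{eq:split-mode-action}, $\ad(\bh_{j,1})$ acts on each $\bb_{i,r}$ as $\tfrac{a_{ij}}{a_{ii}}\ad(\bh_{i,1})$. Hence
\[
[\bh_{j,1},\bh_{i,3}]=\tfrac{a_{ij}}{a_{ii}}[\bh_{i,1},\bh_{i,3}].
\]
The right side is not yet known to vanish, so we do not argue as in Lemma~\ref{lem:com-transit}. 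Instead we compute the quantity $X_k:=[\bh_{k,1},\bh_{k,3}]$ in two ways.

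First expression. We expand $X_j=[[\bb_{j,0},\bb_{j,1}],\bh_{j,3}]$, using $\bh_{j,3}=[\bb_{j,2},\bb_{j,1}]$, and reduce as in \cite[(3.35)]{Lu26min}. This gives $X_j=2a_{jj}[\bb_{j,3},\bb_{j,1}]$, which is the identity already quoted.

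Second expression. We start from
\[
[\bh_{j,1},\bh_{i,3}]=\tfrac{a_{ij}}{a_{ii}}X_i
\]
and apply $[\,\cdot\,,\bb_{j,0}]$, then $\ad(\bb_{j,0})$ again, exactly as in the proof of Lemma~\ref{lem:com-transit}. Only the action rules \eqref{eq:split-mode-action} and \eqref{eq:split-h3-action} are needed for this. The left side becomes a combination $c_1[\bb_{j,1},\bb_{j,3}]+c_2X_j$, where $c_1,c_2$ are explicit multiples of $a_{ij},a_{jj}$. The right side becomes $\tfrac{a_{ij}}{a_{ii}}$ times the double bracket $[\bb_{j,0},[X_i,\bb_{j,0}]]$. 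We evaluate this double bracket by Jacobi: $X_i$ is a commutator of two Cartan-type elements, and each of them acts on $\bb_{j,\cdot}$ by a degree shift ($\bh_{i,1}$ by $2a_{ij}$, $\bh_{i,3}$ by $2a_{ij}$). The two shifts commute on $\bb_{j,0}$, so $[X_i,\bb_{j,0}]=0$.

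Combining the two expressions yields a linear relation
\[
\lambda[\bb_{j,3},\bb_{j,1}]+\mu X_j=0 .
\]
Together with $X_j=2a_{jj}[\bb_{j,3},\bb_{j,1}]$, this forces $X_j=0$ as long as the resulting $2\times 2$ determinant is nonzero. Checking that determinant in terms of $a_{ij},a_{jj}$ is the main obstacle. In particular the non-simply-laced ratios of $\mathsf B_2$ and $\mathsf G_2$ must not produce a degenerate coefficient, because these are exactly the cases where the determinant could accidentally vanish. If it does vanish, we fall back on the symmetric computation with $i,j$ swapped. Together with $X_j=2a_{jj}[\bb_{j,3},\bb_{j,1}]$, the two swapped relations give a $2\times2$ system in $X_i,X_j$, whose determinant involves $a_{ii}a_{jj}-a_{ij}^2\neq 0$ in finite type.

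Once $X_j=0$, the statement for $i$ follows by the same argument with roles swapped, and connectedness of the Dynkin diagram covers every $k\in\I$. Finally, \eqref{eq:split-extra-graded-derived} follows from \eqref{eq:split-h1h3-derived}. Indeed, $[\bh_{k,1},\bb_{k,1}]=2a_{kk}\bb_{k,2}$ and $[\bb_{k,1},\bb_{k,2}]=-\bh_{k,3}$, so the extra relation equals a nonzero multiple of $[\bh_{k,1},\bh_{k,3}]$, which vanishes.
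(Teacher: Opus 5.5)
Your second expression does not work as stated. Put $Y:=[\bh_{j,1},\bh_{i,3}]$.

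\emph{Action rules alone give nothing.} Using only \eqref{eq:split-mode-action} and \eqref{eq:split-h3-action}, you already have
\[
[Y,\bb_{j,0}]=2a_{ij}[\bh_{j,1},\bb_{j,3}]+2a_{jj}[\bb_{j,1},\bh_{i,3}]=4a_{ij}a_{jj}\bb_{j,4}-4a_{ij}a_{jj}\bb_{j,4}=0 .
\]
This is the same cancellation you use to show $[X_i,\bb_{j,0}]=0$. So both sides of your bracketed identity vanish and you learn nothing.

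\emph{Following Lemma~\ref{lem:com-transit} brings $Y$ back.} The double bracket carries information only if the second bracket is evaluated through $[\bb_{j,0},\bb_{j,1}]=\bh_{j,1}$ and $[\bb_{j,0},\bb_{j,3}]=\bh_{j,3}$. Then $[\bb_{j,0},[\bb_{j,1},\bh_{i,3}]]=Y-2a_{ij}[\bb_{j,1},\bb_{j,3}]$ reproduces $Y$, and you get
\[
0=[\bb_{j,0},[Y,\bb_{j,0}]]=-8a_{ij}a_{jj}[\bb_{j,1},\bb_{j,3}]+2a_{ij}X_j+2a_{jj}Y .
\]
In Lemma~\ref{lem:com-transit} the last term is discarded because $Y=0$ is the hypothesis there. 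Here $Y=\tfrac{a_{ij}}{a_{ii}}X_i$ is a priori nonzero. So the left side is not a combination of $[\bb_{j,1},\bb_{j,3}]$ and $X_j$ alone. There is no relation $\lambda[\bb_{j,3},\bb_{j,1}]+\mu X_j=0$ to test with a determinant. Dropping the $Y$-term would yield the unjustified conclusion $6a_{ij}X_j=0$ from a single ordered pair $(i,j)$.

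\emph{What the computation actually gives.} Inserting $X_j=2a_{jj}[\bb_{j,3},\bb_{j,1}]$, you get $3a_{ij}X_j+a_{jj}Y=0$, i.e.\ $X_i=-3\tfrac{a_{ii}}{a_{jj}}X_j$. This relation couples the two vertices. Your fallback is therefore the whole argument, and it must use this cross term; the single-vertex relations you describe would not couple $X_i$ and $X_j$ at all. Swapping $i$ and $j$ gives $X_j=-3\tfrac{a_{jj}}{a_{ii}}X_i$, hence $X_i=9X_i$ and $X_i=X_j=0$. The determinant is $1-9=-8$, independent of the Cartan data, not something involving $a_{ii}a_{jj}-a_{ij}^2$. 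So $\mathsf B_2$ and $\mathsf G_2$ need no separate check.

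This repaired argument is the paper's proof. The paper reaches the cross relation more directly by computing $[\bh_{i,1},\bh_{j,3}]$ in two ways:
\begin{itemize}
\item via $\bh_{j,3}=[\bb_{j,2},\bb_{j,1}]$ and the $\bh_{i,1}$-action;
\item via $\bh_{i,1}=[\bb_{i,0},\bb_{i,1}]$ and the $\bh_{j,3}$-action, together with $[\bb_{i,0},\bb_{i,4}]=X_i/a_{ii}$ (from comparing two expansions of $X_i$).
\end{itemize}

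A minor point: derive your first expression as $[\bh_{j,1},[\bb_{j,2},\bb_{j,1}]]$. Expanding $[[\bb_{j,0},\bb_{j,1}],\bh_{j,3}]$ would need $\ad(\bh_{j,3})$ on $\bb_{j,\bullet}$, which \eqref{eq:split-h3-action} does not supply. Your final deduction of \eqref{eq:split-extra-graded-derived} from \eqref{eq:split-h1h3-derived} is correct.
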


\begin{proof}
Put $\mathscr C_i=[\bh_{i,1},\bh_{i,3}]$ and $\mathscr C_j=[\bh_{j,1},\bh_{j,3}]$. Using \eqref{eq:split-mode-action} and
\eqref{eq:split-low-modes}, we first obtain
\begin{align}
 \mathscr C_i
 &=[\bh_{i,1},[\bb_{i,2},\bb_{i,1}]]
   =2a_{ii}[\bb_{i,3},\bb_{i,1}],
 \label{eq:split-C-first}\\
 \mathscr C_i
 &=[\bh_{i,1},[\bb_{i,0},\bb_{i,3}]]
   =2a_{ii}\bigl(
      [\bb_{i,1},\bb_{i,3}]
      +[\bb_{i,0},\bb_{i,4}]
    \bigr),
 \label{eq:split-C-second}
\end{align}
see also
\cite[(3.35)--(3.36)]{Lu26min}. Comparing
\eqref{eq:split-C-first} and \eqref{eq:split-C-second} gives
\begin{equation}
 [\bb_{i,3},\bb_{i,1}]=\frac{\mathscr C_i}{2a_{ii}} ,
 \qquad
 [\bb_{i,0},\bb_{i,4}]=\frac{\mathscr C_i}{a_{ii}}.
 \label{eq:split-C-consequences}
\end{equation}

We now use the adjacent vertex $j$. By
\eqref{eq:split-low-modes}, \eqref{eq:split-h3-action}, and
\eqref{eq:split-C-consequences},
\begin{align}
 [\bh_{j,3},\bh_{i,1}]
 =[\bh_{j,3},[\bb_{i,0},\bb_{i,1}]]
=2a_{ij}\bigl(
      [\bb_{i,3},\bb_{i,1}]
      +[\bb_{i,0},\bb_{i,4}]
    \bigr)
 =\frac{3a_{ij}}{a_{ii}}\mathscr C_i.
 \label{eq:split-cross-first}
\end{align}
On the other hand, applying \eqref{eq:split-mode-action} directly to
$\bh_{j,3}=[\bb_{j,2},\bb_{j,1}]$ yields
\begin{align}
 [\bh_{i,1},\bh_{j,3}]
 &=[\bh_{i,1},[\bb_{j,2},\bb_{j,1}]]
 =2a_{ij}[\bb_{j,3},\bb_{j,1}]
 =\frac{a_{ij}}{a_{jj}}\mathscr C_j.
 \label{eq:split-cross-second}
\end{align}
Comparing \eqref{eq:split-cross-first}  with
\eqref{eq:split-cross-second} gives
\begin{equation*}
\mathscr C_j=-3\frac{a_{jj}}{a_{ii}}\mathscr C_i.
\end{equation*}
Interchanging $i$ and $j$ in the same argument gives
\begin{equation*}
\mathscr C_i=-3\frac{a_{ii}}{a_{jj}}\mathscr C_j.
\end{equation*}
Hence $\mathscr C_i= \mathscr C_j=0$. Since the Dynkin diagram of $\g$ is connected, the same
argument proves \eqref{eq:split-h1h3-derived} for every vertex.

Finally, by \eqref{eq:split-mode-action} and
$[\bb_{k,1},\bb_{k,2}]=-\bh_{k,3}$, we have $\big[\bh_{k,1},
   \big[\bb_{k,1},[\bh_{k,1},\bb_{k,1}]\big]
 \big]
 =-2a_{kk}[\bh_{k,1},\bh_{k,3}]$, which proves \eqref{eq:split-extra-graded-derived}.
\end{proof}

\begin{cor}\label{cor:split-extra-redundant-quantum}
Let $\g$ be simple of rank at least two. The split iYangian is
presented by the generators $h_{i,1},b_{i,0},b_{i,1}$ and the relations
\cite[(3.1)--(3.3)]{Lu26min}, together with the finite Serre relations \cite[(2.30)--(2.33)]{Lu26min},
without imposing the extra relation
\begin{equation*}
 \Big[h_{i,1},
   \big[b_{i,1},[h_{i,1},b_{i,1}]\big]
 \Big]
 =a_{ii}^2[b_{i,1}^2,h_{i,1}].
\end{equation*}
Thus the extra relation in the minimalistic presentation of split iYangians is
needed only in type $\mathsf A_1$; in particular, it is redundant in
types $\mathsf B_2\cong\mathsf C_2$ and $\mathsf G_2$. Moreover,  \cite[Theorems~4.1 and~5.1]{Lu26min} also hold for type $\mathsf G_2$.
\end{cor}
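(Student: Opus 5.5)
The corollary is the quantum lift of Lemma~\ref{lem:split-extra-redundant-gr}, and the lift uses the same filtered/PBW argument as in \S\ref{sec:pfthmA}. Let $\Yi^{\min}_{\rm split}$ be the algebra generated by $h_{i,1},b_{i,0},b_{i,1}$ subject only to \cite[(3.1)--(3.3)]{Lu26min} and the finite Serre relations \cite[(2.30)--(2.33)]{Lu26min}, with the extra relation omitted.

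The proof has four steps.

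\textbf{Step 1.} Construct the map. The split iYangian satisfies all of these relations, so there is a homomorphism $\pi:\Yi^{\min}_{\rm split}\to\Yi$. It is surjective because the split analogue of Lemma~\ref{lem:generate}, from \cite{Lu26min}, shows that $\Yi$ is generated by $h_{i,1}$ and $b_{i,0}$.

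\textbf{Step 2.} Compare associated gradeds. Filter $\Yi^{\min}_{\rm split}$ by $\deg b_{i,r}=\deg h_{i,1}=r$. Then $\pi$ is filtered and $\gr\,\pi$ is surjective onto $\gr\,\Yi\cong \mathrm U(\g[z]^{\check\omega})$, with $\omega$ the Chevalley involution.

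\textbf{Step 3.} Establish the classical presentation without the extra relation. By the proof of \cite[Theorem~3.1]{Lu26min}, the classical minimalistic presentation, namely the graded versions of the same relations together with \eqref{eq:split-extra-graded-derived}, presents $\mathrm U(\g[z]^{\check\omega})$. The extra relation is used there only to obtain $[\bh_{k,1},\bh_{k,3}]=0$. Lemma~\ref{lem:split-extra-redundant-gr} derives this from the remaining relations whenever the rank is at least two. So the classical presentation holds with the extra relation dropped.

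A point to check carefully: the identities \eqref{eq:split-mode-action}--\eqref{eq:split-h3-action} invoked in that lemma must be established in \cite[(3.24)--(3.32)]{Lu26min} before the extra relation is used. The lemma asserts this. Also, Lu26min's cases $\mathsf B_2$ and $\mathsf G_2$ relied on the extra relation at exactly this step, and the subsequent arguments in \cite[\S3.3]{Lu26min} are type-independent once $[\bh_{k,1},\bh_{k,3}]=0$ holds.

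\textbf{Step 4.} Conclude the isomorphism. Step 3 gives a homomorphism $\mathrm U(\g[z]^{\check\omega})\to\gr\,\Yi^{\min}_{\rm split}$ matching generators. Composed with the inverse of the isomorphism $\gr\,\Yi\cong\mathrm U(\g[z]^{\check\omega})$, it inverts $\gr\,\pi$. Hence $\gr\,\pi$ is an isomorphism, and so is $\pi$, since the filtrations are exhaustive and bounded below. This proves the presentation, and the statement that the extra relation is needed only in type $\mathsf A_1$ follows immediately.

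For the final claim, \cite[Theorems~4.1 and~5.1]{Lu26min} (the embedding and coproduct estimates) were excluded for $\mathsf G_2$ only because their verification of the embedding required checking the defining relations, including the extra one, which was computationally out of reach there. With the extra relation now redundant, the defining relations in \cite[(3.1)--(3.3)]{Lu26min} and the Serre relations are exactly the ones verified uniformly in \cite[\S4]{Lu26min}, so those proofs go through verbatim for $\mathsf G_2$.

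I expect the main obstacle to be this last point: confirming from \cite{Lu26min} that the $\mathsf G_2$ exclusion depended solely on the extra relation and not on some other type-specific check.
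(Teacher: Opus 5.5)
Your proposal is correct and follows the paper's route: the paper also combines the classical presentation of \cite[Proposition~3.2]{Lu26min} with Lemma~\ref{lem:split-extra-redundant-gr}, and lifts the result by the same filtered/PBW argument used for Theorem~\ref{thm:min}. Two small remarks: in Step~2 you mean $\deg h_{i,1}=1$, and since the lemma shows the graded extra relation follows from the other relations, the presentations with and without it define the same algebra, so you need not trace where \cite{Lu26min} used it.
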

\begin{proof}
The proof is similar to Theorem \ref{thm:min} and \cite[Theorem 3.1]{Lu26min} with the help of \cite[Proposition 3.2]{Lu26min} and Lemma \ref{lem:split-extra-redundant-gr}.
\end{proof}

\section{Proof of Theorem \ref{thm:embed}}\label{sec:pfthmB0}
In this section, we prove Theorem \ref{thm:embed}. 
\subsection{$\varphi$ induces an algebra homomorphism}\label{ssec:homo}
We first prove that $\varphi$ induces an algebra homomorphism with the help of the minimalistic presentation from Theorem \ref{thm:min}, assuming $\g$ is not of type $\mathsf A_{2n}$. The case of type $\mathsf A_{2n}$ will be treated in Appendix \ref{sec:app+} with the help of R-matrix presentation. Then we need to verify that the elements $\varphi(h_{i,r}),\varphi(b_{i,r})$ for $i\in\I$ and $r=0,1$ satisfy the relations \eqref{redqs1}--\eqref{redqs5} as the finite type Serre relations are straightforward.

\subsubsection{Some simple relations}\label{ssec:simple-relations}
Clearly, we have $\varphi(h_{\tau i,0})=-\varphi(h_{i,0})$ and $\varphi(h_{\tau i,1})=\varphi(h_{i,1})$. This verifies the second relation in \eqref{redqs1}. The first relation in \eqref{redqs1} for $r=s=0$ is obvious. Now we consider $r=0$ and $s=1$, then we have
\begin{align*}
\big[\varphi(h_{i,0}),\varphi(h_{j,1})\big]&=\Big[\xi_i-\xi_{\tau i},\xi_{j,1}+\xi_{\tau j,1}-\xi_{j}\xi_{\tau j}+\hf\sum_{\alpha\in \Phi^+}(\alpha,\alpha_j)\big\{x_\alpha^+,x_{\tau \alpha}^+\big\}\Big]\\
&=\hf\Big[\xi_i-\xi_{\tau i},\sum_{\alpha\in\Phi^+}(\alpha,\alpha_j)\big\{x_\alpha^+,x_{\tau \alpha}^+\big\}\Big]\\
&=\hf\sum_{\alpha\in\Phi^+}(\alpha,\alpha_j)\big((\alpha_i,\alpha+\tau\alpha)-(\alpha_{\tau i},\alpha+\tau \alpha)\big)\big\{x_\alpha^+,x_{\tau \alpha}^+\big\}=0.
\end{align*}
The relation \eqref{redqs2} for $r=0$ is clear. Now we prove it for the case $r=1$ using the relation \eqref{redqs3} below. Set $\Xi_i:=\varphi(h_{i,1})-\hf\varphi(h_{i,0})^2$ for $i\in \I$. Then it follows from \eqref{redqs2} for $r=0$ and \eqref{redqs3} that
\begin{align*}
\big[\Xi_i,\varphi(b_{j,0})\big]&=[\varphi(h_{i,1})-\hf\varphi(h_{i,0})^2,\varphi(b_{j,0})]\\
&=(c_{ij}+c_{\tau i,j})\varphi(b_{j,1})+\hf(c_{ij}-c_{\tau i,j})\big\{\varphi(h_{i,0}),\varphi(b_{j,0})\big\}\\
&~~~~-\hf(c_{ij}-c_{\tau i,j})\varphi(b_{j,0})\varphi(h_{i,0})-\hf(c_{ij}-c_{\tau i,j})\varphi(h_{i,0})\varphi(b_{j,0})\\
&=(c_{ij}+c_{\tau i,j})\varphi(b_{j,1}).
\end{align*}
Since $\big[\varphi(h_{i,0}),\varphi(h_{j,0})\big]=\big[\varphi(h_{i,0}),\varphi(h_{j,1})\big]=0$, we have $\big[\varphi(h_{i,0}),\Xi_j\big]=0$ and hence
\begin{align*}
\big[\Xi_j,\big[\varphi(h_{i,0}),\varphi(b_{j,0})\big]\big]=\big[\varphi(h_{i,0}),\big[\Xi_j,\varphi(b_{j,0})\big]\big]=(c_{jj}+c_{\tau j,j})\big[\varphi(h_{i,0}),\varphi(b_{j,1})\big].
\end{align*}
On the other hand, we also have
\[
\big[\Xi_j,\big[\varphi(h_{i,0}),\varphi(b_{j,0})\big]\big]=\big[\Xi_j,(c_{ij}-c_{\tau i,j})\varphi(b_{j,0})\big]=(c_{ij}-c_{\tau i,j})(c_{jj}+c_{\tau j,j})\varphi(b_{j,1}).
\]
Combining the above two equations, we obtain the first relation in \eqref{redqs2} for $r=1$.
\subsubsection{The relation \eqref{redqs3}}
We start with the following lemma.
\begin{lem}\label{lem:helper1}
For any $i,j\in\I$, we have
\begin{align*}
\Big[\sum_{\alpha\in \Phi^+}(\alpha,\alpha_i)\big\{x_\alpha^+,x_{\tau \alpha}^+\big\},x_j^+-x_{\tau j}^-\Big]=(c_{ij}+c_{\tau i,j})\Big(\sum_{\alpha\in\Phi^+}\big\{[x_j^+,x_\alpha^+],x_{\tau \alpha}^+\big\}-\big\{x_j^+,\xi_{\tau j}\big\}\Big).
\end{align*}
\end{lem}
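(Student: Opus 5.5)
The identity lives entirely in $\mathrm U(\g)\subset\Y$, so I would prove it by direct root-vector bookkeeping. Write $S_i:=\sum_{\alpha\in\Phi^+}(\alpha,\alpha_i)\{x_\alpha^+,x_{\tau\alpha}^+\}$. Since $\ad$ is a derivation of the anticommutator, $[\{a,b\},c]=\{[a,c],b\}+\{a,[b,c]\}$, I would split $[S_i,x_j^+-x_{\tau j}^-]$ into a raising part $[S_i,x_j^+]$ and a lowering part $-[S_i,x_{\tau j}^-]$. Throughout I would use that $\g$ is not of type $\mathsf A_{2n}$. Then root vectors can be chosen with $\tau(x_\alpha^\pm)=x_{\tau\alpha}^\pm$, so \eqref{eta-tau} holds and $\alpha+\tau\alpha\notin\Phi^+$.

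\emph{Symmetrization.} In each part, the term in which the bracket hits $x_{\tau\alpha}^+$ can be reindexed by $\alpha\mapsto\tau\alpha$. This uses that $\tau$ permutes $\Phi^+$ and $(\tau\alpha,\alpha_i)=(\alpha,\alpha_{\tau i})$, together with \eqref{eta-tau}. It converts that term into one where the bracket hits the first factor, with $(\alpha,\alpha_i)$ replaced by $(\alpha,\alpha_{\tau i})$. The outcome is
\[
[S_i,x_j^+]=\sum_{\alpha\in\Phi^+}(\alpha,\alpha_i+\alpha_{\tau i})\big\{[x_\alpha^+,x_j^+],x_{\tau\alpha}^+\big\},
\]
and similarly
\[
[S_i,x_{\tau j}^-]=\sum_{\alpha\in\Phi^+}(\alpha,\alpha_i+\alpha_{\tau i})\big\{x_\alpha^+,[x_{\tau\alpha}^+,x_{\tau j}^-]\big\}.
\]
Here $\tau$ is again used to move the bracket, and it is harmless because the anticommutator is symmetric.

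\emph{Reindexing the lowering part.} The bracket $[x_{\tau\alpha}^+,x_{\tau j}^-]$ is nonzero only when $\alpha=\alpha_j$ or $\alpha-\alpha_j\in\Phi^+$.
\begin{itemize}
\item For $\alpha=\alpha_j$ it equals $\xi_{\tau j}$. This boundary term contributes $(\alpha_j,\alpha_i+\alpha_{\tau i})\{x_j^+,\xi_{\tau j}\}=\gamma_{ij}\{x_j^+,\xi_{\tau j}\}$, which after the overall minus sign is exactly the last term of the claim.
\item Otherwise write $\alpha=\beta+\alpha_j$ with $\beta\in\Phi^+$. Then $[x_{\tau\alpha}^+,x_{\tau j}^-]$ is a multiple of $x_{\tau\beta}^+$ and $x_\alpha^+$ is a multiple of $[x_j^+,x_\beta^+]$. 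Using the identities \eqref{eta}, in particular $\eta_{\alpha,\beta}=\eta_{-\alpha,\alpha+\beta}$, I would show the product of the two structure constants is exactly the sign needed to rewrite the term as $-\{[x_j^+,x_\beta^+],x_{\tau\beta}^+\}$ up to the coefficient. That coefficient is $(\beta+\alpha_j,\alpha_i+\alpha_{\tau i})$.
\end{itemize}

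\emph{Combining.} The raising part equals $-\sum_\beta(\beta,\alpha_i+\alpha_{\tau i})\{[x_j^+,x_\beta^+],x_{\tau\beta}^+\}$. Adding the reindexed lowering part, the $\beta$-dependent coefficients cancel and leave the constant $(\alpha_j,\alpha_i+\alpha_{\tau i})=c_{ij}+c_{\tau i,j}$. This is precisely $\gamma_{ij}\sum_\beta\{[x_j^+,x_\beta^+],x_{\tau\beta}^+\}$. Terms with $\beta+\alpha_j\notin\Phi$ vanish on both sides, so the range of summation matches.

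\emph{Main obstacle.} The delicate step is the sign and normalization check in the reindexing $\alpha=\beta+\alpha_j$. There I must verify that $[x_{\tau\alpha}^+,x_{\tau j}^-]$ is a multiple of $x_{\tau\beta}^+$ whose coefficient, multiplied by the one expressing $x_\alpha^+$ through $[x_j^+,x_\beta^+]$, is exactly $-1$. I would first check this via the invariance of $(\cdot,\cdot)$: $([x_\alpha^+,x_j^-],x_\beta^-)=(x_\alpha^+,[x_j^-,x_\beta^-])$, combined with $(x_\alpha^+,x_\alpha^-)=1$ and $\eta_{\alpha,\beta}=\eta_{-\beta,-\alpha}$. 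As a sanity check I would verify the whole identity on $\mathsf A_3$, where every root string has length at most two.
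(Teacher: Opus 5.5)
Your proposal is correct and is essentially the paper's argument: a Leibniz expansion in $\mathrm U(\g)$, the boundary term at $\alpha=\alpha_j$ producing $\{x_j^+,\xi_{\tau j}\}$, and a root shift combined with \eqref{eta} and \eqref{eta-tau} that leaves the constant coefficient $c_{ij}+c_{\tau i,j}$. The only difference is bookkeeping: you first symmetrize by $\alpha\mapsto\tau\alpha$, which is a pure reindexing and does not actually need \eqref{eta-tau}, so a single shift $\alpha=\beta+\alpha_j$ suffices; the paper instead keeps four sums and pairs them via the two shifts $\alpha\mapsto\alpha+\alpha_j$ and $\alpha\mapsto\alpha+\alpha_{\tau j}$.
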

\begin{proof}
The LHS is equal to
\begin{align*}
&\sum_{\alpha\in \Phi^+}(\alpha,\alpha_i)\big\{[x_\alpha^+,x_j^+],x_{\tau \alpha}^+\big\}+\sum_{\alpha\in \Phi^+}(\alpha,\alpha_i)\big\{x_\alpha^+,[x_{\tau \alpha}^+,x_j^+]\big\}\\
&-\sum_{\alpha\in \Phi^+}(\alpha,\alpha_i)\big\{[x_\alpha^+,x_{\tau j}^-],x_{\tau \alpha}^+\big\}-\sum_{\alpha\in \Phi^+}(\alpha,\alpha_i)\big\{x_\alpha^+,[x_{\tau \alpha}^+,x_{\tau j}^-]\big\}\\
=&\sum_{\alpha\in \Phi^+}(\alpha,\alpha_i)\big\{[x_\alpha^+,x_j^+],x_{\tau \alpha}^+\big\}-\sum_{\alpha\in \Phi^+}(\alpha+\alpha_j,\alpha_i)\big\{x_{\alpha+\alpha_j}^+,[x_{\tau \alpha+\alpha_{\tau j}}^+,x_{\tau j}^-]\big\}\\
&+\sum_{\alpha\in \Phi^+}(\alpha,\alpha_i)\big\{x_\alpha^+,[x_{\tau \alpha}^+,x_j^+]\big\}-\sum_{\alpha\in \Phi^+}(\alpha+\alpha_{\tau j},\alpha_i)\big\{[x_{\alpha+\alpha_{\tau j}}^+,x_{\tau j}^-],x_{\tau \alpha+\alpha_j}^+\big\}\\
&-(\alpha_{\tau j},\alpha_i)\big\{\xi_{\tau j},x_j^+\big\}-(\alpha_j,\alpha_i)\big\{x_j^+,\xi_{\tau j}\big\}\\
=&(c_{ij}+c_{\tau i,j})\sum_{\alpha\in \Phi^+}\big\{[x_j^+,x_\alpha^+],x_{\tau \alpha}^+\big\}-(c_{ij}+c_{\tau i,j})\big\{x_j^+,\xi_{\tau j}\big\},
\end{align*}
completing the proof of the lemma. Here we have used the substitution $\alpha\mapsto \alpha+\alpha_{j}$ and $\alpha\mapsto \alpha+\alpha_{\tau j}$. We also have applied \eqref{eta} and \eqref{eta-tau} to obtain 
\[
\big\{[x_\alpha^+,x_j^+],x_{\tau \alpha}^+\big\}=\big\{x_{\alpha+\alpha_j}^+,[x_{\tau \alpha+\alpha_{\tau j}}^+,x_{\tau j}^-]\big\},\quad \big\{x_\alpha^+,[x_{\tau \alpha}^+,x_j^+]\big\}=\big\{[x_{\alpha+\alpha_{\tau j}}^+,x_{\tau j}^-],x_{\tau \alpha+\alpha_j}^+\big\},\]
so that many terms are cancelled.
\end{proof}

Now we verify the relation \eqref{redqs3}. By Lemma \ref{lem:helper1}, the LHS of \eqref{redqs3} is equal to
\begin{align*}
&\Big[\xi_{i,1}+\xi_{\tau i,1}-\xi_i\xi_{\tau i}+\hf\sum_{\alpha\in\Phi^+}(\alpha,\alpha_i)\big\{x_\alpha^+,x_{\tau\alpha}^+\big\},x_j^+-x_{\tau j}^-\Big]\\
=&\,c_{ij}x_{j,1}^++\hf c_{ij}\{\xi_i,x_j^+\}+c_{\tau i,j}x_{\tau j,1}^-+\hf c_{\tau i,j}\{\xi_i,x_{\tau j}^-\}+c_{\tau i,j}x_{j,1}^++\hf c_{\tau i,j}\{\xi_{\tau i},x_j^+\}\\
&+c_{i,j}x_{\tau j,1}^-+\hf c_{ij}\{\xi_{\tau i},x_{\tau j}^-\}-c_{ij}x_j^+\xi_{\tau i}-c_{\tau i,j}\xi_ix_j^+-c_{i,\tau j}x_{\tau j}^-\xi_{\tau i}-c_{ij}\xi_ix_{\tau j}^-\\
&+\hf(c_{ij}+c_{\tau i,j})\Big(\sum_{\alpha\in\Phi^+}\big\{[x_j^+,x_\alpha^+],x_{\tau \alpha}^+\big\}-\big\{x_j^+,\xi_{\tau j}\big\}\Big)\\
=&\, \big(c_{ij}+c_{\tau i,j}\big)\big(x_{j,1}^++x_{\tau j,1}^-\big)+\hf\big(c_{ij}-c_{\tau i,j}\big)\big\{\xi_i-\xi_{\tau i},x_j^+-x_{\tau j}^-\big\}\\
&+\hf\big(c_{ij}+c_{\tau i,j}\big)\Big(\sum_{\alpha\in\Phi^+}\big\{[x_j^+,x_\alpha^+],x_{\tau \alpha}^+\big\}-\big\{x_j^+,\xi_{\tau j}\big\}\Big)\\
=&\,\big(c_{ij}+c_{\tau i,j}\big)\varphi(b_{j,1})+\hf\big(c_{ij}-c_{\tau i,j}\big)\big\{\varphi(h_{i,0}),\varphi(b_{j,0})\big\}.
\end{align*}

\subsubsection{The relation \eqref{redqs5} for $j\ne\tau i$}
We need to verify
\begin{align}
\big[\varphi(b_{i,1}),\varphi(b_{j,0})\big]-\big[\varphi(b_{i,0}),\varphi(b_{j,1})\big]= \tfrac12(\alpha_i,\alpha_j)\big\{\varphi(b_{i,0}),\varphi(b_{j,0})\big\},	\quad (j\ne \tau i).\label{bbinej}
\end{align}
\begin{lem}\label{lem:helper2}
We have
\begin{align}
&\Big[x_i^+-x_{\tau i}^-,\sum_{\alpha\in\Phi^+}\big\{[x_j^+,x_\alpha^+],x_{\tau\alpha}^+\big\}\Big]+\Big[x_j^+-x_{\tau j}^-,\sum_{\alpha\in\Phi^+}\big\{[x_i^+,x_\alpha^+],x_{\tau\alpha}^+\big\}\Big]\label{helper2}\\
=&\big\{[x_i^+,\xi_{\tau j}],x_j^+\big\}+\big\{[x_j^+,\xi_{\tau i}],x_i^+\big\}+\big\{[x_i^+,x_j^+],\xi_{\tau j}\big\}+\big\{[x_j^+,x_i^+],\xi_{\tau i}\big\}.\notag
\end{align}
\end{lem}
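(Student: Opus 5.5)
The plan is to expand the left side of \eqref{helper2} directly, using the Leibniz rule for commutators with anticommutators, $[a,\{b,c\}]=\{[a,b],c\}+\{b,[a,c]\}$, and then let terms cancel in pairs across the two sums. This mirrors the proof of Lemma~\ref{lem:helper1}, and we keep the setting $\g$ not of type $\mathsf A_{2n}$, so that \eqref{eta} and \eqref{eta-tau} are available. Write $S_j:=\sum_{\alpha\in\Phi^+}\{[x_j^+,x_\alpha^+],x_{\tau\alpha}^+\}$.

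\emph{Brackets with $x_i^+$.} Expanding $[x_i^+,S_j]$ produces two kinds of terms:
\[
\{[x_i^+,[x_j^+,x_\alpha^+]],x_{\tau\alpha}^+\}\quad\text{and}\quad \{[x_j^+,x_\alpha^+],[x_i^+,x_{\tau\alpha}^+]\}.
\]
Adding the analogous expansion of $[x_j^+,S_i]$, the anticommutator terms of the second kind cancel after the substitution $\alpha\mapsto\tau\alpha$, since $\{\cdot,\cdot\}$ is symmetric. For the terms of the first kind, the Jacobi identity gives
\[
[x_i^+,[x_j^+,x_\alpha^+]]-[x_j^+,[x_i^+,x_\alpha^+]]=[[x_i^+,x_j^+],x_\alpha^+],
\]
so their combination is not obviously zero. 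I expect it to cancel against a part of the $x^-$ contributions below, which is where the $\{[x_i^+,x_j^+],\cdot\}$ terms on the right-hand side come from.

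\emph{Brackets with $-x_{\tau i}^-$.} Next expand $-[x_{\tau i}^-,S_j]$. This gives
\[
-\{[x_{\tau i}^-,[x_j^+,x_\alpha^+]],x_{\tau\alpha}^+\}\quad\text{and}\quad -\{[x_j^+,x_\alpha^+],[x_{\tau i}^-,x_{\tau\alpha}^+]\}.
\]
In the second term, $[x_{\tau i}^-,x_{\tau\alpha}^+]$ is a root vector of $\tau(\alpha-\alpha_i)$ when $\alpha\ne\alpha_i$. After reindexing $\alpha\mapsto\alpha+\alpha_i$ and using $\eta_{\tau\alpha,\tau\beta}=\eta_{\alpha,\beta}$, this should cancel with the first-kind positive terms of $[x_i^+,S_j]$, exactly as in Lemma~\ref{lem:helper1}. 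The boundary term $\alpha=\alpha_i$ survives and gives
\[
\{[x_j^+,x_i^+],[x_{\tau i}^-,x_{\tau i}^+]\}=-\{[x_j^+,x_i^+],\xi_{\tau i}\},
\]
which matches, up to sign bookkeeping, the term $\{[x_j^+,x_i^+],\xi_{\tau i}\}$ on the right-hand side. For the term $[x_{\tau i}^-,[x_j^+,x_\alpha^+]]$, use Jacobi together with $[x_{\tau i}^-,x_j^+]=0$. This last identity holds because $j\ne\tau i$; when $j=\tau i$ the bracket instead contributes a Cartan term, which is why \eqref{bbinej} assumes $j\ne\tau i$. We are then left with $[x_j^+,[x_{\tau i}^-,x_\alpha^+]]$, and reindexing $\alpha\mapsto\alpha+\alpha_{\tau i}$ pairs these with the first-kind terms from $[x_j^+,S_i]$. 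The boundary term $\alpha=\alpha_{\tau i}$ produces $\{[x_j^+,\xi_{\tau i}],x_i^+\}$-type terms. The symmetric computation with $i\leftrightarrow j$ produces the remaining two terms on the right-hand side.

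\emph{Main obstacle.} The hardest part will be the bookkeeping. One must check that, after the root reindexings, the structure constants match exactly via \eqref{eta} and \eqref{eta-tau}, including the coefficient of $x_{\tau\alpha}^+$ when $\alpha-\alpha_i$ is not a root, in which case the term vanishes. One must also confirm that the leftover Jacobi term $[[x_i^+,x_j^+],x_\alpha^+]$ is fully absorbed.

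\emph{Organisation and check.} To organise this, I would group all terms by the root pair $(\beta,\gamma)$ with $\beta+\gamma=\alpha_i+\alpha_j+\alpha+\tau\alpha$, in the spirit of the proof of Lemma~\ref{lem:helper1}. As a sanity check, I would verify the identity in the simplest cases, $c_{ij}=0$ and a rank-two $\tau$-stable configuration. In these cases most sums are empty, only the boundary terms survive, and they reproduce the right-hand side.
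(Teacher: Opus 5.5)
Your overall route is the paper's: Leibniz expansion, Jacobi with $[x_{\tau i}^-,x_j^+]=0$ for $j\ne\tau i$, reindexing by $\alpha_i$ or $\alpha_{\tau i}$, cancellation via \eqref{eta} and \eqref{eta-tau}, and boundary terms giving the right-hand side. However, the cancellation scheme you actually propose is wrong. Since the proof is nothing but this bookkeeping, it does not close as written.

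The second-kind terms of $[x_i^+,S_j]$ and $[x_j^+,S_i]$ both enter with a plus sign. So the substitution $\alpha\mapsto\tau\alpha$ and the symmetry of $\{\cdot,\cdot\}$ show they are \emph{equal}, not opposite. Together they give $2\sum_{\alpha\in\Phi^+}\{[x_j^+,x_\alpha^+],[x_i^+,x_{\tau\alpha}^+]\}$, which is nonzero in general. For example, in type $\mathsf A_5$ with $i=1$, $j=2$, the root $\alpha=\alpha_3+\alpha_4$ contributes a nonzero multiple of $\{x^+_{\alpha_2+\alpha_3+\alpha_4},x^+_{\alpha_1+\alpha_2+\alpha_3}\}$.

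Likewise, the two first-kind families enter as a sum, so the difference $[x_i^+,[x_j^+,x_\alpha^+]]-[x_j^+,[x_i^+,x_\alpha^+]]$ never occurs. There is no Jacobi leftover to absorb, and the terms $\{[x_i^+,x_j^+],\xi_{\tau j}\}$ and $\{[x_j^+,x_i^+],\xi_{\tau i}\}$ are boundary terms.

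Your pairing of the $x^-$ first-kind terms is also off. After $\alpha\mapsto\alpha+\alpha_{\tau i}$, the term $-\{[x_j^+,[x_{\tau i}^-,x^+_{\alpha+\alpha_{\tau i}}]],x^+_{\tau\alpha+\alpha_i}\}$ is a multiple of $\{[x_j^+,x_\alpha^+],x^+_{\tau\alpha+\alpha_i}\}$. That is the shape of a second-kind term of $[x_i^+,S_j]$, not of a first-kind term of $[x_j^+,S_i]$; the latter are multiples of $\{[x_j^+,x^+_{\alpha+\alpha_i}],x^+_{\tau\alpha}\}$. With your assignment, each first-kind family is claimed twice and the doubled second-kind sum is left over.

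The correct pairing, which is what the paper does, is one-to-one. In $-[x_{\tau i}^-,S_j]$, take the first-kind part $-\sum_\beta\{[x_j^+,[x_{\tau i}^-,x_\beta^+]],x_{\tau\beta}^+\}$ with $\beta=\alpha+\alpha_{\tau i}$. It cancels the second-kind part $\sum_\alpha\{[x_j^+,x_\alpha^+],[x_i^+,x_{\tau\alpha}^+]\}$ of $[x_i^+,S_j]$, because $\eta_{-\alpha_{\tau i},\alpha+\alpha_{\tau i}}=\eta_{\alpha_{\tau i},\alpha}=\eta_{\alpha_i,\tau\alpha}$. Its unmatched term $\beta=\alpha_{\tau i}$ gives $\{[x_j^+,\xi_{\tau i}],x_i^+\}$.

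Next take the second-kind part $-\sum_\alpha\{[x_j^+,x_\alpha^+],[x_{\tau i}^-,x_{\tau\alpha}^+]\}$ and reindex $\alpha\mapsto\alpha+\alpha_i$. It cancels the first-kind part $\sum_\alpha\{[x_j^+,[x_i^+,x_\alpha^+]],x_{\tau\alpha}^+\}$ of $[x_j^+,S_i]$, not of $[x_i^+,S_j]$, via $\eta_{-\alpha_{\tau i},\tau\alpha+\alpha_{\tau i}}=\eta_{\alpha_i,\alpha}$. Its unmatched term $\alpha=\alpha_i$ is $-\{[x_j^+,x_i^+],[x_{\tau i}^-,x_{\tau i}^+]\}=\{[x_j^+,x_i^+],\xi_{\tau i}\}$, with exactly the right sign once the overall minus is kept.

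Interchanging $i$ and $j$ cancels the remaining two families, namely the second-kind part of $[x_j^+,S_i]$ and the first-kind part of $[x_i^+,S_j]$. It produces $\{[x_i^+,\xi_{\tau j}],x_j^+\}$ and $\{[x_i^+,x_j^+],\xi_{\tau j}\}$.
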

\begin{proof}
The LHS of \eqref{helper2} is equal to
\begin{align*}
&\sum_{\alpha\in\Phi^+\setminus\{\alpha_j\}}\bigg(\Big(\big\{\big[[x_i^+,x_j^+],x_\alpha^+\big],x_{\tau\alpha}^+\big\}+\big\{\big[x_j^+,[x_i^+,x_\alpha^+]\big],x_{\tau\alpha}^+\big\}\\
&+\big\{[x_j^+,x_\alpha^+],[x_i^+,x_{\tau\alpha}^+]\big\}\Big)-\Big(\big\{\big[x_j^+,[x_{\tau i}^-,x_\alpha^+]\big],x_{\tau\alpha}^+\big\}+\big\{[x_j^+,x_\alpha^+],[x_{\tau i}^-,x_{\tau\alpha}^+]\big\}\Big)\bigg)\\
+&\sum_{\alpha\in\Phi^+\setminus\{\alpha_i\}}\bigg(\big\{\big[[x_j^+,x_i^+],x_\alpha^+\big],x_{\tau\alpha}^+\big\}+\Big(\big\{\big[x_i^+,[x_j^+,x_\alpha^+]\big],x_{\tau\alpha}^+\big\}\\
&+\big\{[x_i^+,x_\alpha^+],[x_j^+,x_{\tau\alpha}^+]\big\}\Big)-\Big(\big\{\big[x_i^+,[x_{\tau j}^-,x_\alpha^+]\big],x_{\tau\alpha}^+\big\}+\big\{[x_i^+,x_\alpha^+],[x_{\tau j}^-,x_{\tau\alpha}^+]\big\}\Big)\bigg)\\
\stackrel{(*)}{=}&\sum_{\alpha\in\Phi^+\setminus\{\alpha_i,\alpha_j\}}\bigg(\Big(\big\{\big[x_i^+,[x_j^+,x_\alpha^+]\big],x_{\tau\alpha}^+\big\}-\big\{[x_i^+,x_{\alpha+\alpha_j}^+],[x_{\tau j}^-,x_{\alpha+\alpha_{\tau j}}^+]\big\}\Big)\\
&\qquad\qquad\qquad-\Big(\big\{\big[x_j^+,[x_{\tau i}^-,x_{\alpha+\alpha_{\tau i}}^+]\big],x_{\tau\alpha+\alpha_i}^+\big\}-\big\{[x_i^+,x_{\tau\alpha}^+],[x_j^+,x_\alpha^+]\big\}\Big)\bigg)\\
+&\sum_{\alpha\in\Phi^+\setminus\{\alpha_i,\alpha_j\}}\bigg(\Big(\big\{\big[x_j^+,[x_i^+,x_\alpha^+]\big],x_{\tau\alpha}^+\big\}-\big\{[x_j^+,x_{\alpha+\alpha_i}^+],[x_{\tau i}^-,x_{\tau\alpha+\alpha_{\tau i}}^+]\big\}\Big)\\
&\qquad\qquad\qquad-\Big(\big\{\big[x_i^+,[x_{\tau j}^-,x_{\alpha+\alpha_{\tau j}}^+]\big],x_{\tau\alpha+\alpha_j}^+\big\}-\big\{[x_j^+,x_{\tau\alpha}^+],[x_i^+,x_\alpha^+]\big\}\Big)\bigg)\\
+&\,\big\{[x_i^+,\xi_{\tau j}],x_j^+\big\}+\big\{[x_j^+,\xi_{\tau i}],x_i^+\big\}+\big\{[x_i^+,x_j^+],\xi_{\tau j}\big\}+\big\{[x_j^+,x_i^+],\xi_{\tau i}\big\}\\
\stackrel{\eqref{eta}}{=}&\,\big\{[x_i^+,\xi_{\tau j}],x_j^+\big\}+\big\{[x_j^+,\xi_{\tau i}],x_i^+\big\}+\big\{[x_i^+,x_j^+],\xi_{\tau j}\big\}+\big\{[x_j^+,x_i^+],\xi_{\tau i}\big\}.
\end{align*}
In $(*)$ we applied \eqref{eq:relXX} and suitable substitutions for $\alpha$ in certain terms, and then arranged the terms properly so that the terms in parenthesis eventually cancel.
\end{proof}
Applying Lemma \ref{lem:helper2}, we obtain that the LHS of \eqref{bbinej} is
\begin{align*}
&\Big[x_{i,1}^++x_{\tau i,1}^-+\hf\sum_{\alpha\in\Phi^+}\big\{[x_i^+,x_\alpha^+],x_{\tau \alpha}^+\big\}-\hf\big\{x_i^+,\xi_{\tau i}\big\},x_j^+-x_{\tau j}^-\Big]\\
&-\Big[x_i^+-x_{\tau i}^-,x_{j,1}^++x_{\tau j,1}^-+\hf\sum_{\alpha\in\Phi^+}\big\{[x_j^+,x_\alpha^+],x_{\tau \alpha}^+\big\}-\hf\big\{x_j^+,\xi_{\tau j}\big\}\Big]
\\
=\,&[x_{i,1}^+,x_j^+]-[x_i^+,x_{j,1}^+]-[x_{\tau i,1}^-,x_{\tau j}^-]+[x_{\tau i}^-,x_{\tau j,1}^-]
\\
&-\hf\big\{[x_i^+,x_j^+],\xi_{\tau i}\big\}-\hf\big\{x_i^+,[\xi_{\tau i},x_j^+]\big\}-\hf c_{ij}\big\{x_i^+,x_{\tau j}^-\big\}
\\
&+\hf\big\{[x_i^+,x_j^+],\xi_{\tau j}\big\}+\hf\big\{x_j^+,[x_i^+,\xi_{\tau j}]\big\}-\hf c_{ij}\big\{x_j^+,x_{\tau i}^-\big\}
\\
&-\hf\big\{[x_i^+,\xi_{\tau j}],x_j^+\big\}-\hf\big\{[x_j^+,\xi_{\tau i}],x_i^+\big\}-\hf\big\{[x_i^+,x_j^+],\xi_{\tau j}\big\}-\hf\big\{[x_j^+,x_i^+],\xi_{\tau i}\big\}\\
=\,&\hf c_{ij}\big(\{x_i^+,x_j^+\}+\{x_{\tau i}^-,x_{\tau j}^-\}-\{x_i^+,x_{\tau j}^-\}-\{x_j^+,x_{\tau i}^-\}\big)\\
=\,&\hf c_{ij}\{x_i^+-x_{\tau i}^-,x_j^+-x_{\tau j}^-\}=\hf c_{ij}\{\varphi(b_{i,0}),\varphi(b_{j,0})\}.
\end{align*}

\subsubsection{The relation \eqref{redqs5} for $j=i=\tau i$} 
We need to verify
\begin{align*}
&\big[\varphi(b_{i,0}),\varphi(b_{i,1})\big]= \varphi(h_{i,1})-\tfrac12(\alpha_i,\alpha_i)\big(\varphi(b_{i,0})\big)^2, & (\tau i =i),
\end{align*}
using the following lemma.
\begin{lem}\label{lem:helper3}
We have
\begin{align}
&\Big[x_i^+ -x_i^-,\sum_{\alpha\in\Phi^+}\big\{[x_i^+,x_{\alpha}^+],x_{\tau \alpha}^+\big\}\Big]=\sum_{\alpha\in\Phi^+\setminus\{\alpha_i\}}(\alpha,\alpha_i)\big\{x_{\alpha}^+,x_{\tau\alpha}^+\big\},\quad (\tau i=i).\label{helper3}\end{align}
\end{lem}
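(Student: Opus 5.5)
We verify \eqref{helper3} by expanding the commutator termwise with the Leibniz rule and then cancelling terms after reindexing the summation over $\alpha$, following the pattern of Lemmas~\ref{lem:helper1} and \ref{lem:helper2}. Since $\tau i=i$ and $\g$ is not of type $\mathsf A_{2n}$, we may use \eqref{eta} and \eqref{eta-tau} freely. In particular $\tau\alpha_i=\alpha_i$ and $\tau x_i^\pm=x_i^\pm$.

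\textbf{The $x_i^+$ part.} Bracketing $x_i^+$ with $\{[x_i^+,x_\alpha^+],x_{\tau\alpha}^+\}$ gives
\[
\big\{[x_i^+,[x_i^+,x_\alpha^+]],x_{\tau\alpha}^+\big\}+\big\{[x_i^+,x_\alpha^+],[x_i^+,x_{\tau\alpha}^+]\big\}.
\]
In a simply-laced root system the $\alpha_i$-string through $\alpha$ has length at most two. Hence $[x_i^+,[x_i^+,x_\alpha^+]]=0$ unless $\alpha=-\alpha_i$, which is excluded since $\alpha\in\Phi^+$. So only the second family survives. I would show that these terms cancel against terms from the $x_i^-$ part, after the substitution $\alpha\mapsto\alpha-\alpha_i$.

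\textbf{The $x_i^-$ part.} Bracketing $-x_i^-$ with the same expression gives three kinds of terms:
\[
\text{(a) } -\big\{[[x_i^-,x_i^+],x_\alpha^+],x_{\tau\alpha}^+\big\},\qquad \text{(b) } -\big\{[x_i^+,[x_i^-,x_\alpha^+]],x_{\tau\alpha}^+\big\},\qquad \text{(c) } -\big\{[x_i^+,x_\alpha^+],[x_i^-,x_{\tau\alpha}^+]\big\}.
\]
Term (a) equals $(\alpha,\alpha_i)\{x_\alpha^+,x_{\tau\alpha}^+\}$, which is already the desired right-hand side summed over all $\alpha$. In term (b), $[x_i^+,[x_i^-,x_\alpha^+]]$ is a multiple $\eta\eta'\,x_\alpha^+$ whenever $\alpha-\alpha_i\in\Phi^+$. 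By \eqref{eta} this product of structure constants is $-1$ (up to the normalization), i.e. the $\mathfrak{sl}_2$-string action. For $\alpha=\alpha_i$ the inner bracket is $-\xi_i$, which produces $\{[x_i^+,\xi_i],x_i^+\}=-2(x_i^+)^2$. Together with the $\alpha=\alpha_i$ term of (a), namely $2(x_i^+)^2$, this removes $\alpha_i$ from the sum, which explains the $\Phi^+\setminus\{\alpha_i\}$ on the right. Term (c), after the shift $\alpha\mapsto\alpha+\alpha_i$ in the relevant range and using \eqref{eta-tau}, matches the surviving second family from the $x_i^+$ part with the opposite sign.

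\textbf{Bookkeeping.} The core of the proof is to organize the positive roots $\alpha\ne\alpha_i$ according to $(\alpha,\alpha_i)\in\{-1,0,1\}$, and within each string $\{\alpha,\alpha+\alpha_i\}$ to track which terms survive. Take $(\alpha,\alpha_i)=1$, so $\alpha-\alpha_i$ is a root. Then the (b) terms give $-\{x_\alpha^+,x_{\tau\alpha}^+\}$ and cancel half of (a). Meanwhile the contributions of (c) and of the $x_i^+$ part over the string pair up and leave $\{x_\alpha^+,x_{\tau\alpha}^+\}$. The net effect is that the coefficient of $\{x_\alpha^+,x_{\tau\alpha}^+\}$ comes out to exactly $(\alpha,\alpha_i)$. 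Note also that $\alpha$ and $\tau\alpha$ lie in $\alpha_i$-strings of the same shape, because $(\tau\alpha,\alpha_i)=(\alpha,\alpha_i)$.

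\textbf{Main obstacle.} The hard part is the sign bookkeeping of the products of structure constants $\eta_{\alpha_i,\alpha}\eta_{-\alpha_i,\alpha+\alpha_i}$ and their $\tau$-images, so that the pairings in (c) genuinely cancel rather than double up. I would first settle this in the rank-two string case using \eqref{eta} together with $\eta_{\tau\alpha,\tau\beta}=\eta_{\alpha,\beta}$, and then check the example $\mathsf A_3$ with $i=2$ as a sanity check.
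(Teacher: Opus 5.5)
Your framework is the paper's: expand by Leibniz/Jacobi into the $x_i^+$-part and the terms (a), (b), (c), then reindex along $\alpha_i$-strings using \eqref{eta} and \eqref{eta-tau}. However, the cancellation pattern you describe is wrong, so the argument does not close as stated.

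\textbf{Term (c) vanishes identically.} For $\alpha\neq\alpha_i$:
\begin{itemize}
\item $[x_i^+,x_\alpha^+]\neq0$ forces $(\alpha,\alpha_i)=-1$;
\item $[x_i^-,x_{\tau\alpha}^+]\neq0$ forces $(\tau\alpha,\alpha_i)=1$;
\item but $(\tau\alpha,\alpha_i)=(\tau\alpha,\tau\alpha_i)=(\alpha,\alpha_i)$, so both cannot hold.
\end{itemize}
This last identity is the fact you record at the end of your bookkeeping paragraph but never use. So (c) cannot cancel the surviving family $\{[x_i^+,x_\alpha^+],[x_i^+,x_{\tau\alpha}^+]\}$, and the ``main obstacle'' you isolate (making the pairings in (c) cancel) does not exist. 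It is term (b), for $\alpha\neq\alpha_i$, that cancels this family.

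\textbf{Your sign in (b) is wrong.} By \eqref{eta}, $\eta_{-\alpha_i,\alpha+\alpha_i}=\eta_{\alpha_i,\alpha}$, so the product in (b) is $\eta_{\alpha_i,\alpha}^2$. The $\mathfrak{sl}_2$ computation $[x_i^+,[x_i^-,x_\beta^+]]=[\xi_i,x_\beta^+]=x_\beta^+$ for $(\beta,\alpha_i)=1$ shows this equals $+1$, not $-1$. Your bookkeeping paragraph tacitly uses $+1$ (``(b) gives $-\{x_\alpha^+,x_{\tau\alpha}^+\}$''). But then (b) cancels all of (a) at such roots, not half. As written, your three paragraphs assign mutually incompatible roles to (a), (b) and (c).

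\textbf{The correct accounting (the paper's).}
\begin{enumerate}
\item Drop $\alpha=\alpha_i$ at the outset, since that summand $\{[x_i^+,x_i^+],x_i^+\}$ is zero. Your evaluation of (a) and (b) there is off by a factor $2$, e.g.\ $\{[x_i^+,\xi_i],x_i^+\}=-4(x_i^+)^2$, though harmlessly.
\item Term (a) is then exactly the right-hand side.
\item The terms $[x_i^+,[x_i^+,x_\alpha^+]]$ and term (c) are both zero.
\item For $(\alpha,\alpha_i)=-1$, one has
\[
\big\{[x_i^+,x_\alpha^+],[x_i^+,x_{\tau\alpha}^+]\big\}
=\eta_{\alpha_i,\alpha}\eta_{\alpha_i,\tau\alpha}\big\{x^+_{\alpha+\alpha_i},x^+_{\tau(\alpha+\alpha_i)}\big\}
=\big\{[x_i^+,[x_i^-,x^+_{\alpha+\alpha_i}]],x^+_{\tau(\alpha+\alpha_i)}\big\},
\]
using \eqref{eta} and $\eta_{\alpha_i,\tau\alpha}=\eta_{\tau\alpha_i,\tau\alpha}=\eta_{\alpha_i,\alpha}$ from \eqref{eta-tau}.
\item The map $\alpha\mapsto\alpha+\alpha_i$ is a bijection from $\{\alpha\in\Phi^+:(\alpha,\alpha_i)=-1\}$ onto $\{\beta\in\Phi^+\setminus\{\alpha_i\}:(\beta,\alpha_i)=1\}$. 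The latter set is precisely the support of (b), so the second $x_i^+$-family cancels (b) term by term.
\end{enumerate}
This cancellation does not even need the value of $\eta_{\alpha_i,\alpha}^2$, since both sides carry the same factor. The sign bookkeeping you feared reduces entirely to \eqref{eta} and \eqref{eta-tau}.
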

\begin{proof}
The LHS of \eqref{helper3} is equal to
\begin{align*}
& \sum_{\alpha\in\Phi^+\setminus\{\alpha_i\}}\Big(\big\{\big[x_i^+,[x_i^+,x_{\alpha}^+]\big],x_{\tau\alpha}^+\big\}+\big\{[x_i^+,x_{\alpha}^+],\big[x_i^+,x_{\tau\alpha}^+\big]\big\}
\\&\hskip2cm+\big\{\big[\xi_i,x_{\alpha}^+\big],x_{\tau\alpha}^+\big\}-\big\{\big[x_i^+,[x_i^-,x_{\alpha}^+]\big],x_{\tau\alpha}^+\big\}-\big\{[x_i^+,x_{\alpha}^+],\big[x_i^-,x_{\tau\alpha}^+\big]\big\}\Big)\\
=&\sum_{\alpha\in\Phi^+\setminus\{\alpha_i\}}\big\{\big[\xi_i,x_{\alpha}^+\big],x_{\tau\alpha}^+\big\}=\sum_{\alpha\in\Phi^+\setminus\{\alpha_i\}}(\alpha,\alpha_i)\big\{x_{\alpha}^+,x_{\tau\alpha}^+\big\}.
\end{align*}
Here we used \eqref{eta} and \eqref{eta-tau} to obtain the cancellations 
\begin{align*}
&\big\{\big[x_i^+,[x_i^+,x_{\alpha}^+]\big],x_{\tau\alpha}^+\big\}=\big\{[x_i^+,x_{\alpha+\alpha_i}^+],\big[x_i^-,x_{\tau\alpha+\alpha_i}^+\big]\big\},\\
&\big\{[x_i^+,x_{\alpha}^+],\big[x_i^+,x_{\tau\alpha}^+\big]\big\}=	\big\{\big[x_i^+,[x_i^-,x_{\alpha+\alpha_i}^+]\big],x_{\tau\alpha+\alpha_i}^+\big\},
\end{align*}
as in Lemma \ref{lem:helper1}. 
\end{proof}

Hence, if $\tau i=i$, then by Lemma \ref{lem:helper3} we have
\begin{align*}
\big[\varphi(b_{i,0}),\varphi(b_{i,1})\big]&=\Big[x_i^+-x_i^-,x_{i,1}^++x_{i,1}^-+\hf\sum_{\alpha\in \Phi^+}\big\{[x_i^+,x_\alpha^+],x_{\tau\alpha}^+\big\}-\hf\big\{x_i^+,\xi_i\big\}\Big]\\
&=[x_i^+,x_{i,1}^+]+\xi_{i,1}+c_{ii}(x_i^+)^2+\xi_{i,1}+[x_{i,1}^-,x_i^-]-\xi_i^2 \\
&\quad~ +\hf c_{ii}\{x_i^+,x_i^-\}+\hf\big[x_i^+ -x_i^-,\sum_{\alpha\in\Phi^+}\big\{[x_i^+,x_{\alpha}^+],x_{\tau \alpha}^+\big\}\big]\\
&=-\hf c_{ii}(x_i^+)^2+2\xi_{i,1}-\xi_i^2+c_{ii}(x_i^+)^2-\hf c_{ii}(x_i^-)^2+\hf c_{ii}\{x_i^+,x_i^-\}\\
&\quad~ +\hf \sum_{\alpha\in\Phi^+\setminus\{\alpha_i\}}(\alpha,\alpha_i)\big\{x_{\alpha}^+,x_{\tau\alpha}^+\big\}\\
&=2\xi_{i,1}-\xi_i^2-\hf c_{ii}(x_i^+-x_i^-)^2+\hf c_{ii}\{x_{i}^+,x_{i}^+\}\\
&\quad~ +\hf \sum_{\alpha\in\Phi^+\setminus\{\alpha_i\}}(\alpha,\alpha_i)\big\{x_{\alpha}^+,x_{\tau\alpha}^+\big\}\\
&= \varphi(h_{i,1})-\hf c_{ii}\varphi(b_{i,0})^2.
\end{align*}

\subsubsection{The relation \eqref{redqs5} for $j=\tau i\ne i$} 
We need to verify
\begin{align}
\big[\varphi(b_{i,1}),\varphi(b_{\tau i,0})\big]-\big[\varphi(b_{i,0}),\varphi(b_{\tau i,1})\big]= \hf c_{i,\tau i}\big\{\varphi(b_{i,0}),\varphi(b_{\tau i,0})\big\}-2 \varphi(h_{\tau i,1 }).\label{bbtaui=jnei}
\end{align}
\begin{lem}\label{lem:helper4}
We have
\begin{align*}
&\Big[
x_i^+-x_{\tau i}^{-},
\sum_{\alpha\in\Phi^+}
\left\{[x_{\tau i}^{+},x_\alpha^{+}],
x_{\tau\alpha}^{+}\right\}
\Big]
+
\Big[
x_{\tau i}^{+}-x_i^{-},
\sum_{\alpha\in\Phi^+}
\left\{[x_i^{+},x_\alpha^{+}],
x_{\tau\alpha}^{+}\right\}
\Big] \\
=\,&
\left\{ [x_{\tau i}^+,x_i^+ ],\xi_{\tau i}\right\}
+
\left\{ [x_i^+,x_{\tau i}^+ ],\xi_i\right\}+
\sum_{\alpha\in\Phi^+\setminus\{\alpha_{ i},\alpha_{\tau i}\}}
(\alpha_i,\alpha+\tau\alpha )
\left\{x_\alpha^+,x_{\tau\alpha}^+\right\}  . 
\end{align*} 
\end{lem}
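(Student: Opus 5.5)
The plan is to expand both brackets with the Leibniz rule, exactly as in Lemmas \ref{lem:helper2} and \ref{lem:helper3}. We are outside type $\mathsf A_{2n}$, so \eqref{eta} and \eqref{eta-tau} are available. Since $\tau i\ne i$ and we are in the situation of \eqref{bbtaui=jnei} with $c_{i,\tau i}=0$ in the relevant cases (types $\mathsf A_{2n-1}$, $\mathsf D$, $\mathsf E_6$), we have $[x_i^+,x_{\tau i}^-]=0$ and $[x_{\tau i}^+,x_i^-]=0$.

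Write the first bracket as
\[
\sum_\alpha\Big(\big\{[x_i^+,[x_{\tau i}^+,x_\alpha^+]],x_{\tau\alpha}^+\big\}+\big\{[x_{\tau i}^+,x_\alpha^+],[x_i^+,x_{\tau\alpha}^+]\big\}-\big\{[x_{\tau i}^-,[x_{\tau i}^+,x_\alpha^+]],x_{\tau\alpha}^+\big\}-\big\{[x_{\tau i}^+,x_\alpha^+],[x_{\tau i}^-,x_{\tau\alpha}^+]\big\}\Big).
\]
The second bracket is the analogous sum with the roles of $i$ and $\tau i$ swapped. The $x^+x^+$ type pieces from one bracket are then paired against the $x^-$ pieces of the other after reindexing. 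In a term like $\{[x_{\tau i}^+,x_\alpha^+],[x_{\tau i}^-,x_{\tau\alpha}^+]\}$, substitute $\alpha\mapsto\alpha-\alpha_i$ on the partner term; in $[x_{\tau i}^-,[x_{\tau i}^+,x_\alpha^+]]$, substitute $\alpha\mapsto \alpha+\alpha_{\tau i}$ (when it is a root) together with $\tau\alpha\mapsto\tau\alpha+\alpha_i$. Using \eqref{eta} and \eqref{eta-tau}, these cancel in pairs, as in the proof of Lemma \ref{lem:helper2}.

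The terms that survive come from two sources. The first is the double-commutator $[x_{\tau i}^-,[x_{\tau i}^+,x_\alpha^+]]$ when no partner exists. This gives a Cartan-type contribution: by \eqref{eq:relXX}, $[x_{\tau i}^-,[x_{\tau i}^+,x_\alpha^+]]=[[x_{\tau i}^-,x_{\tau i}^+],x_\alpha^+]+[x_{\tau i}^+,[x_{\tau i}^-,x_\alpha^+]]$, and the first piece is $-(\alpha_{\tau i},\alpha)x_\alpha^+$. Collecting these pieces from both brackets should yield
\[
\sum_{\alpha\ne\alpha_i,\alpha_{\tau i}}\big((\alpha_{\tau i},\alpha)+(\alpha_i,\tau\alpha)\big)\{x_\alpha^+,x_{\tau\alpha}^+\}\cdot\tfrac12\ \text{(up to symmetrization)}.
\]
Since $(\alpha_i,\tau\alpha)=(\alpha_{\tau i},\alpha)$ and we symmetrize over $\alpha\leftrightarrow\tau\alpha$, this equals $\sum(\alpha_i,\alpha+\tau\alpha)\{x_\alpha^+,x_{\tau\alpha}^+\}$. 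The second source is the boundary values $\alpha\in\{\alpha_i,\alpha_{\tau i}\}$, where $x_{\tau\alpha}^+=x_{\tau i}^+$ or $x_i^+$. There the brackets with $x^-$ produce $\xi_i,\xi_{\tau i}$, giving exactly $\{[x_{\tau i}^+,x_i^+],\xi_{\tau i}\}+\{[x_i^+,x_{\tau i}^+],\xi_i\}$. These boundary terms should be isolated first, splitting each sum as $\alpha\in\{\alpha_i,\alpha_{\tau i}\}$ versus the rest.

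The main obstacle is the bookkeeping of the reindexing cancellations. It must be checked that each shifted root stays positive, and that when $\alpha-\alpha_i\notin\Phi^+$ the unmatched term is exactly a Cartan term and not a leftover. I would organize the computation by the $\alpha_i$- and $\alpha_{\tau i}$-strings through $\alpha$: in simply-laced type these have length at most two, and $\alpha+\tau\alpha\notin\Phi$ kills the would-be cross terms $[x_\alpha^+,x_{\tau\alpha}^+]$. With this organization the cancellation can be verified string by string.
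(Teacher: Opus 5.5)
Your proposal is correct and follows the same route as the paper. You expand both brackets by the Leibniz rule, extract the Cartan terms $\{[\xi_{\tau i},x_\alpha^+],x_{\tau\alpha}^+\}$ and $\{[\xi_i,x_\alpha^+],x_{\tau\alpha}^+\}$ via $[x_k^-,x_k^+]=-\xi_k$, and cancel the remaining terms in pairs after shifting $\alpha$ by a simple root, using \eqref{eta} and \eqref{eta-tau}.

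Two bookkeeping fixes are needed. First, drop the factor $\tfrac12$: the two brackets contribute $(\alpha_{\tau i},\alpha)$ and $(\alpha_i,\alpha)$ with coefficient one each, and these sum directly to $(\alpha_i,\alpha+\tau\alpha)$. Second, since $c_{i,\tau i}=0$ in these types, you also have $[x_i^+,x_{\tau i}^+]=0$, so the two $\xi$-boundary terms are actually zero.
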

\begin{proof}
Expanding the LHS, we find that	
\begin{align*}
&\sum_{\alpha\in\Phi^+\setminus\{\alpha_{\tau i}\}}
\Big(
\left\{\left[[x_i^+,x_{\tau i}^+],x_\alpha^+\right],
x_{\tau\alpha}^+\right\}
+
\left\{\left[x_{\tau i}^+,[x_i^+,x_\alpha^+]\right],
x_{\tau\alpha}^+\right\}  +
\left\{[x_{\tau i}^+,x_\alpha^+],
[x_i^+,x_{\tau\alpha}^+]\right\}                     \\
&\qquad\qquad~~~~
+
\left\{ [\xi_{\tau i},x_\alpha^+ ],
x_{\tau\alpha}^+\right\}                       
-
\left\{ [x_{\tau i}^+,x_\alpha^+ ],
 [x_{\tau i}^{-},x_{\tau\alpha}^+ ]\right\}
-
\left\{\left[x_{\tau i}^+,
 [x_{\tau i}^{-},x_\alpha^+ ]\right],
x_{\tau\alpha}^+\right\}
\Big)                                          \\
+&
\sum_{\alpha\in\Phi^+\setminus\{\alpha_i\}}
\Big(
\left\{\left[[x_{\tau i}^+,x_i^+],x_\alpha^+\right],
x_{\tau\alpha}^+\right\}
+
\left\{\left[x_i^+,[x_{\tau i}^+,x_\alpha^+]\right],
x_{\tau\alpha}^+\right\}  +
\left\{ [\xi_i,x_\alpha^+ ],
x_{\tau\alpha}^+\right\}                     \\
&\qquad\qquad
+
\left\{ [x_i^+,x_\alpha^+ ],
 [x_{\tau i}^+,x_{\tau\alpha}^+ ]\right\}
-
\left\{\left[x_i^+, [x_i^-,x_\alpha^+ ]\right],
x_{\tau\alpha}^+\right\}
-
\left\{ [x_i^+,x_\alpha^+ ],
 [x_i^-,x_{\tau\alpha}^+ ]\right\}
\Big).                                          
  \end{align*}
 The lemma follows by observing cancellations after suitable substitution as before.
\end{proof}

Now using Lemma \ref{lem:helper4}, the LHS of \eqref{bbtaui=jnei} is given by
\begin{align*}
&\Big[
x_{i,1}^{+}+x_{\tau i,1}^{-}
+\hf\sum_{\alpha\in\Phi^+}
\left\{ [x_i^+,x_\alpha^+ ],x_{\tau\alpha}^+\right\}
-\hf\left\{x_i^+,\xi_{\tau i}\right\},
x_{\tau i}^+-x_i^-
\Big]                                      \\
&-
\Big[
x_i^+-x_{\tau i}^{-},
x_{\tau i,1}^{+}+x_{i,1}^{-}
+\hf \sum_{\alpha\in\Phi^+}
\left\{ [x_{\tau i}^+,x_\alpha^+ ],x_{\tau\alpha}^+\right\}
-\hf\left\{x_{\tau i}^+,\xi_i\right\}
\Big]                                      \\
=\,&
\left[x_{i,1}^+,x_{\tau i}^+\right]
-\xi_{i,1}-\xi_{\tau i,1}
-\left[x_{\tau i,1}^{-},x_i^{-}\right]
-\hf\left\{ [x_i^+,x_{\tau i}^+ ],\xi_{\tau i}\right\}-\hf c_{ii}
\left\{x_i^+,x_{\tau i}^+\right\} \\
& 
-\hf c_{\tau i,i}
\left\{x_i^+,x_i^-\right\}
+\hf\left\{\xi_i,\xi_{\tau i}\right\} -\left[x_i^+,x_{\tau i,1}^{+}\right] -\xi_{i,1}-\xi_{\tau i,1}+\left[x_{\tau i}^{-},x_{i,1}^{-}\right]   \\
&
+\hf\left\{ [x_i^+,x_{\tau i}^+ ],\xi_i\right\}-\hf c_{ii}
\left\{x_{\tau i}^+,x_i^+\right\}
+\hf\left\{\xi_{\tau i},\xi_i\right\}
-\hf c_{\tau i,i}
\left\{x_{\tau i}^+,x_{\tau i}^{-}\right\}
\\
&
-\hf\left\{ [x_{\tau i}^+,x_i^+ ],\xi_{\tau i}\right\}
-\hf\left\{ [x_i^+,x_{\tau i}^+ ],\xi_i\right\}-\sum_{\alpha\in\Phi^+\setminus\{\alpha_i\}}
(\alpha,\alpha_i)
\left\{x_\alpha^+,x_{\tau\alpha}^+\right\}\\
=\,&\hf c_{\tau i,i}\left\{x_{\tau i}^+,x_i^+\right\}+\hf c_{\tau i,i}\left\{x_{\tau i}^-,x_i^-\right\}-2\xi_{i,1}-2\xi_{\tau i,1}+2\xi_i\xi_{\tau i}\\
&-\sum_{\alpha\in\Phi^+}
(\alpha,\alpha_i)
\left\{x_\alpha^+,x_{\tau\alpha}^+\right\}-\hf c_{\tau i,i}\left\{x_{i}^+,x_i^-\right\}-\hf c_{\tau i,i}\left\{x_{\tau i}^+,x_{\tau i}^-\right\}\\
=\,&\hf c_{\tau i,i}\left\{x_{\tau i}^+-x_i^-,x_i^+-x_{\tau i}^-\right\}-2\varphi(h_{i,1})=\hf c_{\tau i,i}\left\{\varphi(b_{i,0}),\varphi(b_{\tau i,0})\right\}-2\varphi(h_{i,1}).
\end{align*}

\subsubsection{The relation \eqref{redqs1} for $r=s=1$} Recall that the relation \eqref{redqs1} except for $r=s=1$ are already proved in \S\ref{ssec:simple-relations}. We need to verify 
\beq \label{hhi=0}
[\varphi(h_{i,1}), \varphi(h_{j,1})] = 0 .
\eeq
For that purpose, let $\tilde{v}_i := J(\xi_i)-\tilde{\xi}_{i,1}$, see \eqref{tlxidef} and \eqref{eq:J}. Then it follows from \eqref{hi1-embedding} that
\begin{gather*}
\varphi(h_{i,1}) = J(\xi_i) - \tilde{v}_i
+ J(\xi_{\tau i}) - \tilde{v}_{\tau i} +\tfrac{1}{2}(\xi_i-\xi_{\tau i})^2 +\tfrac{1}{2}\sum_{\alpha \in \Phi^{+}}
(\alpha,\alpha_i)\{x_{\alpha}^{+}, x_{\tau\alpha}^+\}.
\end{gather*}
It is known from \cite[Theorem 2.6]{GRW19} that $[J(\xi_i)-\tilde{v}_i, J(\xi_j)-\tilde{v}_j]=[J(\xi_i)-\tilde{v}_i, \xi_j] = 0$. Thus the LHS of \eqref{hhi=0} is reduced to
\beq
\begin{split}\label{expandhh1}
    \tfrac{1}{2}\sum_{\alpha\in\Phi^+} (\alpha, \alpha_j+\alpha_{\tau j})\{[J(\xi_i)+J(\xi_{\tau i}), x_\alpha^+], x_{\tau \alpha}^+\} -\tfrac{1}{2}\sum_{\alpha\in\Phi^+}(\alpha, \alpha_j+\alpha_{\tau j})\{[\tilde{v}_i+\tilde{v}_{\tau i}, x_\alpha^+], x_{\tau\alpha}^+\} \\
    -\tfrac{1}{2}\sum_{\beta\in\Phi^+} (\beta, \alpha_i+\alpha_{\tau i})\{[J(\xi_j)+J(\xi_{\tau j}), x_\beta^+], x_{\tau \beta}^+\} +\tfrac{1}{2}\sum_{\beta\in\Phi^+}(\beta,\alpha_i+\alpha_{\tau i})\{[\tilde{v}_j+\tilde{v}_{\tau j}, x_\beta^+], x_{\tau\beta}^+\}\\
    +\tfrac{1}{4}\sum_{\alpha, \beta\in \Phi^+}(\alpha, \alpha_i)(\beta, \alpha_j)[\{x_\alpha^+, x_{\tau\alpha}^+\}, \{x_\beta^+, x_{\tau \beta}^+\}].
\end{split}
\eeq
By \cite[Proposition 3.21]{GNW18}, we have
\[
[J(\xi_i) + J(\xi_{\tau i}), x_\alpha^+]= (\alpha,\alpha_i+\alpha_{\tau i})J(x_\alpha^+),\qquad [J(\xi_j)+J(\xi_{\tau j}), x_\beta^+]=(\beta,\alpha_j+\alpha_{\tau j})J(x_\beta^+).
\]
Thus the two summations in \eqref{expandhh1} involving $J(\xi_i)$ or $J(\xi_j)$ cancel. Further notice that 
\begin{align*}
    [\tilde{v}_i+\tilde{v}_{\tau i}, x_\alpha^+]&=\Big[ \tfrac{1}{8}c_\g(\xi_i+\xi_{\tau i})+\tfrac{1}{2}\sum_{\beta\in\Phi^+}(\beta, \alpha_i+\alpha_{\tau i})x_\beta^-x_\beta^+,x_\alpha^+\Big]\\
    & = \tfrac{1}{8}c_\g(\alpha,\alpha_i+\alpha_{\tau i})x_\alpha^++\tfrac{1}{2}\sum_{\beta\in\Phi^+}(\beta, \alpha_i+\alpha_{\tau i})\left([x_\beta^-, x_\alpha^+]x_\beta^+ + x_\beta^-[x_\beta^+, x_\alpha^+]\right)
\end{align*}
and a similar equality for $[\tilde{v}_j+\tilde{v}_{\tau j}, x_\beta^+]$ from \cite[$\S$3.2 and $\S$4.4]{GNW18}, where $c_\g$ is the eigenvalue of the Casimir element $C_\g \in \mathrm U(\g)$ acting on the adjoint representation. Therefore, the expression \eqref{expandhh1} can be rewritten as
\begin{align*}
-&\tfrac{1}{4}
\sum_{\alpha\in\Phi^{+}}
\sum_{\beta\in\Phi^{+}}
(\alpha,\alpha_j+\alpha_{\tau j})
(\beta,\alpha_i+\alpha_{\tau i})
\left\{
[x_{\beta}^{-},x_{\alpha}^{+}]x_{\beta}^{+}
+x_{\beta}^{-}[x_{\beta}^{+},x_{\alpha}^{+}],
x_{\tau\alpha}^{+}
\right\}
\\
+& \tfrac{1}{4}
\sum_{\alpha\in\Phi^{+}}
\sum_{\beta\in\Phi^{+}}
(\alpha,\alpha_j+\alpha_{\tau j})
(\beta,\alpha_i+\alpha_{\tau i})
\left\{
[x_{\alpha}^{-},x_{\beta}^{+}]x_{\alpha}^{+}
+x_{\alpha}^{-}[x_{\alpha}^{+},x_{\beta}^{+}],
x_{\tau\beta}^{+}
\right\}
\\
+&\tfrac{1}{4}
\sum_{\alpha\in\Phi^{+}}
\sum_{\beta\in\Phi^{+}}
(\alpha,\alpha_j+\alpha_{\tau j})
(\beta,\alpha_i+\alpha_{\tau i})
\left\{
[x_{\beta}^{+},x_{\alpha}^{+}]
x_{\tau\beta}^{+}
+x_{\tau\beta}^{+}
[x_{\beta}^{+},x_{\alpha}^{+}],
x_{\tau\alpha}^{+}
\right\}.
\end{align*}
We need to show that all terms cancel.

Clearly, the term for $\alpha = \beta$ cancels.
Next, we show that the terms involving exactly one negative root vector cancel. Here we treat the commutators as root vectors. There are two situations:
\[
x_{\alpha_0}^{-} x_{\alpha_0+\beta_0}^{+} x_{\tau\beta_0}^{+}
\quad \text{or} \quad
x_{\tau\beta_0}^{+} x_{\alpha_0}^{-} x_{\alpha_0+\beta_0}^{+}.
\]
for any given $\alpha_0$, $\beta_0 \in \Phi^+$ such that $\alpha_0+\beta_0\in\Phi^+$. We first consider the occurrence of $x_{\alpha_0}^{-} x_{\alpha_0+\beta_0}^{+} x_{\tau\beta_0}^{+}$. Then it appears in the following $4$ terms
\begin{align*}
-&\tfrac{1}{4}
(\beta_0,\alpha_j+\alpha_{\tau j})
(\alpha_0+\beta_0,\alpha_i+\alpha_{\tau i})
[x_{\alpha_0+\beta_0}^{-},x_{\beta_0}^{+}]
x_{\alpha_0+\beta_0}^{+}x_{\tau\beta_0}^{+}
\\
&\qquad = -\tfrac{1}{4}
(\beta_0,\alpha_j+\alpha_{\tau j})
(\alpha_0+\beta_0,\alpha_i+\alpha_{\tau i})
\eta_{-\alpha_0-\beta_0, \beta_0}x_{\alpha_0}^{-}
x_{\alpha_0+\beta_0}^{+}x_{\tau\beta_0}^{+},
\\
&\tfrac{1}{4}(\alpha_0+\beta_0,\alpha_j+\alpha_{\tau j})
(\beta_0,\alpha_i+\alpha_{\tau i})
[x_{\alpha_0+\beta_0}^{-},x_{\beta_0}^{+}]
x_{\alpha_0+\beta_0}^{+}x_{\tau\beta_0}^{+}
\\
&\qquad = \tfrac{1}{4}(\alpha_0+\beta_0,\alpha_j+\alpha_{\tau j})
(\beta_0,\alpha_i+\alpha_{\tau i})
\eta_{-\alpha_0-\beta_0, \beta_0}x_{\alpha_0}^{-}
x_{\alpha_0+\beta_0}^{+}x_{\tau\beta_0}^{+},
\\
&\tfrac{1}{4}(\alpha_0,\alpha_j+\alpha_{\tau j})
(\beta_0,\alpha_i+\alpha_{\tau i})
x_{\alpha_0}^{-}
[x_{\alpha_0}^{+},x_{\beta_0}^{+}]
x_{\tau\beta_0}^{+} 
\\ 
&\qquad = \tfrac{1}{4}(\alpha_0,\alpha_j+\alpha_{\tau j})
(\beta_0,\alpha_i+\alpha_{\tau i}) \eta_{\alpha_0, \beta_0}
x_{\alpha_0}^{-}
x_{\alpha_0+\beta_0}^+
x_{\tau\beta_0}^{+},
\\
-&\tfrac{1}{4}
(\beta_0,\alpha_j+\alpha_{\tau j})
(\alpha_0,\alpha_i+\alpha_{\tau i})
x_{\alpha_0}^{-}
[x_{\alpha_0}^{+},x_{\beta_0}^{+}]
x_{\tau\beta_0}^{+}
\\ 
&\qquad = -\tfrac{1}{4}
(\beta_0,\alpha_j+\alpha_{\tau j})
(\alpha_0,\alpha_i+\alpha_{\tau i})\eta_{\alpha_0, \beta_0}
x_{\alpha_0}^{-}
x_{\alpha_0+\beta_0}^+
x_{\tau\beta_0}^{+}.
\end{align*}
By \eqref{eta}, we have $\eta_{\alpha_0,\beta_0} + \eta_{-\alpha_0-\beta_0, \beta_0} = 0$ and hence these $4$ terms cancel. A similar calculation also works for the case $x_{\tau\beta_0}^{+} x_{\alpha_0}^{-} x_{\alpha_0+\beta_0}^{+}$.

Finally, we consider the remaining case when all monomials are products of positive root vectors. It suffices to consider the products of the form $x_{\alpha_0}^+x_{\beta_0}^+x_{\tau\alpha_0+\tau\beta_0}^+$ for fixed $\alpha_0,\beta_0\in \Phi^+$ such that $\alpha_0+\beta_0\in\Phi^+$. Then by a similar calculation as above using \eqref{eta} and \eqref{eta-tau}, these products simplify as 
\begin{align*}
&\tfrac{1}{4}(\alpha_0,\alpha_i+\alpha_{\tau i})
(\beta_0,\alpha_j+\alpha_{\tau j})
\eta_{\alpha_0,\beta_0}
x_{\alpha_0}^{+}x_{\beta_0}^{+}x_{\tau\alpha_0+\tau\beta_0}^{+}
\\
+~&\tfrac{1}{4}
(\alpha_0,\alpha_j+\alpha_{\tau j})
(\beta_0,\alpha_i+\alpha_{\tau i})
\eta_{\alpha_0,\beta_0}
x_{\beta_0}^{+}x_{\alpha_0}^{+}x_{\tau\alpha_0+\tau\beta_0}^{+}
\\
+~&\tfrac{1}{4}
(\alpha_0,\alpha_j+\alpha_{\tau j})
(\beta_0,\alpha_i+\alpha_{\tau i})
\eta_{\alpha_0,\beta_0}
x_{\tau\alpha_0+\tau\beta_0}^{+}x_{\alpha_0}^{+}x_{\beta_0}^{+}
\\
+~&\tfrac{1}{4}
(\alpha_0,\alpha_i+\alpha_{\tau i})
(\beta_0,\alpha_j+\alpha_{\tau j})
\eta_{\alpha_0,\beta_0}
x_{\tau\alpha_0+\tau\beta_0}^{+}x_{\beta_0}^{+}x_{\alpha_0}^{+}
\\
+~&\tfrac{1}{4}
\Big(
(\alpha_0,\alpha_j+\alpha_{\tau j})
(\beta_0,\alpha_i+\alpha_{\tau i})
-
(\alpha_0,\alpha_i+\alpha_{\tau i})
(\beta_0,\alpha_j+\alpha_{\tau j})
\Big)
\eta_{\alpha_0,\beta_0}
x_{\alpha_0}^{+}
x_{\tau\alpha_0+\tau\beta_0}^{+}
x_{\beta_0}^{+}
\\
+~&\tfrac{1}{4}
\Big(
(\alpha_0,\alpha_i+\alpha_{\tau i})
(\beta_0,\alpha_j+\alpha_{\tau j})
-
(\alpha_0,\alpha_j+\alpha_{\tau j})
(\beta_0,\alpha_i+\alpha_{\tau i})
\Big)
\eta_{\alpha_0,\beta_0}
x_{\beta_0}^{+}
x_{\tau\alpha_0+\tau\beta_0}^{+}
x_{\alpha_0}^{+}
\\
=~&\tfrac{1}{4}
(\alpha_0,\alpha_i+\alpha_{\tau i})
(\beta_0,\alpha_j+\alpha_{\tau j})
\eta_{\alpha_0,\beta_0}
\cdot
[x_{\alpha_0}^{+},
[x_{\tau\alpha_0+\tau\beta_0}^{+},x_{\beta_0}^{+}]]
\\ 
-~&\tfrac{1}{4}
(\alpha_0,\alpha_j+\alpha_{\tau j})
(\beta_0,\alpha_i+\alpha_{\tau i})
\eta_{\alpha_0,\beta_0}
\cdot
[x_{\beta_0}^{+},
[x_{\tau\alpha_0+\tau\beta_0}^{+},x_{\alpha_0}^{+}]],
\end{align*}
which is clearly zero because $\alpha+\tau\alpha\notin\Phi$ for any $\alpha\in\Phi^+$.

Now the verification of \eqref{expandhh1} is complete.

\subsection{Completing the proof of Theorem \ref{thm:embed}}

For Part (1), we have verified that $\varphi$ induces an algebra homomorphism in \S\ref{sec:pfthmB0} (we also need Appendix \ref{sec:app+} for types $\mathsf A_r$ if $r=1,2n$
). Therefore, it remains to show the homomorphism $\varphi$ is injective. 

Recall the filtration on $\Y$ given by setting $\mathrm{deg}\,\xi_{i,r}=\mathrm{deg}\,x_{i,r}^\pm=r$ and the algebra isomorphism $\rho$ from \eqref{ass}. Also recall the filtration on $\Yi$ given by setting $\mathrm{deg}\,h_{i,r}=\mathrm{deg}\,b_{i,r}=r$ and the algebra isomorphism $\varrho$ from \eqref{assi}. Clearly, the homomorphism $\varphi$ is a filtered algebra homomorphism and hence induces the associated graded homomorphism
\begin{align*}
\mathrm{\gr}\,\varphi:~&\mathrm{\gr}\,\Yi\to \mathrm{\gr}\,\Y,\\
&\bar h_{i,r}\mapsto (\bar\xi_{i,r}-(-1)^r\bar\xi_{\tau i,r})+\tfrac14\delta_{r0}\wp_i,\quad \bar{b}_{i,r}\mapsto (\bar x_{i,r}^+-(-1)^r\bar x_{\tau i,r}^-).
\end{align*}
Let
$\iota:\mathrm U(\g[z]^{\check\omega})\hookrightarrow\mathrm U(\g[z])$ be the natural inclusion,
and write
\[
\Psi=\rho^{-1}\circ\gr\varphi\circ\varrho:
\mathrm U(\g[z]^{\check\omega})\longrightarrow\mathrm U(\g[z]).
\]
The map $\Psi$ is not literally $\iota$ in type
$\mathsf A_{2n}$ because of the scalar term above. Equip both
enveloping algebras with their standard PBW filtrations. Then $\Psi$
is PBW-filtered, and the scalar term has PBW degree zero, whereas the
current-algebra generators have PBW degree one. Consequently,
$\gr_{\mathrm{PBW}}\Psi=\iota$. By the PBW theorem, $\iota$ is injective. Hence $\Psi$, and
therefore $\gr\varphi$ and $\varphi$, are injective.

Now we prove Part (2). We identify $\Yi$ as a subalgebra of $\Y$ via $\varphi$. Clearly, we have 
\[
\Delta(b_{i,0})=b_{i,0}\otimes 1+1\otimes b_{i,0},\quad 
\Delta(h_{i,0})=h_{i,0}\otimes 1+1\otimes h_{i,0}-\tfrac14\wp_i.
\]
Note that $[x_{\alpha}^+,x_{\tau \alpha}^+]=0$ for all $\alpha\in\Phi^+$ if $\g$ is not of type $\mathsf A_{2n}$. For these cases we can define $\Phi^+_{i,<}$ and $\Phi^+_{i,>}$ as a partition of $\Phi^+$: $\Phi^+=\Phi^+_{i,<}\sqcup \Phi^+_{i,>}$. Then the formula \eqref{eq:hi1-simle} holds for all types  and hence we have
\begin{align*}
\tl h_{i,1}
={}& h_{i,1}-\tfrac12 h_{i,0}^{2}
\\
={}&
\xi_{i,1}+\xi_{\tau i,1}-\xi_i\xi_{\tau i}
+\tfrac{\wp_i}{4}(\xi_i-\xi_{\tau i})
-\tfrac12
\left(
\xi_i-\xi_{\tau i}+\tfrac{\wp_i}{4}
\right)^2
\\
&\quad
+\sum_{\alpha\in \Phi^+_{i,<}}
(\alpha,\alpha_i)\,
 x_\alpha^+ x_{\tau\alpha}^+
+\sum_{\alpha\in \Phi^+_{i,>}}
(\alpha,\alpha_i)\,
 x_{\tau\alpha}^+ x_\alpha^+
\\
={}&
\tl\xi_{i,1}
+\tl\xi_{\tau i,1}
-\tfrac{1}{32}\wp_i^2
+\sum_{\alpha\in \Phi^+_{i,<}}
(\alpha,\alpha_i)\,
 x_\alpha^+ x_{\tau\alpha}^+
+\sum_{\alpha\in \Phi^+_{i,>}}
(\alpha,\alpha_i)\,
 x_{\tau\alpha}^+ x_\alpha^+.
\end{align*}
Using the identity
\[
\sum_{\alpha\in \Phi^+}
(\alpha,\alpha_{\tau i})\,
 x_\alpha^-\otimes x_\alpha^+=\sum_{\alpha\in \Phi^+}
(\alpha,\alpha_{i})\,
 x_{\tau \alpha}^-\otimes x_{\tau\alpha}^+,
\]
we further obtain
\begin{align*}
\Delta(\tl h_{i,1})
={}&
\tl\xi_{i,1}\otimes 1
+1\otimes\tl\xi_{i,1}
-\sum_{\alpha\in \Phi^+}
(\alpha,\alpha_i)\,
 x_\alpha^-\otimes x_\alpha^+
-\tfrac{1}{32}\wp_i^2
\\
&
+\tl\xi_{\tau i,1}\otimes 1
+1\otimes\tl \xi_{\tau i,1}
-\sum_{\alpha\in \Phi^+}
(\alpha,\alpha_{\tau i})\,
 x_\alpha^-\otimes x_\alpha^+
\\
&
+\sum_{\alpha\in \Phi^+_{i,<}}
(\alpha,\alpha_i)
\bigl(
   x_\alpha^+ x_{\tau\alpha}^+\otimes 1
 + x_\alpha^+\otimes x_{\tau\alpha}^+
 + x_{\tau\alpha}^+\otimes x_\alpha^+
 +1\otimes x_\alpha^+ x_{\tau\alpha}^+
\bigr)
\\
&
+\sum_{\alpha\in \Phi^+_{i,>}}
(\alpha,\alpha_i)
\bigl(
   x_{\tau\alpha}^+ x_\alpha^+\otimes 1
 + x_\alpha^+\otimes x_{\tau\alpha}^+
 + x_{\tau\alpha}^+\otimes x_\alpha^+
 +1\otimes x_{\tau\alpha}^+ x_\alpha^+
\bigr)
\\
={}&
\tl h_{i,1}\otimes 1
+1\otimes\tl h_{i,1}
+\tfrac{1}{32}\wp_i^2
\\
&+\sum_{\alpha\in \Phi^+}
(\alpha,\alpha_i)
\bigl(
 x_{\tau\alpha}^+- x_\alpha^-
\bigr)
\otimes x_\alpha^+
+\sum_{\alpha\in \Phi^+}
(\alpha,\alpha_{i})
\bigl(
 x_\alpha^+- x_{\tau\alpha}^-
\bigr)
\otimes x_{\tau\alpha}^+
\\
={}&
\tl h_{i,1}\otimes 1
+1\otimes\tl h_{i,1}+\tfrac{1}{32}\wp_i^2
+\sum_{\alpha\in \Phi^+}
\bigl(\alpha,\alpha_i+\alpha_{\tau i}\bigr)
\bigl(
 x_\alpha^+- x_{\tau\alpha}^-
\bigr)
\otimes x_{\tau\alpha}^+ .
\end{align*}
Since $\Yi$ is generated by $b_{i,0}$, $h_{i,0}$ and $\tl h_{i,1}$, we conclude that $\Delta(\Yi)\subset \Yi\otimes \Y$. Hence $\Yi$ is a right coideal subalgebra of $\Y$.

Finally, we prove Part (3). We identify $\mathrm{gr}\,\Y$ with   $\mathrm{U}(\g[z])$ via the isomorphism $\rho$ from \eqref{ass}. In the $J$ presentation, by \eqref{eq:J} the images of $x\in \g$ and $J(x)$ are given by $x$ and $xz$, respectively. Note that $\YiJ$ is generated by $\xi_{i}-\xi_{\tau i}$ for $i\in \I$, $b_{\alpha}=x_{\alpha}^+-x_{\tau \alpha}^-$ for $\alpha\in \Phi^+$, and
\[
B(\xi_i+\xi_{\tau i})=J(\xi_i+\xi_{\tau i})-\tfrac14[\xi_i+\xi_{\tau i},C_{\mathfrak k}],\qquad B(y_\alpha)=J(x_{\alpha}^++x_{\tau \alpha}^-)-\tfrac14[x_{\alpha}^++x_{\tau\alpha}^-,C_{\mathfrak k}].
\]
By \eqref{ass}, the images of $\xi_{i}-\xi_{\tau i}$, $b_{\alpha}$, $B(\xi_i+\xi_{\tau i})$ and $B(y_\alpha)$ in the associated graded of $\Y$ (considered as $\mathrm{U}(\g[z])$) are $\xi_{i}-\xi_{\tau i}$, $x_{\alpha}^+-x_{\tau\alpha}^-$, $(\xi_i+\xi_{\tau i})z$, and $(x_{\alpha}^++x_{\tau\alpha}^-)z$, respectively. Thus the image of $\YiJ$ in the associated graded is the subalgebra $\mathrm{U}(\g[z]^{\check \omega})$. Similarly, the image of $\YiJ(\mathscr K,c)$ in the associated graded is also the subalgebra $\mathrm{U}(\g[z]^{\check \omega})$. Note that it is known from above that the image of $\Yi$ in the associated graded of $\Y$ is also the subalgebra $\mathrm{U}(\g[z]^{\check \omega})$. 
Thus, to prove that $\Yi=\YiJ(\mathscr K,c)$, where $c=\tfrac18$ for type $\mathsf A_{2n}$ and $0$ otherwise, it suffices to show that $\Yi\subset \YiJ(\mathscr K,c)$. 

Since $h_{i,0},b_{i,0}\in \YiJ(\mathscr K,c)$ and $\Yi$ is generated by $b_{i,0}$, $h_{i,0}$, and $h_{i,1}$, this reduces to showing that $h_{i,1}\in \YiJ(\mathscr K,c)$. To see this, note that
\begin{align*}
&B(\xi_i+\xi_{\tau i})=J(\xi_i)+J(\xi_{\tau i})
-\tfrac14[\xi_i+\xi_{\tau i},C_{\mathfrak k}]
\\
={}&
\xi_{i,1}
+\frac14\sum_{\alpha\in\Phi^+}
(\alpha,\alpha_i)\{x_\alpha^+,x_\alpha^-\}
-\tfrac12\xi_i^2
+\xi_{\tau i,1}
+\tfrac14\sum_{\alpha\in\Phi^+}
(\alpha,\alpha_{\tau i})\{x_\alpha^+,x_\alpha^-\}
-\tfrac12\xi_{\tau i}^2
\\
&\quad
+\tfrac14
\Big[
 \xi_i+\xi_{\tau i},
 \tfrac12\sum_{\alpha\in\Phi^+}
 (x_\alpha^+-x_{\tau\alpha}^-)
 (x_{\tau\alpha}^+-x_\alpha^-)
\Big]
\\
={}&
h_{i,1}
-\tfrac12(\xi_i-\xi_{\tau i})^2
-\tfrac12\sum_{\alpha\in\Phi^+}
(\alpha,\alpha_i)
\{x_\alpha^+,x_{\tau\alpha}^+\}
+\tfrac14\sum_{\alpha\in\Phi^+}
(\alpha,\alpha_i+\alpha_{\tau i})
\{x_\alpha^+,x_\alpha^-\}
\\
&\quad
+\tfrac18\sum_{\alpha\in\Phi^+}
\Big(
 (\alpha_i+\alpha_{\tau i},\alpha)
 x_\alpha^+x_{\tau\alpha}^+
 +(\alpha_i+\alpha_{\tau i},\tau\alpha)
 x_\alpha^+x_{\tau\alpha}^+
\\
&\hspace{5.3cm}
 -(\alpha_i+\alpha_{\tau i},\tau\alpha)
 x_{\tau\alpha}^-x_\alpha^-
 -(\alpha_i+\alpha_{\tau i},\alpha)
 x_{\tau\alpha}^-x_\alpha^-
\Big)
\\
={}&
h_{i,1}
-\tfrac12(\xi_i-\xi_{\tau i})^2
-\tfrac14\sum_{\alpha\in\Phi^+}
(\alpha,\alpha_i+\alpha_{\tau i})
(x_\alpha^+-x_{\tau\alpha}^-)
(x_{\tau\alpha}^+-x_\alpha^-)
\\
={}&
h_{i,1}
-\tfrac12(\xi_i-\xi_{\tau i})^2
-\tfrac14\sum_{\alpha\in\Phi^+}
(\alpha,\alpha_i)\{b_{\alpha},b_{\tau \alpha}\}\in \YiJ=\YiJ(\mathscr K,0),
\end{align*}
completing the proof of Part (3) for the case when $\g$ is not of type $\mathsf A_{2n}$.

If $\g$ is of type $\mathsf A_{2n}$, using the equality
\[
[\mathscr K,\xi_i+\xi_{\tau i}]=2\bigl(\Theta_i-\omega(\Theta_i)\bigr),
\]
a similar calculation shows that
\[
\varphi(h_{i,1})
=
B_{\mathscr K,c}(\xi_i+\xi_{\tau i})
+\tfrac12(\xi_i-\xi_{\tau i})^2
+\tfrac14\sum_{\alpha\in\Phi^+}
(\alpha,\alpha_i)\{b_\alpha,b_{\tau\alpha}\}
+\tfrac14\bigl(\Theta_i+\omega(\Theta_i)\bigr),
\]
where $\mathscr K$ is given by \eqref{eq:K-def} and $c=\tfrac18$. We also note that $\Theta_i+\omega(\Theta_i)\in\mathfrak k$. Comparing to the non $\mathsf A_{2n}$ case, the extra term $\tfrac14\bigl(\Theta_i-\omega(\Theta_i)\bigr)$ is absorbed into $B_{\mathscr K,c}(\xi_i+\xi_{\tau i})$. Clearly, we have $\Yi\subset \YiJ(\mathscr K,\tfrac18)$. Repeating the argument for the case of non $\mathsf A_{2n}$ type, we finish the proof of Part (3).

Now the proof of Theorem \ref{thm:embed} is complete.

\section{Proof of Theorem \ref{thm:hb}}\label{sec:proof-thmC}
\subsection{Useful relations in Yangians}
We record some equalities that will be used in the proof:
\begin{align}
&[x_i^+,x_i^-(u)]=\xi_i(u)-1,\qquad [x_{i,1}^+,x_i^-(u)]
=
u\xi_i(u)-u-\xi_i,\label{xi+xi-}\\
&[x_i^-,x_i^-(u)]=\tfrac12(\alpha_i,\alpha_i)(x_i^-(u))^2,\label{xixi-}\\
&[\tl\xi_{i,1},x_{j}^\pm(u)]=\pm(\alpha_i,\alpha_j)(ux_j^\pm(u)-x_{j}^\pm),\label{xixj1}\\
&[\xi_i(u),x_{i,0}^-]=-\tfrac12(\alpha_i,\alpha_i)\{\xi_i(u),x_i^-(u)\},\label{xiuxi0}\\
&\bigl[u\xi_i(u),x_j^-\bigr]
-\bigl[\xi_i(u),x_{j,1}^-\bigr]
=
-\tfrac{1}{2}c_{ij}
 \bigl\{\xi_i(u),x_j^-\bigr\},\label{helper31}\\
&\bigl[x_{j,1}^-,x_i^-(u)\bigr]
-\bigl[x_j^-,u x_i^-(u)\bigr]
+\bigl[x_j^-,x_i^-\bigr]
=
-\tfrac{1}{2}c_{ij}
 \bigl\{x_j^-,x_i^-(u)\bigr\},\label{helper32}\\
&[\tl h_{i,1},b_j(u)]=(c_{ij}+c_{\tau i,j})(ub_j(u)-b_{j,0}),\label{hbcom1}\\
&2u h_{i}(u)
=
2u+2h_{i,0}
+\bigl[b_{\tau i,0},u b_i(u)-b_{i,0}\bigr]
-\bigl[b_{\tau i,1},b_i(u)\bigr]
+\tfrac{1}{2}c_{\tau i,i}\bigl\{b_{\tau i,0},b_i(u)\bigr\}.\label{bi0biu}
\end{align}
The equalities \eqref{xi+xi-}, \eqref{xixj1}, \eqref{helper31}, \eqref{helper32} and \eqref{hbcom1} follow from \eqref{eq:relXX},  \eqref{tlxicom}, \eqref{eq:relexHX}, \eqref{eq:relexXX} and \eqref{eq:hi1bjr-new}, respectively. The equalities \eqref{xixi-} and \eqref{xiuxi0} can be found in \cite[\S2.4]{GTL16} while the equality \eqref{bi0biu} is obtained from \eqref{qs5}. 

\subsection{The case excluding type $\mathsf A_{2n}$}\label{sec:noA2n}
In this subsection, we deal with the case of type $\mathsf A_{2n-1}$, $\mathsf D_n$, and $\mathsf E_6$. The calculations are similar to those performed in \cite{Lu26min}.
\subsubsection{}\label{ssec:b-est} We first establish \eqref{eq:b-est} whose component-wise formula is given by
\be
b_{i,r}\equiv x_{i,r}^+-(-1)^rx_{\tau i,r}^--\tfrac12\sum_{0\lle s<r}(-1)^s \{x_{i,r-s-1}^+,\xi_{\tau i,s} \}.
\ee
Here $\equiv$ stands for equality modulo $\Y_{\alpha_i+Q_+}^{\gge 0}[\![u^{-1}]\!]$. We prove it by induction on $r$. The base cases $r=0,1$ are clear from \eqref{bi0-embedding} and \eqref{bi1-embedding}. 

We only consider the case $\tau i\ne i$ as the calculation for the case $\tau i=i$ is very similar (also discussed in \cite{Lu26min}). Note that by \eqref{hi1-embedding} we have $[\tl h_{i,1},\Y_{\alpha_i+Q_+}^{\gge 0}]\subset \Y_{\alpha_i+Q_+}^{\gge 0}$. Together with \eqref{hbcom1}, it is not hard to see that the induction step is reduced to showing
\beq\label{helper5}
\begin{split}
\big[\tl h_{i,1},&\,\tfrac12 \{x_i^+(u),\xi_{\tau i}(-u)\}+x_{\tau i}^-(-u)\big]\\
&\equiv (c_{ii}+c_{\tau i,i})u\big(\tfrac12\{x_i^+(u),\xi_{\tau i}(-u)\}+x_{\tau i}^-(-u)\big)-(c_{ii}+c_{\tau i,i})(x_i^+-x_{\tau i}^-).
\end{split}
\eeq
By \eqref{tlxidef} and \eqref{hi1-embedding}, the LHS of \eqref{helper5} is equal to
\begin{align*}
&\Big[\tl\xi_{i,1}+\tl\xi_{\tau i,1}+\hf \sum_{\alpha\in\Phi^+}(\alpha_i,\alpha)\big\{x_\alpha^+,x_{\tau\alpha}^+\big\},\tfrac12 \big\{x_i^+(u),\xi_{\tau i}(-u)\big\}+x_{\tau i}^-(-u)\Big]\\
\stackrel{\eqref{xixj1}}{\equiv}\,&\hf c_{ii}\big\{ux_i^{+}(u)-x_i^+,\xi_{\tau i}(-u)\big\}+ c_{\tau i,i}\big(ux_{\tau i}^-(-u)+x_{\tau i}^-\big)
\\ \hskip0.22cm&
+\hf c_{\tau i,i}\big\{ux_i^{+}(u)-x_i^+,\xi_{\tau i}(-u)\big\}+ c_{i,i}\big(ux_{\tau i}^-(-u)+x_{\tau i}^-\big)\\
\hskip0.22cm&+\hf(c_{ii}+c_{\tau i,i})\big\{[x_{\tau i}^+,x_{\tau i}^-(-u)],x_i^+\big\}\\
\stackrel{\eqref{xi+xi-}}{=}\,&\hf(c_{ii}+c_{\tau i,i})\Big(\big\{ux_i^{+}(u)-x_i^+,\xi_{\tau i}(-u)\big\}+ 2\big(ux_{\tau i}^-(-u)+x_{\tau i}^-\big)+\big\{\xi_{\tau i}(-u)-1,x_i^+\big\}\Big)\\
=\hskip0.22cm&(c_{ii}+c_{\tau i,i})u\big(\tfrac12\{x_i^+(u),\xi_{\tau i}(-u)\}+x_{\tau i}^-(-u)\big)-(c_{ii}+c_{\tau i,i})(x_i^+-x_{\tau i}^-)
\end{align*}
as required.

\subsubsection{} \label{sssec:copro-b}
For \eqref{eq:b-co-est}, the corresponding component-wise formula is given by 
\[
\Delta(b_{i,r})\equiv \sum_{0\lle s<r}(-1)^{s+1} b_{i,r-s-1}\otimes \xi_{\tau i,s}+b_{i,r}\otimes 1+1\otimes b_{i,r}.
\]
Here $\equiv$ stands for equality modulo $\Yi\otimes \Y_{Q_+}[\![u^{-1}]\!]$. We prove it again by induction on $r$. The base case $r=0$ is clear while the case $r=1$ follows from \eqref{helper98}. 

Again, we only consider the case $\tau i\ne i$.  Note that by \eqref{coprohi1} we have $[\Delta(\tilde{h}_{i,1}),\Yi\otimes \Y_{Q_+}]\subset \Yi\otimes \Y_{Q_+}$. Together with the relation 
\begin{align*}
\big[\Delta(\tl h_{i,1}),\Delta(b_i(u))\big]= (c_{ii}+c_{\tau i,i})\big(u\Delta(b_i(u))-\Delta(b_{i,0})\big)
\end{align*}
that follows from \eqref{hbcom1}, 
it is not hard to see that the induction step is reduced to proving
\beq\label{helper21}
\begin{split}
\Big[\tl h_{i,1}\otimes 1&+ 1\otimes \tl h_{i,1}+ \sum_{\alpha\in\Phi^+}(\alpha+\tau \alpha,\alpha_i)\,b_\alpha\otimes x_{\tau\alpha}^+,b_i(u)\otimes \xi_{\tau i}(-u)+1\otimes b_i(u)\Big]\\
\equiv &~(c_{ii}+c_{\tau i,i})\big(u(b_i(u)\otimes \xi_{\tau i}(-u)+1\otimes b_i(u))-(b_{i,0}\otimes 1+1\otimes b_{i,0})\big).
\end{split}
\eeq
We list all the terms on the LHS of \eqref{helper21} as follows:
\begin{align*}
&[\tl h_{i,1}\otimes 1, b_i(u)\otimes \xi_{\tau i}(-u)]\stackrel{\eqref{hbcom1}}{=}(c_{ii}+c_{\tau i,i})(ub_i(u)-b_{i,0})\otimes \xi_{\tau i}(-u),\\
&[\tl h_{i,1}\otimes 1, 1\otimes b_i(u)]=0,\qquad [1\otimes \tl h_{i,1}, b_i(u)\otimes \xi_{\tau i}(-u)]\stackrel{\eqref{hi1-embedding}}{\equiv}0,\\
&[1\otimes \tl h_{i,1}, 1\otimes b_i(u)]\stackrel{\eqref{hbcom1}}{=} (c_{ii}+c_{\tau i,i}) 1\otimes (ub_i(u)-b_{i,0}),\\
&[b_\alpha\otimes x_{\tau \alpha}^+,b_i(u)\otimes \xi_{\tau i}(-u)]\equiv 0,\\
&[b_\alpha\otimes x_{\tau \alpha}^+,1\otimes b_i(u)]\stackrel{\eqref{eq:b-est}}{\equiv} 0,\qquad \text{ if }\alpha\ne \alpha_{i},\\
&[b_{i,0}\otimes x_{\tau i}^+,1\otimes b_i(u)]\overset{\eqref{eq:b-est}}{\underset{\eqref{xi+xi-}}{\equiv}}b_{i,0}\otimes (\xi_{\tau i}(-u)-1).
\end{align*}
Summing them up (with suitable multiples), we easily deduce \eqref{helper21}.

\subsubsection{}\label{ssec:h-est} Next we prove \eqref{eq:h-est}. Let $\equiv$ stand for equality modulo $\Y_{Q_+}[\![u^{-1}]\!]$. By \eqref{bi0biu}, we have
\beq\label{todo1}
2u h_j(u)
=
2u+2h_{j,0}
+\bigl[b_{i,0},u b_j(u)-b_{j,0}\bigr]
-\bigl[b_{i,1},b_j(u)\bigr]
+\tfrac{1}{2}c_{ij}\bigl\{b_{i,0},b_j(u)\bigr\},
\eeq
where $j=\tau i$.

The relation \eqref{helper31} implies that
\begin{align*}
-&\tfrac{1}{2}u
 \bigl\{
 x_j^+(u),
 [x_j^-,\xi_i(-u)]
 \bigr\}
-\tfrac{1}{2}
 \bigl\{
 x_j^+(u),
 [x_{j,1}^-,\xi_i(-u)]
 \bigr\}
\\
&\qquad
-\tfrac{1}{4}c_{ij}
 \bigl\{
 x_j^-,
 \{x_j^+(u),\xi_i(-u)\}
 \bigr\}
\\
={}&
\tfrac{1}{4}c_{ij}
 \bigl\{
 x_j^+(u),
 \{\xi_i(-u),x_j^-\}
 \bigr\}
-\tfrac{1}{4}c_{ij}
 \bigl\{
 x_j^-,
 \{x_j^+(u),\xi_i(-u)\}
 \bigr\}\\
={}&
\tfrac{1}{4}c_{ij}
\bigl(
 [x_j^+(u),x_j^-]\xi_i(-u)
-\xi_i(-u)[x_j^+(u),x_j^-]
\bigr)\\
={}&\tfrac{1}{4}c_{ij}
\bigl(
 (\xi_j(u)-1)\xi_i(-u)
-\xi_i(-u)(\xi_j(u)-1)
\bigr)=0.
\end{align*}

Now we calculate the estimate for each term on the RHS of \eqref{todo1} and then add them up. By \eqref{xi+xi-}, we have
\begin{align*}
\bigl[b_{i,0},u b_j(u)-b_{j,0}\bigr]
&\equiv
\big[
x_i^+-x_j^-,
\tfrac{1}{2}u
 \bigl\{x_j^+(u),\xi_i(-u)\bigr\}
+u x_i^-(-u)-x_j^++x_i^-
\big]
\\
&\equiv
u\bigl(\xi_i(-u)-1\bigr)+\xi_i
+\tfrac{1}{2}u
 \bigl\{\xi_j(u)-1,\xi_i(-u)\bigr\}
\\
&\quad
-\tfrac{1}{2}u
 \bigl\{
 x_j^+(u),
 [x_j^-,\xi_i(-u)]
 \bigr\}
-u[x_j^-,x_i^-(-u)]
-\xi_j-[x_j^-,x_i^-],
\end{align*}
\begin{align*}
-\bigl[b_{i,1},b_j(u)\bigr]
&\equiv
-\big[
x_{i,1}^+ +x_{j,1}^-
-\tfrac{1}{2}\{x_i^+,\xi_j\},
\tfrac{1}{2}\{x_j^+(u),\xi_i(-u)\}
+x_i^-(-u)
\big]
\\
&\equiv
u\xi_i(-u)+\xi_i-u
+\tfrac{1}{2}
 \bigl\{
 u\xi_j(u)-\xi_j-u,\xi_i(-u)
 \bigr\}-[x_{j,1}^-,x_i^-(-u)]
\\
&\quad
-\tfrac{1}{2}
 \bigl\{
 x_j^+(u),
 [x_{j,1}^-,\xi_i(-u)]
 \bigr\}
+\tfrac{1}{2}
 \bigl\{\xi_i(-u)-1,\xi_j\bigr\}
-\tfrac{1}{2}c_{ij}
 \bigl\{x_i^+,x_i^-(-u)\bigr\},
\end{align*}
and
\begin{align*}
&\tfrac{1}{2}c_{ij}\bigl\{b_{i,0},b_j(u)\bigr\}\\
&\equiv
\tfrac{1}{2}c_{ij}
\left\{
x_i^+-x_j^-,
\tfrac{1}{2}\{x_j^+(u),\xi_i(-u)\}
+x_i^-(-u)
\right\}
\\
&\equiv
\tfrac{1}{2}c_{ij}
 \bigl\{x_i^+,x_i^-(-u)\bigr\}
-\tfrac{1}{4}c_{ij}
 \bigl\{
 x_j^-,
 \{x_j^+(u),\xi_i(-u)\}
 \bigr\}
-\tfrac{1}{2}c_{ij}
 \bigl\{x_j^-,x_i^-(-u)\bigr\}.
\end{align*}
Summing them up and using the observations above, we obtain that the RHS of \eqref{todo1} is equal to $2u\xi_j(u)\xi_i(-u)$ modulo $\Y_{Q_+}[\![u^{-1}]\!]$, completing the proof of \eqref{eq:h-est}.

\subsubsection{} \label{sssec:copro-h}

Finally, we prove \eqref{eq:h-co-est}. For $j\in\I$, set
\begin{equation*}
Q_{j,+}:=
\Big\{\sum_{k\in\I}m_k\alpha_k
\mathrel{\Big|}m_j>0,\ m_k\in\bZ\text{ for }k\in\I\Big\}.
\end{equation*}

\begin{lem}\label{lem:cap=0}
For every $j\in\I$, we have $\Yi\cap\Y_{Q_{j,+}}=\{0\}$ and $
\Yi\cap\Y_{Q_+}=\{0\}$.
\end{lem}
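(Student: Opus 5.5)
The plan is to argue with leading symbols, passing twice to associated graded objects, and to finish with a weight argument in a symmetric algebra. The key compatibility I will use is that the filtration on $\Y$ (with $\deg\xi_{i,r}=\deg x^\pm_{i,r}=r$) is compatible with the $Q$-grading. Indeed, each filtered piece $F_k\Y$ is spanned by monomials in the Drinfeld generators, and these are $Q$-homogeneous, so $F_k\Y=\bigoplus_{\beta\in Q}(F_k\Y)_\beta$. Consequently the symbol of an element of $\Y_{Q_{j,+}}$ lies in $(\gr\Y)_{Q_{j,+}}$, and $\rho$ from \eqref{ass} is $Q$-graded.

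Let $0\neq y\in\Yi\cap\Y_{Q_{j,+}}$ and let $k$ be minimal with $y\in F_k\Y$. Its symbol $\bar y\neq0$ lies in $\rho^{-1}(\gr\Yi)$. By the proof of Theorem~\ref{thm:embed}(1), this is the image of $\Psi$, which equals $\mathrm U(\g[z]^{\check\omega})$. In type $\mathsf A_{2n}$ the generators are shifted by the scalars $\tfrac14\delta_{r0}\wp_i$, but scalars lie in the subalgebra, so they do not change the subalgebra generated. Hence $0\neq\bar y\in\mathrm U(\g[z]^{\check\omega})\cap\mathrm U(\g[z])_{Q_{j,+}}$.

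Next I repeat the argument with the PBW filtration. It is also $Q$-compatible, because $\g[z]$ is $Q$-graded. Taking the top PBW symbol therefore reduces the problem to proving
\[
S(\g[z]^{\check\omega})\cap S(\g[z])_{Q_{j,+}}=\{0\}\qquad\text{inside } S(\g[z]).
\]
Here I use that $\gr_{\mathrm{PBW}}\mathrm U(\g[z]^{\check\omega})=S(\g[z]^{\check\omega})\subset S(\g[z])$, which follows from the PBW theorem applied to the subalgebra.

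For this final step, let $\pi:S(\g[z])\to S((\h\oplus\mathfrak n^-)[z])$ be the algebra homomorphism that kills $\mathfrak n^+[z]$. It is $Q$-graded, and its target is concentrated in degrees in $-Q_+\cup\{0\}$. None of these degrees lies in $Q_{j,+}$, and none lies in $Q_+$. Hence $\pi$ vanishes on $S(\g[z])_{Q_{j,+}}$ and on $S(\g[z])_{Q_+}$. On the other hand, $\pi$ is injective on $S(\g[z]^{\check\omega})$. To see this, note that $\pi$ sends the basis vectors of $\g[z]^{\check\omega}$ listed before \eqref{assi} to the elements
\[
\xi_iz^{2r+1},\qquad (\xi_j-(-1)^r\xi_{\tau j})z^r,\qquad -(-1)^r x^-_{\tau\alpha}z^r .
\]
These elements are linearly independent in $(\h\oplus\mathfrak n^-)[z]$, so $\pi$ maps a polynomial ring in these variables isomorphically onto a polynomial subring. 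Combining the two facts, every $f$ in the intersection satisfies $\pi(f)=0$ and therefore $f=0$. This is a contradiction, so $y=0$. The argument for $Q_+$ is identical.

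I expect the main obstacle to be the bookkeeping of the two symbol passages rather than any computation. One has to check that the leading symbol of an element of $\Yi$ really lands in $\mathrm U(\g[z]^{\check\omega})$: this is the identification $\rho^{-1}(\gr\Yi)=\mathrm U(\g[z]^{\check\omega})$ coming from the injectivity argument, including the scalar shift in type $\mathsf A_{2n}$. One also has to check that both filtrations split along the $Q$-grading, so that taking symbols preserves membership in the $Q_{j,+}$-component.
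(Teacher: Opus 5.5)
Your proof is correct and complete. The paper gives no argument beyond saying the proof is parallel to \cite[Lemma~5.4]{Lu26min}; your reduction is the natural associated-graded argument behind that citation. It passes first to $\mathrm U(\g[z]^{\check\omega})\cap\mathrm U(\g[z])_{Q_{j,+}}$ via the filtration of $\Y$, then to $S(\g[z]^{\check\omega})\cap S(\g[z])_{Q_{j,+}}$ via PBW, and finishes with the projection killing $\mathfrak n^+[z]$, which is injective on $S(\g[z]^{\check\omega})$ because $\g[z]^{\check\omega}\cap\mathfrak n^+[z]=0$.

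The one step you should state explicitly is strictness. Injectivity of $\gr\varphi$ gives $\varphi(F_k\Yi)=\varphi(\Yi)\cap F_k\Y$, and this is what guarantees that the symbol of $y$ lies in $\mathrm{im}\,\gr\varphi=\rho(\mathrm U(\g[z]^{\check\omega}))$.
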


\begin{proof}
The proof is parallel to 
\cite[Lemma~5.4]{Lu26min}.

\end{proof}

Fix $i\in\I$ and abbreviate
\[\varsigma_i(u):=1+\tfrac{1}{4u}\wp_i,
\qquad
\mathcal H_i(u):=\xi_i(u)\xi_{\tau i}(-u).
\]
By \eqref{eq:h-est}, there is a series
$\theta_i(u)\in\Y_{Q_+}[\![u^{-1}]\!]$ such that
\begin{equation}\label{eq:hi-theta-qs}
h_i(u)=\varsigma_i(u)\mathcal H_i(u)+\theta_i(u).
\end{equation}
For $j\in\I$, Lemma~\ref{lem:copro} gives
\[
\Delta(\xi_j(v))
=\xi_j(v)\otimes\xi_j(v)+\mc Z_j(v),
\qquad
\mc Z_j(v)\in
\Y_{Q_-}\otimes\Y_{Q_+}[\![v^{-1}]\!].
\]
Consequently,
\begin{equation}\label{eq:copro-Ai-qs}
\Delta(\mc H_i(u))=\mc H_i(u)\otimes \mc H_i(u)+\mc E_i(u),
\end{equation}
where
\begin{align*}
\mc E_i(u)=
\bigl(\xi_i(u)\otimes\xi_i(u)\bigr)\mc Z_{\tau i}(-u)
&+\mc Z_i(u)\bigl(\xi_{\tau i}(-u)\otimes\xi_{\tau i}(-u)\bigr)\\
&+\mc Z_i(u)\mc Z_{\tau i}(-u)
\in\Y_{Q_-}\otimes\Y_{Q_+}[\![u^{-1}]\!].
\end{align*}
Define $\Theta_i(u):=
\Delta(h_i(u))-h_i(u)\otimes \mc H_i(u)$.
Equations \eqref{eq:hi-theta-qs} and \eqref{eq:copro-Ai-qs}
imply that
\begin{equation}\label{eq:Theta-qs}
\Theta_i(u)
=\varsigma_i(u)\mc E_i(u)
+\Delta(\theta_i(u))-\theta_i(u)\otimes \mc H_i(u).
\end{equation}
Since $\Yi$ is a right coideal subalgebra of $\Y$, we also have
\begin{equation}\label{eq:Theta-coideal-qs}
\Theta_i(u)\in\Yi\otimes\Y[\![u^{-1}]\!].
\end{equation}

We now argue degree by degree. For $\beta\in Q$, let
$\operatorname{pr}_\beta:\Y\to\Y_\beta$ be the projection onto the subspace of 
degree $\beta$, and set
\[
\Theta_{i,\beta}(u)
:=(\mathrm{id}\otimes\operatorname{pr}_\beta)\Theta_i(u).
\]
Suppose that $\beta\notin Q_+$. The first term on the right-hand side
of \eqref{eq:Theta-qs} has zero projection to second tensor-factor
degree $\beta$, because the second tensor factor of $\mc E_i(u)$ has
degree in $Q_+$.

Recall that the coproduct preserves the total $Q$-grading. Hence, if
a homogeneous term of $\theta_i(u)$ has root degree $\gamma\in Q_+$,
then every contribution of
$\Delta(\theta_i(u))-\theta_i(u)\otimes \mc H_i(u)$ whose second tensor
factor has degree $\beta$ has first tensor-factor degree
$\gamma-\beta$. If $\beta=0$, it follows from
\eqref{eq:Theta-coideal-qs} that $\Theta_{i,0}(u)
\in
\bigl(\Yi\cap\Y_{Q_+}\bigr)
\otimes\Y_0[\![u^{-1}]\!]
=0$
by Lemma~\ref{lem:cap=0}.

It remains to consider $\beta\neq0$. Since
$\beta\notin Q_+$, we can write
$\beta=\sum_{k\in\I}m_k\alpha_k$ with $m_j<0$ for some $j\in\I$.
For every $\gamma\in Q_+$, the coefficient of $\alpha_j$ in
$\gamma-\beta$ is strictly positive. Therefore, again by Lemma~\ref{lem:cap=0}, we have
\[
\Theta_{i,\beta}(u)
\in
\bigl(\Yi\cap\Y_{Q_{j,+}}\bigr)
\otimes\Y_\beta[\![u^{-1}]\!]
=0.
\]
We have thus shown that
$\Theta_{i,\beta}(u)=0$ for every $\beta\notin Q_+$. Consequently, $\Theta_i(u)
\in\Yi\otimes\Y_{Q_+}[\![u^{-1}]\!]$,
which is precisely \eqref{eq:h-co-est}.

\subsection{The case of type $\mathsf A_{2n}$}
In this subsection, we complete the proof of Theorem \ref{thm:hb} for the special case of type $\mathsf A_{2n}$.
\subsubsection{}
We start with establishing \eqref{eq:b-est2}. The case for $i\ne n,n+1$ is essentially the same as \S\ref{ssec:b-est}. We only prove the case $i=n$ as the case $i=n+1$ is similar.

As in \S\ref{ssec:b-est}, we need to prove
\begin{align*}
&\bigl[\tl h_{n,1},
\xi_{n+1}(-u)x_n^+(u)+x_{n+1}^-(-u)\bigr]\\
&\qquad\equiv
u\bigl(\xi_{n+1}(-u)x_n^+(u)+x_{n+1}^-(-u)\bigr)
-\bigl(x_n^+-x_{n+1}^-\bigr).
\end{align*}
Indeed,
\begin{align*}
\mathrm{LHS}
&\equiv
\bigl[
\tl\xi_{n,1}+\tl\xi_{n+1,1}
+\tfrac12\{x_n^+,x_{n+1}^+\}
-\tfrac12[x_n^+,x_{n+1}^+],
\xi_{n+1}(-u)x_n^+(u)+x_{n+1}^-(-u)
\bigr]\\
&\equiv
\bigl[
\tl\xi_{n,1}+\tl\xi_{n+1,1}+x_{n+1}^+x_n^+,
\xi_{n+1}(-u)x_n^+(u)+x_{n+1}^-(-u)
\bigr]\\
&\equiv
\xi_{n+1}(-u)\bigl(ux_n^+(u)-x_n^+\bigr)
-\bigl(-ux_{n+1}^-(-u)-x_{n+1}^-\bigr)
+\bigl(\xi_{n+1}(-u)-1\bigr)x_n^+\\
&\equiv
u\bigl(\xi_{n+1}(-u)x_n^+(u)+x_{n+1}^-(-u)\bigr)
-\bigl(x_n^+-x_{n+1}^-\bigr)
\equiv\mathrm{RHS}.
\end{align*}

\subsubsection{}Then we prove \eqref{eq:h-est} for type $\mathsf A_{2n}$ with the same strategy as in \S\ref{ssec:h-est}. Note that we have
\beq\label{eq:todo2}
\begin{split}
2u h_{n+1}(u)
={}&
2u+2h_{n+1,0}\\
&+\bigl[b_{n,0},u b_{n+1}(u)-b_{n+1,0}\bigr]
-\bigl[b_{n,1},b_{n+1}(u)\bigr]
-\tfrac{1}{2}\bigl\{b_{n,0},b_{n+1}(u)\bigr\}.
\end{split}
\eeq
Then we estimate the terms in the second line of \eqref{eq:todo2}. We have
\begin{align*}
&\bigl[b_{n,0},u b_{n+1}(u)-b_{n+1,0}\bigr]\\
\equiv{}&
\bigl[
 x_n^+-x_{n+1}^-,
 u x_{n+1}^+(u)\xi_n(-u)+u x_n^-(-u)-x_{n+1}^++x_n^-
\bigr]\\
\equiv{}&
u\bigl(\xi_n(-u)-1\bigr)+\xi_n
+u\bigl(\xi_{n+1}(u)-1\bigr)\xi_n(-u)
-u\bigl[x_{n+1}^-,x_n^-(-u)\bigr]\\
&\hskip3.03cm
-\xi_{n+1}-\bigl[x_{n+1}^-,x_n^-\bigr]
-u x_{n+1}^+(u)\bigl[x_{n+1}^-,\xi_n(-u)\bigr],
\end{align*}
\begin{align*}
&-\bigl[b_{n,1},b_{n+1}(u)\bigr]\\
\equiv{}&
-\bigl[
 x_{n,1}^+ +x_{n+1,1}^- -\xi_{n+1}x_n^+,
 x_{n+1}^+(u)\xi_n(-u)+x_n^-(-u)
\bigr]\\
\equiv{}&
-\bigl(-u\xi_n(-u)+u-\xi_n\bigr)
+\bigl(u\xi_{n+1}(u)-u-\xi_{n+1}\bigr)\xi_n(-u)\\
&\hskip2.03cm
-x_{n+1}^+(u)\bigl[x_{n+1,1}^-,\xi_n(-u)\bigr]
-\bigl[x_{n+1,1}^-,x_n^-(-u)\bigr]\\
&\hskip3.03cm
+\xi_{n+1}\bigl(\xi_n(-u)-1\bigr)
+\bigl[\xi_{n+1},x_n^-(-u)\bigr]x_n^+,
\end{align*}
and
\begin{align*}
&-\tfrac12\bigl\{b_{n,0},b_{n+1}(u)\bigr\}\\
\equiv{}&-\tfrac12
\bigl\{
 x_n^+-x_{n+1}^-,
 x_{n+1}^+(u)\xi_n(-u)+x_n^-(-u)
\bigr\}\\
\equiv{}&
-\tfrac12\bigl\{x_n^+,x_n^-(-u)\bigr\}
+\tfrac12\bigl\{x_{n+1}^-,x_{n+1}^+(u)\xi_n(-u)\bigr\}
+\tfrac12\bigl\{x_{n+1}^-,x_n^-(-u)\bigr\}.
\end{align*}
Note also that $2u+2h_{n+1,0}=2u+2\xi_{n+1}-2\xi_n-\tfrac12$. Hence, applying \eqref{helper32}, the RHS of \eqref{eq:todo2} is rewritten as
\begin{align*}
&2u\xi_{n+1}(u)\xi_n(-u)
-u x_{n+1}^+(u)\bigl[x_{n+1}^-,\xi_n(-u)\bigr]
-x_{n+1}^+(u)\bigl[x_{n+1,1}^-,\xi_n(-u)\bigr]\\
&\qquad
+\bigl[\xi_{n+1},x_n^-(-u)\bigr]x_n^+
-\tfrac12 \{x_n^+,x_n^-(-u) \}
+\tfrac12 \{x_{n+1}^-,x_{n+1}^+(u)\xi_n(-u) \}
-\tfrac12,
\end{align*}
which transforms to (with the help of \eqref{helper31})
\begin{align*}
&2u\xi_{n+1}(u)\xi_n(-u)
-\tfrac12x_{n+1}^+(u)\bigl\{\xi_n(-u),x_{n+1}^-\bigr\}
+\tfrac12\bigl[x_n^-(-u),x_n^+\bigr]\\
&\qquad
+\tfrac12x_{n+1}^-x_{n+1}^+(u)\xi_n(-u)
+\tfrac12x_{n+1}^+(u)\xi_n(-u)x_{n+1}^-
-\tfrac12\\
={}&
2u\xi_{n+1}(u)\xi_n(-u)
-\tfrac12x_{n+1}^+(u)\xi_n(-u)x_{n+1}^-
-\tfrac12x_{n+1}^+(u)x_{n+1}^-\xi_n(-u)\\
&
-\tfrac12\bigl(\xi_n(-u)-1\bigr)
+\tfrac12x_{n+1}^-x_{n+1}^+(u)\xi_n(-u)
+\tfrac12x_{n+1}^+(u)\xi_n(-u)x_{n+1}^-
-\tfrac12\\
={}&
2u\xi_{n+1}(u)\xi_n(-u)
-\tfrac12\bigl[x_{n+1}^+(u),x_{n+1}^-\bigr]\xi_n(-u)
-\tfrac12\xi_n(-u)\\
={}&
2u\xi_{n+1}(u)\xi_n(-u)
-\tfrac12\bigl(\xi_{n+1}(u)-1\bigr)\xi_n(-u)
-\tfrac12\xi_n(-u)\\
={}&\left(2u-\tfrac12\right)\xi_{n+1}(u)\xi_n(-u),
\end{align*}
verifying \eqref{eq:h-est} for $i=n+1$. The remaining cases are similar and hence we omit the details.

\subsubsection{}
Note that by \eqref{coprohi1}, we find that the formula for $\Delta(\tl h_{i,1})$ in the case of type $\mathsf A_{2n}$ only differs from the formula for $\Delta(\tl h_{i,1})$ in other types by a scalar. Moreover, the base case $\Delta(b_{i,0})$ is precisely the same for all $\mathsf{ADE}$ types. Therefore, by exactly the same calculation in \S\ref{sssec:copro-b}, we conclude that the coproduct estimate for $\Delta(b_i(u))$ has the same form as in the other cases by induction.

The proof of \eqref{eq:h-co-est} in the case of type $\mathsf A_{2n}$ is parallel to the other cases as discussed in \S\ref{sssec:copro-h}. Now the proof of Theorem \ref{thm:hb} is complete.

\appendix

\section{Proof of Proposition \ref{prop:red}, Part II}\label{sec:pf-prop2}
Throughout this section, let $\g=\mathfrak{sl}_{2n+1}$, $\I=\{1,\ldots,2n\}$, $\tau i=2n+1-i$. Thus $\tau n=n+1$, $c_{n,n+1}=c_{n+1,n}=-1$, whereas $c_{i,\tau i}=0$
for $i\notin\{n,n+1\}$.
Recall from \eqref{eq:gamma-def} that
\begin{equation*}
\gamma_{ii}=c_{ii}+c_{\tau i,i}
=
\begin{cases}
1,&\text{ if }i=n,n+1,\\
2,&\text{ if }i\notin\{n,n+1\}.
\end{cases}
\end{equation*}
As before, define the higher modes recursively by
\begin{align}
\bb_{i,r+1}
&:=
\gamma_{ii}^{-1}[\bh_{i,1},\bb_{i,r}],
\notag
\\
\bh_{i,r+1}&:=[\bb_{\tau i,0},\bb_{i,r+1}], &\text{ if }i\ne n,n+1,\label{eq:A2n-rec-h-sim}\\
2\bh_{i,r+1}
&:=
[\bb_{\tau i,0},\bb_{i,r+1}]
-
[\bb_{\tau i,1},\bb_{i,r}], &\text{ if }i= n,n+1,
\label{eq:A2n-rec-h}
\end{align}
for $i\in\I$ and $r\in\bN$.  Notice that
\eqref{eq:A2n-rec-h}, rather than the simplified formula \eqref{eq:A2n-rec-h-sim} valid when
$c_{i,\tau i}=0$, has to be used on the central orbit
$\{n,n+1\}$. 

For these recursively defined modes, relation \eqref{hi1bjr}
remains valid. Moreover, the proof of Lemma~\ref{lem:bb} applies
to vertices in distinct $\tau$-orbits. We shall use
Lemma~\ref{lem:h2bj} only when its first index belongs to a
noncentral orbit, so that $c_{i,\tau i}=0$.

The proof naturally separates into two parts.  We first record the
rank-two reconstruction.  We then show that, for $n\gge2$, its two
additional relations are forced by the neighboring noncentral
$\tau$-orbit.

\subsection{The rank-two reconstruction}\label{ssec:A2-reduction}
In this subsection, we consider the case $n=1$. In this case, we shall prove that the degree-zero and degree-one relations, the
finite type Serre relations, and
\eqref{eq:A2-extra-Serre}--\eqref{eq:A2-extra-Cartan} imply all the
relations in Lemma~\ref{lem:full-gr}.

Define recursively
\begin{align}
\bb_{1,r+1}
&=
[\bh_{1,1},\bb_{1,r}],
\qquad 
\bb_{2,r+1}=
[\bh_{1,1},\bb_{2,r}],
\label{eq:A2-rec-b}
\\
2\bh_{1,r+1}
&=
[\bb_{2,0},\bb_{1,r+1}]
-
[\bb_{2,1},\bb_{1,r}],
\label{eq:A2-rec-h1}
\\
2\bh_{2,r+1}
&=
[\bb_{1,0},\bb_{2,r+1}]
-
[\bb_{1,1},\bb_{2,r}]
\label{eq:A2-rec-h2}
\end{align}
for $r\in\bN$.  For $r=0$, the last two formulas agree with \begin{equation}\label{eq:A2-mixed-low}
[\bb_{1,1},\bb_{2,0}]
-
[\bb_{1,0},\bb_{2,1}]
=
-2\bh_{1,1},
\end{equation}
and $\bh_{2,1}=\bh_{1,1}$. Note that \eqref{hi1bjr} also works for this case.

Set
\begin{align*}
\mathsf S_+
:=
\big[\bb_{1,1},[\bb_{1,1},\bb_{2,0}]\big],
\qquad 
\mathsf S_-
:=
\big[\bb_{2,1},[\bb_{2,1},\bb_{1,0}]\big].
\end{align*}
Then \eqref{eq:A2-extra-Serre} says $\mathsf S_++\mathsf S_-=0$. Applying $\ad(\bh_{1,0})$ to it, we obtain $3\mathsf S_+-3\mathsf S_-=0$. Thus, we have 
\begin{equation}\label{eq:A2-two-shifted-Serre}
\big[\bb_{1,1},[\bb_{1,1},\bb_{2,0}]\big]=0,
\qquad
\big[\bb_{2,1},[\bb_{2,1},\bb_{1,0}]\big]=0.
\end{equation}
We shall use these two relations repeatedly below.

\subsubsection*{Step 1. The degree-two Cartan mode}

Applying $\ad(\bh_{1,1})$ to
\eqref{eq:A2-mixed-low} and using
\eqref{eq:A2-rec-b}, we obtain
\beq\label{eq:A2-b10b20}
[\bb_{1,2},\bb_{2,0}]
=
[\bb_{1,0},\bb_{2,2}]. 
\eeq
Using \eqref{eq:A2-rec-h1} with $r=1$, we find
\begin{align}
2\bh_{1,2}
=
[\bb_{2,0},\bb_{1,2}]
-
[\bb_{2,1},\bb_{1,1}]=
[\bb_{1,1},\bb_{2,1}]
-
[\bb_{1,0},\bb_{2,2}].
\label{eq:A2-h12}
\end{align}
Similarly, \eqref{eq:A2-rec-h2} gives
\begin{align*}
2\bh_{2,2}
=
[\bb_{1,0},\bb_{2,2}]
-
[\bb_{1,1},\bb_{2,1}]=
-2\bh_{1,2}.
\end{align*}
Applying $\mathrm{ad}(\bh_{1,1})$ to \eqref{eq:A2-h12} and using \eqref{eq:A2-extra-Cartan}, we have
\begin{equation}\label{eq:A2-h11h12}
[\bh_{1,1},\bh_{1,2}]
=
0.
\end{equation}

We next compute the action of $\bh_{1,2}$ on the degree-zero
generators. 

The finite type Serre relation is
\begin{equation}\label{eq:A2-Serre-10}
[\bb_{1,0},[\bb_{1,0},\bb_{2,0}]]
=
4\bb_{1,0}.
\end{equation}
Applying $\ad(\bh_{1,1})$ to
\eqref{eq:A2-Serre-10}, we obtain
\begin{align*}
4\bb_{1,1}
=
[\bb_{1,1},[\bb_{1,0},\bb_{2,0}]]
+
[\bb_{1,0},[\bb_{1,1},\bb_{2,0}]]
+
[\bb_{1,0},[\bb_{1,0},\bb_{2,1}]].
\end{align*}
Since $[\bb_{1,0},\bb_{1,1}]=0$ by \eqref{tcbb}, the Jacobi identity gives
\begin{equation}\label{eq:A2-helper-A}
2[\bb_{1,0},[\bb_{1,1},\bb_{2,0}]]
+
[\bb_{1,0},[\bb_{1,0},\bb_{2,1}]]
=
4\bb_{1,1}.
\end{equation}

On the other hand, applying $\ad(\bb_{1,0})$ to
\eqref{eq:A2-mixed-low}, we obtain
\begin{align*}
[\bb_{1,0},[\bb_{1,1},\bb_{2,0}]]
-
[\bb_{1,0},[\bb_{1,0},\bb_{2,1}]]
=
-2[\bb_{1,0},\bh_{1,1}]
=
2\bb_{1,1}.
\end{align*}
Combining this with \eqref{eq:A2-helper-A}, we obtain
\begin{align}\label{eq:A2-helper-B}
[\bb_{1,0},[\bb_{1,1},\bb_{2,0}]]
=
2\bb_{1,1},
\qquad 
[\bb_{1,0},[\bb_{1,0},\bb_{2,1}]]
=
0.
\end{align}

We now apply $\ad(\bh_{1,1})$ to the second relation in 
\eqref{eq:A2-helper-B}.  Using
\eqref{eq:A2-two-shifted-Serre}, we find
\begin{align*}
0
={}&
[\bb_{1,1},[\bb_{1,0},\bb_{2,1}]]
+
[\bb_{1,0},[\bb_{1,1},\bb_{2,1}]]
+
[\bb_{1,0},[\bb_{1,0},\bb_{2,2}]]
\\
={}&
-2\bb_{1,2}
-2\bb_{1,2}
+
[\bb_{1,0},[\bb_{1,0},\bb_{2,2}]].
\end{align*}
Here we used
\begin{align}\label{helper865}
[\bb_{1,0},[\bb_{1,1},\bb_{2,1}]]=[\bb_{1,1},[\bb_{1,0},\bb_{2,1}]]
=
[\bb_{1,1},
[\bb_{1,1},\bb_{2,0}]+2\bh_{1,1}]
=
-2\bb_{1,2}.
\end{align}
Consequently,
\begin{equation}\label{eq:A2-b10b10b22}
[\bb_{1,0},[\bb_{1,0},\bb_{2,2}]]
=
4\bb_{1,2}.
\end{equation}

Using \eqref{eq:A2-h12}, we now compute
\begin{align*}
2[\bh_{1,2},\bb_{1,0}]
={}&
[[\bb_{1,1},\bb_{2,1}],\bb_{1,0}]
-
[[\bb_{1,0},\bb_{2,2}],\bb_{1,0}]
\\
={}&
-[\bb_{1,1},[\bb_{1,0},\bb_{2,1}]]
+
[\bb_{1,0},[\bb_{1,0},\bb_{2,2}]]
\\
={}&
2\bb_{1,2}+4\bb_{1,2}
=
6\bb_{1,2}.
\end{align*}
Thus
\begin{equation}\label{eq:A2-h12b10}
[\bh_{1,2},\bb_{1,0}]
=
3\bb_{1,2}.
\end{equation}

Similarly, we have 
\begin{equation}\label{eq:A2-h22b20}
[\bh_{2,2},\bb_{2,0}]
=
3\bb_{2,2},\qquad [\bh_{1,2},\bb_{2,0}]
=
-3\bb_{2,2}.
\end{equation}
Combining \eqref{eq:A2-rec-b}  with
\eqref{eq:A2-h11h12},
\eqref{eq:A2-h12b10},
and
\eqref{eq:A2-h22b20}, we obtain, by induction on $r$,
\begin{align}
[\bh_{1,2},\bb_{1,r}]
=
3\bb_{1,r+2},
\label{eq:A2-h12b1r}
\qquad 
[\bh_{1,2},\bb_{2,r}]
=
-3\bb_{2,r+2}.
\end{align}

\subsubsection*{Step 2. Commutativity of $\bb_{i,r},\bb_{i,s}$}
We prove
\begin{equation}\label{eq:A2-same-sign}
[\bb_{1,r},\bb_{1,s}]
=
[\bb_{2,r},\bb_{2,s}]
=
0
\qquad
(r,s\in\bN).
\end{equation}
It suffices to consider the first family as the second one is similar.  We use induction on
$\ell=r+s$.  The cases $\ell=0,1$ are clear, while
\begin{equation*}
[\bb_{1,0},\bb_{1,2}]
=
[\bb_{1,0},[\bh_{1,1},\bb_{1,1}]]
=
[\bh_{1,1},[\bb_{1,0},\bb_{1,1}]]
=0,
\end{equation*}
so the case $\ell=2$ also holds.

Suppose that $[\bb_{1,r},\bb_{1,\ell-1-r}]=0$ for $0\lle r\lle \ell-1$. Applying $\ad(\bh_{1,1})$ gives
\begin{equation}\label{eq:A2-same-chain}
[\bb_{1,r+1},\bb_{1,\ell-1-r}]
+
[\bb_{1,r},\bb_{1,\ell-r}]
=
0.
\end{equation}
Set $\mathsf V_r:=
[\bb_{1,r},\bb_{1,\ell-r}]$, for $0\lle r\lle \ell$. Then \eqref{eq:A2-same-chain} gives
$\mathsf V_{r+1}=-\mathsf V_r$ and hence $
\mathsf V_r=(-1)^r\mathsf V_0$.

If $\ell$ is 
even, then we have $\mathsf V_\ell=(-1)^\ell\mathsf V_0=\mathsf V_0$. But skew-symmetry gives $\mathsf V_\ell=-\mathsf V_0$. Thus $\mathsf V_0=0$.

Suppose now that $\ell $ is odd.  Then $\ell\gge3$.  By the induction
hypothesis, $[\bb_{1,0},\bb_{1,\ell-2}]=0$. Applying $\ad(\bh_{1,2})$ and using
\eqref{eq:A2-h12b1r}, we obtain
\begin{align*}
0
=
3[\bb_{1,2},\bb_{1,\ell-2}]
+
3[\bb_{1,0},\bb_{1,\ell}]
=
3(\mathsf V_2+\mathsf V_0)
=
6\mathsf V_0.
\end{align*}
Hence again $\mathsf V_0=0$. This proves $[\bb_{1,r},\bb_{1,s}]=0$ for all $r,s\in\bN$.

\subsubsection*{Step 3. Boundary Cartan action} Set
\begin{equation*}
\mathfrak q:=[\bb_{1,0},\bb_{2,0}].
\end{equation*}
The finite type Serre relations give $[\bb_{1,0},\mathfrak q]=4\bb_{1,0}$ and 
$[\bb_{2,0},\mathfrak q]=-4\bb_{2,0}$.
By \eqref{eq:A2-helper-B}, we have $[\bb_{1,1},\mathfrak q]
=
2\bb_{1,1}$ and similarly $[\bb_{2,1},\mathfrak q]
=
-2\bb_{2,1}$
by interchanging the indices. Moreover, by \eqref{eq:A2-b10b20} and \eqref{eq:A2-h12b1r},
\begin{align*}
[\bh_{1,2},\mathfrak q]
&=
[[\bh_{1,2},\bb_{1,0}],\bb_{2,0}]
+
[\bb_{1,0},[\bh_{1,2},\bb_{2,0}]]
\\
&=
3[\bb_{1,2},\bb_{2,0}]
-
3[\bb_{1,0},\bb_{2,2}]
=
0.
\end{align*}
Therefore, using
\eqref{eq:A2-h12b1r}, we obtain by induction on $r$ that 
\begin{align}
[\bb_{1,r},\mathfrak q]
=
\bigl(3+(-1)^r\bigr)\bb_{1,r},
\label{eq:A2-q-b1}
\qquad 
[\bb_{2,r},\mathfrak q]
=
-\bigl(3+(-1)^r\bigr)\bb_{2,r}
\end{align}
for all $r\in\bN$. 

We now compute the action of an arbitrary Cartan mode on
$\bb_{1,0}$. From \eqref{eq:A2-mixed-low} and
$[\bh_{1,1},\mathfrak q]
=
[\bb_{1,1},\bb_{2,0}]
+
[\bb_{1,0},\bb_{2,1}]$,
we obtain
\begin{equation}\label{eq:A2-b10b21}
[\bb_{1,0},\bb_{2,1}]
=
\bh_{1,1}
+
\tfrac12[\bh_{1,1},\mathfrak q].
\end{equation}

For $r\gge1$, using \eqref{eq:A2-rec-h1},
\eqref{eq:A2-same-sign}, and
\eqref{eq:A2-b10b21}, we compute
\begin{align*}
2[\bh_{1,r},\bb_{1,0}]
={}&
[[\bb_{2,0},\bb_{1,r}],\bb_{1,0}]
-
[[\bb_{2,1},\bb_{1,r-1}],\bb_{1,0}]
\\
={}&
[\bb_{1,r},\mathfrak q]
-
[\bb_{1,r-1},[\bb_{1,0},\bb_{2,1}]]
\\
={}&
(3+(-1)^r)\bb_{1,r}
-
[\bb_{1,r-1},\bh_{1,1}]
-\tfrac12
[\bb_{1,r-1},[\bh_{1,1},\mathfrak q]]
\\
={}&
(3+(-1)^r)\bb_{1,r}
+\bb_{1,r}
-\tfrac12
\left(
[[\bb_{1,r-1},\bh_{1,1}],\mathfrak q]
+
[\bh_{1,1},[\bb_{1,r-1},\mathfrak q]]
\right)
\\
={}&
(3+(-1)^r)\bb_{1,r}
+\bb_{1,r}
-\tfrac12
\left(
-(3+(-1)^r)\bb_{1,r}
+
(3+(-1)^{r-1})\bb_{1,r}
\right)
\\
={}&
2\bigl(2+(-1)^r\bigr)\bb_{1,r}.
\end{align*}
Hence we have
\begin{equation}\label{eq:A2-hm-b10}
[\bh_{1,r},\bb_{1,0}]
=
\bigl(2+(-1)^r\bigr)\bb_{1,r}.
\end{equation}
Repeating the same calculation with the indices $1$ and $2$
interchanged, using \eqref{eq:A2-rec-h2} and
\eqref{eq:A2-q-b1}, gives
\begin{equation*}
[\bh_{2,r},\bb_{2,0}]
=
\bigl(2+(-1)^r\bigr)\bb_{2,r}.
\end{equation*}

\subsubsection*{Step 4. Higher shifted Serre relations and the full Cartan action}
For $r,s,t\in\bN$, set
\begin{equation*}
\Lambda(r,s\,|\,t)
:=
(-1)^r+(-1)^s+2(-1)^t.
\end{equation*}
We claim that
\begin{align}
[\bb_{1,r},[\bb_{1,s},\bb_{2,t}]]
&=
\Lambda(r,s\,|\,t)\bb_{1,r+s+t},
\label{eq:A2-higher-Serre1}
\\
[\bb_{2,r},[\bb_{2,s},\bb_{1,t}]]
&=
\Lambda(r,s\,|\,t)\bb_{2,r+s+t}.
\label{eq:A2-higher-Serre2}
\end{align}

We only prove \eqref{eq:A2-higher-Serre1} as the proof of \eqref{eq:A2-higher-Serre2} is very similar.  Define
\begin{equation}\label{eq:A2-P}
\mathsf P(r,s\,|\,t)
:=
[\bb_{1,r},[\bb_{1,s},\bb_{2,t}]]
-
\Lambda(r,s\,|\,t)\bb_{1,r+s+t}.
\end{equation}
By \eqref{eq:A2-same-sign},
\begin{equation}\label{eq:A2-P-sym}
\mathsf P(r,s\,|\,t)=\mathsf P(s,r\,|\,t).
\end{equation}

The relations in total degree $0,1,2$ follow from
\eqref{eq:A2-two-shifted-Serre}, \eqref{eq:A2-b10b20},
\eqref{eq:A2-Serre-10},
\eqref{eq:A2-helper-B}, \eqref{helper865}
and \eqref{eq:A2-b10b10b22}: 
\[
[\bb_{1,0},[\bb_{1,2},\bb_{2,0}]]=[\bb_{1,0},[\bb_{1,0},\bb_{2,2}]]=4\bb_{1,2}.
\]
Assume inductively that $\mathsf P(r,s\,|\,t)=0$ if $r+s+t<\ell$. Applying $\ad(\bh_{1,1})$ to a relation of total degree $\ell-1$ gives
\begin{equation}\label{eq:A2-P-shift1}
\mathsf P(r+1,s\,|\,t)
+
\mathsf P(r,s+1\,|\,t)
+
\mathsf P(r,s\,|\,t+1)
=
0,
\end{equation}
for $r+s+t=\ell-1$. Applying $\ad(\bh_{1,2})$ to a relation of total degree $\ell-2$ and
using
\eqref{eq:A2-h12b1r}, we obtain
\begin{equation}\label{eq:A2-P-shift2}
\mathsf P(r+2,s\,|\,t)
+
\mathsf P(r,s+2\,|\,t)
-
\mathsf P(r,s\,|\,t+2)
=
0,
\end{equation}
for $r+s+t=\ell-2$.

We first show that
\begin{equation}\label{eq:A2-P-interior}
\mathsf P(r,s\,|\,t)=0
\qquad
\text{if }r,s\gge1,\quad r+s+t=\ell.
\end{equation}
Indeed, applying \eqref{eq:A2-P-shift1} successively gives
\begin{align*}
\mathsf P(r,s\,|\,t+2)
={}&
-\mathsf P(r+1,s\,|\,t+1)
-\mathsf P(r,s+1\,|\,t+1)
\\
={}&
\mathsf P(r+2,s\,|\,t)
+
2\mathsf P(r+1,s+1\,|\,t)
+
\mathsf P(r,s+2\,|\,t).
\end{align*}
Comparing this with \eqref{eq:A2-P-shift2}, we obtain $2\mathsf P(r+1,s+1\,|\,t)=0$,
which proves \eqref{eq:A2-P-interior}.

It remains to consider the boundary terms.  If $s\gge1$, then
\eqref{eq:A2-P-shift1} and
\eqref{eq:A2-P-interior} imply
\begin{equation}\label{eq:A2-P-boundary-rec}
\mathsf P(0,s+1\,|\,t)
=
-\mathsf P(0,s\,|\,t+1).
\end{equation}

We next use \eqref{eq:A2-hm-b10}.  From
\eqref{eq:A2-rec-h1} and \eqref{eq:A2-same-sign}, we have
\begin{align*}
2[\bh_{1,\ell},\bb_{1,0}]
={}&
[\bb_{1,\ell},[\bb_{1,0},\bb_{2,0}]]
-
[\bb_{1,\ell-1},[\bb_{1,0},\bb_{2,1}]].
\end{align*}
Substituting \eqref{eq:A2-P} into this equality and comparing the
coefficients with \eqref{eq:A2-hm-b10}, we obtain
\begin{equation*}
\mathsf P(\ell,0\,|\,0)
=
\mathsf P(\ell-1,0\,|\,1).
\end{equation*}
Using \eqref{eq:A2-P-sym} and \eqref{eq:A2-P-boundary-rec}, this becomes
$\mathsf P(0,\ell-1\,|\,1)
=\mathsf P(0,\ell\,|\,0)=-\mathsf P(0,\ell-1\,|\,1)
$. Therefore, we obtain $\mathsf P(0,\ell\,|\,0)
=
\mathsf P(0,\ell-1\,|\,1)
=
0$.
Repeated use of \eqref{eq:A2-P-boundary-rec} then gives
$\mathsf P(0,s\,|\,t)=0$ for
$s\gge1$, $s+t=\ell$.
Finally, taking $r=s=0$ in
\eqref{eq:A2-P-shift1}, we obtain
\begin{equation*}
2\mathsf P(0,1\,|\,\ell-1)
+
\mathsf P(0,0\,|\,\ell)
=
0.
\end{equation*}
Thus $\mathsf P(0,0\,|\,\ell)=0$.  This completes the induction and proves
\eqref{eq:A2-higher-Serre1}.

Using \eqref{eq:A2-rec-h1} and
\eqref{eq:A2-same-sign}, we have, for $r,s\in\bN$ with $r\gge1$,
\begin{align*}
2[\bh_{1,r},\bb_{1,s}]
={}&
[\bb_{1,r},[\bb_{1,s},\bb_{2,0}]]
-
[\bb_{1,r-1},[\bb_{1,s},\bb_{2,1}]].
\end{align*}
Applying \eqref{eq:A2-higher-Serre1}, we obtain
\begin{equation}\label{eq:A2-full-hb11}
[\bh_{1,r},\bb_{1,s}]
=
\bigl(2+(-1)^r\bigr)\bb_{1,r+s}.
\end{equation}
Similarly, we have
\begin{equation}\label{eq:A2-full-hb12}
[\bh_{1,r},\bb_{2,s}]
=
-\bigl(1+2(-1)^r\bigr)\bb_{2,r+s}.
\end{equation}

\subsubsection*{Step 5. The remaining relations}

Set $\mathscr A(r,s)
:=
[\bb_{1,r+1},\bb_{2,s}]-[\bb_{1,r},\bb_{2,s+1}]$.
We prove
\begin{equation}\label{eq:A2-full-mixed}
\mathscr A(r,s)
=
-2(-1)^s\bh_{1,r+s+1}
\qquad
(r,s\in\bN).
\end{equation}

The case $r=s=0$ is
\eqref{eq:A2-mixed-low}.  For total degree one,
applying $\ad(\bh_{1,1})$ to
\eqref{eq:A2-mixed-low} gives
$\mathscr A(1,0)+\mathscr A(0,1)=0$. On the other hand, by \eqref{eq:A2-rec-h1}, $\mathscr A(1,0)=-2\bh_{1,2}$.
Hence $\mathscr A(0,1)=2\bh_{1,2}$,
so \eqref{eq:A2-full-mixed} holds in total degree one.

Assume now that \eqref{eq:A2-full-mixed} has been proved in all total
degrees strictly smaller than $\ell$. 
Applying $\ad(\bh_{1,1})$ to
\eqref{eq:A2-full-mixed} in total degree $\ell-1$, we have
\begin{equation*}
\mathscr A(r+1,s)+\mathscr A(r,s+1)
=
-2(-1)^s[\bh_{1,1},\bh_{1,\ell}],
\qquad
r+s=\ell-1.
\end{equation*}
By \eqref{eq:A2-rec-h1}, $\mathscr A(\ell,0)=-2\bh_{1,\ell+1}$. Thus we obtain recursively
\begin{equation}\label{eq:A2-A-general}
\mathscr A(\ell-s,s)
=
-2(-1)^s\bh_{1,\ell+1}
+
2s(-1)^s[\bh_{1,1},\bh_{1,\ell}].
\end{equation}

It remains to prove $[\bh_{1,1},\bh_{1,\ell}]=0$.  For $\ell=1$, this is trivial.
For $\ell=2$, it is \eqref{eq:A2-h11h12}.  Assume $\ell\gge3$. Apply $\ad(\bh_{1,2})$ to
\eqref{eq:A2-full-mixed} in total degree $\ell-2$.  Using
\eqref{eq:A2-h12b1r}, we obtain
\begin{equation}\label{eq:A2-A-rec2}
3\bigl(\mathscr A(\ell-s,s)-\mathscr A(\ell-s-2,s+2)\bigr)
=
-2(-1)^s[\bh_{1,2},\bh_{1,\ell-1}].
\end{equation}
Substituting \eqref{eq:A2-A-general} into \eqref{eq:A2-A-rec2}, we find
\begin{equation}\label{eq:A2-h12hm-first}
[\bh_{1,2},\bh_{1,\ell-1}]
=
6[\bh_{1,1},\bh_{1,\ell}].
\end{equation}

By \eqref{eq:A2-rec-h1}, \eqref{eq:A2-h12}, and \eqref{eq:A2-full-hb11}--\eqref{eq:A2-full-hb12}, we have
\begin{align*}
2[\bh_{1,\ell-1},\bh_{1,2}]
={}&
-(1-2(-1)^\ell)[\bb_{2,\ell-1},\bb_{1,2}]
+
(2-(-1)^\ell)[\bb_{2,0},\bb_{1,\ell+1}]
\\
&
+(1-2(-1)^\ell)[\bb_{2,\ell},\bb_{1,1}]
-
(2-(-1)^\ell)[\bb_{2,1},\bb_{1,\ell}]
\\
={}&
2(2-(-1)^\ell)\bh_{1,\ell+1}
+(1-2(-1)^\ell)\mathscr A(1,\ell-1).
\end{align*}
By \eqref{eq:A2-A-general}, we find
\begin{equation}\label{eq:A2-h12hm-second}
[\bh_{1,2},\bh_{1,\ell-1}]
=
-(\ell-1)(2-(-1)^\ell)[\bh_{1,1},\bh_{1,\ell}].
\end{equation}
Comparing
\eqref{eq:A2-h12hm-first} and
\eqref{eq:A2-h12hm-second}, we obtain
\begin{equation*}
\bigl(6+(\ell-1)(2-(-1)^\ell)\bigr)[\bh_{1,1},\bh_{1,\ell}]=0\Longrightarrow [\bh_{1,1},\bh_{1,\ell}]=0,
\end{equation*}
proving \eqref{eq:A2-full-mixed} in total degree $\ell$ and
completing the induction.

By \eqref{eq:A2-rec-h2}, $2\bh_{2,s+1}
=
-\mathscr A(0,s)$. Using \eqref{eq:A2-full-mixed}, we obtain $2\bh_{2,s+1}
=
2(-1)^s\bh_{1,s+1}$. Thus together with \eqref{tch-sym} we have
\begin{equation}\label{eq:A2-full-hsym}
\bh_{2,r}
=
(-1)^{r+1}\bh_{1,r}
\qquad
(r\in\bN).
\end{equation}

It remains to prove that all Cartan modes commute.  First, by \eqref{hi1bjr} we have $[\bh_{1,0},\bb_{1,r}]=3\bb_{1,r}$ and $[\bh_{1,0},\bb_{2,s}]=-3\bb_{2,s}$. Hence by \eqref{eq:A2-rec-h1} we have $[\bh_{1,0},\bh_{1,r}]=0$.

Let $r,s\gge1$.  Using \eqref{eq:A2-rec-h1} and
\eqref{eq:A2-full-hb11}--\eqref{eq:A2-full-hb12}, we obtain
\begin{align*}
2[\bh_{1,r},\bh_{1,s}]
={}&
[\bh_{1,r},
[\bb_{2,0},\bb_{1,s}]
-
[\bb_{2,1},\bb_{1,s-1}]
]
\\
={}&
2(2+(-1)^r)\bh_{1,r+s}
+(1+2(-1)^r)\mathscr A(s-1,r).
\end{align*}
By \eqref{eq:A2-full-mixed}, $\mathscr A(s-1,r)
=
-2(-1)^r\bh_{1,r+s}$. Hence $2[\bh_{1,r},\bh_{1,s}]=0$.
Therefore $[\bh_{1,r},\bh_{1,s}]
=
0$
for $r,s\in\bN$.
Together with \eqref{eq:A2-full-hsym}, this proves
\begin{equation*}
[\bh_{i,r},\bh_{j,s}]=0
\qquad
(i,j\in\{1,2\},\ r,s\in\bN).
\end{equation*}

Finally, \eqref{eq:A2-full-hsym} and
\eqref{eq:A2-full-hb11}--\eqref{eq:A2-full-hb12} give the full
Cartan action for $i=2$ as well.  Moreover,
\eqref{eq:A2-same-sign} gives \eqref{eq:qsclassical3} when
$i=j$, while \eqref{eq:A2-full-mixed} gives
\eqref{eq:qsclassical3} for $(i,j)=(1,2)$.  Indeed,
$-2(-1)^s\bh_{1,r+s+1}
=
-2(-1)^r\bh_{2,r+s+1}$
by \eqref{eq:A2-full-hsym}.  The case $(i,j)=(2,1)$ follows by
skew-symmetry.

We have therefore recovered all the higher relations
\eqref{eq:qsclassical1}--\eqref{eq:qsclassical3} in type
$\mathsf A_2$ from the degree-zero and degree-one relations together
with \eqref{eq:A2-extra-Serre} and
\eqref{eq:A2-extra-Cartan}.

\subsection{Extracting the two central rank-two relations}
In this subsection, we prove that the extra relations are not needed if $n\gge 2$.  

Assume from now on that $n\gge2$, and set $i:=n-1, j:=n$.
Thus
$
\tau i=n+2,
\tau j=n+1,
$
and
\begin{equation*}
c_{ij}=c_{\tau i,\tau j}=-1,
\qquad
c_{\tau i,j}=c_{i,\tau j}=0,
\qquad
c_{i,\tau i}=0,
\qquad
c_{j,\tau j}=-1.
\end{equation*}

The orbit $\{i,\tau i\}$ is noncentral; in particular,
$c_{i,\tau i}=0$.  It supplies the two
derivations
\begin{equation*}
\cD_1:=\ad(\bh_{i,1}),
\qquad
\cD_2:=\ad(\bh_{i,2}).
\end{equation*}
Therefore, by \eqref{hi1bjr} and Lemma~\ref{lem:h2bj}, respectively,
\begin{align}
\cD_1(\bb_{j,r})
&=-\bb_{j,r+1},
&
\cD_1(\bb_{\tau j,r})
&=-\bb_{\tau j,r+1},
\label{eq:A2n-D1-action}
\\
\cD_2(\bb_{j,r})
&=-\bb_{j,r+2},
&
\cD_2(\bb_{\tau j,r})
&=\bb_{\tau j,r+2}.
\label{eq:A2n-D2-action}
\end{align}
The point of the next lemma is that $\cD_2$ also annihilates the
degree-one Cartan generator on the central orbit.

\begin{lem}\label{lem:A2n-neighboring-orbit}
With the notation above, we have
\begin{equation}\label{eq:A2n-outer-Cartan}
[\bh_{i,1},\bh_{i,2}]=0, \qquad \cD_2(\bh_{j,1})=[\bh_{i,2},\bh_{j,1}]
=0.
\end{equation}
\end{lem}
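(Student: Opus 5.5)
For the first identity $[\bh_{i,1},\bh_{i,2}]=0$, I will reuse the Step 2 argument of Section~\ref{sec:pf-thm1}. That argument covers Lemma~\ref{lem:h2bj}, the lemma giving $[\bb_{i,1},[\bb_{i,1},\bb_{j,0}]]=0$, and Lemma~\ref{lem:hi1hi2-com}. It only uses the recursive modes, the relation \eqref{hi1bjr}, Lemma~\ref{lem:bb}, and a neighbour $j$ with $c_{ij}=-1$, together with $c_{i,\tau i}=0$ for the first index. In the present setting the neighbour $j=n$ lies on the central orbit, so $c_{j,\tau j}=-1$, and I have to check which steps still go through.

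\emph{Shifted Serre step.} The proof that $[\bb_{i,1},[\bb_{i,1},\bb_{j,0}]]=0$ applies $\ad(\bh_{i,1}),\ad(\bh_{j,1})$, and then $\ad(\bh_{i,2}),\ad(\bh_{j,2})$, to $\mathsf X(0,0\,|\,0)=0$. Here I will avoid $\bh_{j,2}$ entirely. Instead I will use $\ad(\bh_{i,1})$, $\ad(\bh_{\tau i,1})$ and $\ad(\bh_{i,2})$ via Lemma~\ref{lem:h2bj}, which is legitimate since its first index $i$ is noncentral. I will also use $\ad(\bh_{\tau j,1})$, whose eigenvalues are $\gamma_{\tau j,i}=c_{\tau j,i}+c_{j,i}=-1$ on $\bb_{i,r}$ and $\gamma_{\tau j,j}=1$ on $\bb_{j,r}$. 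The resulting $2\times 2$ systems are $(2,-1;-1,1)$ for degree one and $(2,-1;1,-1)$ for the $\ad(\bh_{i,2})$-type combination. The second comes from Lemma~\ref{lem:h2bj}, which gives eigenvalue $2$ on $\bb_{i,0}$ and $c_{ij}-c_{\tau i,j}=-1$ on $\bb_{j,r}$. Both matrices are invertible. This yields $\mathsf X(2,0\,|\,0)=0$ and hence $[\bb_{i,1},[\bb_{i,1},\bb_{j,0}]]=0$.

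\emph{Lemma~\ref{lem:hi1hi2-com} step.} This proof needs the bracket of the above with $\bb_{\tau j,1}$, producing an $\bh_{j,1}$. Now $[\bb_{j,0},\bb_{\tau j,1}]$ is no longer simply $\pm\bh_{j,1}$. By \eqref{tcbb} it equals $[\bb_{j,1},\bb_{\tau j,0}]+2\bh_{\tau j,1}$ up to sign conventions. I expect the extra central-orbit terms either to cancel or to be proportional to $\bh_{j,1}=\bh_{\tau j,1}$, so that \eqref{eq:bi-bi-hj} persists up to a nonzero scalar. The rest of the chain only brackets with $\bb_{\tau i,0}$, which is untouched, so it gives $[\bh_{\tau i,1},\bh_{i,2}]=0$ and therefore $[\bh_{i,1},\bh_{i,2}]=0$. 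If $n\gge3$, I can instead take the neighbour $n-2$, which is noncentral, and quote Lemma~\ref{lem:hi1hi2-com} directly. Only the case $n=2$ then needs the modification above.

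\emph{Second identity.} For $[\bh_{i,2},\bh_{j,1}]=0$, I will write $\bh_{j,1}$ via \eqref{tcbb} as a combination of $[\bb_{\tau j,0},\bb_{j,1}]$ and $[\bb_{\tau j,1},\bb_{j,0}]$; by \eqref{eq:A2n-rec-h} these are the terms in $2\bh_{j,1}=[\bb_{\tau j,0},\bb_{j,1}]-[\bb_{\tau j,1},\bb_{j,0}]$. Applying the derivation $\cD_2$ and using \eqref{eq:A2n-D2-action} gives
\[
2\cD_2(\bh_{j,1})=[\bb_{\tau j,2},\bb_{j,1}]-[\bb_{\tau j,0},\bb_{j,3}]-[\bb_{\tau j,3},\bb_{j,0}]+[\bb_{\tau j,1},\bb_{j,2}].
\]
To kill this I will use $[\bh_{i,1},\bh_{i,2}]=0$ together with $[\cD_1,\cD_2]=0$. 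Applying $\cD_1^2$ to $\bh_{j,1}$ produces the same four brackets with all-minus signs, because $\cD_1$ acts by $-1$ on both families. I also plan to use that $\cD_1(\bh_{j,1})=0$, since $[\bh_{i,1},\bh_{j,1}]=0$ by \eqref{tchh}. This gives $\cD_1^2(\bh_{j,1})=0$, so the expansion yields linear relations among the degree-three brackets $[\bb_{\tau j,a},\bb_{j,b}]$ with $a+b=3$. Combining these with $\cD_1\cD_2(\bh_{j,0})$-type identities, where $\bh_{j,0}=[\bb_{\tau j,0},\bb_{j,0}]$ is central under $\cD$ up to the known action, should express $\cD_2(\bh_{j,1})$ as a multiple of $\cD_1^2(\bh_{j,1})=0$.

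\emph{Main obstacle.} I expect the main obstacle to be this last linear-algebra step. It requires showing that the degree-three mixed brackets on the central orbit are determined by the $\cD_1$-chain. I will try first to derive the chain from applying $\cD_1$ and $\cD_2$ to \eqref{tcbb}, that is, to $[\bb_{j,1},\bb_{\tau j,0}]-[\bb_{j,0},\bb_{\tau j,1}]$, and to solve the resulting system.
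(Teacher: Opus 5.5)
Your argument for the second identity has a genuine gap. Expanding $2\bh_{j,1}=[\bb_{\tau j,0},\bb_{j,1}]-[\bb_{\tau j,1},\bb_{j,0}]$ and applying $\cD_2$ is correct. Combined with $\cD_1^2(\bh_{j,1})=0$, however, it only gives
\[
\cD_2(\bh_{j,1})=[\bb_{j,0},\bb_{\tau j,3}]-[\bb_{j,2},\bb_{\tau j,1}].
\]
So your target is equivalent to \eqref{eq:A2n-C-seed-short}, the central-orbit form of the rank-two obstruction \eqref{eq:A2-extra-Cartan}. The paper derives that relation in Lemma~\ref{lem:A2n-central-seeds} \emph{from} the present lemma, and in isolated type $\mathsf A_2$ it must be imposed by hand. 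So you cannot expect to get it by applying $\cD_1,\cD_2$ to \eqref{tcbb} and other central-orbit relations. The extra input you name, $[\cD_1,\cD_2]=\ad([\bh_{i,1},\bh_{i,2}])=0$, only gives $\cD_1\cD_2(\bh_{j,1})=0$, which is a degree-four statement. Also, $\bh_{j,0}\neq[\bb_{\tau j,0},\bb_{j,0}]$ on the central orbit: \eqref{tcfSerre0} requires $c_{j,\tau j}=0$, and classically this bracket has components in $\g_{\pm(\alpha_j+\alpha_{\tau j})}$.

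The missing idea is to expand the \emph{other} factor, $\bh_{i,2}=[\bb_{i,1},\bb_{\tau i,1}]$. By \eqref{hi1bjr}, $\ad(\bh_{j,1})$ acts by $\gamma_{ji}=\gamma_{j,\tau i}=-1$ on both $\bb_{i,\bullet}$ and $\bb_{\tau i,\bullet}$, while $\ad(\bh_{i,1})$ acts by $\gamma_{ii}=\gamma_{i,\tau i}=2$ on both. Hence $[\bh_{j,1},\bh_{i,2}]=-[\bb_{i,2},\bb_{\tau i,1}]-[\bb_{i,1},\bb_{\tau i,2}]=-\tfrac12[\bh_{i,1},\bh_{i,2}]$, and the second identity follows from the first in one line.

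For the first identity, reusing the chain of Lemma~\ref{lem:hi1hi2-com} is the paper's route. You are right that the two $\ad(\bb_{\tau i,0})$ steps are unchanged, since $\gamma_{ji}$ and $\gamma_{j,\tau i}$ are as before. For $n\gge3$, quoting that lemma for the noncentral pair $(n-1,n-2)$ is also legitimate.

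For $n=2$, though, you leave the only new point as an expectation, and ``proportional to $\bh_{j,1}$'' is not what happens, since $[\bb_{j,1},\bb_{\tau j,0}]$ has root-vector components. Its contribution $[\bb_{i,1},[\bb_{i,1},[\bb_{j,1},\bb_{\tau j,0}]]]$ vanishes for two reasons:
\begin{itemize}
\item $[\bb_{i,1},\bb_{\tau j,s}]=0$ for all $s$, obtained by propagating $c_{i,\tau j}=0$ with $\ad(\bh_{i,1})$ and Lemma~\ref{lem:bb};
\item $[\bb_{i,1},[\bb_{i,1},\bb_{j,1}]]=0$, the shifted Serre relation in degree three, which your computation of $\mathsf X(2,0\,|\,0)=0$ does not reach.
\end{itemize}
The paper gets all shifted Serre relations at once, with no $\bh_{\bullet,2}$. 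Applying $\ad(\bh_{j,1})$ propagates $[\bb_{i,0},[\bb_{i,0},\bb_{j,s}]]=0$ to all $s$; the coefficient is $2\gamma_{ji}+\gamma_{jj}=-1\neq0$ precisely because $\gamma_{jj}=1$ on the central orbit. Lemma~\ref{lem:bb} together with $[\bb_{i,0},\bb_{i,1}]=0$ then gives $[\bb_{i,1},[\bb_{i,1},\bb_{j,s}]]=[\bb_{i,0},[\bb_{i,0},\bb_{j,s+2}]]=0$.
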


\begin{proof}
We first derive the higher-mode consequences of the finite Serre relation between $i$ and $j$.
Since $i$ and $j$ are in distinct $\tau$-orbits,
Lemma~\ref{lem:bb} gives
\begin{equation}\label{eq:A2n-cross-shift}
[\bb_{i,r+1},\bb_{j,s}]
=
[\bb_{i,r},\bb_{j,s+1}].
\end{equation}
Starting from $[\bb_{i,0},[\bb_{i,0},\bb_{j,0}]]=0$, we claim that
\begin{equation}\label{eq:A2n-Serre-prop}
[\bb_{i,0},[\bb_{i,0},\bb_{j,s}]]=0
\qquad
(s\in\bN).
\end{equation}
Indeed, if the assertion holds for $s$, then applying
$\ad(\bh_{j,1})$ gives
\begin{align*}
0=
-[\bb_{i,1},[\bb_{i,0},\bb_{j,s}]]
-[\bb_{i,0},[\bb_{i,1},\bb_{j,s}]]+
[\bb_{i,0},[\bb_{i,0},\bb_{j,s+1}]].
\end{align*}
Since $[\bb_{i,0},\bb_{i,1}]=0$, the first two terms are equal, and
\eqref{eq:A2n-cross-shift} identifies each of them with
$[\bb_{i,0},[\bb_{i,0},\bb_{j,s+1}]]$.
Hence \eqref{eq:A2n-Serre-prop} follows by induction.  In particular,
using \eqref{eq:A2n-cross-shift} twice,
\begin{align}
[\bb_{i,1},[\bb_{i,1},\bb_{j,s}]]
&=
[\bb_{i,1},[\bb_{i,0},\bb_{j,s+1}]]
\notag\\
&=
[\bb_{i,0},[\bb_{i,1},\bb_{j,s+1}]]=
[\bb_{i,0},[\bb_{i,0},\bb_{j,s+2}]]
=0.
\label{eq:A2n-shifted-neighbor-Serre}
\end{align}

We also need the nonadjacent pair $i,\tau j$.  Since
$c_{i,\tau j}=0$, $[\bb_{i,0},\bb_{\tau j,0}]=0$. Assume $[\bb_{i,0},\bb_{\tau j,s}]=0$.  Applying
$\ad(\bh_{i,1})$ gives
\begin{equation*}
2[\bb_{i,1},\bb_{\tau j,s}]
-
[\bb_{i,0},\bb_{\tau j,s+1}]
=0,
\end{equation*}
while Lemma~\ref{lem:bb} gives $[\bb_{i,1},\bb_{\tau j,s}]
=
[\bb_{i,0},\bb_{\tau j,s+1}]$. Therefore, $[\bb_{i,1},\bb_{\tau j,s}]
=
[\bb_{i,0},\bb_{\tau j,s+1}]=0$. By induction we have
\begin{equation}\label{eq:A2n-nonadjacent}
[\bb_{i,0},\bb_{\tau j,s}]
=
[\bb_{i,1},\bb_{\tau j,s}]
=0
\qquad
(s\in\bN).
\end{equation}

Take $s=0$ in \eqref{eq:A2n-shifted-neighbor-Serre} and bracket
with $\bb_{\tau j,1}$.  By
\eqref{eq:A2n-nonadjacent},
\begin{equation*}
0
=
[\bb_{i,1},
[\bb_{i,1},
[\bb_{j,0},\bb_{\tau j,1}]]].
\end{equation*}
Plugging 
$
[\bb_{j,0},\bb_{\tau j,1}]
=
[\bb_{j,1},\bb_{\tau j,0}]
+
2\bh_{j,1}$ from \eqref{tcbb} into it,
the first term on the right gives zero by
\eqref{eq:A2n-nonadjacent} and
\eqref{eq:A2n-shifted-neighbor-Serre} with $s=1$.  Hence
\begin{equation}\label{eq:A2n-bi-bi-hj}
[\bb_{i,1},[\bb_{i,1},\bh_{j,1}]]=0.
\end{equation}

For the noncentral orbit $\{i,\tau i\}$, we have
\begin{equation}\label{eq:A2n-hi2-expression}
\bh_{i,2}=[\bb_{i,1},\bb_{\tau i,1}].
\end{equation}
Applying $\ad(\bb_{\tau i,0})$ to
\eqref{eq:A2n-bi-bi-hj} and using $[\bb_{\tau i,0},\bb_{i,1}]=\bh_{i,1}$, $
[\bb_{\tau i,0},\bh_{j,1}]=\bb_{\tau i,1}$, and $[\bh_{i,1},\bh_{j,1}]=0$,
we obtain
\begin{equation*}
[\bh_{i,1},[\bb_{i,1},\bh_{j,1}]]
+
[\bb_{i,1},\bh_{i,2}]
=0.
\end{equation*}
Apply $\ad(\bb_{\tau i,0})$ once more.  Since
\begin{align*}
[\bb_{\tau i,0},\bh_{i,1}]
&=-2\bb_{\tau i,1},
&
[\bb_{\tau i,0},\bh_{i,2}]
&=2\bb_{\tau i,2},
\\
[\bb_{\tau i,0},
[\bb_{i,1},\bh_{j,1}]]
&=\bh_{i,2},
&
[\bb_{i,1},\bh_{j,1}]
&=\bb_{i,2},
\end{align*}
we find
\begin{equation}\label{eq:A2n-neighbor-second}
0=
2[\bb_{i,2},\bb_{\tau i,1}]
+
2[\bh_{i,1},\bh_{i,2}]
+
2[\bb_{i,1},\bb_{\tau i,2}].
\end{equation}
On the other hand, applying $\ad(\bh_{i,1})$ to
\eqref{eq:A2n-hi2-expression} gives
\begin{equation}\label{eq:A2n-neighbor-third}
[\bh_{i,1},\bh_{i,2}]
=
2[\bb_{i,2},\bb_{\tau i,1}]
+
2[\bb_{i,1},\bb_{\tau i,2}].
\end{equation}
Comparing \eqref{eq:A2n-neighbor-second} and
\eqref{eq:A2n-neighbor-third}, we obtain $3[\bh_{i,1},\bh_{i,2}]=0$,
which proves this first relation in \eqref{eq:A2n-outer-Cartan}.

Finally,
\begin{align*}
[\bh_{j,1},\bh_{i,2}]
&=
[\bh_{j,1},
[\bb_{i,1},\bb_{\tau i,1}]]
\\
&=
-[\bb_{i,2},\bb_{\tau i,1}]
-
[\bb_{i,1},\bb_{\tau i,2}]
=
-\tfrac12[\bh_{i,1},\bh_{i,2}]
=0,
\end{align*}
where the last equality follows from
\eqref{eq:A2n-neighbor-third}.  This proves the second relation in 
\eqref{eq:A2n-outer-Cartan}.
\end{proof}

The previous lemma allows us to use $\cD_1^2+\cD_2$ on the two
degree-zero/degree-one relations at the central orbit.  This gives a
short uniform derivation of both additional relations appearing in
the isolated $\mathsf A_2$ case.

\begin{lem}\label{lem:A2n-central-seeds}
Assume $n\gge2$.  Then the central pair $\{j,\tau j\}$ satisfies
\begin{align}
&
[\bb_{j,1},[\bb_{j,1},\bb_{\tau j,0}]]
+
[\bb_{\tau j,1},[\bb_{\tau j,1},\bb_{j,0}]]
=0,
\label{eq:A2n-central-seed-Serre}
\\
&
\big[[\bh_{j,1},\bb_{j,1}],\bb_{\tau j,1}\big]
-
\big[\bb_{j,0},
[\bh_{j,1},[\bh_{j,1},\bb_{\tau j,1}]]
\big]
=0.
\label{eq:A2n-central-seed-Cartan}
\end{align}
In fact, the two summands in
\eqref{eq:A2n-central-seed-Serre} vanish separately.
\end{lem}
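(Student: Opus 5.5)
The plan is to treat the composite operators $\cD_1^2\pm\cD_2$ as tools that can move degree selectively onto one of the two central families. By \eqref{eq:A2n-D1-action}--\eqref{eq:A2n-D2-action}, $\cD_1^2+\cD_2$ annihilates every $\bb_{j,r}$ and sends $\bb_{\tau j,r}\mapsto 2\bb_{\tau j,r+2}$. Likewise $\cD_1^2-\cD_2$ annihilates every $\bb_{\tau j,r}$ and sends $\bb_{j,r}\mapsto 2\bb_{j,r+2}$.

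By Lemma~\ref{lem:A2n-neighboring-orbit}, both $\cD_1$ and $\cD_2$ are inner derivations by commuting elements, and both kill $\bh_{j,1}$ and $\bh_{j,0}$. Hence they commute with $\ad(\bh_{j,1})$. $\cD_1^2$ is not a derivation, so when it is applied to a nested bracket it produces the extra cross terms $2[\cD_1X,\cD_1Y]$. I will keep careful track of these.

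First I collect the low-degree consequences at the central orbit that use only $\cD_1$ and the degree $\le1$ relations. These are the analogues of \eqref{eq:A2-helper-B}: applying $\cD_1$ to the finite Serre relation $[\bb_{j,0},[\bb_{j,0},\bb_{\tau j,0}]]=4\bb_{j,0}$ and combining with $\ad(\bb_{j,0})$ applied to \eqref{tcbb} for the pair $(j,\tau j)$ gives
\[
[\bb_{j,0},[\bb_{j,1},\bb_{\tau j,0}]]=2\bb_{j,1},\qquad [\bb_{j,0},[\bb_{j,0},\bb_{\tau j,1}]]=0,
\]
together with the symmetric identities for $\tau j$. The same manipulation one step higher (applying $\cD_1$ again) expresses $[\bb_{j,0},[\bb_{j,1},\bb_{\tau j,1}]]$ and $[\bb_{j,0},[\bb_{j,0},\bb_{\tau j,2}]]$ in terms of $\bb_{j,2}$ and of $\mathsf S_+:=[\bb_{j,1},[\bb_{j,1},\bb_{\tau j,0}]]$.

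For \eqref{eq:A2n-central-seed-Serre}, I then apply $\cD_1^2+\cD_2$ to the same finite Serre relation. The right side becomes $4(\cD_1^2+\cD_2)\bb_{j,0}=0$. The left side equals
\[
2\mathsf S_+ + 2[\bb_{j,1},[\bb_{j,0},\bb_{\tau j,1}]]+2[\bb_{j,0},[\bb_{j,1},\bb_{\tau j,1}]]+2[\bb_{j,0},[\bb_{j,0},\bb_{\tau j,2}]].
\]
This is a different linear combination of the same quantities as the one produced by $\cD_1^2$ alone. Comparing the two should leave a nonzero multiple of $\mathsf S_+$, which forces $\mathsf S_+=0$. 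Running the argument with $\cD_1^2-\cD_2$ and the Serre relation for $\tau j$ gives $\mathsf S_-=0$. This shows that the two summands of \eqref{eq:A2n-central-seed-Serre} vanish separately.

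For \eqref{eq:A2n-central-seed-Cartan}, I use that, as in the proof of Lemma~\ref{lem:hbhb}, it is a rewriting of $[\bh_{j,1},\bh_{j,2}]=0$ with $2\bh_{j,2}$ defined by \eqref{eq:A2n-rec-h}, and $\bb_{\tau j,2}=[\bh_{j,1},\bb_{\tau j,1}]$ (here $\gamma_{jj}=1$). So it suffices to prove $[\bh_{j,1},\bh_{j,2}]=0$. I apply $\cD_1^2+\cD_2$ to the mixed relation $[\bb_{j,1},\bb_{\tau j,0}]-[\bb_{j,0},\bb_{\tau j,1}]=-2\bh_{j,1}$. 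Its right side is killed, since $\cD_1\bh_{j,1}=0$ and $\cD_2\bh_{j,1}=0$ by Lemma~\ref{lem:A2n-neighboring-orbit}. The left side becomes an explicit combination of the brackets $[\bb_{j,a},\bb_{\tau j,b}]$ with $a+b=3$. A second expression for these brackets comes from applying $\ad(\bh_{j,1})$ twice to the same relation, as in \eqref{eq:A2-A-general}; the difference between the two expressions is proportional to $[\bh_{j,1},\bh_{j,2}]$. Setting them equal, and using $\mathsf S_\pm=0$ where needed, should isolate a nonzero multiple of $[\bh_{j,1},\bh_{j,2}]$.

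The main obstacle is the bookkeeping of the cross terms $2[\cD_1X,\cD_1Y]$ in this last step. I must check that, once everything is reduced via Lemma~\ref{lem:bb}-type shifts and the identities of the first step, the coefficient of $[\bh_{j,1},\bh_{j,2}]$ is nonzero and no new unknown survives. If the plain mixed relation does not suffice, my fallback is to apply $\cD_1^2+\cD_2$ instead to the identity $[\bb_{j,0},[\bb_{j,1},\bb_{\tau j,0}]]=2\bb_{j,1}$ and then bracket with $\bb_{\tau j,1}$.
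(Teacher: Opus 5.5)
Your proposal is correct and is essentially the paper's proof: there too one derives \eqref{9999}, applies $\cD_1^2+\cD_2$ to the finite Serre relation to get $2\mathsf S_++4[\bb_{j,0},[\bb_{j,1},\bb_{\tau j,1}]]+2[\bb_{j,0},[\bb_{j,0},\bb_{\tau j,2}]]=0$, and subtracts twice the $\ad(\bh_{j,1})$-shift of the second identity in \eqref{9999}. It is your one-step-higher identities that isolate $\mathsf S_+$; the comparison with $\cD_1^2$ alone is redundant, and your use of $\cD_1^2-\cD_2$ for $\mathsf S_-$ is exactly what the paper's ``applying $\tau$'' amounts to.

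For the Cartan relation, your detour through $[\bh_{j,1},\bh_{j,2}]$ works (it gives $4[\bh_{j,1},\bh_{j,2}]=0$), but it is unnecessary: $\cD_1^2+\cD_2$ applied to the mixed relation already yields $2\bigl([\bb_{j,2},\bb_{\tau j,1}]-[\bb_{j,0},\bb_{\tau j,3}]\bigr)=0$. Since $\gamma_{jj}=\gamma_{\tau j,\tau j}=1$, this is \eqref{eq:A2n-central-seed-Cartan} verbatim, so neither the cross-term bookkeeping nor $\mathsf S_\pm=0$ is an obstacle.
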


\begin{proof}
Introduce
\begin{align*}
\mathcal R_j
&:=
[\bb_{j,0},[\bb_{j,0},\bb_{\tau j,0}]]
-4\bb_{j,0},
\\
\mathcal M_j
&:=
[\bb_{j,1},\bb_{\tau j,0}]
-
[\bb_{j,0},\bb_{\tau j,1}]
+
2\bh_{j,1}.
\end{align*}
Thus $\mathcal R_j=0$ and $\mathcal M_j=0$.

We first need one elementary consequence of these two relations. 
Applying $\ad(\bh_{j,1})$ to $\mathcal R_j=0$, and using
$[\bb_{j,0},\bb_{j,1}]=0$, gives
\begin{equation*}
2[\bb_{j,0},[\bb_{j,1},\bb_{\tau j,0}]]+[\bb_{j,0},[\bb_{j,0},\bb_{\tau j,1}]]
=
4\bb_{j,1}.
\end{equation*}
Applying $\ad(\bb_{j,0})$ to $\mathcal M_j=0$ gives
\begin{equation*}
[\bb_{j,0},[\bb_{j,1},\bb_{\tau j,0}]]-[\bb_{j,0},[\bb_{j,0},\bb_{\tau j,1}]]
=
2\bb_{j,1}.
\end{equation*}
Hence
\begin{equation}\label{9999}
[\bb_{j,0},[\bb_{j,1},\bb_{\tau j,0}]]=2\bb_{j,1},
\qquad
[\bb_{j,0},[\bb_{j,0},\bb_{\tau j,1}]]=0.
\end{equation}

We now apply $\cD_1^2+\cD_2$ to $\mathcal R_j=0$.  Set
\begin{equation*}
\mathcal S_j
:=
[\bb_{j,1},[\bb_{j,1},\bb_{\tau j,0}]],\quad 
\mathcal U_j
:=
[\bb_{j,0},[\bb_{j,1},\bb_{\tau j,1}]],
\quad
\mathcal V_j
:=
[\bb_{j,0},[\bb_{j,0},\bb_{\tau j,2}]].
\end{equation*}
Using \eqref{eq:A2n-D1-action}--\eqref{eq:A2n-D2-action}, a direct
expansion gives
\begin{align}
\cD_1^2(\mathcal R_j)
={}&
[\bb_{j,2},[\bb_{j,0},\bb_{\tau j,0}]]
+
2\mathcal S_j
+
2[\bb_{j,1},[\bb_{j,0},\bb_{\tau j,1}]]
\notag\\
&
+
[\bb_{j,0},[\bb_{j,2},\bb_{\tau j,0}]]
+
2\mathcal U_j
+
\mathcal V_j
-
4\bb_{j,2},
\label{eq:A2n-D1sq-R}
\\
\cD_2(\mathcal R_j)
={}&
-[\bb_{j,2},[\bb_{j,0},\bb_{\tau j,0}]]
-
[\bb_{j,0},[\bb_{j,2},\bb_{\tau j,0}]]
+
\mathcal V_j
+
4\bb_{j,2}.
\label{eq:A2n-D2-R}
\end{align}
Since $[\bb_{j,0},\bb_{j,1}]=0$, we have $[\bb_{j,1},[\bb_{j,0},\bb_{\tau j,1}]]
=
\mathcal U_j$. Adding \eqref{eq:A2n-D1sq-R} and
\eqref{eq:A2n-D2-R}, we therefore obtain
\begin{equation}\label{eq:A2n-Dsum-R}
0
=
(\cD_1^2+\cD_2)(\mathcal R_j)
=
2\mathcal S_j+4\mathcal U_j+2\mathcal V_j.
\end{equation}

On the other hand, applying $\ad(\bh_{j,1})$ to
$[\bb_{j,0},[\bb_{j,0},\bb_{\tau j,1}]]=0$ from \eqref{9999} gives
\begin{equation}\label{eq:A2n-hT}
0
=
\Big[\bh_{j,1},\big[\bb_{j,0},[\bb_{j,0},\bb_{\tau j,1}]\big]\Big]
=
2\mathcal U_j+\mathcal V_j.
\end{equation}
Subtracting twice \eqref{eq:A2n-hT} from
\eqref{eq:A2n-Dsum-R}, we obtain $\mathcal S_j=0$. Applying $\tau$ gives $[\bb_{\tau j,1},[\bb_{\tau j,1},\bb_{j,0}]]=0$. This proves \eqref{eq:A2n-central-seed-Serre}.

We next apply the same operator $\cD_1^2+\cD_2$ to
$\mathcal M_j=0$.  By the degree-one Cartan relation and
Lemma~\ref{lem:A2n-neighboring-orbit}, $\cD_1(\bh_{j,1})=\cD_2(\bh_{j,1})=0$. Hence
\begin{align*}
\cD_1^2(\mathcal M_j)
={}&
[\bb_{j,3},\bb_{\tau j,0}]
+
[\bb_{j,2},\bb_{\tau j,1}]
-
[\bb_{j,1},\bb_{\tau j,2}]
-
[\bb_{j,0},\bb_{\tau j,3}],
\\
\cD_2(\mathcal M_j)
={}&
-[\bb_{j,3},\bb_{\tau j,0}]
+
[\bb_{j,1},\bb_{\tau j,2}]
+
[\bb_{j,2},\bb_{\tau j,1}]
-
[\bb_{j,0},\bb_{\tau j,3}].
\end{align*}
Adding these two equations gives
\begin{equation}\label{eq:A2n-C-seed-short}
[\bb_{j,2},\bb_{\tau j,1}]
=
[\bb_{j,0},\bb_{\tau j,3}].
\end{equation}
Since $\gamma_{jj}=1$, we have $\bb_{j,2}
=
[\bh_{j,1},\bb_{j,1}]$, $\bb_{\tau j,3}
=
[\bh_{j,1},
[\bh_{j,1},\bb_{\tau j,1}]]$,
and \eqref{eq:A2n-C-seed-short} is exactly
\eqref{eq:A2n-central-seed-Cartan}.
\end{proof}

\begin{rem}\label{rem:A2n-D-idea}
The role of the neighboring orbit can be summarized by the two
degree shifts
\begin{align*}
&\cD_1:
(\bb_{j,r},\bb_{\tau j,r})
\longmapsto
(-\bb_{j,r+1},-\bb_{\tau j,r+1}),\\
&\cD_2:
(\bb_{j,r},\bb_{\tau j,r})
\longmapsto
(-\bb_{j,r+2},\bb_{\tau j,r+2}).
\end{align*}
The combination $\cD_1^2+\cD_2$ cancels the unwanted outer terms and
isolates precisely the two rank-two obstructions.  This operator is
not available in the isolated type $\mathsf A_2$, which explains
why the two additional classical relations have to be imposed there.
\end{rem}

\subsection{Completion of the higher-rank proof}

We can now finish the proof without repeating the rank-two
calculation.  By Lemma~\ref{lem:A2n-central-seeds}, the two
extra relations hold on the central
orbit.  Hence the calculations in Appendix \ref{ssec:A2-reduction} apply verbatim after the
relabeling
\begin{equation*}
1\longmapsto j=n,
\qquad
2\longmapsto\tau j=n+1.
\end{equation*}
Thus all the relations in Lemma~\ref{lem:full-gr} supported on the
central orbit are recovered.  In particular,
\begin{align}
&[\bh_{j,r},\bh_{j,s}]
=0,
\qquad \qquad
\bh_{\tau j,r}
=(-1)^{r+1}\bh_{j,r},
\notag
\\
&[\bh_{j,r},\bb_{j,s}]
=
\bigl(2+(-1)^r\bigr)\bb_{j,r+s},
\notag
\\
&[\bh_{j,r},\bb_{\tau j,s}]
=
-\bigl(1+2(-1)^r\bigr)\bb_{\tau j,r+s},
\notag
\\
&[\bb_{j,r},\bb_{j,s}]
=
[\bb_{\tau j,r},\bb_{\tau j,s}]
=0,
\notag
\\
&[\bb_{j,r+1},\bb_{\tau j,s}]
-
[\bb_{j,r},\bb_{\tau j,s+1}]
=
-2(-1)^s\bh_{j,r+s+1}.
\label{eq:A2n-central-mixed-full}
\end{align}

For the neighboring noncentral orbit $\{i,\tau i\}$,
Lemma~\ref{lem:A2n-neighboring-orbit} gives precisely the seed
$[\bh_{i,1},\bh_{i,2}]=0$ required in Step~4 of the proof of
Proposition~\ref{prop:red} in \S\ref{sec:pf-prop1}.  Hence all same-orbit relations on
$\{i,\tau i\}$ follow from that argument.  Moving successively away
from the center, every remaining noncentral orbit is treated by the
same proof as in Proposition~\ref{prop:red} performed in \S\ref{sec:pf-prop1}; no central rank-two
calculation enters again.

\begin{lem}\label{lem:A2n-central-cross-hb}
Let $j$ belong to the central $\tau$-orbit and let
$k\notin\{j,\tau j\}$.  Then, for all $r,s\gge 0$,
\begin{equation}\label{eq:A2n-central-cross-hb}
 [\bh_{j,r},\bb_{k,s}]
 =
 \bigl(c_{jk}-(-1)^r c_{\tau j,k}\bigr)
 \bb_{k,r+s}.
\end{equation}
\end{lem}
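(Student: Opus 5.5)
Since $j$ is central, the recursion \eqref{eq:A2n-rec-h} gives, for $r\gge1$,
\[
2\bh_{j,r}=[\bb_{\tau j,0},\bb_{j,r}]-[\bb_{\tau j,1},\bb_{j,r-1}].
\]
The plan is to bracket this with $\bb_{k,s}$, expand by the Jacobi identity, and then push degrees onto $\bb_{k,\cdot}$ using Lemma~\ref{lem:bb}, which applies because $k$ and $j$ lie in distinct $\tau$-orbits. This mirrors the computation at the end of Step~5 in \S\ref{sec:pf-prop1}, except that the central recursion has two terms, not one. The case $r=0$ is \eqref{hi1bjr}, and $r=1$ is also covered by \eqref{hi1bjr}, since $\gamma_{jk}=c_{jk}+c_{\tau j,k}$. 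We argue by induction on $r$, with $s$ arbitrary.

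Expanding the first term gives
\[
[[\bb_{\tau j,0},\bb_{j,r}],\bb_{k,s}]=[\bb_{\tau j,0},[\bb_{j,r},\bb_{k,s}]]-[\bb_{j,r},[\bb_{\tau j,0},\bb_{k,s}]].
\]
By Lemma~\ref{lem:bb} we may replace $[\bb_{j,r},\bb_{k,s}]$ by $[\bb_{j,0},\bb_{k,r+s}]$, and $[\bb_{\tau j,0},\bb_{k,s}]$ by $[\bb_{\tau j,1},\bb_{k,s-1}]$ or its reverse as convenient. After a second Jacobi expansion, the inner brackets $[\bb_{j,\cdot},\bb_{\tau j,\cdot}]$ are converted by \eqref{eq:A2n-central-mixed-full} into Cartan modes $\bh_{j,m}$. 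The Cartan modes with $m<r$ then act on $\bb_{k,\cdot}$ by the inductive hypothesis.

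The second term $[[\bb_{\tau j,1},\bb_{j,r-1}],\bb_{k,s}]$ is treated the same way. The target is an identity of the form
\[
2[\bh_{j,r},\bb_{k,s}]=(\text{known multiple})\,\bb_{k,r+s}+(\text{combination of }[\bh_{j,r'},\bb_{k,s'}]\text{ with }r'<r).
\]
The constant should match $2(c_{jk}-(-1)^rc_{\tau j,k})$ once the inductive values are substituted. The leftover double brackets of the form $[\bb_{j,0},[\bb_{\tau j,0},\bb_{k,r+s}]]\pm[\bb_{\tau j,0},[\bb_{j,0},\bb_{k,r+s}]]$ collapse to $\mp[\bh_{\tau j,0},\bb_{k,r+s}]$ by Jacobi and the degree-zero central relation $[\bb_{j,0},\bb_{\tau j,0}]$. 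That degree-zero relation is not simply $\bh$ on the central orbit; it is $\mathfrak q$ from Step~3. So I would instead use $\mathcal M_j$ at degree one, which equates the relevant combination to $2\bh_{j,1}$, together with \eqref{hi1bjr}.

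An alternative, cleaner route is to induct on $r$ via the derivation $\ad(\bh_{j,1})$ and $\ad(\bh_{j,2})$. First prove the statement for $r=2$ directly, by expanding $2\bh_{j,2}=[\bb_{j,1},\bb_{\tau j,1}]-[\bb_{j,0},\bb_{\tau j,2}]$ as in \eqref{eq:A2-h12}. Then use the commutativity $[\bh_{j,1},\bh_{j,r}]=[\bh_{j,2},\bh_{j,r}]=0$ already recovered on the central orbit, together with \eqref{eq:A2n-central-mixed-full}. Applying $\ad(\bh_{j,1})$ to \eqref{eq:A2n-central-mixed-full} with appropriate indices expresses $\bh_{j,r+1}$ through brackets whose action on $\bb_{k,\cdot}$ is determined by the smaller cases. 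The only coefficients that enter are $c_{jk}$ and $c_{\tau j,k}$, and for $k$ adjacent to the center exactly one of them is $-1$. The parity sign $(-1)^r$ arises from the alternating signs in \eqref{eq:A2n-central-mixed-full}.

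The main obstacle is the base case $r=2$, along with bookkeeping of the sign $(-1)^r$ through the two-term recursion. Unlike the noncentral case, $[\bb_{j,0},\bb_{\tau j,0}]$ is not a Cartan element, so the Jacobi collapse used in Lemma~\ref{lem:h2bj} fails. I would handle it by rewriting $[\bb_{j,0},\bb_{\tau j,2}]$ through $\mathcal M_j$ shifted by $\cD_1$, and using the Serre relation $[\bb_{j,0},[\bb_{j,0},\bb_{k,0}]]=0$ propagated to higher modes as in \eqref{eq:A2n-Serre-prop}, to kill the non-Cartan remainders.
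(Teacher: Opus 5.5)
Your first route is the paper's route in outline: bracket the central recursion \eqref{eq:A2n-rec-h} with $\bb_{k,s}$, push all degrees onto $\bb_{k,\cdot}$ by Lemma~\ref{lem:bb}, and handle the non-Cartan bracket $[\bb_{j,0},\bb_{\tau j,0}]$ with $\mathcal M_j=0$ and \eqref{hi1bjr}. As written, though, it does not close. You never carry out the expansion (``the constant should match''), and you miss the observation that makes it collapse.

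Since $k\notin\{j,\tau j\}$, at most one of $c_{jk},c_{\tau j,k}$ is nonzero. Suppose $c_{\tau j,k}=0$; this includes $k$ adjacent to neither central node. Then \eqref{eq:A2n-nonadjacent}, or its analogue for a nonadjacent pair, together with Lemma~\ref{lem:bb} gives $[\bb_{\tau j,p},\bb_{k,q}]=0$ for all $p,q$. So the terms $[\bb_{j,r},[\bb_{\tau j,0},\bb_{k,s}]]$ that you propose to shift are simply zero. Without this fact, your leftover double brackets only recombine into $[[\bb_{j,0},\bb_{\tau j,0}],\bb_{k,r+s}]$, which nothing available evaluates. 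With it, for $\ell=r+s$ and every $r\gge1$,
\begin{align*}
2[\bh_{j,r},\bb_{k,s}]
&=\bigl[[\bb_{\tau j,0},\bb_{j,0}],\bb_{k,\ell}\bigr]-\bigl[[\bb_{\tau j,1},\bb_{j,0}],\bb_{k,\ell-1}\bigr]\\
&=\bigl[[\bb_{\tau j,0},\bb_{j,0}],\bb_{k,\ell}\bigr]-\Bigl(\bigl[[\bb_{\tau j,0},\bb_{j,0}],\bb_{k,\ell}\bigr]-2[\bh_{j,1},\bb_{k,\ell-1}]\Bigr)
=2c_{jk}\bb_{k,\ell}.
\end{align*}
The second step uses three facts:
\begin{itemize}
\item $[\bb_{\tau j,1},\bb_{j,0}]=[\bb_{\tau j,0},\bb_{j,1}]-2\bh_{j,1}$, that is, $\mathcal M_j=0$;
\item one more shift, $[\bb_{j,1},\bb_{k,\ell-1}]=[\bb_{j,0},\bb_{k,\ell}]$;
\item $[\bh_{j,1},\bb_{k,\ell-1}]=\gamma_{jk}\bb_{k,\ell}=c_{jk}\bb_{k,\ell}$.
\end{itemize}

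So the non-Cartan terms cancel between the two summands of the recursion, for all $r\gge1$ at once. No induction on $r$, no separate base case $r=2$, and no Serre relation are needed, so the obstacle you single out is not the real one. Note also that converting the inner brackets by \eqref{eq:A2n-central-mixed-full} before pushing degrees onto $\bb_{k,\cdot}$ would reintroduce $\bh_{j,r}$ itself, not only modes $\bh_{j,m}$ with $m<r$.

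The remaining case $c_{jk}=0$, $c_{\tau j,k}=-1$ follows from the same computation with $j$ and $\tau j$ interchanged, combined with $\bh_{\tau j,r}=(-1)^{r+1}\bh_{j,r}$ on the central orbit. That is where the sign $(-1)^r$ enters.

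Your alternative route does not work as described. The derivations $\ad(\bh_{j,1})$ and $\ad(\bh_{j,2})$ commute with every $\bh_{j,r}$. Applied to a known identity $[\bh_{j,r},\bb_{k,s}]=c\,\bb_{k,r+s}$, they only raise $s$. To reach $\bh_{j,r+1}$ you must again expand it as a bracket of central $\bb$'s and compute directly, which is the first route.
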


\begin{proof}
The case $r=0$ follows from the first identity in
\eqref{hi1bjr}.  We may therefore assume that $r\gge 1$.

Suppose first that $c_{jk}=-1$ and $c_{\tau j,k}=0$. By \eqref{eq:A2n-nonadjacent} and Lemma~\ref{lem:bb}, we have $[\bb_{\tau j,p},\bb_{k,q}]=0$ for all $p,q\gge 0$. Set $\ell=r+s$.  Using \eqref{eq:A2n-rec-h}, the
$\bb$--$\bb$ shift relation, and the Jacobi identity, we obtain
\begin{align}
 2[\bh_{j,r},\bb_{k,s}]
 &=
 \bigl[[\bb_{\tau j,0},\bb_{j,r}],\bb_{k,s}\bigr]
 -
 \bigl[[\bb_{\tau j,1},\bb_{j,r-1}],\bb_{k,s}\bigr]
 \notag\\
 &=
 \bigl[\bb_{\tau j,0},
       [\bb_{j,0},\bb_{k,\ell}]\bigr]
 -
 \bigl[\bb_{\tau j,1},
       [\bb_{j,0},\bb_{k,\ell-1}]\bigr]
 \notag\\
 &=
 \bigl[[\bb_{\tau j,0},\bb_{j,0}],\bb_{k,\ell}\bigr]
 -
 \bigl[[\bb_{\tau j,1},\bb_{j,0}],
       \bb_{k,\ell-1}\bigr].
 \label{eq:A2n-central-cross-hb-calc}
\end{align}
The degree-one central mixed relation gives
$[\bb_{\tau j,1},\bb_{j,0}]
 =
 [\bb_{\tau j,0},\bb_{j,1}]
 -2\bh_{j,1}$.
Moreover, the already established degree-one Cartan action gives $ [\bh_{j,1},\bb_{k,\ell-1}]
 =
 (c_{jk}+c_{\tau j,k})\bb_{k,\ell}
 =
 -\bb_{k,\ell}$.
Consequently,
\begin{align*}
 \bigl[[\bb_{\tau j,1},\bb_{j,0}],
       \bb_{k,\ell-1}\bigr]
 &=
 \bigl[\bb_{\tau j,0},
       [\bb_{j,1},\bb_{k,\ell-1}]\bigr]
 -2[\bh_{j,1},\bb_{k,\ell-1}]
 \\
 &=
 \bigl[\bb_{\tau j,0},
       [\bb_{j,0},\bb_{k,\ell}]\bigr]
 +2\bb_{k,\ell}
 \\
 &=
 \bigl[[\bb_{\tau j,0},\bb_{j,0}],
       \bb_{k,\ell}\bigr]
 +2\bb_{k,\ell}.
\end{align*}
Substituting this into
\eqref{eq:A2n-central-cross-hb-calc} yields $[\bh_{j,r},\bb_{k,s}]
 =
 -\bb_{k,r+s}$,
which is \eqref{eq:A2n-central-cross-hb} in this case.

Finally, if instead $c_{jk}=0$ and $c_{\tau j,k}=-1$,
the preceding argument with $j$ and $\tau j$ interchanged gives the desired equality. If $c_{jk}=c_{\tau j,k}=0$, the same argument, using
the nonadjacent $\bb$--$\bb$ relations, gives
$[\bh_{j,r},\bb_{k,s}]=0$.

This completes the proof.
\end{proof}

Together with the calculation for a noncentral first Cartan
index, Lemma~\ref{lem:A2n-central-cross-hb} proves the full
mixed $\bh$--$\bb$ relation for vertices in distinct
$\tau$-orbits.  It remains only to prove Cartan commutativity
when one of the two orbits is central.  We now explain the
required cancellation explicitly.

Let $k$ be in a $\tau$-orbit distinct from
$\{j,\tau j\}$ and set $\gamma_{kj}^{(r)}
:=
c_{kj}-(-1)^r c_{\tau k,j}$.
Using \eqref{eq:A2n-rec-h} and the mixed Cartan action already
obtained, we have
\begin{align}
2[\bh_{k,r},\bh_{j,s+1}]
={}&
-(-1)^r \gamma_{kj}^{(r)}
[\bb_{\tau j,r},\bb_{j,s+1}]
+
\gamma_{kj}^{(r)}
[\bb_{\tau j,0},\bb_{j,r+s+1}]
\notag\\
&+
(-1)^r \gamma_{kj}^{(r)}
[\bb_{\tau j,r+1},\bb_{j,s}]
-
\gamma_{kj}^{(r)}
[\bb_{\tau j,1},\bb_{j,r+s}].
\label{eq:A2n-cross-hh-short}
\end{align}
Rewriting
\eqref{eq:A2n-cross-hh-short} using \eqref{eq:A2n-central-mixed-full}, we find
\begin{align*}
2[\bh_{k,r},\bh_{j,s+1}]
=
(-1)^r \gamma_{kj}^{(r)}
\bigl(-2(-1)^r\bh_{j,r+s+1}\bigr)
-
\gamma_{kj}^{(r)}
\bigl(-2\bh_{j,r+s+1}\bigr)
=0.
\end{align*}
Thus the Cartan generators commute also across the central orbit.
All the relations in Lemma~\ref{lem:full-gr} have now been
recovered. Consequently, the recursively defined elements satisfy all the
relations of Lemma~\ref{lem:full-gr}. They therefore define a
homomorphism inverse to \(\psi\). Hence \(\psi\) is an isomorphism,
which completes the proof of Proposition~\ref{prop:red}, Part~II.

\section{R-matrix presentation}\label{sec:app+}

In this section, using the explicit identification between iYangians of quasi-split type $\mathsf A$ and twisted Yangians in R-matrix presentation established in \cite{LZ24}, we express $h_{i,r},b_{i,r}$ for $r=0,1$ in terms of the Drinfeld generators of the Yangian.

\subsection{Yangians}\label{sec:Y-R}
We first recall Yangians and twisted Yangians in R-matrix presentation (we shall only use relations in generating series form) from \cite{MNO96,MR02,LZ24}.

The \textit{Yangian} $\Y_{\mathscr R}$ corresponding to the Lie algebra $\gl_N$ is a unital associative algebra with generators $t_{ij}^{(r)}$, where $1\lle i,j\lle N$ and $r\in\bZ_{>0}$, subject to the defining relations 
\be
(u-v)[t_{ij}(u),t_{kl}(v)]=t_{kj}(u)t_{il}(v)-t_{kj}(v)t_{il}(u).
\ee
Here we have used the generating series in an indeterminate $u$
\[
t_{ij}(u)=  \delta_{ij}+t_{ij}^{(1)}u^{-1}+t_{ij}^{(2)}u^{-2}+\cdots.
\]
Introduce new generating series via Gauss decomposition, for $i<j$, 
\beq\label{tij-Gauss}
\begin{split}
t_{ii}(u)&=D_i(u)+\sum_{k<i}F_{ik}(u)D_k(u)E_{ki}(u), \\
t_{ij}(u)&=D_i(u)E_{ij}(u)+\sum_{k<i}F_{ik}(u)D_k(u)E_{kj}(u),\\
t_{ji}(u)&=F_{ji}(u)D_i(u)+\sum_{k<i}F_{jk}(u)D_k(u)E_{ki}(u).
\end{split}
\eeq
It has been established in \cite{BK05} that the Yangian $\Y(\mathfrak{sl}_N)$ in Drinfeld presentation can be identified as a subalgebra of $\Y_{\mathscr R}$ via the following correspondence:
\beq\label{eq:Y2-R}
\begin{split}
x_i^+(u)&=\sqrt{-1}F_{i+1,i}(u+\vartheta_i),\\
x_i^-(u)&=-\sqrt{-1}E_{i,i+1}(u+\vartheta_i),\\
\xi_i(u)&= D_{i}(u+\vartheta_i)^{-1}D_{i+1}(u+\vartheta_i),
\end{split}
\eeq
where $\vartheta_i=\tfrac{N-2i}{4}$. Note that we have $\vartheta_i+\vartheta_{\tau i}=0$.

Define $\wtl T(u):=\big(T(u)\big)^{-1}=\big(\tl t_{ij}(u)\big)$ and let $\tl t_{ij}(u)$ for $1\lle i,j\lle N$ be its matrix elements, 
$$
\tl t_{ij}(u)=\delta_{ij}+\sum_{r>0} \tl {t}_{ij}^{(r)}u^{-r}.
$$
Then
\be
\tl t_{ij}(u)=\delta_{ij}+\sum_{k>0} (-1)^k\sum_{a_1,\cdots,a_{k-1}=1}^N t_{ia_1}^\circ(u)t_{a_1a_2}^\circ(u)\cdots t_{a_{k-1}j}^\circ(u),
\ee
where $t_{ij}^\circ(u)=t_{ij}(u)-\delta_{ij}$. In particular, by taking the coefficient of $u^{-r}$, for $r\gge 1$, one obtains 
\be
\tl t_{ij}^{(r)}=\sum_{k=1}^r (-1)^k\sum_{a_1,\cdots,a_{k-1}=1}^N\sum_{r_1+\cdots+r_k=r}t_{ia_1}^{(r_1)}t_{a_1a_2}^{(r_2)}\cdots t_{a_{k-1}j}^{(r_k)},
\ee
where $r_i$ for $1\lle i\lle k$ are positive integers. In particular, we have
\begin{align*}
\tl t_{ij}^{(1)}= -t_{ij}^{(1)}, \qquad \tl t_{ij}^{(2)}= -t_{ij}^{(2)}+\sum_{k=1}^{N}t_{ik}^{(1)}t_{kj}^{(1)}.
\end{align*}

Now we express the Drinfeld generators in terms of $t_{ij}^{(r)}$. By \eqref{tij-Gauss}, we clearly have $t_{ii}^{(1)}=D_i^{(1)}$ for $1\lle i\lle N$ and $t_{ji}^{(1)}=F_{ji}^{(1)}$, $t_{ij}^{(1)}=E_{ij}^{(1)}$ for $i<j$. In particular $t_{i+1,i}^{(1)}=F_i^{(1)}$. Furthermore, we have
\begin{align}
&t_{i,i}^{(2)}=D_i^{(2)}+\sum_{k<i}F_{ik}^{(1)}E_{ki}^{(1)},\label{eq:t-diag-used}\\
&t_{i+1,i}^{(2)}=F_{i+1,i}^{(2)}+F_{i+1,i}^{(1)}D_i^{(1)}+\sum_{k<i}F_{i+1,k}^{(1)}E_{ki}^{(1)},\notag\\
&t_{i,i+1}^{(2)}=E_{i,i+1}^{(2)}+D_i^{(1)}E_{i,i+1}^{(1)}+\sum_{k<i}F_{i,k}^{(1)}E_{k,i+1}^{(1)}.\notag
\end{align}
On the other hand, it follows from \eqref{eq:Y2-R} that
\begin{align}
&x_{i,0}^+=\sqrt{-1}F_{i+1,i}^{(1)}, \qquad x_{i,0}^-=-\sqrt{-1}E_{i,i+1}^{(1)},\qquad \xi_{i,0}=D_{i+1}^{(1)}-D_i^{(1)},\label{eq:xpm0}\\
&x_{i,1}^+=\sqrt{-1}\big(F_{i+1,i}^{(2)}-\vartheta_i F_{i+1,i}^{(1)}\big), \quad  x_{i,1}^-=-\sqrt{-1}\big(E_{i,i+1}^{(2)}-\vartheta_i E_{i,i+1}^{(1)}\big),\notag\\
&\xi_{i,1}=D_{i+1}^{(2)}-D_i^{(2)}+\big(D_{i}^{(1)}\big)^2-\vartheta_i D_{i+1}^{(1)}+\vartheta_i D_{i}^{(1)}-D_{i}^{(1)}D_{i+1}^{(1)}.\label{eq:xi1}
\end{align}

Let $\mathbf e_{ij}$ be the standard generators of $\gl_N$. Then with the identification above, we have
\[
t_{ij}^{(1)}=-(\sqrt{-1})^{i-j}\mathbf e_{ji}.
\]

\subsection{Twisted Yangians}
For $1\lle i,j\lle N$, we introduce the generating series 
\beq\label{siju}
s_{ij}(u)=  \delta_{ij}+s_{ij}^{(1)}u^{-1}+s_{ij}^{(2)}u^{-2}+\cdots.
\eeq
The \textit{twisted Yangian} $\Yi_{\mathscr R}$ of quasi-split type $A$ is the unital associative algebra  with generators $s_{ij}^{(r)}$, where $1\lle i,j\lle N$ and $r\in\bZ_{>0}$, whose generating series \eqref{siju} satisfy the following quaternary relation:
\be
\begin{split}
(u^2-v^2)[s_{ij}(u),s_{kl}(v)]=  (u+v)(s_{kj}(u)s_{il}(v)-s_{kj}(v)s_{il}(u))&\\
  + (u-v)\Big(\delta_{kj'}\sum_{a=1}^N s_{ia'}(u)s_{al}(v)-\delta_{il'}\sum_{a=1}^N s_{ka'}(v)s_{aj}(u)\Big)&\\
 - \delta_{ij'}\Big(\sum_{a=1}^N s_{ka'}(u)s_{al}(v)-\sum_{a=1}^N s_{ka'}(v)s_{al}(u)\Big)&
\end{split}
\ee
and the unitary relations
\be
\sum_{a=1}^N s_{ia'}(u)s_{aj}(-u)=\delta_{ij'}.
\ee

The twisted Yangian $\Yi_{\mathscr R}$ can be identified as a subalgebra of $\Y_{\mathscr R}$ via
\beq\label{eq:embed-R}
s_{ij}(u)=\sum_{k=1}^N \tl t_{i'k'}(-u)t_{kj}(u).
\eeq
This version is slightly different from the one in \cite{LZ24}, in order to make it a \textit{right} coideal subalgebra, cf. \cite{LPTTW25} and \cite[Appendix]{Lu26min}. Thus, we have
\begin{align}
  s_{ij}^{(1)}
   &=t_{ij}^{(1)}-\tilde t_{i'j'}^{(1)}
     =t_{ij}^{(1)}+t_{i'j'}^{(1)},\notag\\
  s_{ij}^{(2)}
   &=t_{ij}^{(2)}+\tilde t_{i'j'}^{(2)}
     -\sum_{k=1}^{N}\tilde t_{i'k'}^{(1)}t_{kj}^{(1)}=t_{ij}^{(2)}-t_{i'j'}^{(2)}
     +\sum_{k=1}^{N}t_{i'k}^{(1)}t_{kj'}^{(1)}
     +\sum_{k=1}^{N}t_{i'k'}^{(1)}t_{kj}^{(1)}.
     \label{eq:s-second}
\end{align}

Introduce new generating series via Gauss decomposition, for $i<j$, 
\beq\label{eq:sij-Gauss}
\begin{split}
s_{ii}(u)&=d_i(u)+\sum_{k<i}f_{ik}(u)d_k(u)e_{ki}(u),\\
s_{ij}(u)&=d_i(u)e_{ij}(u)+\sum_{k<i}f_{ik}(u)d_k(u)e_{kj}(u),\\
s_{ji}(u)&=f_{ji}(u)d_i(u)+\sum_{k<i}f_{jk}(u)d_k(u)e_{ki}(u).
\end{split}
\eeq
It is shown in \cite[Lemma~6.8]{LZ24} that $f_i(u)=-e_{\tau i}(-u)$. It has been established in \cite{LZ24} that the iYangian $\Yi$ in Drinfeld presentation can be identified as a subalgebra of $\Yi_{\mathscr R}$ via the following correspondence:
\beq\label{eq:Y2i-R}
b_i(u)=\sqrt{-1}f_i(u+\vartheta_i),\qquad 
h_i(u)=\big(1+\tfrac{\wp_i}{4u}\big) d_{i}(u+\vartheta_i)^{-1}d_{i+1}(u+\vartheta_i)
\eeq
for $1\lle i<N$, where $\wp_i$ is defined in \eqref{eq:wpi-def}.

\subsection{Obtaining $h_{i,0}$ and $h_{i,1}$}\label{ssec:obtain-h}
It follows from \eqref{eq:sij-Gauss} that \begin{align}
&d_{i}^{(1)}=s_{ii}^{(1)}=t_{ii}^{(1)}+t_{i'i'}^{(1)}=D_i^{(1)}+D_{i'}^{(1)},\label{eq:di-first}\\
&f_{i+1,i}^{(1)}=s_{i+1,i}^{(1)}=t_{i'-1,i'}^{(1)}+t_{i+1,i}^{(1)}.\notag
\end{align}
Moreover, we have
\begin{align}
&s_{ii}^{(2)}
   =d_i^{(2)}+
     \sum_{k<i}f_{ik}^{(1)}e_{ki}^{(1)}=d_i^{(2)}+
     \sum_{k<i}\bigl(t_{ik}^{(1)}+t_{i'k'}^{(1)}\bigr)
              \bigl(t_{ki}^{(1)}+t_{k'i'}^{(1)}\bigr).\label{eq:sii-gauss}
\end{align}
On the other hand, we obtain from \eqref{eq:t-diag-used} and \eqref{eq:s-second} that
\beq\label{eq:sii-rtt}
  s_{ii}^{(2)}
   =D_i^{(2)}+
     \sum_{k<i}t_{ik}^{(1)}t_{ki}^{(1)}
     -D_{i'}^{(2)}-
     \sum_{k<i'}t_{i'k}^{(1)}t_{ki'}^{(1)}
     +\sum_{k=1}^{N}t_{i'k}^{(1)}t_{ki'}^{(1)}
     +\sum_{k=1}^{N}t_{i'k'}^{(1)}t_{ki}^{(1)}.
\eeq
Combining \eqref{eq:sii-gauss} and \eqref{eq:sii-rtt} gives
\beq\label{eq:di-second-final}
\begin{split}
  d_i^{(2)}
   ={}&D_i^{(2)}+
     \sum_{k<i}t_{ik}^{(1)}t_{ki}^{(1)}
     -D_{i'}^{(2)}-
     \sum_{k<i'}t_{i'k}^{(1)}t_{ki'}^{(1)} \\
   &+\sum_{k=1}^{N}t_{i'k}^{(1)}t_{ki'}^{(1)}
     +\sum_{k=1}^{N}t_{i'k'}^{(1)}t_{ki}^{(1)}
     -\sum_{k<i}
      \bigl(t_{ik}^{(1)}+t_{i'k'}^{(1)}\bigr)
      \bigl(t_{ki}^{(1)}+t_{k'i'}^{(1)}\bigr) \\
   ={}&D_i^{(2)}-D_{i'}^{(2)}
     +\bigl(D_{i'}^{(1)}\bigr)^2
     +D_i^{(1)}D_{i'}^{(1)}
     +\sum_{k<i'}t_{i'k}^{(1)}t_{k'i}^{(1)}
     -\sum_{k<i}t_{ik}^{(1)}t_{k'i'}^{(1)}.
\end{split}
\eeq
By \eqref{eq:Y2i-R}, we have
\begin{align*}
  h_{i,0}&=d_{i+1}^{(1)}-d_i^{(1)}+\tfrac14 \wp_i\stackrel{\eqref{eq:di-first}}{=}\xi_i-\xi_{\tau i}+\tfrac14 \wp_i,\\
  h_{i,1}
  &=d_{i+1}^{(2)}-d_i^{(2)}+\bigl(d_i^{(1)}\bigr)^2
   +\big(\tfrac{1}{4}\wp_i-\vartheta_i\big) \big(d_{i+1}^{(1)}-d_i^{(1)}\big)-d_i^{(1)}d_{i+1}^{(1)}.
\end{align*}
Using \eqref{eq:di-first} and \eqref{eq:di-second-final}, we obtain
\be
\begin{split}
  h_{i,1}
  =\,&D_{i+1}^{(2)}-D_{i'-1}^{(2)}
     +\bigl(D_{i'-1}^{(1)}\bigr)^2
     +D_{i+1}^{(1)}D_{i'-1}^{(1)}
     +\sum_{k<i'-1}t_{i'-1,k}^{(1)}t_{k',i+1}^{(1)}
     \\&-\sum_{k<i+1}t_{i+1,k}^{(1)}t_{k',i'-1}^{(1)}
  -D_i^{(2)}+D_{i'}^{(2)}
     -\bigl(D_{i'}^{(1)}\bigr)^2
     -D_i^{(1)}D_{i'}^{(1)}\\
     &-\sum_{k<i'}t_{i'k}^{(1)}t_{k'i}^{(1)}
     +\sum_{k<i}t_{ik}^{(1)}t_{k'i'}^{(1)}
+\bigl(D_i^{(1)}\bigr)^2+\bigl(D_{i'}^{(1)}\bigr)^2
     +2D_i^{(1)}D_{i'}^{(1)}\\
&     +\big(\tfrac{1}{4}\wp_i-\vartheta_i\big)\bigl(D_{i+1}^{(1)}+D_{i'-1}^{(1)}\bigr)
     -\big(\tfrac{1}{4}\wp_i-\vartheta_i\big)\bigl(D_i^{(1)}+D_{i'}^{(1)}\bigr) \\
  &-D_i^{(1)}D_{i+1}^{(1)}-D_{i'}^{(1)}D_{i'-1}^{(1)}
     -D_i^{(1)}D_{i'-1}^{(1)}-D_{i+1}^{(1)}D_{i'}^{(1)}.
\end{split}
\ee
Applying \eqref{eq:xpm0} and \eqref{eq:xi1}, it becomes
\beq\label{eq:hi1-before-root}
\begin{split}
h_{i,1}
   =\,&\xi_{i,1}+\xi_{\tau i,1}
      -\xi_{i}\xi_{\tau i}+\tfrac14\wp_i\big(\xi_i-\xi_{\tau i}\big)\\
&+\sum_{k<i'-1}t_{i'-1,k}^{(1)}t_{k',i+1}^{(1)}
      -\sum_{k<i+1}t_{i+1,k}^{(1)}t_{k',i'-1}^{(1)}
      -\sum_{k<i'}t_{i'k}^{(1)}t_{k'i}^{(1)}
      +\sum_{k<i}t_{ik}^{(1)}t_{k'i'}^{(1)}.
\end{split}      
\eeq
For $\alpha=\ve_i-\ve_j\in \Phi^+$ with $1\lle i<j\lle N$, we choose the root vectors as follows:
\[
x_\alpha^+=\sqrt{-1}\,t_{ji}^{(1)}=(\sqrt{-1})^{j-i-1}\mathbf e_{ij},\qquad x_\alpha^-=-\sqrt{-1}\,t_{ij}^{(1)}=(\sqrt{-1})^{i-j+1}\mathbf e_{ji}.
\]
There are two cases depending on the parity of $N$. 
\begin{enumerate}
    \item If $N=2n$, then we find that $\wp_i=0$ for all $1\lle i<N$ and $[x_\alpha^+,x_{\tau \alpha}^+]=0$. Hence we obtain from \eqref{eq:hi1-before-root} that
    \[
    h_{i,1}=\xi_{i,1}+\xi_{\tau i,1}
      -\xi_{i}\xi_{\tau i}+\hf\sum_{\alpha\in\Phi^+}(\alpha,\alpha_i)\big\{x_\alpha^+,x_{\tau\alpha}^+\big\}.
    \]
    \item If $N=2n+1$, then both $\wp_i$ and $[x_\alpha^+,x_{\tau \alpha}^+]$ may be nonzero for $i\in\I$ and $\alpha\in \Phi^+$. Hence we have
    \[
    h_{i,1}={}\xi_{i,1}+\xi_{\tau i,1}
      -\xi_{i}\xi_{\tau i}+\tfrac14\wp_i(\xi_i-\xi_{\tau i})+\sum_{\alpha\in\Phi^+_{i,<}}(\alpha,\alpha_i)x_\alpha^+x_{\tau\alpha}^++\sum_{\alpha\in\Phi^+_{i,>}}(\alpha,\alpha_i)x_{\tau\alpha}^+x_\alpha^+.
    \]
    If we rewrite it using the anti-commutator, then
    \[
    h_{i,1}=\xi_{i,1}+\xi_{\tau i,1}
      -\xi_{i}\xi_{\tau i}+\hf\sum_{\alpha\in\Phi^+}(\alpha,\alpha_i)\big\{x_\alpha^+,x_{\tau\alpha}^+\big\}+\hf\Theta_i,
    \]
    where $\Theta_i$ is defined in \eqref{eq:Theta}. 
\end{enumerate}

\subsection{Obtaining $b_{i,0}$ and $b_{i,1}$}\label{ssec:obtain-b}
It follows from \eqref{eq:sij-Gauss} that
\begin{align*}
  f_i^{(1)}
    &=s_{i+1,i}^{(1)}
      =t_{i'-1,i'}^{(1)}+t_{i+1,i}^{(1)},\\
  s_{i+1,i}^{(2)}
    &=f_i^{(2)}+f_i^{(1)}d_i^{(1)}+
      \sum_{k<i}f_{i+1,k}^{(1)}e_{ki}^{(1)}=\tl t_{i'-1,i'}^{(2)}+t_{i+1,i}^{(2)}
      +\sum_{k=1}^{N}t_{i'-1,k'}^{(1)}t_{ki}^{(1)}.
\end{align*}
Hence we have
\begin{align*}
f_i^{(2)}
=\,&~
\tl{t}_{i'-1,i'}^{(2)}
+t_{i+1,i}^{(2)}
+\sum_{k=1}^{N} t_{i'-1,k'}^{(1)}t_{ki}^{(1)}
\\
&-\big(t_{i'-1,i'}^{(1)}+t_{i+1,i}^{(1)}\big)
 \big(t_{ii}^{(1)}+t_{i'i'}^{(1)}\big)
-\sum_{k<i}
\big(t_{i'-1,k'}^{(1)}+t_{i+1,k}^{(1)}\big)
\big(t_{k'i'}^{(1)}+t_{ki}^{(1)}\big)
\\
=\,&
-t_{i'-1,i'}^{(2)}
+\sum_{k=1}^{N}t_{i'-1,k}^{(1)}t_{ki'}^{(1)}
+F_i^{(2)}
+F_i^{(1)}D_i^{(1)}
+\sum_{k<i}F_{i+1,k}^{(1)}E_{ki}^{(1)}
+\sum_{k=1}^{N}t_{i'-1,k'}^{(1)}t_{ki}^{(1)}
\\
&\quad
-\big(t_{i'-1,i'}^{(1)}+t_{i+1,i}^{(1)}\big)
 \big(t_{ii}^{(1)}+t_{i'i'}^{(1)}\big)
-\sum_{k<i}
\big(t_{i'-1,k'}^{(1)}+t_{i+1,k}^{(1)}\big)
\big(t_{k'i'}^{(1)}+t_{ki}^{(1)}\big)
\\
=\,&
-E_{i'-1}^{(2)}
-D_{i'-1}^{(1)}E_{i'-1}^{(1)}
-\sum_{k<i'-1}t_{i'-1,k}^{(1)}t_{ki'}^{(1)}
+\sum_{k=1}^{N}t_{i'-1,k}^{(1)}t_{ki'}^{(1)}
\\
&
+F_i^{(2)}
+t_{i+1,i}^{(1)}t_{ii}^{(1)}
+\sum_{k<i}t_{i+1,k}^{(1)}t_{ki}^{(1)}
+\sum_{k=1}^{N}t_{i'-1,k'}^{(1)}t_{ki}^{(1)}
\\
&
-t_{i'-1,i'}^{(1)}t_{ii}^{(1)}
-t_{i'-1,i'}^{(1)}t_{i'i'}^{(1)}
-t_{i+1,i}^{(1)}t_{ii}^{(1)}
-t_{i+1,i}^{(1)}t_{i'i'}^{(1)}
\\
&
-\sum_{k<i}t_{i'-1,k'}^{(1)}t_{k'i'}^{(1)}
-\sum_{k<i}t_{i'-1,k'}^{(1)}t_{ki}^{(1)}
-\sum_{k<i}t_{i+1,k}^{(1)}t_{k'i'}^{(1)}
-\sum_{k<i}t_{i+1,k}^{(1)}t_{ki}^{(1)}
\\[2mm]
={}&
-E_{i'-1}^{(2)}
+F_i^{(2)}
+\sum_{k>i}t_{i'-1,k'}^{(1)}t_{ki}^{(1)}
-\sum_{k<i}t_{i+1,k}^{(1)}t_{k'i'}^{(1)}
-t_{i+1,i}^{(1)}t_{i'i'}^{(1)}
\\[2mm]
={}&
-E_{i'-1}^{(2)}
+F_i^{(2)}
+\sum_{k>i+1}t_{i'-1,k'}^{(1)}t_{ki}^{(1)}
-\sum_{k<i}t_{i+1,k}^{(1)}t_{k'i'}^{(1)}
+t_{i'-1,i'-1}^{(1)}t_{i+1,i}^{(1)}
-t_{i+1,i}^{(1)}t_{i'i'}^{(1)}
\\
=\,&
-E_{i'-1}^{(2)}
+F_i^{(2)}
+\sum_{k>i+1}
t_{i'-1,k'}^{(1)}
\big[t_{k,i+1}^{(1)},t_{i+1,i}^{(1)}\big]
-\sum_{k<i}
\big[t_{i+1,i}^{(1)},t_{ik}^{(1)}\big]
t_{k'i'}^{(1)}
\\
& 
-\tfrac{1}{2}
\big\{
t_{i'i'}^{(1)}-t_{i'-1,i'-1}^{(1)},
t_{i+1,i}^{(1)}
\big\}
-\tfrac{1}{2}
\big[t_{i+1,i}^{(1)},t_{i'i'}^{(1)}\big]
+\tfrac{1}{2}
\big[t_{i'-1,i'-1}^{(1)},t_{i+1,i}^{(1)}\big].
\end{align*}
Thus it follows from \eqref{eq:Y2i-R} that
\begin{align*}
b_{i,0}&=\imag\,f_i^{(1)}=\imag(t_{i'-1,i'}^{(1)}+t_{i+1,i}^{(1)})=x_i^+-x_{\tau i}^-,\\
  b_{i,1}&=\imag\big(f_i^{(2)}-
       \vartheta_if_i^{(1)}\big)\\
  &=\imag\Big(
      F_i^{(2)}-\vartheta_iF_i^{(1)}-E_{i'-1}^{(2)}+\vartheta_{\tau i}E_{i'-1}^{(1)}     
      \notag\\
  &\hspace{4em}
      +\sum_{k>i+1}t_{i'-1,k'}^{(1)}
        \comm{t_{k,i+1}^{(1)}}{t_{i+1,i}^{(1)}}-\sum_{k<i}\comm{t_{i+1,i}^{(1)}}{t_{ik}^{(1)}}t_{k'i'}^{(1)}\\
  &\hspace{4em}-\tfrac12\acomm{t_{i'i'}^{(1)}-t_{i'-1,i'-1}^{(1)}}{t_{i+1,i}^{(1)}}
      -\tfrac12\comm{t_{i+1,i}^{(1)}}{t_{i'i'}^{(1)}}
      +\tfrac12\comm{t_{i'-1,i'-1}^{(1)}}{t_{i+1,i}^{(1)}}
      \Big).
\end{align*}
In terms of root vectors, the final expression is
\begin{equation*}
b_{i,1}
   =x_{i,1}^{+}+x_{\tau i,1}^{-}
    -\tfrac12\acomm{\xi_{\tau i}}{x_i^{+}}+\hf\wp_i x_i^++\sum_{\alpha}
       \comm{x_i^{+}}{x_{\alpha}^{+}}x_{\tau\alpha}^{+}
    +\sum_{\beta}
       x_{\tau\beta}^{+}\comm{x_i^{+}}{x_{\beta}^{+}},
\end{equation*}
where $\alpha$ runs over all positive roots $\ve_k-\ve_i$ for $1\lle k<i$ while $\beta$ runs over all positive roots $\ve_{i+1}-\ve_k$ for $i+1<k\lle N$.

\subsection{Conclusion}
In the previous subsections, we have expressed $h_{i,r},b_{i,r}$ for $i\in \I$ and $r=0,1$ defined in \eqref{eq:sij-Gauss}--\eqref{eq:Y2i-R} in terms of the Drinfeld generators of the Yangian if we identify
\begin{itemize}
    \item $\Yi_{\mathscr R}$ as a subalgebra of $\Y_{\mathscr R}$ via \eqref{eq:embed-R},
    \item $\Y$ as a subalgebra of $\Y_{\mathscr R}$ via \eqref{tij-Gauss}--\eqref{eq:Y2-R}.
\end{itemize}
These expressions are precisely those given in \eqref{hi0-embedding2}--\eqref{bi1-embedding2}. Since it is verified in \cite{LZ24} that $h_{i,r},b_{i,r}$ for $i\in \I$ and $r\in \bN$ satisfy the relations in \eqref{qs1}--\eqref{finserre4-ty}, we conclude by Lemma \ref{lem:hbhb} that $h_{i,r},b_{i,r}$ for $i\in \I$ and $r=0,1$  satisfy the relations \eqref{redqs1}--\eqref{redqs5}, \eqref{finserre1-ty}--\eqref{finserre4-ty}, and \eqref{eq:extraA2-1}--\eqref{eq:extraA2-2}. Therefore, the map $\varphi:\Yi\to \Y$ defined by \eqref{hi0-embedding2}--\eqref{bi1-embedding2} induces an algebra homomorphism.

\bibliographystyle{amsalpha}
\bibliography{reference}

@article{Lu27super,
  AUTHOR = {Lu, K.},
  TITLE = {Twisted super {Y}angians of quasi-split type {A}},
  JOURNAL = {J. Algebra},
  FJOURNAL = {Journal of Algebra},
  VOLUME = {711},
  YEAR = {2027},
  PAGES = {474--529},
  DOI = {10.1016/j.jalgebra.2026.06.035},
  URL = {https://doi.org/10.1016/j.jalgebra.2026.06.035},
}

@article{LuPan26Drinfeld,
  AUTHOR = {Lu, M. and Pan, X.},
  TITLE = {{Drinfeld presentation of affine $\mathrm{i}$quantum groups via $\mathrm{i}$Hopf algebras}},
  JOURNAL = {preprint},
  FJOURNAL = {},
  VOLUME = {},
  YEAR = {2026},
  PAGES = {40pp}
}

@article{Ueda25twistedaffine,
  AUTHOR = {Ueda, M.},
  TITLE = {Suggestion of a definition of the twisted affine {Y}angian of type {$D$}},
  JOURNAL = {arXiv preprint \arxiv{2512.06340}},
  YEAR = {2025},
}

@article{LWW26affine,
  author  = {Lu, K. and Wang, W. and Weekes, A.},
  title   = {{Shifted affine $\imath$quantum groups of quasi-split ADE types}},
  journal = {arXiv preprint \arxiv{2603.28446}},
  year    = {2026}
}

@article{LP26GKLO,
  author  = {Li, J.-R. and Prze\'zdziecki, T.},
  title   = {{GKLO representations for shifted quantum affine symmetric pairs}},
  journal = {arXiv preprint \arxiv{2603.07250}},
  year    = {2026}
}

@article{shen2026weight,
  title={{On weight modules over truncated shifted iYangians}},
  author={Shen, Y. and Song, L. and Xiong, R.},
  journal={arXiv preprint \arxiv{2607.27596}},
  year={2026}
}

@article{wang2025quivers,
  title={{Quivers with involutions and shifted twisted Yangians via Coulomb branches II}},
  author={Wang, Z.},
  journal={arXiv preprint arXiv:2601.00039},
  year={2025}
}

@article{su2026twisted,
  title={{Twisted Yangians and Steinberg varieties of type C}},
  author={Su, C. and Yang, Y.},
  journal={arXiv preprint arXiv:2603.28084},
  year={2026}
}

@article {CP91,
    AUTHOR = {Chari, V. and Pressley, A.},
     TITLE = {Fundamental representations of {Y}angians and singularities of
              {$R$}-matrices},
   JOURNAL = {J. Reine Angew. Math.},
  FJOURNAL = {Journal f\"ur die Reine und Angewandte Mathematik. [Crelle's
              Journal]},
    VOLUME = {417},
      YEAR = {1991},
     PAGES = {87--128},
      ISSN = {0075-4102,1435-5345},
   MRCLASS = {17B37},
  MRNUMBER = {1103907},
MRREVIEWER = {Vladimir\ A.\ Stukopin},
       DOI = {10.1515/crll.1991.417.87},
       URL = {https://doi.org/10.1515/crll.1991.417.87},
}

@article {GW23,
    AUTHOR = {Gautam, S. and Wendlandt, C.},
     TITLE = {Poles of finite-dimensional representations of {Y}angians},
   JOURNAL = {Selecta Math. (N.S.)},
  FJOURNAL = {Selecta Mathematica. New Series},
    VOLUME = {29},
      YEAR = {2023},
    NUMBER = {1},
     PAGES = {Paper No. 13, 68},
      ISSN = {1022-1824,1420-9020},
   MRCLASS = {17B37 (81R05)},
  MRNUMBER = {4520258},
MRREVIEWER = {Cristian\ Vay},
       DOI = {10.1007/s00029-022-00813-y},
       URL = {https://doi.org/10.1007/s00029-022-00813-y},
}

@article {CGM14,
    AUTHOR = {Chen, H. and Guay, N. and Ma, X.},
     TITLE = {Twisted {Y}angians, twisted quantum loop algebras and affine
              {H}ecke algebras of type {$BC$}},
   JOURNAL = {Trans. Amer. Math. Soc.},
  FJOURNAL = {Transactions of the American Mathematical Society},
    VOLUME = {366},
      YEAR = {2014},
    NUMBER = {5},
     PAGES = {2517--2574},
      ISSN = {0002-9947,1088-6850},
   MRCLASS = {17B37 (20C08)},
  MRNUMBER = {3165646},
MRREVIEWER = {Guiyu\ Yang},
       DOI = {10.1090/S0002-9947-2014-05994-1},
       URL = {https://doi.org/10.1090/S0002-9947-2014-05994-1},
}

@article {Kolb14,
    AUTHOR = {Kolb, S.},
     TITLE = {Quantum symmetric {K}ac-{M}oody pairs},
   JOURNAL = {Adv. Math.},
  FJOURNAL = {Advances in Mathematics},
    VOLUME = {267},
      YEAR = {2014},
     PAGES = {395--469},
      ISSN = {0001-8708,1090-2082},
   MRCLASS = {17B37 (17B67)},
  MRNUMBER = {3269184},
MRREVIEWER = {Darren\ Funk-Neubauer},
       DOI = {10.1016/j.aim.2014.08.010},
       URL = {https://doi.org/10.1016/j.aim.2014.08.010},
}

@article{TT24,
    AUTHOR = {Tappeiner, L. and Topley, L.},
     TITLE = {Shifted twisted {Y}angians and {S}lodowy slices in classical
              {L}ie algebras},
   JOURNAL = {Forum Math. Sigma},
  FJOURNAL = {Forum of Mathematics, Sigma},
    VOLUME = {14},
      YEAR = {2026},
     PAGES = {Paper No. e103, 36},
      ISSN = {2050-5094},
       DOI = {10.1017/fms.2026.10247},
       URL = {https://doi.org/10.1017/fms.2026.10247},
}

@article {MR02,
    AUTHOR = {Molev, A. and Ragoucy, E.},
     TITLE = {Representations of reflection algebras},
   JOURNAL = {Rev. Math. Phys.},
  FJOURNAL = {Reviews in Mathematical Physics. A Journal for Both Review and
              Original Research Papers in the Field of Mathematical Physics},
    VOLUME = {14},
      YEAR = {2002},
    NUMBER = {3},
     PAGES = {317--342},
      ISSN = {0129-055X,1793-6659},
   MRCLASS = {16S99 (17B37 82B23)},
  MRNUMBER = {1894013},
MRREVIEWER = {Alexei\ P.\ Isaev},
       DOI = {10.1142/S0129055X02001156},
       URL = {https://doi.org/10.1142/S0129055X02001156},
}

@article {Zha22,
    AUTHOR = {Zhang, W.},
     TITLE = {A {D}rinfeld type presentation of affine {$\imath$}quantum
              groups {II}: split {BCFG} type},
   JOURNAL = {Lett. Math. Phys.},
  FJOURNAL = {Letters in Mathematical Physics},
    VOLUME = {112},
      YEAR = {2022},
    NUMBER = {5},
     PAGES = {Paper No. 89, 33},
      ISSN = {0377-9017},
   MRCLASS = {17B37 (17B67)},
  MRNUMBER = {4482099},
MRREVIEWER = {Andrea Appel},
       DOI = {10.1007/s11005-022-01583-6},
       URL = {https://doi.org/10.1007/s11005-022-01583-6},
}

@article {LW21,
    AUTHOR = {Lu, M. and Wang, W.},
     TITLE = {A {D}rinfeld type presentation of affine {$\imath$}quantum
              groups {I}: {S}plit {ADE} type},
   JOURNAL = {Adv. Math.},
  FJOURNAL = {Advances in Mathematics},
    VOLUME = {393},
      YEAR = {2021},
     PAGES = {Paper No. 108111, 46},
      ISSN = {0001-8708},
   MRCLASS = {17B37 (17B67)},
  MRNUMBER = {4340233},
MRREVIEWER = {Andrea Appel},
       DOI = {10.1016/j.aim.2021.108111},
       URL = {https://doi.org/10.1016/j.aim.2021.108111},
}

@article {HZ24,
    AUTHOR = {Hernandez, D. and Zhang, H.},
     TITLE = {Shifted {Y}angians and polynomial {$R$}-matrices},
   JOURNAL = {Publ. Res. Inst. Math. Sci.},
  FJOURNAL = {Publications of the Research Institute for Mathematical
              Sciences},
    VOLUME = {60},
      YEAR = {2024},
    NUMBER = {1},
     PAGES = {1--69},
      ISSN = {0034-5318,1663-4926},
   MRCLASS = {20G42 (16T25 81R50)},
  MRNUMBER = {4803333},
MRREVIEWER = {Run-Qiang\ Jian},
       DOI = {10.4171/prims/60-1-1},
       URL = {https://doi.org/10.4171/prims/60-1-1},
}

@article {GR16,
    AUTHOR = {Guay, N. and Regelskis, V.},
     TITLE = {Twisted {Y}angians for symmetric pairs of types {B}, {C}, {D}},
   JOURNAL = {Math. Z.},
  FJOURNAL = {Mathematische Zeitschrift},
    VOLUME = {284},
      YEAR = {2016},
    NUMBER = {1-2},
     PAGES = {131--166},
      ISSN = {0025-5874,1432-1823},
   MRCLASS = {17B67 (16T25 81R10)},
  MRNUMBER = {3545488},
MRREVIEWER = {Chengming\ Bai},
       DOI = {10.1007/s00209-016-1649-2},
       URL = {https://doi.org/10.1007/s00209-016-1649-2},
}

@article {Li19,
    AUTHOR = {Li, Y.},
     TITLE = {Quiver varieties and symmetric pairs},
   JOURNAL = {Represent. Theory},
  FJOURNAL = {Representation Theory. An Electronic Journal of the American
              Mathematical Society},
    VOLUME = {23},
      YEAR = {2019},
     PAGES = {1--56},
   MRCLASS = {16S30 (14J50 14L35 53D05)},
  MRNUMBER = {3900699},
MRREVIEWER = {Kevin D. Coulembier},
       DOI = {10.1090/ert/522},
       URL = {https://doi.org/10.1090/ert/522},
}

@article {BK05,
    AUTHOR = {Brundan, J. and Kleshchev, A.},
     TITLE = {Parabolic presentations of the {Y}angian {$Y({\mathfrak{gl}}_n)$}},
   JOURNAL = {Comm. Math. Phys.},
  FJOURNAL = {Communications in Mathematical Physics},
    VOLUME = {254},
      YEAR = {2005},
    NUMBER = {1},
     PAGES = {191--220},
      ISSN = {0010-3616,1432-0916},
   MRCLASS = {17B37},
  MRNUMBER = {2116743},
MRREVIEWER = {Sonia\ Natale},
       DOI = {10.1007/s00220-004-1249-6},
       URL = {https://doi.org/10.1007/s00220-004-1249-6},
}

@article {MNO96,
    AUTHOR = {Molev, A. and Nazarov, M. and Olshanski, G.},
     TITLE = {Yangians and classical {L}ie algebras},
   JOURNAL = {Uspekhi Mat. Nauk},
  FJOURNAL = {Uspekhi Matematicheskikh Nauk},
    VOLUME = {51},
      YEAR = {1996},
    NUMBER = {2(308)},
     PAGES = {27--104},
      ISSN = {0042-1316,2305-2872},
   MRCLASS = {17B37},
  MRNUMBER = {1401535},
MRREVIEWER = {Ya.\ S.\ So\u ibel\cprime man},
       DOI = {10.1070/RM1996v051n02ABEH002772},
       URL = {https://doi.org/10.1070/RM1996v051n02ABEH002772},
}

@article{SSX26,
    author = {Shen, Y. and Su, C. and Xiong, R.},
    title = {{Quantized Coulomb Branches of Separated Cotangent Type and Orthosymplectic Quivers}},
   JOURNAL = {arXiv preprint \arxiv{2608.16091}},
      YEAR = {2026}
}

@article{SSX25,
    author = {Shen, Y. and Su, C. and Xiong, R.},
    title = {{Quivers with involutions and shifted twisted Yangians via Coulomb branches}},
   JOURNAL = {to appear in Commun. Math. Phys., arXiv preprint \arxiv{2510.12118}},
      YEAR = {2026}
}

@article{BPT25,
    author = {Bartlett, R. and Przeździecki, T. and Tappeiner, L.},
    title = {{GKLO representations of twisted Yangians in type $\sf AI$ and quantizations of symmetric quotients of the affine Grassmannian}},
   JOURNAL = {arXiv preprint \arxiv{2510.12706}},
      YEAR = {2025}
}

@article {GTL16,
    AUTHOR = {Gautam, S. and Toledano Laredo, V.},
     TITLE = {Yangians, quantum loop algebras, and abelian difference
              equations},
   JOURNAL = {J. Amer. Math. Soc.},
  FJOURNAL = {Journal of the American Mathematical Society},
    VOLUME = {29},
      YEAR = {2016},
    NUMBER = {3},
     PAGES = {775--824},
      ISSN = {0894-0347,1088-6834},
   MRCLASS = {17B67 (17B37)},
  MRNUMBER = {3486172},
MRREVIEWER = {Rutwig\ Campoamor-Stursberg},
       DOI = {10.1090/jams/851},
       URL = {https://doi.org/10.1090/jams/851},
}

@article {Lev93gen,
    AUTHOR = {Levendorskii, S. Z.},
     TITLE = {On generators and defining relations of {Y}angians},
   JOURNAL = {J. Geom. Phys.},
  FJOURNAL = {Journal of Geometry and Physics},
    VOLUME = {12},
      YEAR = {1993},
    NUMBER = {1},
     PAGES = {1--11},
      ISSN = {0393-0440,1879-1662},
   MRCLASS = {17B37 (81R50 82B23)},
  MRNUMBER = {1226802},
MRREVIEWER = {Marco\ Tarlini},
       DOI = {10.1016/0393-0440(93)90084-R},
       URL = {https://doi.org/10.1016/0393-0440(93)90084-R},
}

@article {LWW25,
    AUTHOR = {Lu, K. and Wang, W. and Weekes, A.},
     TITLE = {{Shifted twisted Yangians and affine Grassmannian islices}},
   JOURNAL = {{arXiv preprint \arxiv{2510.10652}}},
      YEAR = {2025},
}

@article {LWW25SiY,
    AUTHOR = {Lu, K. and Wang, W. and Weekes, A.},
     TITLE = {{Shifted twisted Yangians of quasi-split ADE types}},
   JOURNAL = {{arXiv preprint \arxiv{2512.19998}}},
      YEAR = {2025},
}

@article {Nak25,
    AUTHOR = {Nakajima, H.},
     TITLE = {{Instantons on ALE spaces for classical groups, involutions on quiver varieties, and quantum symmetric pairs}},
   JOURNAL = {{arXiv preprint \arxiv{2510.13007}}},
      YEAR = {2025},
}

@article {LPTTW25,
    AUTHOR = {Lu, K. and Peng, Y.-N. and Tappeiner, L. and Topley, L. and Wang, W.},
     TITLE = {{Shifted twisted Yangians and finite W-algebras of classical type}},
   JOURNAL = {arXiv preprint \arxiv{2505.03316}},
      YEAR = {2025},
}

@article {LWZ25GD,
    AUTHOR = {Lu, K. and Wang, W. and Zhang, W.},
     TITLE = {{A Drinfeld type Presentation of twisted Yangians}},
   JOURNAL = {Represent. Theory},
  FJOURNAL = {Representation Theory. An Electronic Journal of the American
              Mathematical Society},
    VOLUME = {29},
      YEAR = {2025},
     PAGES = {838--870}
}

@article {LZ24,
    AUTHOR = {Lu, K. and Zhang, W.},
     TITLE = {{A Drinfeld type Presentation of twisted Yangians of quasi-split type}},
   JOURNAL = {Commun. Contemp. Math., arXiv preprint \arxiv{2408.06981}},
      YEAR = {2025},
}

@article{regelskis2026twisted,
  title={{Twisted Yangians of types BI, CI, DI and Drinfeld type current relations}},
  author={Regelskis, Vidas},
  journal={arXiv preprint \arxiv{2607.14692}},
  year={2026}
}

@article{LP26qchar,
  AUTHOR = {Li, J.-R. and Prze\'zdziecki, T.},
  TITLE = {Compatibility of {D}rinfeld presentations and {$q$}-characters
           for affine {K}ac--{M}oody quantum symmetric pairs:
           quasi-split case},
  JOURNAL = {arXiv preprint \arxiv{2601.02165}},
  YEAR = {2026},
}

@article{regelskis2026note,
  title={{A note on $X^{\mathsf{tw}}(\mathfrak o_4,\mathfrak o_3)$}},
  author={Regelskis, Vidas},
  journal={private note},
  year={2026}
}

@article {Skl88,
    AUTHOR = {Sklyanin, E.},
     TITLE = {Boundary conditions for integrable quantum systems},
   JOURNAL = {J. Phys. A},
  FJOURNAL = {Journal of Physics. A. Mathematical and General},
    VOLUME = {21},
      YEAR = {1988},
    NUMBER = {10},
     PAGES = {2375--2389},
      ISSN = {0305-4470,1751-8121},
   MRCLASS = {81F99 (58F07 81E25 82A68 82A69)},
  MRNUMBER = {953215},
MRREVIEWER = {M.\ Lawrence\ Glasser},
       URL = {http://stacks.iop.org/0305-4470/21/2375},
}

@article {Che84,
    AUTHOR = {Cherednik, I.},
     TITLE = {{Factorizing particles on a half line, and root systems}},
   JOURNAL = {Teoret. Mat. Fiz.},
  FJOURNAL = {Akademiya Nauk SSSR. Teoreticheskaya i Matematicheskaya
              Fizika},
    VOLUME = {61},
      YEAR = {1984},
    NUMBER = {1},
     PAGES = {35--44},
      ISSN = {0564-6162},
   MRCLASS = {81F15 (58F07 82A67)},
  MRNUMBER = {774205},
MRREVIEWER = {V.\ Z.\ Enol\cprime ski\u{\i}},
}

@incollection {Ols92,
    AUTHOR = {Olshanski, G.},
     TITLE = {Twisted {Y}angians and infinite-dimensional classical {L}ie
              algebras},
 BOOKTITLE = {Quantum groups ({L}eningrad, 1990)},
    SERIES = {Lecture Notes in Math.},
    VOLUME = {1510},
     PAGES = {104--119},
 PUBLISHER = {Springer, Berlin},
      YEAR = {1992},
      ISBN = {3-540-55305-3},
   MRCLASS = {17B37 (17B65)},
  MRNUMBER = {1183482},
MRREVIEWER = {Janusz\ Grabowski},
       DOI = {10.1007/BFb0101183},
       URL = {https://doi.org/10.1007/BFb0101183},
}

@article {Mac02,
    AUTHOR = {MacKay, N.},
     TITLE = {Rational {$K$}-matrices and representations of twisted
              {Y}angians},
   JOURNAL = {J. Phys. A},
  FJOURNAL = {Journal of Physics. A. Mathematical and General},
    VOLUME = {35},
      YEAR = {2002},
    NUMBER = {37},
     PAGES = {7865--7876},
      ISSN = {0305-4470,1751-8121},
   MRCLASS = {81R12 (17B37 81R50 81T40)},
  MRNUMBER = {1945798},
MRREVIEWER = {Sonia\ Natale},
       DOI = {10.1088/0305-4470/35/37/302},
       URL = {https://doi.org/10.1088/0305-4470/35/37/302},
}

@article {Lu26min,
    AUTHOR = {Lu, K.},
     TITLE = {Minimalistic presentation and coideal structure of twisted
              {Y}angians},
   JOURNAL = {Comm. Math. Phys.},
  FJOURNAL = {Communications in Mathematical Physics},
    VOLUME = {407},
      YEAR = {2026},
    NUMBER = {5},
     PAGES = {Paper No. 99, 38},
      ISSN = {0010-3616,1432-0916},
   MRCLASS = {17B37},
  MRNUMBER = {5055749},
       DOI = {10.1007/s00220-026-05615-3},
       URL = {https://doi.org/10.1007/s00220-026-05615-3},
}

@article {GRW19,
    AUTHOR = {Guay, N. and Regelskis, V. and Wendlandt, C.},
     TITLE = {Equivalences between three presentations of orthogonal and
              symplectic {Y}angians},
   JOURNAL = {Lett. Math. Phys.},
  FJOURNAL = {Letters in Mathematical Physics},
    VOLUME = {109},
      YEAR = {2019},
    NUMBER = {2},
     PAGES = {327--379},
      ISSN = {0377-9017,1573-0530},
   MRCLASS = {17B37 (16T20)},
  MRNUMBER = {3917347},
MRREVIEWER = {Chengming\ Bai},
       DOI = {10.1007/s11005-018-1108-6},
       URL = {https://doi.org/10.1007/s11005-018-1108-6},
}

@book {Hum72,
    AUTHOR = {Humphreys, J. E.},
     TITLE = {Introduction to {L}ie algebras and representation theory},
    SERIES = {Graduate Texts in Mathematics},
    VOLUME = {Vol. 9},
 PUBLISHER = {Springer-Verlag, New York-Berlin},
      YEAR = {1972},
     PAGES = {xii+169},
   MRCLASS = {17BXX},
  MRNUMBER = {323842},
MRREVIEWER = {F.\ W.\ Lemire},
}

@article {BR17,
    AUTHOR = {Belliard, S. and Regelskis, V.},
     TITLE = {Drinfeld {J} presentation of twisted {Y}angians},
   JOURNAL = {SIGMA Symmetry Integrability Geom. Methods Appl.},
  FJOURNAL = {SIGMA. Symmetry, Integrability and Geometry. Methods and
              Applications},
    VOLUME = {13},
      YEAR = {2017},
     PAGES = {Paper No. 011, 35},
      ISSN = {1815-0659}
}

@article {Dri87,
    AUTHOR = {Drinfeld, V. G.},
     TITLE = {A new realization of {Y}angians and of quantum affine
              algebras},
   JOURNAL = {Dokl. Akad. Nauk SSSR},
  FJOURNAL = {Doklady Akademii Nauk SSSR},
    VOLUME = {296},
      YEAR = {1987},
    NUMBER = {1},
     PAGES = {13--17},
      ISSN = {0002-3264},
   MRCLASS = {17B65 (16A24 17B45 58F07 81E99)},
  MRNUMBER = {914215},
MRREVIEWER = {J.\ S.\ Joel},
}

@article {Dri85,
    AUTHOR = {Drinfeld, V. G.},
     TITLE = {Hopf algebras and the quantum {Y}ang-{B}axter equation},
   JOURNAL = {Dokl. Akad. Nauk SSSR},
  FJOURNAL = {Doklady Akademii Nauk SSSR},
    VOLUME = {283},
      YEAR = {1985},
    NUMBER = {5},
     PAGES = {1060--1064},
      ISSN = {0002-3264},
   MRCLASS = {58F07 (17B20 82A15)},
  MRNUMBER = {802128},
MRREVIEWER = {Alexander\ A.\ Pankov},
}

@article {LWZ25affine,
    AUTHOR = {Lu, K. and Wang, W. and Zhang, W.},
     TITLE = {Affine {$\imath$}quantum groups and twisted {Y}angians in
              {D}rinfeld presentations},
   JOURNAL = {Comm. Math. Phys.},
  FJOURNAL = {Communications in Mathematical Physics},
    VOLUME = {406},
      YEAR = {2025},
    NUMBER = {5},
     PAGES = {Paper No. 98, 36},
      ISSN = {0010-3616,1432-0916},
   MRCLASS = {17B37 (81R50)},
  MRNUMBER = {4887617},
       DOI = {10.1007/s00220-025-05263-z},
       URL = {https://doi.org/10.1007/s00220-025-05263-z},
}

@article {GNW18,
    AUTHOR = {Guay, N. and Nakajima, H. and Wendlandt, C.},
     TITLE = {Coproduct for {Y}angians of affine {K}ac-{M}oody algebras},
   JOURNAL = {Adv. Math.},
  FJOURNAL = {Advances in Mathematics},
    VOLUME = {338},
      YEAR = {2018},
     PAGES = {865--911},
      ISSN = {0001-8708,1090-2082},
   MRCLASS = {17B37 (17B67)},
  MRNUMBER = {3861718},
MRREVIEWER = {Volodymyr\ Mazorchuk},
       DOI = {10.1016/j.aim.2018.09.013},
       URL = {https://doi.org/10.1016/j.aim.2018.09.013},
}

@article {Kn95,
    AUTHOR = {Knight, H.},
     TITLE = {Spectra of tensor products of finite-dimensional
              representations of {Y}angians},
   JOURNAL = {J. Algebra},
  FJOURNAL = {Journal of Algebra},
    VOLUME = {174},
      YEAR = {1995},
    NUMBER = {1},
     PAGES = {187--196},
      ISSN = {0021-8693,1090-266X},
   MRCLASS = {17B37 (81R50)},
  MRNUMBER = {1332866},
MRREVIEWER = {Preeti\ Parashar},
       DOI = {10.1006/jabr.1995.1123},
       URL = {https://doi.org/10.1006/jabr.1995.1123},
}

\end{document}